\newtheorem{theorem}{Theorem}[section]
\newtheorem{corollary}{Corollary}[theorem]
\newtheorem{lemma}[theorem]{Lemma}
\theoremstyle{definition}
\newtheorem{remark}{Remark}
\theoremstyle{definition}
\newtheorem{definition}[theorem]{Definition}
\newtheorem{problem}{Problem}
\title{Bounding Radial Variation of positive harmonic Functions on Lipschitz Domains}
\author{Jakob Fromherz, Paul F.X. Müller and Katharina Riegler}
\begin{document}
\pagenumbering{arabic}
\maketitle
\begin{abstract}
	Let $D\subset\mathbb{R}^d$ be a Lipschitz domain (bounded or unbounded) and let $u$ be a bounded positive harmonic function on $D$. By a point of bounded radial variation, or a \textit{Bourgain} point, we will refer to a point $x$ on $\partial D$, such that the integral
	\begin{align*}
		\int_0^1\|\nabla u(x+tv)\|\mathrm{d}t
	\end{align*}
	is finite for some vector $v$ satisfying $(x,x+v])\subset D$. Let $\mathcal{V}^u(\partial D)$ denote the set of all Bourgain points on $\partial D$.
	
	This paper presents techniques going back to Havin and Mozolyako \cite{HavinMozol2016} that determine a large set of Bourgain points on $\partial D$. To be more precise, any ball $B$ with center on $\partial D$ contains a Bourgain point. As a consequence, the Hausdorff dimension of $\mathcal{V}^u(\partial D)\cap B(x,r)$ is greater than $(d-1)/2$, for arbitrary $x\in\partial D$ and $r>0$. 
	
	The intention of this paper is to provide and document an updated and refined version of \cite{MuellerRiegler2020}.
\paragraph*{AMS Subject Classification 2010:} 31B25, 31B10, 31B05, 31A20
\paragraph*{Keywords: } Radial Variation, Boundary Behaviour, Positive Harmonic Functions,  Potential Theory
\end{abstract}		
	\section{Introduction}
	In 1993, J. Bourgain published two papers \cite{Bourgain931}, \cite{Bourgain932} in which he proved, that for any nonnegative harmonic on the unit disk $\mathbb{D}:=\lbrace z\in\mathbb{C}:|z|\leq1\rbrace$, the quantity
	\begin{align*}
		\mathrm{var}(u,\theta):=\int_0^1|\nabla u(\rho e^{\mathrm{i}\theta})|\mathrm{d}\rho
	\end{align*}
	is finite on a subset of $\partial\mathbb{D}$ with Hausdorff dimension 1. A compact overview of the two papers by Bourgain and their implications on minimal surfaces in $\mathbb{R}^2$ is featured in the Master's thesis of Katharina Riegler \cite{RieglerMaster}.
	
	Later, in 2016, Havin and Mozolyako considerably expanded the techniques developed by Bourgain in \cite{HavinMozol2016} and proved analogous results for positive harmonic functions $u$ on domains $D\subset\mathbb{R}^d$, $d\geq3$ with $C^2$ boundary. To be more precise, it was shown that on a subset of the boundary $\partial D$ that has Hausdorff dimension $d-1$, the variation of $u$ along the inward normal vector is finite i.e. the set
	\begin{align*}
		\left\lbrace x\in\partial D: \int_0^1\|\nabla u(x+\rho r(x)\vec{N}(x))\|\mathrm{d}\rho<\infty\right\rbrace
	\end{align*}
	has full Hausdorff dimension.

	In the introduction of the aforementioned paper, the authors comment on the possibility of further weakening regularity assumptions on the boundary of the domain and still producing analogous results for Lipschitz domains using their techniques. This remark was further expanded on by Müller and Riegler in 2020 \cite{MuellerRiegler2020}, providing the full proof and necessary modifications of how the paper by Havin and Mozolyako extends to Lipschitz domains. Using the results in \cite{MuellerRiegler2020}, they were able to lift a theorem on boundedness of radial variation of Bloch functions on the unit disk to arbitrary dimensions in \cite{MuellerRiegler20Bloch}. The combination of both papers forms the PhD-thesis of Katharina Riegler \cite{RieglerDiss} and serves as comprehensive source for what follows.
	
	The following treatise is concerned with presenting and sharpening the results in \cite{MuellerRiegler2020}:	For a bounded positive harmonic function $u$ on a Lipschitz domain $D$ (bounded or unbounded), we are interested in characterizing the set
	\begin{align}
		\label{eq:BourgainPoints}
		\mathcal{V}^u(\partial D):=\left\lbrace x\in \partial D:\exists v\in\mathbb{R}^d\text{ s.t. }(x,x+v]\subset D\text{ and }\int_0^1\|\nabla u(x+tv)\|\mathrm{d}t<\infty\right\rbrace,
	\end{align}
	in particular, estimating its Hausdorff dimension. In accordance to previous papers (\cite{HavinMozol2016},\cite{MuellerRiegler2020}) dealing with boundedness of radial variation, we shall call it the set of \textit{Bourgain} points.
	
	We will first solve the problem on very special Lipschitz domains, namely on \textit{near half spaces}. Then through locally identifying a Lipschitz domain with a corresponding near half space, we will carry over the results to arbitrary Lipschitz domains
	
	Section \ref{sec:prel} will introduce some preliminaries, while Section \ref{sec:rad_var_nhs} is dedicated to proving important variational estimates on near half spaces. In particular, it is shown, that in any surface ball on the boundary of a near half space, there exists a Bourgain point (Theorem \ref{thm:main}). Section \ref{sec:Hausdorff} will use results from previous sections to estimate the Hausdorff dimension of the set of all Bourgain points. This was not done in \cite{MuellerRiegler2020} and the resulting bound on the Hausdorff dimension is weaker than that proven for domains with $C^2$ boundaries in \cite{HavinMozol2016}. However, an improvement is very likely. As previously announced, Section \ref{sec:nhs_lip} contains the necessary steps to extend Hausdorff dimension estimates of the set of all Bourgain points to general Lipschitz domains.
	\section{Preliminaries}\label{sec:prel}
	\subsection{Notation}
	Let $n\in\mathbb{N}$, then for the set of subsets of $\lbrace1,\dots,n\rbrace$ of cardinality $k\in\lbrace0,\dots,n\rbrace$ we write $\left\lbrace\binom{n}{k}\right\rbrace$, i.e.
	\[
		\left\lbrace\binom{n}{k}\right\rbrace:=\left\lbrace M\subseteq \lbrace1,\dots,n\rbrace : |A|=k\right\rbrace.
	\]
	Furthermore, we denote the dyadic numbers on $[0,1]$ by $\mathcal{D}:=\left\lbrace\frac{k}{2^n}:n\in\mathbb{N}_0\quad k\in\lbrace0,\dots,2^n\rbrace\right\rbrace$. For a compact interval $[a,b]$ we denote with $\mathcal{D}([a,b])$ the dyadic numbers in $[0,1]$, transformed to $[a,b]$ i.e.
	\begin{align*}
		\mathcal{D}\left([a,b]\right):=\left\lbrace a+\frac{(b-a)k}{2^n}:n\in\mathbb{N}_0\quad k\in\lbrace0,\dots,2^n\rbrace\right\rbrace.
	\end{align*}
	
	By $B(x,r)$ we will denote the open ball around $x$ with radius $r>0$. Sometimes, if we want to specify the dimension $d\in\mathbb{N}$ in which we consider the ball, we add the $d$ as superscript, i.e. $B^d(x,r)=\lbrace y\in\mathbb{R}^d:\|x-y\|<r\rbrace$.
	
	Let $X$ be any locally compact  Hausdorff space, then we denote by
	\begin{align*}
		C_0(X):=\left\lbrace f:X\longrightarrow\mathbb{R}:f \text{ is continuous and } \forall\epsilon>0\exists K\subset X\text{ compact}:\left|f_{|X\setminus K}\right|<\epsilon \right\rbrace
	\end{align*}
	the normed space of continuous functions on $X$, that vanish at infinity, equipped with the norm $\left\|f\right\|_{C_0(X)}:=\sup_{z\in X}|f(z)|$. Riesz's representation theorem \cite{Rudin1999} states, that the dual of $C_0(X)$, is the space of regular signed Borel measures with finite total variation, i.e: $C_0(X)\cong\mathcal{M}(X)$ where
	\begin{align*}
		\mathcal{M}(X):=\left\lbrace\mu :\mu\text{ regular, signed measure with } |\mu(X)|<\infty\right\rbrace.
	\end{align*}
	We continue to define \textit{near half spaces}. Those are special Lipschitz domains which allow us to derive variational estimates of positive harmonic functions thereon.
	\begin{definition}[Near Half Space]
		\label{def:near-half_space}
		A Lipschitz domain $\mathcal{O}$ is called a near half space, if there exists a Lipschitz function $\Phi:\mathbb{R}^{d-1}\longrightarrow\mathbb{R}$ with the extra condition 
		\[
			\exists r\in(0,1):\quad \Phi\arrowvert_{\mathbb{R}^{d-1}\setminus B^{d-1}(0,r)}=0,
		\]
		such that $\mathcal{O}$ is given by the epigraph of $\Phi$, i.e. 
		\[
		\mathcal{O}=\left\lbrace(x,y)\in\mathbb{R}^d:x\in\mathbb{R}^{d-1},y\in\mathbb{R}\quad\text{and}\quad y>\Phi(x)\right\rbrace.
		\]
		The boundary of $\mathcal{O}$ is then given by the graph of $\Phi$, i.e.
		\[
			S:=\partial \mathcal{O}=\left\lbrace(x,y)\in\mathbb{R}^d:x\in\mathbb{R}^{d-1},y\in\mathbb{R}\quad\text{and}\quad y=\Phi(x)\right\rbrace.
		\] 
	\end{definition}
	\begin{remark}\textbf{(Convention)}
		Throughout this paper, the constant $c_S$ denotes a positive constant that depends only on the Lipschitz boundary of our domain $\mathcal{O}$. It will be used very flexibly, in one line of inequalities it might take on different values but remains dependent only on $S$ for example $c_S\cdot\exp(c_S)=c_S$ and $c_S\leq3\cdot c_S=c_S$.
		For $x\in\mathbb{R}^d$ we will use the notational convention $x_y:=x+y \vec{e_d}$, where $\vec{e_d}$ denotes the $d$-th unit vector of the standard basis and $y\in\mathbb{R}$. With our definition of the half-space $\mathcal{O}$ we immediately obtain $x_y\in \mathcal{O}$ for all $x\in S$  if $y>0$. Analogously, sets are shifted, i.e. for $A\subset\mathbb{R}^d$ we define $A_y:=y\vec{e_d}+A$. Now let $A,B\subseteq\mathbb{R}^d$ and $y\in \mathbb{R}$ such that $A_y\subseteq B$. For a function $\varphi$ defined on $B$ we define the function $\varphi_y$ on $A$ by $\varphi_y(x):=\varphi(x_y)$.
	\end{remark}
	Citations of referenced Theorems and Lemmas are given directly after the number of the Theorem/Lemma in question.
	
	\subsection{Harnack Inequality and Harnack chains}
	An important ingredient necessary to analyze positive harmonic functions is Harnack's inequality:
	\begin{theorem}[Harnack inequality \cite{Gilbarg2001}] 
		\label{thm:Harnack_ineq}
		Let $x_0\in\mathbb{R}^d$, $R>0$ and $u:B(x_0,R)\longrightarrow\mathbb{R}$ be non-negative harmonic function. Then for every $x\in\mathbb{R}^d$ with $\|x-x_0\|=r<R$ we have the double inequality
		\begin{align*}
			\frac{\left(1-\frac{r}{R}\right)}{\left(1+\frac{r}{R}\right)^{d-1}}u(x_0)\leq u(x)\leq\frac{\left(1+\frac{r}{R}\right)}{\left(1-\frac{r}{R}\right)^{d-1}}u(x_0).
		\end{align*}
	\begin{corollary}
		\label{cor:Harnack_gradient}
		Let $\Omega\subset\mathbb{R}^d$ be a domain and $u:\Omega\longrightarrow\mathbb{R}$ be positive harmonic. Then for $x\in\Omega$ we can bound the gradient of $u$ in the following way:
		\begin{align*}
			\|\nabla u(x)\|\leq d^{3/2}\frac{u(x)}{d(x,\partial\Omega)}.
		\end{align*}
	\end{corollary}
	\begin{proof}
		Let $x\in\Omega$ and denote $R=\textrm{dist}(x,\partial\Omega)$. Then for any $h\in(0,R)$ we have by Harnack's inequality
		\begin{align*}
			\frac{u(x+h\vec{e_i})-u(x)}{h}&\leq\left(\frac{\frac{1}{h}+\frac{1}{R}}{\left(1-\frac{h}{R}\right)^{d-1}}-\frac{1}{h}\right) u(x)\\
			&=\left(\frac{\frac{1}{h}\left(1-\left(1-\frac{h}{R}\right)^{d-1}\right)+\frac{1}{R}}{\left(1-\frac{h}{R}\right)^{d-1}}\right)u(x).
		\end{align*}
		Taking the limit $h\rightarrow0$ we obtain
		\begin{align*}
			\frac{\partial}{\partial x_i}u(x)\leq\frac{d}{R}u(x),
		\end{align*}
		since the constant terms in $p=1-\left(1-\frac{h}{R}\right)^{d-1}$ cancel and hence $p/h\rightarrow\frac{d-1}{R}$ as $h\rightarrow0$. A similar calculation also shows $\frac{\partial}{\partial x_i}u(x)\geq-\frac{d}{R}u(x)$ and thus $\left|\frac{\partial}{\partial x_i}u(x)\right|\leq\frac{d}{R}u(x)$. This implies the claimed inequality.
	\end{proof}
	\end{theorem}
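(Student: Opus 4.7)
The plan is to fix $x \in \Omega$, write $R = \mathrm{dist}(x,\partial\Omega)$, and exploit the fact that $u$ is a positive harmonic function on the ball $B(x,R) \subset \Omega$, so that Theorem \ref{thm:Harnack_ineq} applies with $x_0 = x$ to any point in $B(x,R)$. The desired bound is coordinate-wise: if each partial derivative $\partial_i u(x)$ can be estimated by $(d/R)\,u(x)$ in absolute value, then passing to the Euclidean norm yields the factor $\sqrt{d}$, producing $d^{3/2}$ altogether.

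First I would take a coordinate direction $\vec{e_i}$ and a small increment $h \in (0,R)$, so that $x + h\vec{e_i} \in B(x,R)$ lies at distance $h$ from the center. Applying the upper Harnack inequality gives
\begin{align*}
u(x+h\vec{e_i}) \;\leq\; \frac{1+h/R}{(1-h/R)^{d-1}}\, u(x),
\end{align*}
and symmetrically the lower bound yields $u(x+h\vec{e_i}) \geq \frac{1-h/R}{(1+h/R)^{d-1}}\, u(x)$. Forming the forward difference quotient $(u(x+h\vec{e_i}) - u(x))/h$, dividing by $h$ and rearranging produces, on each side, a quotient whose numerator involves the factor $\tfrac{1}{h}\bigl(1 - (1 \mp h/R)^{d-1}\bigr)$.

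The key computation is the Taylor expansion $(1 \mp h/R)^{d-1} = 1 \mp (d-1)h/R + O(h^2)$, so that $\tfrac{1}{h}\bigl(1 - (1 \mp h/R)^{d-1}\bigr) \to \pm (d-1)/R$ as $h \to 0$. Combined with the remaining $1/R$ term this gives the limit $d/R$ on both sides, while the denominators tend to $1$. Letting $h \to 0$ therefore produces the two-sided bound $|\partial_i u(x)| \leq (d/R)\,u(x)$. Summing the squared bounds over $i=1,\dots,d$ and taking a square root gives $\|\nabla u(x)\| \leq \sqrt{d}\cdot (d/R)\,u(x) = d^{3/2}\,u(x)/\mathrm{dist}(x,\partial\Omega)$.

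The only technically delicate point is the limit computation: one must verify that the leading-order terms really do cancel correctly so that the upper and lower limits of the difference quotient both equal $\pm(d/R)\,u(x)$, rather than something larger that would inflate the constant. Once this Taylor-expansion bookkeeping is done cleanly, the rest is routine and the $d^{3/2}$ follows directly from the passage from coordinate partial derivatives to the Euclidean norm of the gradient.
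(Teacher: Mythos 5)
Your proposal is correct and follows essentially the same route as the paper: apply the two-sided Harnack estimate at $x \pm h\vec{e_i}$, form the difference quotient, expand $(1 \mp h/R)^{d-1}$ to first order in $h$ to get the bound $|\partial_i u(x)| \le (d/R)\,u(x)$, and then pass from coordinate partials to the Euclidean gradient norm to pick up the extra $\sqrt{d}$. The paper leaves the last $\sqrt{d}$ step implicit ("this implies the claimed inequality"), but you make it explicit, which is a small clarity improvement rather than a different argument.
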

	This next Lemma will enable us to properly exploit Harnack's inequality for positive harmonic functions on near half spaces by providing a simple formula estimating the distance of points in $\mathcal{O}$ to the boundary $S$.
	\begin{lemma}[\cite{MuellerRiegler2020} ]
		\label{lem:dist_to_boundary}
		Let $\mathcal{O}$ be a near-half space with boundary $S$. Then there exists $c_S\leq1$ such that for all $x\in S$ and every $y>0$ we have $y\geq d(x_y,S)\geq c_Sy$.
	\end{lemma}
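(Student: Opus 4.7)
The strategy is to work directly from the graph representation in Definition \ref{def:near-half_space}, which reduces the problem to a one-variable geometric optimization. Write $x = (p, \Phi(p))$ with $p \in \mathbb{R}^{d-1}$, so that $x_y = (p, \Phi(p) + y)$, and note that every boundary point has the form $z = (q, \Phi(q))$ for some $q \in \mathbb{R}^{d-1}$. The basic quantity to analyze is then
\[
\|x_y - z\|^2 = \|p-q\|^2 + \bigl(y + \Phi(p) - \Phi(q)\bigr)^2.
\]

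The upper bound $d(x_y, S) \leq y$ is immediate: taking $q = p$ yields $z = x \in S$, and $\|x_y - x\| = y$. So the content of the lemma is entirely in the lower bound.

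For the lower bound, let $L$ denote a Lipschitz constant of $\Phi$, which depends only on $S$. Setting $t := \|p - q\|$ and applying the Lipschitz estimate $|\Phi(p) - \Phi(q)| \leq L t$, I would split into two cases. If $L t \geq y$, then the horizontal component alone is large, and $\|x_y - z\|^2 \geq t^2 \geq y^2/L^2$. If $L t < y$, then $y + \Phi(p) - \Phi(q) \geq y - L t > 0$, so
\[
\|x_y - z\|^2 \geq t^2 + (y - L t)^2.
\]
Minimizing the quadratic $t \mapsto t^2 + (y - Lt)^2$ over $t \in [0, y/L]$ is a routine calculus exercise: the minimizer is $t^* = yL/(1+L^2)$ and the minimum value is $y^2/(1+L^2)$. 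Taking the infimum over all $z \in S$, we obtain $d(x_y, S) \geq y/\sqrt{1 + L^2}$, and setting $c_S := 1/\sqrt{1 + L^2} \leq 1$ finishes the argument.

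There is no real obstacle in this proof, but the point that requires attention is that $c_S$ must depend \emph{only} on the Lipschitz constant of $\Phi$, not on $x$, $y$, or the specific point on $S$ under consideration. This uniformity is precisely what will allow the lemma to be combined with the pointwise gradient bound of Corollary \ref{cor:Harnack_gradient} in later sections, replacing $d(x_y, \partial\mathcal{O})$ by the simpler quantity $y$ up to a constant.
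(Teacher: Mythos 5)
Your proof is correct and takes essentially the same approach as the paper: both expand $\|x_y-z\|^2$ as the sum of a horizontal and a vertical term, invoke the Lipschitz constant $L$, and arrive at the constant $c_S = 1/\sqrt{1+L^2}$. The only cosmetic difference is that the paper substitutes the lower bound $d(\bar{x},\bar{z})^2 \geq (\Phi(\bar{x})-\Phi(\bar{z}))^2/L^2$ and completes the square directly (so no case split is needed), whereas you substitute the upper bound $|\Phi(p)-\Phi(q)| \leq Lt$ and minimize over $t$ with a two-case argument; both are valid.
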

	\begin{proof}
		The first inequality is trivial. For $z=(z_1,\dots,z_d)\in\mathbb{R}^d$ we denote $\bar{z}=(z_1,\dots,z_{d-1})\in\mathbb{R}^{d-1}$ so in particular we have $z=(\bar{z},\Phi(\bar{z}))$ for any $z\in S$. Choosing an arbitrary $z\in S$, we obtain
		\begin{align*}
			d(x_y,z)^2
			&=d((\bar{x},\Phi(\bar{x})+y),(\bar{z},\Phi(\bar{z})))^2\\
			&=d(\bar{x},\bar{z})^2+(y+\Phi(\bar{x})-\Phi(\bar{z}))^2\\
			&\geq \frac{1}{L^2}(\Phi(\bar{x})-\Phi(\bar{z}))^2+y^2+2y(\Phi(\bar{x})-\Phi(\bar{z}))+(\Phi(\bar{x})-\Phi(\bar{z}))^2\\
			&=\left(\frac{\sqrt{L^2+1}}{L}(\Phi(\bar{x})-\Phi(\bar{z}))+\frac{L}{\sqrt{L^2+1}}y\right)^2+y^2-\frac{L^2}{L^2+1}y^2\\
			&\geq \frac{1}{L^2+1}y^2.
		\end{align*}
		Therefore we have 
		\[
			\inf_{z\in S}d(x_y,z)\geq\frac{1}{\sqrt{L^2+1}}y.
		\]
	\end{proof}
	Equipped with Harnack's inequality and the distance formula from Lemma \ref{lem:dist_to_boundary}, we formulate a lemma on Harnack chains, a sequence of overlapping balls inside $\mathcal{O}$, that connects 2 points in $\mathcal{O}$.	
	\begin{lemma}
		\label{lem:harnack_chain}
		Let $u$ be a positive harmonic function on a near half space $\mathcal{O}$ with $\partial \mathcal{O} = S$ and $0<y_1\leq y_2$. Then there exists $\alpha_1,\alpha_2>0$, that solely depend on the Lipschitz boundary of $\mathcal{O}$ and $C_1,C_2>0$, depending on the the dimension $d$, such that for all $x\in S$ we have
		\begin{align*}
			C_1\left(\frac{y_1}{y_2}\right)^{\alpha_1}\leq\frac{u_{y_2}(x)}{u_{y_1}(x)}\leq C_2\left(\frac{y_2}{y_1}
			\right)^{\alpha_2}.
		\end{align*}
	\end{lemma}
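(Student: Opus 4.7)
The plan is to build a Harnack chain of overlapping balls connecting $x_{y_1}$ to $x_{y_2}$ along the vertical line through $x$, and to iterate Harnack's inequality (Theorem \ref{thm:Harnack_ineq}). The geometric input is Lemma \ref{lem:dist_to_boundary}, which tells us that at every height $h>0$ the ball $B(x_h, c_S h)$ lies in $\mathcal{O}$, so that $u$ is positive and harmonic on it.

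Concretely, I would fix the geometric step ratio $q := 1 + c_S/2$, set $h_0 := y_1$, $h_{k+1} := q\, h_k$, and let $N$ be the smallest integer with $h_N \geq y_2$; note $N \leq 1 + \log_q(y_2/y_1)$. Since
\[
\|x_{h_{k+1}} - x_{h_k}\| \;=\; (q-1)\,h_k \;=\; \tfrac{c_S}{2}\,h_k \;=\; \tfrac{1}{2}\cdot c_S h_k,
\]
the point $x_{h_{k+1}}$ lies in $B(x_{h_k}, c_S h_k) \subset \mathcal{O}$ at relative distance $r/R = 1/2$. Hence Theorem \ref{thm:Harnack_ineq} supplies constants $K_1, K_2 > 0$, depending only on $d$, with
\[
K_1 \;\leq\; \frac{u(x_{h_{k+1}})}{u(x_{h_k})} \;\leq\; K_2 \qquad \text{for every } k.
\]
Iterating the $N$ links yields $K_1^N \leq u(x_{h_N})/u(x_{y_1}) \leq K_2^N$, and a single further Harnack comparison (admissible because $y_2 \leq h_N \leq q\, y_2$) relates $u(x_{h_N})$ and $u(x_{y_2})$ up to a bounded factor.

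Setting $\alpha_2 := \log K_2/\log q$ and $\alpha_1 := -\log K_1/\log q$ and using the bound on $N$ converts the iterated inequalities into $K_2^N \leq K_2 (y_2/y_1)^{\alpha_2}$ and $K_1^N \geq K_1 (y_1/y_2)^{\alpha_1}$; absorbing the leftover multiplicative factors into $C_1, C_2$ produces the stated double inequality. The ratio $q$, and hence each $\alpha_j$, depends on the Lipschitz data only through $c_S$, with the usual flexibility in the convention for $c_S$ absorbing the dimensional contribution from $\log K_j$.

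There is no deep obstacle: the argument is a standard Harnack chain adapted to near half spaces. The only point requiring genuine care is the calibration of the step ratio $q$: a $q$ too close to $1$ blows up $N$ and therefore the exponents $\alpha_j$, while a $q$ too large would push $r/R$ toward $1$ and destroy the per-step Harnack constants. The balanced choice $q = 1 + c_S/2$, whose admissibility is exactly what Lemma \ref{lem:dist_to_boundary} enables, is where the Lipschitz geometry of $S$ enters the exponents $\alpha_1, \alpha_2$.
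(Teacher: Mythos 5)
Your proposal is correct and follows essentially the same Harnack-chain argument as the paper: a geometric progression of heights along the vertical line through $x$, balls $B(x_h, c_S h)\subset\mathcal{O}$ furnished by Lemma \ref{lem:dist_to_boundary}, per-step Harnack constants depending only on $d$, and a logarithmic conversion of the chain length into the exponents $\alpha_1,\alpha_2$. The only cosmetic differences are that you ascend from $y_1$ (the paper descends from $y_2$ with ratio $1-c/2$) and you calibrate $r/R=1/2$ (the paper uses $2/3$ via the intermediate ball $B(x_{a_{n-1}},\tfrac{3}{4}c\,a_{n-1})$); these are inessential.
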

	\begin{proof}
		Let $c_S=c$ be the constant from Lemma \eqref{lem:dist_to_boundary}, in particular we have that for all $y>0$, $u$ is positive harmonic on $B(x_y,\delta cy)$ since $B(x_y,\delta cy)\subset \mathcal{O}$ for any $0<\delta<1$. 	
		Furthermore, depending on $y_1,y_2$, there exists $N=N(y_1,y_2)$ such that 
		\begin{align}
			\label{eq:double_ineq_y1y2}
			\left(1-\frac{c}{2}\right)^Ny_2\leq y_1 \leq \left(1-\frac{c}{2}\right)^{N-1}y_2.
		\end{align}
		Denoting $a_n:=\left(1-\frac{c}{2}\right)^ny_2$, we have $x_{y_1}\in B(x_{a_{N-1}},a_{N-1}-a_N)$. An easy calculation shows $a_{n-1}-a_n=\frac{c}{2}a_{n-1}$. Applying the Harnack inequality to 
		\[
		B(x_{a_{n-1}},a_{n-1}-a_n)=B\left(x_{a_{n-1}},\frac{c}{2}a_{n-1}\right)\subsetneq B\left(x_{a_{n-1}},\frac{3}{4}ca_{n-1}\right)  
		\]
		for every $n$ yields
		\begin{align*}
			\frac{u(x_{a_n})}{u(x_{a_{n-1}})}\geq\frac{1-\frac{\frac{c}{2}a_{n-1}}{\frac{3c}{4}a_{n-1}}}{\left(1+\frac{\frac{c}{2}a_{n-1}}{\frac{3c}{4}a_{n-1}}\right)^{d-1}}=\frac{1}{3}\left(\frac{5}{3}\right)^{1-d}=:C_d^{(1)}
		\end{align*}
		and analogously 
		\begin{align*}
			\frac{u(x_{a_n})}{u(x_{a_{n-1}})}\leq\frac{1+\frac{\frac{c}{2}a_{n-1}}{\frac{3c}{4}a_{n-1}}}{\left(1-\frac{\frac{c}{2}a_{n-1}}{\frac{3c}{4}a_{n-1}}\right)^{d-1}}=\frac{5}{3}\left(\frac{1}{3}\right)^{1-d}=:C_d^{(2)}.
		\end{align*}
		Furthermore, since $x_{y_1}\in B\left(x_{a_{N-1}},\frac{c}{2}a_{N-1}\right)$ and thus $y_1\geq a_{N}$, we can once more use the Harnack inequality to estimate
		\begin{align*}
			 C_d^{(1)}\leq\frac{1-\frac{a_{N-1}-y_1}{\frac{3c}{4}a_{n-1}}}{\left(1+\frac{a_{N-1}-y_1}{\frac{3c}{4}a_{n-1}}\right)^{d-1}} \leq\frac{u(x_{y_1})}{u(x_{a_{N-1}})}\leq\frac{1+\frac{a_{N-1}-y_1}{\frac{3c}{4}a_{n-1}}}{\left(1-\frac{a_{N-1}-y_1}{\frac{3c}{4}a_{n-1}}\right)^{d-1}}\leq C_d^{(2)}.
		\end{align*}
		Writing 
		\begin{align*}
			\frac{u(x_{y_2})}{u(x_{y_1})}&=\prod_{i=1}^{N-1}\frac{u(x_{a_{i-1}})}{u(x_{a_i})}\cdot\frac{u(x_{a_{N-1}})}{u(x_{y_1})},
		\end{align*}
		we can finally bound
		\begin{align*}
			\left(C_d^{(2)}\right)^{-N}\leq\frac{u(x_{y_2})}{u(x_{y_1})}\leq\left(C_d^{(1)}\right)^{-N}.
		\end{align*}
		Taking the logarithm in \eqref{eq:double_ineq_y1y2} we obtain 
		\begin{align*}
			\frac{\ln\left(\frac{y_1}{y_2}\right)}{\ln\left(1-\frac{c}{2}\right)}\leq N\leq\frac{\ln\left(\frac{y_1}{y_2}\right)}{\ln\left(1-\frac{c}{2}\right)}+1.
		\end{align*}
		Since $C_d^{(1)}<1$, a simple calculation shows 
		\begin{align*}
			\left(C_d^{(1)}\right)^{-N}\leq \left(C_d^{(1)}\right)^{-1}\left(\frac{y_2}{y_1}\right)^{\alpha_1} \quad \text{where}\quad \alpha_1=\frac{\ln\left(C_d^{(1)}\right)}{\ln\left(1-\frac{c}{2}\right)}>0.
		\end{align*}
		Moreover, since $C_d^{(2)}>1$, we can derive
		\begin{align*}
			\left(C_d^{(2)}\right)^{-N}\geq\left(C_d^{(2)}\right)^{-1}\left(\frac{y_1}{y_2}\right)^{\alpha_2}\quad\text{where}\quad\alpha_2=-\frac{\ln\left(C_d^{(2)}\right)}{\ln\left(1-\frac{c}{2}\right)}>0.
		\end{align*}
	\end{proof}
	\newpage
	\begin{figure}[h!]
		\centering
		\includegraphics[width=0.4\textwidth]{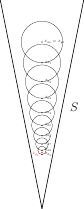}
		\caption{A Harnack chain connecting $x_{y_2}$ and $x_{y_1}$}
	\end{figure}
	
	\subsection{The Dirichlet problem and Harmonic Measure}
	One of the key tools in our analysis of harmonic functions on the domains above a Lipschitz graph will be the harmonic measure, a measure defined on the boundary $S$, such that integration of bounded and continuous functions on the boundary yields the unique harmonic extension, i.e. the extension is harmonic inside the domain and converges back to the boundary values. We take a step back and formulate the problem and aims for general domains $D\subset\mathbb{R}^d$.\par
	In the following we will consider differential operators $L:C^2(\mathbb{R}^d)\longrightarrow C(\mathbb{R}^d)$ of the following form
	\begin{align}
		\label{eq:diff_op}
		L=\sum_{i,j=1}^{d}a_{ij}\frac{\partial^2}{\partial x_i\partial x_j}+\sum_{i=1}^db_i\frac{\partial}{\partial x_i},
	\end{align}
	where $a_{ij}$ and $b_i$ are continuous functions. We call $L$ \textbf{semi-elliptic} if the eigenvalues of $(a_{ij})_{i,j=1}^d$ are non-negative, \textbf{elliptic} if they are positive and \textbf{uniformly elliptic} if the eigenvalues are positiv and bounded away from $0$.
	We now state the problem of interest.
	
	\begin{problem}(Dirichlet problem) \label{prob:DirichletProblem}
		Given a domain $D\subset\mathbb{R}^d$, a semi-elliptic differential operator $L$ as in \eqref{eq:diff_op} on $D$ and some bounded measurable function $\phi$ defined on $\partial D$, find a function $u\in C^2(D)$ such that the following two properties hold
		\begin{enumerate}[label=(\roman*)]
			\item $Lu=0$ in $D$,
			\item $\lim_{\substack{x\rightarrow y\\x\in D}}u(x)=\phi(y)$ for all $y\in\partial D$.
		\end{enumerate}
	\end{problem}
	
	A unique solution to Problem \ref{prob:DirichletProblem}, will be called \textbf{harmonic extension}. However, we are not only interested in existence and uniqueness of a solution to problem \ref{prob:DirichletProblem}, we also want the unique solution to be representable as integration of the boundary values $\phi:\partial D\rightarrow\mathbb{R}$ against a  measure, i.e. there exists a a family of measures $\lbrace\mu^x\rbrace_{x\in D}$ on the Borel subsets of $\partial D$, such that the solution $u\in C^2(D)$ to Problem \ref{prob:DirichletProblem} is given by
	\begin{align}
		\label{eq:harm_ext_form}
		u(x)=\int_{\partial D}\phi(\xi)\mathrm{d}\mu^x(\xi).
	\end{align}

	For $L=\Delta$ on bounded domains $D$ and continuous boundary values (i.e. $\phi\in C(\partial D)$ in definition \ref{prob:DirichletProblem}), recall that the Perron operator $H_D:C(\partial D)\rightarrow C^2(D)\cap C(\bar{D})$ realizes the desired harmonic extension \cite{Helms1969}. The family of measure that we look for is constructed using Riesz's representation theorem: For $x\in D$ define the functional $L_x:C(\partial D)\rightarrow\mathbb{R}$ where $\phi\longmapsto (H_D\phi)(x)$. The maximum principle assures boundedness of $L_x$ moreover, $L_x$ is positive and $L_x(1)=1$, thus by Riesz's representation theorem, $L_x$ is uniquely representable as a probability measure $\mu^x$ in the sense of \eqref{eq:harm_ext_form}. The probability measure $\mu^x$ is then called the \textbf{harmonic measure} of $D$ w.r.t. the pole $x\in D$.
	
	Rather than extending this classical approach of defining the harmonic measure for unbounded domains, which is done in \cite{Helms1969}, we will use the following probabilistic line of arguments found in \cite{Oksendal2003} and \cite{Bass}. We begin by defining Brownian motion.\\
	Let $(\Omega,\mathcal{F},\mathbb{P})$ be a probability space and $\mathcal{B}$ the $\sigma$ algebra generated by the open subsets of $[0,\infty)$. A stochastic process $X(t,\omega):=X_t(\omega):[0,\infty)\times\Omega\rightarrow\mathbb{R}^d$, is a measurable mapping w.r.t to $\mathcal{B}\otimes\mathcal{F}$, where $\mathcal{B}=\mathcal{B}(\mathbb{R})$ denotes the $\sigma$-algebra generated by the open subsets of the real line.
	\begin{definition}(Brownian Motion)
		A stochastic process $B_t$ is a one-dimensional Brownian Motion if it satisfies the following conditions
		\begin{enumerate}[label=(\roman*)]
			\item $B_0=0$ a.s.,
			\item $\forall s\leq t: B_t-B_s\sim\mathcal{N}(0,s-t)$, i.e. $B_s-B_t$ is normally distributed with mean $0$ and variance $s-t$,
			\item $\forall s<t: B_s-B_t$ is independent of $\sigma(\lbrace B_r:r\in[0,t]\rbrace)$, 
			\item The paths $t\mapsto B_t$ are continuous a.e.
		\end{enumerate}
		If $B_t^{(1)},\dots B_t^{(d)}$ are independent one-dimensional Brownian Motions, then
		\begin{align*}
			B_t:=\left(B_t^{(1)},\dots,B_t^{(d)}\right)
		\end{align*}
		defines a $d$-dimensional Brownian Motion.
	\end{definition}
	There are numerous proofs of existence of such a stochastic process, for some constructions refer to \cite{Bass} and \cite{Oksendal2003}. However, the most intuitive and direct approach works on a probability space $(\Omega,\mathcal{F},\mathbb{P})$ on which there exists a countable family of independent Gaussian random variables with mean $0$ and variance $1$ and uses the Haar functions \cite[11]{Bass}.
	\begin{definition}(Stopping time)
		Let $\lbrace\mathcal{F}_t\rbrace_{t\geq0}$ be a filtration of $\mathcal{F}$. A mapping $\tau:\Omega\longrightarrow[0,\infty)$ that satisfies $\lbrace\tau<t\rbrace\in\mathcal{F}_t$ for all $t\geq0$, is called a stopping time.
	\end{definition}
	Now let $D\subset\mathbb{R}^d$ be a domain and define for any $x\in\mathbb{R}^d$ 
	\begin{align}
		\tau_D^x:=\inf\left\lbrace t>0:x+B_t\notin D\right\rbrace,
	\end{align}
	then $\tau_D^x$ is a stopping time w.r.t to the filtration $\mathcal{F}_t=\sigma(B_r:r\leq t)$, such that for any $\omega\in\Omega$, $\tau_D^x(\omega)$ gives the time when the path $B_\cdot(\omega)$ of Brownian motion started at $x$ leaves the domain $D$ for the first time. In particular, since $D$ is open, $x+B_{\tau_D^x}\in\partial D$ for all $x\in D$. This gives rise to a notion of how likely it is for Brownian motion started at a point inside $D$ to land in a subset of $\partial D$.
	\begin{definition}[Harmonic measure] \label{def:harm_meas}For $x\in D\subset\mathbb{R}^d$ we define the \textbf{harmonic measure} of $D$ w.r.t the pole $x$ as
	\begin{align}
		\omega_D^x(A):=\mathbb{P}\left(x+B_{\tau_D^x}\in A\right)\quad \text{for all}\quad A\in\mathcal{B}(\partial D).
	\end{align}
	\end{definition}
	The harmonic measure can thus also be interpreted as the hitting distribution of Brownian motion started inside the domain. If we now consider any ball $B(y,r)$, that is compactly contained in $D$ and define the function 
	\begin{align}
		\label{eq:harm_fctn}
		\varphi(x)=\mathbb{E}(f(x+B_{\tau_D^x}))\quad x\in D
	\end{align}
	for any bounded, measurable function $f$ on $\partial D$, then $\varphi$ satisfies the following mean-value property:
	\begin{align}
		\label{eq:mean_val_prop}
		\varphi(x)=\int_{\partial B(y,r)} \varphi(\xi)\mathrm{d}\omega_{B(y,r)}^x(\xi) \quad\text{for all }x\in B(y,r).
	\end{align}
	This formula is a consequence of the strong Markov property of Brownian motion, for exact proofs, refer to \cite[116\psq]{Oksendal2003}. Since the harmonic measure $\omega_{B(y,r)}^x$ on the ball can be computed to be the normalized surface measure on $\partial B(y,r)$ \cite[14]{Bass}, choosing $x=y$ in \eqref{eq:mean_val_prop}, yields that $\varphi$ is harmonic. Functions of the form given in \eqref{eq:harm_fctn}, will thus be candidates for solutions of the Dirichlet problem \ref{prob:DirichletProblem}. Setting $f=\chi_A$ for $A\in\mathcal{B}(\partial D)$ in \eqref{eq:harm_fctn}, we can even infer harmonicity of the function $x\longmapsto\omega_D^x(A)$ for $x\in D$, since
	\begin{align*}
		\omega_D^x(A)=\mathbb{E}\left(\chi_A\left(x+B_{\tau_D^x}\right)\right).
	\end{align*}
	Observe that if $D$ is the epigraph of a Lipschitz function, then by Harnack's inequality and a Harnack chain argument, for any $x_0\in D$, there exist positive constants $c_1,c_2$, depending on $S,x,x_0$ but not on $A$, such that
	\begin{align*}
		c_1\omega_D^{x_0}(A)\leq\omega_D^x(A)\leq c_2\omega_D^{x_0}(A)
	\end{align*}
	Hence if $A\subset \partial D$ is measurable and not a null set of $\omega_D^x$, it is not a null set for $\omega_D^{x_0}$. In other words, for such sets $A$, the function $x\longmapsto\omega^x_D(A)$ is positive harmonic in $D$.
	
	We now specify the domain $D$ to be the epigraph of a Lipschitz function (which includes near-half spaces in Definition \ref{def:near-half_space}) and $L=\Delta$ and formulate existence and uniqueness of solutions to the Dirichlet Problem \ref{prob:DirichletProblem} in the special case \cite[Theorem 9.1.2, 9.2.14]{Oksendal2003}:
	\begin{theorem}[\cite{Oksendal2003}]
		\label{thm:ex_harm_ext}
		Suppose $D$ is an epigraph of a Lipschitz function. Moreover, let $\phi$ be a bounded and continuous function on $\partial D$. For $x\in D$, define the function
		\begin{align*}
			u(x)=\mathbb{E}\left(\phi\left(x+B_{\tau_D^x}\right)\right),
		\end{align*}
		then  for all $\alpha<1$, $u\in C^{2+\alpha}(D)$ and $u$ is the unique solution to the Dirichlet Problem \ref{prob:DirichletProblem}. i.e.
		\begin{enumerate}[label=(\roman*)]
			\item $\Delta u=0$ in $D$,
			\item $\lim_{\substack{y\rightarrow x\\x\in D}}u(y)=\phi(x)$ for all $x\in\partial D$,
		\end{enumerate}
		where $C^{k+\alpha}(D)=\left\lbrace f:D\rightarrow\mathbb{R}:\forall i=(i_1,\dots, i_d)\in\mathbb{N}^d:\partial^i f \text{ is Hölder-}\alpha\text{ continuous}\right\rbrace$
	\end{theorem}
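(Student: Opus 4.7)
The plan is to verify the three assertions — harmonicity of $u$, boundary matching, and uniqueness — separately, with the regularity statement $u \in C^{2+\alpha}(D)$ following automatically from harmonicity together with classical interior estimates for the Laplacian (in fact $u \in C^\infty(D)$). All three steps are probabilistic and rely on the strong Markov property of Brownian motion; this is essentially the route of \cite{Oksendal2003}.

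For harmonicity, I would fix $x \in D$ and pick $r > 0$ so small that $\overline{B(x,r)} \subset D$. Let $\sigma$ be the first exit time of $x + B_\cdot$ from $B(x,r)$; then $\sigma \le \tau_D^x$ almost surely. The strong Markov property applied at $\sigma$ yields
\begin{align*}
u(x) = \mathbb{E}\bigl[\phi(x+B_{\tau_D^x})\bigr] = \mathbb{E}\bigl[\mathbb{E}^{x+B_\sigma}[\phi(B_{\tau_D})]\bigr] = \mathbb{E}[u(x+B_\sigma)].
\end{align*}
Because the exit distribution of Brownian motion from a ball centered at $x$ is the normalized surface measure on $\partial B(x,r)$ (as recalled between \eqref{eq:mean_val_prop} and Theorem \ref{thm:ex_harm_ext}), this is precisely the mean value property, so $u$ is harmonic and hence smooth in $D$.

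For the boundary behaviour, fix $y_0 \in \partial D$ and $\varepsilon > 0$. The key geometric input is that, because $D$ is the epigraph of a Lipschitz function, the complement $D^c$ contains a truncated open cone $K$ with apex at $y_0$ (exterior cone condition). Zaremba's regularity criterion, proved via Blumenthal's $0$-$1$ law applied to the hitting time of $K$ by Brownian motion started at $y_0$, yields $\mathbb{P}(\tau_D^{y_0} = 0) = 1$. Making this quantitative — by lower-bounding, uniformly in $y$ near $y_0$, the probability that $y + B_\cdot$ enters $K$ before leaving a small ball around $y_0$ — one obtains
\begin{align*}
\lim_{y \to y_0,\, y \in D}\mathbb{P}\bigl(\lvert y + B_{\tau_D^y} - y_0\rvert > \varepsilon\bigr) = 0.
\end{align*}
Splitting $u(y) - \phi(y_0) = \mathbb{E}[\phi(y+B_{\tau_D^y}) - \phi(y_0)]$ into the events $\{|y+B_{\tau_D^y} - y_0| \le \varepsilon\}$ (where $\phi$ is nearly $\phi(y_0)$ by continuity) and its complement (where $|\phi| \le \|\phi\|_\infty$) then forces $u(y) \to \phi(y_0)$.

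Uniqueness is handled by optional stopping. If $v$ is another bounded solution, $w := u - v$ is bounded, harmonic in $D$, and extends continuously to $0$ on $\partial D$. For $x \in D$ the process $w(x + B_{t \wedge \tau_D^x})$ is a bounded martingale; since Brownian motion exits a Lipschitz epigraph in finite time almost surely (the $d$-th coordinate eventually drops below $\Phi$), sending $t \to \infty$ and invoking dominated convergence gives $w(x) = \mathbb{E}[w(x + B_{\tau_D^x})] = 0$. The main obstacle is Step 2: the qualitative regularity of Lipschitz boundary points must be upgraded to a uniform-in-$y$ statement on the exit distributions, and this quantitative comparison with hitting probabilities of the exterior cone is the only genuinely technical piece of the argument.
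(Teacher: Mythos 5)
The paper does not supply a proof of this theorem; it is quoted directly from Øksendal's book (Theorems 9.1.2 and 9.2.14 are cited). Your proposal reconstructs the proof along exactly the probabilistic lines of that reference --- strong Markov for the mean-value property, exterior-cone regularity for boundary continuity, and optional stopping for uniqueness --- so in spirit the two ``approaches'' agree.

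There is, however, a genuine gap in the uniqueness step as you justify it. You write that $\tau_D^x < \infty$ almost surely ``since the $d$-th coordinate eventually drops below $\Phi$.'' This reasoning conflates the (true) recurrence of the one-dimensional process $B_t^{(d)}$ with the stronger statement that $B_t^{(d)}$ eventually drops below the \emph{moving} barrier $\Phi(\bar{B}_t)$, where $\bar{B}_t$ denotes the first $d-1$ coordinates. For a general Lipschitz epigraph $\Phi$ need not be bounded below (only $|\Phi(x)-\Phi(0)|\le L|x|$), and as $|\bar{B}_t|\to\infty$ (which happens a.s.\ in dimension $d\ge 3$) the barrier $\Phi(\bar{B}_t)$ can also drift to $-\infty$, so the one-dimensional recurrence argument alone proves nothing. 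The correct input is that $D^c$ contains the full (untruncated) solid cone $C=\{(x,y):y\le\Phi(0)-L\|x\|\}$, and that $d$-dimensional Brownian motion a.s.\ enters any solid cone. That last fact is not automatic for $d\ge 3$, where Brownian motion is transient and does not hit every non-polar set with probability one; one way to prove it is via the skew-product decomposition ($B_t/|B_t|$ is a time-changed spherical Brownian motion with a.s.\ infinite clock, hence recurrent on $S^{d-1}$), or via the scale-invariance of the hitting probability $h(x)=\mathbb{P}^x(\text{hit }C)$ combined with the minimum principle to force $h\equiv 1$. Without $\tau_D^x<\infty$ a.s.\ the optional-stopping identity $w(x)=\mathbb{E}[w(x+B_{\tau_D^x})]=0$ collapses (and indeed the harmonic measure $\omega_D^x$ in Definition \ref{def:harm_meas} would fail to be a probability measure), so this point cannot be glossed over. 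A secondary and more cosmetic remark: ``unique solution'' must be understood within a restricted class (bounded solutions, as your argument indeed uses), since on unbounded domains the Dirichlet problem without a growth restriction is underdetermined.
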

	Observe that
	\begin{align*}
		\mathbb{E}\left(\phi\left(x+B_{\tau_D^x}\right)\right)=\int_\Omega\phi\left(x+B_{\tau_D^x}\right)\mathrm{d}\mathbb{P}=\int_{\partial D}\phi(\xi)\mathrm{d}\omega_D^x(\xi),
	\end{align*}
	implying that the harmonic measure in Definition \ref{def:harm_meas} yields the integral representation of the harmonic extension in the sense of \eqref{eq:harm_ext_form}.
	\subsection{Harmonic Measure and Hausdorff Measure: Dahlberg's Theorem}
	In 1977, B.E.J. Dahlberg \cite{Dahlberg1977}  established an important connection between the harmonic measure and Hausdorff measure. To be more precise, he proved that the harmonic measure $\omega^{x_0}$ on the boundary $\partial D$ of a Lipschitz domain $D\subset\mathbb{R}^d$, w.r.t to a pole $x_0\in D$, is mutually absolutely continuous to the $d-1$-dimensional Hausdorff measure, $\mathcal{H}^{d-1}$, on $\partial D$. Moreover, the Radon-Nikodym derivative of $\omega^{x_0}$ w.r.t to $\mathbb{H}^{d-1}$ even is locally in $L^{2+\varepsilon}(\partial D)$, for some $\varepsilon>0$. Since the  surface measure on $\partial D$ is just a multiple of $\mathcal{H}^{d-1}$ restricted to the boundary of a Lipschitz domain, we have the same statements for the surface measure $s$ on $\partial D$, as stated in \cite{Bass}. 
	\begin{theorem}[Dahlberg \cite{Dahlberg1977}]
		\label{thm:Dahlberg}
		Let $D\subset\mathbb{R}^d$ be either a bounded Lipschitz domain or the epigraph of a Lipschitz function and $x_0\in D$. Then the following statements hold:
		\begin{enumerate}[label=(\roman*)]
			\item $\omega^{z_0}$ and $s$ are mutually absolutely continuous.
			\item If $d$ is the Radon-Nikodym derivative of $\omega^{z_0}$ with respect to $s$, then there exists $\varepsilon>0$, such that $d\in L^{2+\varepsilon}_{\text{loc}}(S)$.
		\end{enumerate}
	\end{theorem}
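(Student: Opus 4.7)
The plan is to follow Dahlberg's original strategy, which reduces both claims to a single reverse Hölder inequality for the density $k := d\omega^{x_0}/ds$ on surface balls. Once such a reverse Hölder estimate is in place, the conclusions (i) and (ii) follow from the standard theory of Muckenhoupt $A_\infty$ weights on spaces of homogeneous type.

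I would begin by establishing the geometric preliminaries on $\partial D$. On a Lipschitz domain one has, at every boundary point $Q$ and every scale $r$ (small compared to $\mathrm{diam}(D)$ in the bounded case), an interior ``corkscrew'' $A_r(Q) \in D$ satisfying $|A_r(Q) - Q| \simeq d(A_r(Q),\partial D) \simeq r$, depending only on the Lipschitz character. Together with Harnack chains (Lemma \ref{lem:harnack_chain}) and the boundary Harnack principle, this yields the doubling property $\omega^{x_0}(\Delta(Q,2r)) \leq C\, \omega^{x_0}(\Delta(Q,r))$ for surface balls $\Delta(Q,r) := B(Q,r) \cap \partial D$, as well as the Green function identity $\omega^{x_0}(\Delta(Q,r)) \simeq r^{d-2} G(x_0, A_r(Q))$. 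Using standard Green function estimates on Lipschitz domains one gets, in particular, $\omega^{x_0}(\Delta(Q,r)) \lesssim r^{d-1} \simeq s(\Delta(Q,r))$, showing $\omega^{x_0} \ll s$. The reverse absolute continuity will be recovered a posteriori once $k > 0$ a.e.\ is known.

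The main step is the quantitative bound
\begin{align*}
\left( \frac{1}{s(\Delta)} \int_\Delta k^2 \, ds \right)^{1/2} \leq C\, \frac{\omega^{x_0}(\Delta)}{s(\Delta)}
\end{align*}
for every surface ball $\Delta = \Delta(Q_0,r)$ with $r$ small and $x_0$ far from $\Delta$, with $C$ depending only on $d$ and the Lipschitz constant $L$. To prove it, one flattens $S$ locally by the bi-Lipschitz graph parametrization induced by $\Phi$, applies a Rellich-type integration by parts identity against a carefully chosen vector field $\alpha$ defined on a sawtooth region above $\Delta$, and tests it on the harmonic function $u(x) = \omega^x(\Delta)$ (which is positive and harmonic on $D$ by the observation preceding Theorem \ref{thm:ex_harm_ext}). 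This identity converts an $L^2$ surface integral of $\nabla u$ into a solid integral that can be controlled by the non-tangential maximal function and the Lusin area function of $u$, which in turn are comparable in $L^2$ by Dahlberg's good-$\lambda$ arguments. Choosing the vector field $\alpha$ so that $\alpha \cdot N \simeq 1$ on $\Delta$ and controlling the sawtooth side-contributions is the key technical difficulty and the main obstacle of the proof: the construction has to respect the Lipschitz geometry of $\Phi$ and the estimates must be uniform in the aperture of the non-tangential cones.

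Finally, I would deduce (ii) and the reverse half of (i) by Gehring's self-improvement lemma: a reverse Hölder inequality with exponent $2$ for a non-negative weight on the doubling metric measure space $(\partial D, s)$ automatically upgrades to exponent $2+\varepsilon$ for some $\varepsilon = \varepsilon(C, d, L) > 0$. This gives $k \in L^{2+\varepsilon}_{\mathrm{loc}}(S)$, proving (ii), and since the reverse Hölder inequality forces $k > 0$ almost everywhere (otherwise the right-hand side above would vanish on some ball while $\omega^{x_0}(\Delta) > 0$ by positivity of $u$), we also obtain $s \ll \omega^{x_0}$, completing (i).
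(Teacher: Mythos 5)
The paper does not prove this statement: Theorem~\ref{thm:Dahlberg} is imported from Dahlberg \cite{Dahlberg1977} as a black-box ingredient, so there is no internal proof to compare your sketch against. The assessment below is therefore on the merits of the sketch itself.

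Your high-level architecture—establish a $B_2$ reverse H\"older inequality for the density $k$ on surface balls and then invoke Gehring self-improvement together with standard $A_\infty$ theory—is the accepted route to Dahlberg's theorem, and the Rellich-type integration by parts you describe is the Jerison--Kenig refinement of that route (Dahlberg's own 1977 argument proceeds by interior approximation with smooth domains rather than a direct Rellich identity). There is, however, a genuine gap in your preliminaries. The claim
\[
\omega^{x_0}\bigl(\Delta(Q,r)\bigr) \lesssim r^{d-1} \simeq s\bigl(\Delta(Q,r)\bigr)
\]
is false for Lipschitz domains. Via the CFMS-type comparison $\omega^{x_0}(\Delta(Q,r)) \simeq r^{d-2} G(x_0, A_r(Q))$ that you yourself invoke, this bound would require $G(x_0, A_r(Q)) \lesssim r$, i.e.\ linear boundary decay of a positive harmonic function vanishing on $\partial D$. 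On a Lipschitz graph with nontrivial constant one only gets H\"older decay $G(x_0, A_r(Q)) \lesssim r^{\alpha}$ for some $\alpha = \alpha(L) \in (0,1]$, which yields only $\omega^{x_0}(\Delta) \lesssim r^{d-2+\alpha}$ and says nothing about $\omega^{x_0}\ll s$ when $\alpha < 1$. Indeed $k$ is generically unbounded near concave portions of a Lipschitz boundary, so the pointwise estimate $\omega^{x_0}(\Delta)\lesssim s(\Delta)$ (equivalently $k \in L^\infty$) cannot hold; that is precisely why the theorem asserts $L^{2+\varepsilon}$ rather than $L^\infty$. There is also a logical circularity to watch: the reverse H\"older inequality you write down already presupposes the existence of the density $k = d\omega^{x_0}/ds$, so it cannot simultaneously serve to \emph{establish} $\omega^{x_0}\ll s$ without an approximation step. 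Dahlberg handles this by proving a uniform $B_2$ estimate for the (manifestly existing) densities of harmonic measure on an exhausting sequence of smooth subdomains and passing to the limit; the Rellich route likewise needs such a regularization so that the boundary integrals in the identity make sense on a rough boundary. Your sketch omits the approximation, and the faulty absolute-continuity argument in the preliminaries does not replace it.
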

	This powerful theorem enables us to transfer knowledge about the geometry of Lipschitz domains to properties of their harmonic measure.´
	\subsection{The Martin kernel for epigraphs of Lipschitz domains}
	We will now observe that epigraphs of Lipschitz functions have "Poisson" kernels. \cite[192\psq]{Bass}
	\begin{theorem}[Martin Kernel \cite{Bass}] 
		Let $D\subset\mathbb{R}^d$ be the epigraph of a Lipschitz function. Then for any $x,x_0\in D$, the harmonic measures $\omega^x$ is absolutely continuous to $\omega^{x_0}$. Furthermore there exists a positive and continuous function $k^{x_0}:D\times\partial D\longrightarrow\mathbb{R}^+$ satisfying
		\begin{align*}
			\omega^x(A)=\int_{A}k^{x_0}(x,\xi)\mathrm{d}\omega^{x_0}(\xi)\quad\text{for any}\quad A\in\mathcal{B}(\partial D)
		\end{align*}
		and 
		\begin{align*}
			x\longmapsto k^{x_0}(x,\xi) \quad\text{is harmonic for all}\quad\xi\in\partial D.
		\end{align*}
		We call the function $k^{x_0}$ the Martin kernel of $D$ at $x_0$.
		In particular, the Radon-Nikodym derivative of $\omega^x$ with respect to $\omega^{x_0}$ is given by $k^{x_0}(x,\cdot)$.
	\end{theorem}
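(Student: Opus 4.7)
Mutual absolute continuity is essentially contained in the paragraph preceding the statement: for every $A\in\mathcal{B}(\partial D)$ the function $u_A(x):=\omega^x(A)$ is non-negative and harmonic on $D$, and a Harnack chain argument produces constants $0<c_1(x,x_0)\le c_2(x,x_0)<\infty$ (independent of $A$) with
\[
c_1\,\omega^{x_0}(A)\le\omega^x(A)\le c_2\,\omega^{x_0}(A)\quad\text{for all }A\in\mathcal{B}(\partial D).
\]
Hence $\omega^x\ll\omega^{x_0}$ and $\omega^{x_0}\ll\omega^x$, and the Radon-Nikodym theorem delivers, for each fixed $x\in D$, a density $k^{x_0}(x,\cdot)$ with $c_1\le k^{x_0}(x,\cdot)\le c_2$ holding $\omega^{x_0}$-almost everywhere. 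This version is only defined up to $\omega^{x_0}$-nullsets and carries no continuity or harmonicity, so additional work is required.

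To promote this \emph{a.e.}~density into an everywhere defined, positive, continuous, and in $x$ harmonic function, I would exploit the Green function $G_D$ of $D$ (which exists for epigraphs of Lipschitz functions, cf.~Bass) and set
\[
k^{x_0}(x,\xi):=\lim_{\substack{y\to\xi\\ y\in D}}\frac{G_D(x,y)}{G_D(x_0,y)},
\]
with non-tangential approach. Each $G_D(\cdot,y)$ is harmonic on $D\setminus\{y\}$, so once the ratios converge locally uniformly in $x$, harmonicity of $x\mapsto k^{x_0}(x,\xi)$ follows from a Harnack/normal-families compactness argument. The existence of the limit, its strict positivity, its independence of the approach sequence, and its joint continuity on $D\times\partial D$ are exactly the content of the boundary Harnack principle for non-negative harmonic functions vanishing on a Lipschitz boundary portion. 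To identify this continuous function with the Radon-Nikodym density obtained above, I would invoke the Lebesgue-Besicovitch differentiation theorem for the doubling measure $\omega^{x_0}$ (doubling being a standard byproduct of the boundary Harnack principle, compatible with Dahlberg's Theorem \ref{thm:Dahlberg}) to get
\[
k^{x_0}(x,\xi)=\lim_{r\downarrow 0}\frac{\omega^x(B(\xi,r)\cap\partial D)}{\omega^{x_0}(B(\xi,r)\cap\partial D)}\quad\text{for }\omega^{x_0}\text{-a.e.\ }\xi,
\]
forcing the two candidates to coincide.

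The main obstacle is purely analytic rather than measure-theoretic: upgrading an $\omega^{x_0}$-a.e.~defined density to a pointwise, jointly continuous, strictly positive kernel that is harmonic in the interior variable requires the boundary Harnack principle on Lipschitz domains, which compares two positive harmonic functions vanishing on the same boundary portion up to multiplicative constants uniformly. Ordinary Radon-Nikodym gives the density but none of the regularity; Harnack on balls contained in $D$ gives continuity away from $\partial D$ but not near it. Once the boundary Harnack principle is granted (as the paper does by citing Bass), the remaining pieces -- harmonicity of the ratio limit, independence of the approach sequence, and identification with the differentiation-of-measures density -- reduce to routine Harnack compactness arguments.
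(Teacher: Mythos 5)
Your first paragraph reproduces exactly the argument the paper itself gives in the discussion following the theorem: for any non-null $A$ the map $x\mapsto\omega_D^x(A)$ is a positive harmonic function, and a Harnack chain gives two-sided bounds $c_1\,\omega^{x_0}(A)\le\omega^x(A)\le c_2\,\omega^{x_0}(A)$ with constants independent of $A$, hence mutual absolute continuity and an a.e.\ positive Radon--Nikodym density. For the continuity and harmonicity of a pointwise defined kernel, the paper does not supply its own argument at all; it cites Bass, and then simply remarks that \emph{given} continuity of $k^{x_0}$ (from the citation), the a.e.\ positivity upgrades to everywhere positivity. Your second and third paragraphs correctly unpack what is behind the citation: the Martin kernel as a non-tangential limit of Green function ratios, with existence, positivity, joint continuity, and harmonicity of $x\mapsto k^{x_0}(x,\xi)$ resting on the boundary Harnack principle for Lipschitz domains, and identification with the measure-theoretic density via differentiation of the doubling measure $\omega^{x_0}$. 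This is the standard construction in Bass, so your proposal is essentially the same route as the source the paper invokes, merely spelled out one level deeper than the paper chooses to go.
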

	The fact that $\omega_D^x$ is absolutely continuous w.r.t. $\omega_D^{x_0}$ is a consequence of Harnack's inequality, i.e. since for any measurable $A\subset D$, that is no $\omega^x_D$-nullset, the mapping $x\mapsto\omega_D^x(A)$ is a positive harmonic function, thus there exists a constant $c$, independent of $A$, such that $\omega^x_D(A)\leq c\omega_D^{x_0}(A)$. With the same reasoning and the roles reversed, we even obtain, that $\omega_D^x$ and $\omega_D	^{x_0}$ are equivalent. Their Radon-Nikodym derivative is thus positive $\omega_D^x$ a.e. and $\omega_D^{x_0}$ a.e.. Now given that $k^{x_0}$ is continuous, it is positive everywhere on $D$.
	
	Combining the Martin kernel with Theorem \ref{thm:ex_harm_ext}, an immediate consequence is the next lemma.	
	\begin{lemma}
		\label{lem:Martinkernel_Harmmeas}
		Let $D$ be the epigraph of a Lipschitz domain and $x_0,x\in D$, then for all continuous and bounded $f:\partial D\longrightarrow\mathbb{R}$ we have
		\begin{align*}
			\int_{\partial D} 	f(\xi)\mathrm{d}\omega^x(\xi)=\int_{\partial D}k^{x_0}(x,\xi)f(\xi)\mathrm{d}\omega^{x_0}(\xi),
		\end{align*}
		i.e. integration of $f$ against the Martin kernel, retrieves the harmonic extension of $f$ to $D$. In particular, if $u$ is harmonic in $D$ and continuous up to the boundary, then
		\[
		u(x)=\int_{\partial D} k^{x_0}(x,\xi)u(\xi)\mathrm{d}\omega^{x_0}(\xi).
		\]
	\end{lemma}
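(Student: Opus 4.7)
The plan is to read off both identities from the Martin kernel theorem combined with the probabilistic representation of harmonic extensions in Theorem \ref{thm:ex_harm_ext}. The first identity is essentially a restatement of the fact that $k^{x_0}(x,\cdot)$ is the Radon--Nikodym derivative of $\omega^x$ with respect to $\omega^{x_0}$, upgraded from indicator functions to bounded continuous $f$ by the standard monotone class / approximation machinery.

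First I would verify the identity on indicators: by the very statement of the Martin kernel theorem, $\omega^x(A)=\int_A k^{x_0}(x,\xi)\,\mathrm{d}\omega^{x_0}(\xi)$ for any Borel $A\subseteq\partial D$, which is exactly the claim for $f=\chi_A$. By linearity this extends to simple Borel functions. For bounded continuous $f$ I would approximate $f$ uniformly by simple functions $f_n$ with $\|f_n\|_\infty\leq\|f\|_\infty$ and pass to the limit on both sides. On the left, convergence is immediate from bounded convergence against the probability measure $\omega^x$; on the right, it follows from dominated convergence with dominating function $\|f\|_\infty\, k^{x_0}(x,\cdot)$, which lies in $L^1(\omega^{x_0})$ since $\int_{\partial D} k^{x_0}(x,\xi)\,\mathrm{d}\omega^{x_0}(\xi)=\omega^x(\partial D)=1$.

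For the second identity, set $f:=u|_{\partial D}$, which is continuous by the assumption that $u$ extends continuously to the boundary, and bounded under the paper's standing hypothesis that $u$ is bounded. By Theorem \ref{thm:ex_harm_ext}, the function $\tilde u(x):=\mathbb{E}\bigl(f(x+B_{\tau_D^x})\bigr)=\int_{\partial D}f(\xi)\,\mathrm{d}\omega^x(\xi)$ is the unique (bounded) solution of the Dirichlet problem on $D$ with boundary data $f$. Since $u$ also solves this problem, uniqueness forces $u=\tilde u$, and combining with the first identity immediately yields $u(x)=\int_{\partial D} k^{x_0}(x,\xi)\,u(\xi)\,\mathrm{d}\omega^{x_0}(\xi)$.

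The only subtle point is the boundedness hypothesis used in the second step: on an unbounded epigraph the Dirichlet problem is unique only among \emph{bounded} harmonic functions, so Theorem \ref{thm:ex_harm_ext} cannot be invoked without this control. In the present setting every $u$ under consideration is bounded positive harmonic, so this is not a genuine obstacle; I would simply record it as a standing hypothesis rather than chase a growth condition through the argument.
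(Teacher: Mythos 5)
Your proof is correct and follows exactly the route the paper intends: the paper states this lemma as an ``immediate consequence'' of the Martin kernel theorem (which gives the Radon--Nikodym identity on Borel sets, extended to bounded continuous $f$ by the standard approximation you describe) together with Theorem~\ref{thm:ex_harm_ext} (which gives uniqueness of the harmonic extension). Your remark that boundedness of $u$ is required for uniqueness on the unbounded epigraph is a valid and useful clarification of an assumption the paper leaves implicit.
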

	What Lemma \ref{lem:Martinkernel_Harmmeas} states, is that we can solve the Dirichlet problem on epigraphs of Lipschitz functions with continuous boundary values, using only $\omega^{x_0}$ at a fixed $x_0$ and the corresponding Martin kernel $k^{x_0}$. This will be exploited heavily throughout.
	
	The Martin kernel on Lipschitz domains has been well studied by Hunt and Wheeden in \cite{HuntWheeden68}, \cite{HuntWheeden70} and has  later been examined on a more general class of domains, so called NTA domains, by Jerison and Kenig \cite{JerisonKenig82}.
	The first inequality of its kind, found by Hunt and Wheeden, characterizes the decay of the Martin kernel on bounded Lipschitz domains and is stated below.
	\begin{lemma}[Hunt Wheeden\cite{HuntWheeden68}]
		\label{lem:MartinKernelDecayHuntWheeden}
		Let $D$ be a bounded Lipschitz domain, $z_0\in D$ and $q_0\in\partial D$. Given $p\in D$ with $\|p-q_0\|=a$, define $\Delta_j:=B(q_0,2^ja)\cap S$ for $j\in\lbrace0,\dots,N\rbrace$ and $R_j:=\Delta_j\setminus\Delta_{j-1}$ for $j\in\lbrace1,\dots,N\rbrace$ with $R_0:=\Delta_0$, where $N=N_a$ is large enough, s.t. $D\subset\Delta_N$. Then, the following estimate holds:
		\begin{align}
			\label{eq:MartinKernelDecayBounded}
			\|k(p,q)\|_{L^\infty(R_j)}\leq c\frac{c_j}{\omega^{z_0}(\Delta_j)},
		\end{align}
		where additionally $\sum_{j=0}^{N}c_j\leq c'<\infty$ and $c,c'$ only depend on $\partial D$.
	\end{lemma}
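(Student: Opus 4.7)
The plan is to realise the Martin kernel as a pointwise density of harmonic measures and then exploit the two pillars of potential theory on Lipschitz domains, namely the doubling property of harmonic measure and the boundary Harnack principle (Carleson estimate). By the previous theorem on the Martin kernel,
\[
k(p,q)=\lim_{\rho\downarrow 0}\frac{\omega^p(B(q,\rho)\cap S)}{\omega^{z_0}(B(q,\rho)\cap S)}
\]
for $\omega^{z_0}$-a.e.\ $q$, and by continuity this extends to every $q\in\partial D$. So the task reduces to estimating the ratio of the two harmonic measures on surface balls shrinking around $q$.

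First I would treat the annular case $j\ge 2$. Fix $q\in R_j$, so $\|q-q_0\|\approx 2^{j}a$, and consider the concentric surface ball $\tilde\Delta_j:=B(q,2^{j-2}a)\cap S$. Since $\|p-q_0\|=a$, we have $\|p-q\|\gtrsim 2^{j-1}a$, so both $p$ and $z_0$ lie well outside $B(q,2^{j-1}a)$. The functions $u_\rho(x):=\omega^x(B(q,\rho)\cap S)$ (for $\rho<2^{j-2}a$) and $v(x):=\omega^x(\tilde\Delta_j)$ are positive harmonic in $D\cap B(q,2^{j-1}a)$ and vanish continuously on the portion of $\partial D$ outside $\tilde\Delta_j$. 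The boundary Harnack principle for Lipschitz domains (an ingredient in the very proof of Dahlberg's theorem cited above) then yields
\[
\frac{u_\rho(p)}{v(p)}\le C\,\frac{u_\rho(z_0)}{v(z_0)},
\]
with $C=C(\partial D)$. Passing to the limit $\rho\downarrow 0$ gives
\[
k(p,q)\le C\,\frac{\omega^p(\tilde\Delta_j)}{\omega^{z_0}(\tilde\Delta_j)}.
\]

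Now comes the scale reduction. The doubling property of harmonic measure (valid on Lipschitz domains and also a consequence of Dahlberg's theorem together with the doubling of surface measure) yields $\omega^{z_0}(\tilde\Delta_j)\ge c\,\omega^{z_0}(\Delta_j)$, since $\Delta_j$ is contained in a bounded, dimension- and $\partial D$-dependent number of doublings of $\tilde\Delta_j$. Moreover, geometrically $\tilde\Delta_j\subset R_{j-2}\cup R_{j-1}\cup R_j\cup R_{j+1}$. Hence setting
\[
c_j:=\omega^p\bigl(R_{j-2}\cup R_{j-1}\cup R_j\cup R_{j+1}\bigr)
\]
(with the convention $R_k=\emptyset$ for $k<0$), we obtain the claimed bound $\|k(p,\cdot)\|_{L^\infty(R_j)}\le c\,c_j/\omega^{z_0}(\Delta_j)$, and telescoping gives $\sum_{j=0}^N c_j\le 4\,\omega^p(\partial D)=4$. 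The residual cases $j=0,1$ are handled separately: there $q$ lies within distance $\lesssim a$ of $p$, so a bounded-length Harnack chain argument together with the estimate $\omega^p(\Delta_0)\ge c_S$ reduces $k(p,q)$ to $c/\omega^{z_0}(\Delta_0)$, which fits into the same scheme.

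The main technical hurdle is the clean application of the boundary Harnack principle at every scale: one must ensure that $p$ and $z_0$ really are uniformly "far from" the surface ball $\tilde\Delta_j$ and that the Lipschitz geometry of $\partial D$ allows one to take the constants independent of $j$ and of the base point $q_0$. This is standard for bounded Lipschitz domains where the Lipschitz character is uniform, but writing it out carefully requires a covering of $\partial D$ by cylinders in which the Lipschitz graph representation is valid, and invoking the versions of the doubling and boundary Harnack estimates that are proved in exactly this setting by Dahlberg, Hunt, and Wheeden.
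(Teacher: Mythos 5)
The paper does not give a proof here: the lemma is quoted from Hunt--Wheeden \cite{HuntWheeden68}, and the remark that follows it points to Jerison--Kenig, Lemma~4.14, for the geometric decay of the $c_j$. Your scheme --- change of pole at each dyadic scale, doubling of harmonic measure, and telescoping the $c_j$ --- is exactly how those references argue, and your bookkeeping (each $R_k$ is counted in at most four of the $c_j$, so $\sum_j c_j\le 4$) is correct. Two steps, however, are genuine gaps rather than omitted routine detail.

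First, the inequality
\[
\frac{u_\rho(p)}{v(p)}\le C\,\frac{u_\rho(z_0)}{v(z_0)}
\]
does not follow from the boundary Harnack principle as you state it. BHP compares $u_\rho/v$ at points of $D\cap B(q,c\,2^{j-2}a)$, close to the boundary piece where both functions vanish; $p$ and $z_0$ are supposed to lie \emph{outside} that ball. What you are really invoking is the change-of-pole comparison for harmonic measure (CFMS for Lipschitz, Jerison--Kenig Lemma~4.11 for NTA): for $E\subset\tilde\Delta_j$ and $x\in D\setminus B(q,C\,2^{j-2}a)$, the ratio $\omega^x(E)/\omega^x(\tilde\Delta_j)$ equals, up to a geometric constant, $\omega^{A}(E)/\omega^{A}(\tilde\Delta_j)$ with $A$ a corkscrew point over $q$. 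That lemma is deduced from BHP by writing $\omega^x$ as a harmonic-measure integral over $D\cap\partial B(q,r)$ and applying BHP on that sphere; it is a separate statement and is precisely where the scale-independence of the constant is earned, so it should be invoked by name. You must also verify that $p$ and $z_0$ are genuinely outside the ball: for $p$ this only holds once $j$ exceeds a threshold $j_0(\partial D)$ (so your ``residual'' cases are $0\le j< j_0$, not just $j=0,1$), and for $z_0$ it fails once $2^j a\gtrsim\mathrm{dist}(z_0,\partial D)$, after which one should instead use $\omega^{z_0}(\Delta_j)\gtrsim 1$.

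Second, the residual small-$j$ case is not actually handled. A Harnack chain from $p$ to a corkscrew point of scale $a$ over $q_0$ has length $\sim\log\bigl(a/\mathrm{dist}(p,\partial D)\bigr)$, so the constant it produces is \emph{not} bounded in terms of $\partial D$ alone. Indeed, let $p$ approach a boundary point $q'$ with $\|q'-q_0\|\approx a$ while keeping $\|p-q_0\|=a$: then $k(p,\cdot)$ blows up near $q'\in\Delta_1$, while $c\,c_1/\omega^{z_0}(\Delta_1)$ stays bounded because $c_1\le1$ and $\omega^{z_0}(\Delta_1)$ is fixed. So the bound with $c=c(\partial D)$ cannot hold for arbitrary $p$ at distance $a$ from $q_0$. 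The Hunt--Wheeden and Jerison--Kenig estimates assume $p$ non-tangential over $q_0$ (equivalently $\mathrm{dist}(p,\partial D)\approx a$), which is also the only situation in which the paper applies the lemma (there $p=x_y$ with $y=\|p-x\|$). Your proof should either add that hypothesis explicitly and run the corkscrew/change-of-pole argument at scale $a$ to dispose of $j<j_0$, or allow the constant to depend on $a/\mathrm{dist}(p,\partial D)$.
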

	\begin{remark}
		In fact, the inequality above, even holds for for NTA domains and the sequence of constants $(c_j)_{j=0}^N$ has geometric decay, as shown in \cite[Lemma 4.14]{JerisonKenig82}. To be more precise, $c_j\leq c 2^{-\alpha j}$, where $c$ depends on $z_0$ and $D$ and $\alpha>0$ only depends on $D$.
	\end{remark}
	Reading and understanding the proof of Lemma 4.14 in \cite{JerisonKenig82}, it is not too hard to extend the proof, such that an inequality analogous to \eqref{eq:MartinKernelDecayBounded} holds for the Martin kernel on near half spaces.
	\begin{lemma}[Decay of Martin Kernel on Near Half Spaces]
		\label{lem:MartinKernelDecay}
	Let $D$ denote an epigraph of a Lipschitz function, $z_0\in D$ and $q_0\in\partial D$. For $p\in D$ satisfying $\|p-q_0\|=y>0$, define $\Delta_j:=B(q_0,2^jy)\cap S$ and $R_j:=\Delta_j\setminus\Delta_{j-1}$ where $j\in\mathbb{N}_0$ and $R_0:=\Delta_0$. Let $i\in\mathbb{N}$ be such that $2^iy<\|z_0-q_0\|\leq2^{i+1}y$. Then the following estimate holds.
	
	\begin{align}
		\label{eq:MartinKernelDecayUnbounded}
		\|k(p,q)\|_{L^\infty(R_j)}\leq
		\begin{cases}
			\frac{c2^{-\alpha j}}{\omega^{z_0}(\Delta_j)}, \quad &j\in\lbrace0,\dots,i-1\rbrace\\
			c2^{-\alpha j},\quad &j\geq i+1
		\end{cases}
	\end{align}
	where $c$ only depends on $z_0$ and $D$ and $\alpha>0$ only depends on $D$.
	\end{lemma}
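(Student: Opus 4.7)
The plan is to follow the proof of Lemma 4.14 in \cite{JerisonKenig82} verbatim in the first regime ($j\le i-1$) and to adapt it in the second regime ($j\ge i+1$), which is absent in the bounded case. The essential ingredients --- that an epigraph of a Lipschitz function is an NTA domain, so the boundary Harnack principle (BHP), the corkscrew/Harnack-chain condition, and doubling of harmonic measure at all scales are available --- transfer without change from the bounded Lipschitz setting.

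The central estimate is that, for any $\xi\in R_j$, the identity $\omega^p(R_j)=\int_{R_j}k^{z_0}(p,\eta)\,d\omega^{z_0}(\eta)$, combined with the fact that BHP makes $k^{z_0}(p,\cdot)$ comparable on $R_j$ up to constants depending only on $D$, yields
\[
\|k(p,\cdot)\|_{L^\infty(R_j)}\le C\,\frac{\omega^p(R_j)}{\omega^{z_0}(R_j)}.
\]
The numerator always satisfies the geometric decay $\omega^p(R_j)\le\omega^p(S\setminus\Delta_{j-1})\le c\,2^{-\alpha j}$, which for $p$ in the corkscrew position relative to $\Delta_0$ is obtained by iterating BHP across the nested surface balls $\Delta_k$, $1\le k\le j$; this iteration is insensitive to whether $D$ is bounded.

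The denominator is treated case by case. In the first regime, $j\le i-1$, the pole $z_0$ sits outside $B(q_0,2^jy)$ at comparable or larger distance, so the doubling of $\omega^{z_0}$ across the annular decomposition gives $\omega^{z_0}(R_j)\ge c\,\omega^{z_0}(\Delta_j)$, and one obtains the first branch of \eqref{eq:MartinKernelDecayUnbounded}. In the second regime, $j\ge i+1$, the pole $z_0$ is interior to $B(q_0,2^{j}y)$ at the essentially fixed scale $2^iy\approx\|z_0-q_0\|$, so $\omega^{z_0}(\Delta_j)$ is bounded below by a positive constant depending only on $D$ and $z_0$; a Harnack chain of bounded length, contained in a fixed dilate of $B(q_0,2^iy)\cap D$, connects $p$ and $z_0$ and yields $\omega^{z_0}(R_j)\ge c(z_0,D)\,\omega^p(R_j)$, so the ratio $\omega^p(R_j)/\omega^{z_0}(R_j)$ stays bounded and only the $2^{-\alpha j}$ factor survives, giving the second branch.

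The main obstacle is the bookkeeping in the second regime: one must ensure that the Harnack chain comparing $\omega^p(R_j)$ to $\omega^{z_0}(R_j)$ has length depending only on $z_0$, $q_0$ and the Lipschitz character of $S$, so that its multiplicative constant does not grow with $j$ and does not eat into the geometric decay. This is achieved by choosing the chain inside a fixed bounded subset of $D$, after which the factor $2^{-\alpha j}$ is contributed solely by $\omega^p(R_j)$, exactly as in the bounded Jerison--Kenig argument.
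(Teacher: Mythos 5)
The paper itself gives no proof of this lemma; it only asserts that the statement follows by adapting the argument of Lemma~4.14 in Jerison--Kenig, which is exactly the strategy you sketch. Your treatment of the first regime $j\le i-1$ (estimate $\|k(p,\cdot)\|_{L^\infty(R_j)}\lesssim \omega^p(R_j)/\omega^{z_0}(R_j)$, decay of $\omega^p$ across nested surface balls, doubling of $\omega^{z_0}$ at scales smaller than $\|z_0-q_0\|$) is a reasonable sketch and in line with the reference.

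The second regime, however, contains a genuine gap. You derive that the ratio $\omega^p(R_j)/\omega^{z_0}(R_j)$ is bounded by a constant, which combined with the boundary Harnack estimate gives only $\|k(p,\cdot)\|_{L^\infty(R_j)}\le C$, a constant bound. The closing claim that ``only the $2^{-\alpha j}$ factor survives'' is a non sequitur: the geometric decay of $\omega^p(R_j)$ is matched by the equal decay of $\omega^{z_0}(R_j)$ (both poles are at scale $\ll 2^jy$ from $R_j$), so no power of $2^{-j}$ is left after taking the ratio. Moreover, the Harnack chain joining $p$ and $z_0$ has length of order $i=\log_2(\|z_0-q_0\|/y)$, so the resulting constant depends on $p$ through $y$, not only on $z_0$ and $D$ as the lemma asserts. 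In fact the exact half-space computation shows $k^{z_0}(p,\xi)=\frac{p_n|z_0-\xi|^n}{(z_0)_n|p-\xi|^n}\to p_n/(z_0)_n>0$ as $\xi\to\infty$, so a bound of the form $c(z_0,D)\,2^{-\alpha j}$ with $\alpha>0$ independent of $p$ cannot hold for all $j\ge i+1$; what your argument actually establishes is the $j$-independent bound $\|k(p,\cdot)\|_{L^\infty(R_j)}\le c\,2^{-\alpha i}$ (equivalently $\lesssim (y/\|z_0-q_0\|)^\alpha$), which is all one should expect. You should flag this discrepancy rather than silently asserting the stated decay.
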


	Being slightly inaccurate we can drop the case distinction in \eqref{eq:MartinKernelDecayUnbounded}. We will mostly use this inequality in the following form:
	\begin{align*}
		\forall j\in\mathbb{N}_0:\quad \|k(p,q)\|_{L^\infty(R_j)}\leq\frac{c2^{-\alpha j}}{\omega^{z_0}(\Delta_j)}.
	\end{align*}
	In particular, an immediate consequence is boundedness of the Martin kernel on the shifted boundaries $S_y$, $y>0$ of near half spaces.
	\begin{corollary}
		\label{thm:mart-kern-bdd}
		Let $\mathcal{O}$ be a near half space, $S=\partial \mathcal{O}$ and $k:\mathcal{O}\times S\longrightarrow\mathbb{R}$. Then $k_y:\mathcal{K}\times S\longrightarrow\mathbb{R}$ is bounded for $y>0$ and any compact subset $\mathcal{K}$ of $S$. 
	\end{corollary}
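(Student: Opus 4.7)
The strategy is to combine the decay estimate for the Martin kernel from Lemma \ref{lem:MartinKernelDecay} with a uniform-in-$x$ lower bound on the harmonic measure of small boundary balls centered at points of $\mathcal{K}$.

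Fix $y>0$ and a compact subset $\mathcal{K}\subset S$. For each $x\in\mathcal{K}$, I would apply Lemma \ref{lem:MartinKernelDecay} with $p:=x_y$ and $q_0:=x$, noting $\|p-q_0\|=y$. In the uniform form stated immediately after that lemma, this gives
\[
k(x_y,q)\leq\frac{c\,2^{-\alpha j}}{\omega^{z_0}(\Delta_j(x))}\quad\text{for every }q\in R_j(x),\ j\in\mathbb{N}_0,
\]
where $\Delta_j(x):=B(x,2^j y)\cap S$ and $R_j(x):=\Delta_j(x)\setminus\Delta_{j-1}(x)$, with $c,\alpha$ depending only on $D$ and $z_0$. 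Since $\Delta_j(x)\supseteq\Delta_0(x)$, monotonicity of $\omega^{z_0}$ together with $2^{-\alpha j}\leq 1$ yields
\[
\sup_{q\in S}k(x_y,q)\leq\frac{c}{\omega^{z_0}(B(x,y)\cap S)}.
\]

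The task thus reduces to verifying the uniform lower bound
\[
\inf_{x\in\mathcal{K}}\omega^{z_0}(B(x,y)\cap S)>0.
\]
For a fixed $x\in\mathcal{K}$, by Lemma \ref{lem:dist_to_boundary} the shifted point $x_{y/2}$ satisfies $d(x_{y/2},S)\geq c_S y/2$, so it plays the role of a corkscrew point for the surface ball $B(x,y)\cap S$. A standard short-range harmonic measure estimate on Lipschitz graph domains then provides a constant $c_0>0$, depending only on $c_S$ and $d$, such that
\[
\omega^{x_{y/2}}(B(x,y)\cap S)\geq c_0.
\]
The positive harmonic function $p\mapsto\omega^p(B(x,y)\cap S)$ on $\mathcal{O}$ can then be transferred from $x_{y/2}$ to the global pole $z_0$ via a Harnack chain, whose length is controlled uniformly for $x\in\mathcal{K}$ by compactness (Lemma \ref{lem:harnack_chain} applied to horizontal shifts, combined with Harnack's inequality along the bounded horizontal displacement). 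This yields $\omega^{z_0}(B(x,y)\cap S)\geq c_1>0$ uniformly in $x\in\mathcal{K}$, and closes the argument with the bound $\sup_{\mathcal{K}\times S}k_y\leq c/c_1$.

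The main obstacle is the uniform corkscrew-type bound $\omega^{x_{y/2}}(B(x,y)\cap S)\geq c_0$. Although such an estimate is classical for Lipschitz domains, one must confirm that the constant truly depends only on the Lipschitz data of $S$ (and on $d$), and not on the base point $x\in\mathcal{K}$. This is where the translation-invariance of the Lipschitz description of $\mathcal{O}$ near any boundary point (through the common Lipschitz constant $L$) is used; once this is in place, the Harnack chain transfer to $z_0$ and the decay estimate from Lemma \ref{lem:MartinKernelDecay} combine routinely to give boundedness.
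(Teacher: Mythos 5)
Your proof takes essentially the same route as the paper: apply Lemma \ref{lem:MartinKernelDecay} with $p=x_y$, $q_0=x$ to obtain $k(x_y,\xi)\leq c/\omega^{z_0}(B(x,y)\cap S)$, then establish a uniform positive lower bound on $\omega^{z_0}(B(x,y)\cap S)$ over $x\in\mathcal{K}$. The paper simply asserts this lower bound from compactness; you derive it via a corkscrew point and Harnack-chain transfer to the pole, which is correct but heavier than strictly necessary, since lower semi-continuity of $x\mapsto\omega^{z_0}(B(x,y)\cap S)$, strict positivity on surface balls (from Dahlberg's theorem), and compactness of $\mathcal{K}$ already yield the bound directly.
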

	\begin{proof}
		Lemma \ref{lem:MartinKernelDecay} immediately yields
		\begin{align*}
			k(x_y,\xi)\leq\frac{c}{\omega^{z_0}(B(x,y))} \quad\text{for any }x,\xi\in S.
		\end{align*}
		If $x\in\mathcal{K}\subset S$ for some compact subset $\mathcal{K}$, then $\omega^{z_0}(B(x,y))\geq\alpha>0$.
	\end{proof}
	
	\section{Radial Variation on Near Half Spaces}\label{sec:rad_var_nhs}
	We fix a near half space $\mathcal{O}$ and denote $S:=\partial\mathcal{O}$. Furthermore, let $z_0\in\mathcal{O}$ be arbitrary but fixed for this hole section. The point $z_0$ will serve as pole of the harmonic measure on $S$ as described below Lemma \ref{lem:Martinkernel_Harmmeas}. Unless necessary, we drop the notational dependence of the corresponding Martin kernel $k^{z_0}$, i.e. $k^{z_0}:=k$.
	
	The object of analysis will be a positive harmonic function $u:\mathcal{O}\longrightarrow\mathbb{R}$ satisfying the following two conditions:
	
	\begin{enumerate}
		\item For any $y>0$, the function $u_y:\mathcal{O}\longrightarrow\mathbb{R}$ is bounded on $\mathcal{O}$.
		\item $u$ vanishes at infinity on all positive shifts of the boundary i.e.
		\begin{align*}
			\forall y>0:\quad\lim_{\substack{\|x\|\rightarrow\infty\\x\in S}} u(x_y)=0.
		\end{align*}
	\end{enumerate}
	Fix $u$ as described above.
	\subsection{Kernels}
	The term \textbf{kernel} will refer to any function on $S\times S$ that defines a meaningful integral operator in the following way: Let $q:S\times S\longrightarrow\mathbb{R}$ be a kernel and set
	\[
	M_q:=\left\lbrace f:S\longrightarrow\mathbb{R}:\int_Sq(x,\xi)f(\xi)\mathrm{d}\omega^{z_0}(\xi)\quad\text{exists in }\mathbb{R}\right\rbrace.
	\]
	Then the integral operator $Q:M_q\longrightarrow\mathbb{R}$ is defined as
	\[
	(Qf)(x):=\int_Sq(x,\xi)f(\xi)\mathrm{d}\omega^{z_0}(\xi).
	\]
	Furthermore we will make use of the convention, that kernels are denoted as lowercase letters and that the integral operator induced by the kernel is denoted by the respective uppercase letter. The concatenation of two kernels $p,q$ is defined as
	\[
	(p\circ q)(x,\xi):=\int_Sp(x,\zeta)q(\zeta,\xi)\mathrm{d}\omega^{z_0}(\zeta),
	\]
	if well-defined.
	
	\begin{definition}(Martin Kernel and Harmonic Extension Operator)
		The operator 
		\[
			(K_yu)(x)=\int_S k(x_y,\xi)u(\xi)\mathrm{d}\omega^{z_0}(\xi)
		\]
		is well-defined since integration of a function $u$ given on $S$ with respect to the measure $k_y\cdot\mathrm{d}\omega^{z_0}$, yields the evaluation of the harmonic extension of $u$ to $\mathcal{O}$ at the point $x_y$
	\end{definition}
	\begin{lemma}[Properties of kernel $k_y$ \cite{MuellerRiegler2020}] \label{lem:MartinKernel}
		Let $k$ be the Martin kernel for the domain $\mathcal{O}$ with boundary $\partial \mathcal{O}=S$ and $y>0$. Then $k_y$ has the following properties
		\begin{enumerate}
			\item Let $u$ be a harmonic function on $\mathcal{O}$, then for all $x\in S$ and $y_1,y_2>0$ we have:
			\begin{align*}
				K_{y_2}(u_{y_1}|_S)(x) = u_{y_1+y_2}(x).
			\end{align*}
			\item (Semi-group property) For $y_1,y_2>0$ we have 
			\begin{align*}
				k_{y_1}\circ k_{y_2} = k_{y_1+y_2}.	
			\end{align*}
			\item (Quotient bound) For $0<y_1\leq y_2$, the quotient $\frac{k_{y_2}}{k_{y_1}}$ admits the following bound
			\begin{align*}
				\frac{k_{y_2}}{k_{y_1}}\leq c \left(\frac{y_2}{y_1}\right)^\alpha.
			\end{align*}
			where $\alpha=\alpha_S$ is a constant only depending on the boundary $S$ and $c=c_d$ only depends on the dimension $d$.
			\item $K_y(1)=1$.
			\item Let $y\geq z>0$ and $\tau\geq0$ then 
			\begin{align*}
				|k_{y+\tau}-k_y| \leq c_S\left(\left(y+\tau\right)^\alpha-y^\alpha\right)\frac{k_z}{z^\alpha}.
			\end{align*}
			Furthermore, if $(a,b)\subset\mathbb{R}^+$ is a non-degenerated interval, we have
			\begin{align*}
				\int_{a}^{b}\left|k_{y+\tau}-k_y\right|\mathrm{d}y\leq c_S\tau(b-a)(b+\tau)^{\alpha-1}\frac{k_z}{z^\alpha}.
			\end{align*}
		\end{enumerate}
	\end{lemma}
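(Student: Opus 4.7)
The plan is to treat the five assertions in their order of mutual logical dependence. For \emph{(1)}, observe that the hypotheses on $u$ make $u_{y_1}(z):=u(z_{y_1})$ harmonic on the strict superdomain $\mathcal{O}_{-y_1}\supset\overline{\mathcal{O}}$, hence continuous on $\overline{\mathcal{O}}$ and (by the standing boundedness assumption) bounded there. Uniqueness of the bounded Dirichlet solution (Theorem \ref{thm:ex_harm_ext}) combined with the Martin-kernel representation in Lemma \ref{lem:Martinkernel_Harmmeas} forces the harmonic extension of the boundary trace $u_{y_1}|_S$ to coincide with $u_{y_1}$ on $\mathcal{O}$; evaluating this identity at $x_{y_2}$ for $x\in S$ yields $K_{y_2}(u_{y_1}|_S)(x)=u_{y_1+y_2}(x)$. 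Assertion \emph{(4)} is then the specialization to the constant harmonic function $u\equiv 1$, or equivalently the record that $\omega^{x_y}$ is a probability measure with density $k(x_y,\cdot)$ against $\omega^{z_0}$.

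For \emph{(2)}, take continuous bounded $f:S\to\mathbb{R}$ and let $h$ be its bounded harmonic extension to $\mathcal{O}$. Applying \emph{(1)} to $h$ gives $K_{y_1}(K_{y_2}f)=K_{y_1+y_2}f$ on $S$; expanding both sides and using Fubini produces
\[
	\int_S\left(\int_S k_{y_1}(x,\zeta)k_{y_2}(\zeta,\xi)\,d\omega^{z_0}(\zeta)\right)f(\xi)\,d\omega^{z_0}(\xi)=\int_S k_{y_1+y_2}(x,\xi)f(\xi)\,d\omega^{z_0}(\xi)
\]
for every such $f$. Riesz representation combined with joint continuity of both kernels upgrades this to the pointwise identity $k_{y_1}\circ k_{y_2}=k_{y_1+y_2}$. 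For \emph{(3)}, fix $\xi\in S$ and observe that $z\mapsto k(z,\xi)$ is positive harmonic on $\mathcal{O}$, so Lemma \ref{lem:harnack_chain} applied along the vertical line through $x$ delivers $k(x_{y_2},\xi)/k(x_{y_1},\xi)\leq c(y_2/y_1)^\alpha$ at once.

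The main work is \emph{(5)}. Applying the gradient bound of Corollary \ref{cor:Harnack_gradient} to $k(\cdot,\xi)$ and inserting the distance estimate $d(x_s,S)\geq c_S s$ from Lemma \ref{lem:dist_to_boundary} gives $|\partial_s k(x_s,\xi)|\leq c_S\,k_s(x,\xi)/s$; integrating in $s$ from $y$ to $y+\tau$ and then substituting the quotient bound from \emph{(3)} (valid since $s\geq y\geq z$) produces
\[
	|k_{y+\tau}(x,\xi)-k_y(x,\xi)|\leq c_S\frac{k_z(x,\xi)}{z^\alpha}\int_y^{y+\tau}s^{\alpha-1}\,ds=\frac{c_S}{\alpha}\bigl((y+\tau)^\alpha-y^\alpha\bigr)\frac{k_z(x,\xi)}{z^\alpha},
\]
which is the claimed pointwise estimate. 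The integral version follows by integrating this bound once more over $y\in[a,b]$ and controlling $\int_a^b\bigl((y+\tau)^\alpha-y^\alpha\bigr)\,dy$ via the mean-value representation $(y+\tau)^\alpha-y^\alpha=\alpha\tau(y+\theta\tau)^{\alpha-1}$, splitting into the subcases $\alpha\geq 1$ and $0<\alpha<1$ so that the exponent $\alpha-1$ can be pushed onto $b+\tau$ after using monotonicity of the integrand. This last estimate is the only genuinely technical step; the earlier items follow structurally from uniqueness of the bounded Dirichlet problem and the Harnack machinery already in place, whereas \emph{(5)} is the first place where one must actually quantify the $y$-regularity of $k_y$.
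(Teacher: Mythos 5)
Your proof is essentially correct and tracks the paper's own reasoning step by step, with one noteworthy variation. For item (2), the paper applies item (1) directly to the function $\zeta\longmapsto k_{y_2}(\zeta,\xi)$, which is positive harmonic in $\zeta$ for each fixed $\xi\in S$, giving
\begin{align*}
	(k_{y_1}\circ k_{y_2})(x,\xi)=\int_S k_{y_2}(\zeta,\xi)\,\mathrm{d}\omega^{x_{y_1}}(\zeta)=k_{y_1+y_2}(x,\xi)
\end{align*}
in one line. You instead test against continuous bounded $f$, reduce to $K_{y_1}K_{y_2}f=K_{y_1+y_2}f$ via item (1), and upgrade the weak identity to a pointwise one using Fubini, joint continuity of the kernels, and Riesz representation. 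Both routes are valid; the paper's is more economical because it bypasses the upgrade step, whereas yours is arguably more robust since it only requires the Martin kernel to be continuous rather than to be a priori a legitimate boundary datum for the harmonic-extension formula. Items (1), (3), (4), and the pointwise part of (5) match the paper's argument almost verbatim (the paper applies the quotient bound to $k_{y+t}/y^\alpha$ first and then passes $k_y/y^\alpha$ to $k_z/z^\alpha$, while you substitute $k_s\leq c(s/z)^\alpha k_z$ in one step; same content).

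One small inaccuracy in item (5): you propose to close the integral estimate by distinguishing $\alpha\geq 1$ from $0<\alpha<1$ and ``pushing the exponent $\alpha-1$ onto $b+\tau$'' in either case. That only works when $\alpha\geq 1$, since then $r\mapsto r^{\alpha-1}$ is nondecreasing and $\sup_{r\in[a,b+\tau]}r^{\alpha-1}=(b+\tau)^{\alpha-1}$; if $\alpha<1$ the monotonicity is reversed and the supremum sits at $r=a$, so the claimed bound would not follow. This turns out to be harmless here: the exponent $\alpha$ produced by the Harnack-chain Lemma \ref{lem:harnack_chain} satisfies $\alpha=-\ln C_d^{(2)}/\ln(1-c_S/2)$ with $C_d^{(2)}=\tfrac{5}{3}\cdot 3^{d-1}\geq 5$ and $c_S\leq1$, which forces $\alpha>1$, so the subcase $\alpha<1$ never arises and the paper's unconditional bound $\int_y^{y+\tau}\alpha r^{\alpha-1}\mathrm{d}r\leq\alpha\tau(b+\tau)^{\alpha-1}$ is legitimate. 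You should either drop the $\alpha<1$ branch or note that it is vacuous.
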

	\begin{proof}
		We prove properties 1 to 5\\
		\begin{enumerate}
			\item By Lemma \eqref{lem:Martinkernel_Harmmeas} and the definition of the harmonic measure we immediately obtain
			\begin{align*}
				(K_{y_2}(u_{y_1}|_S))(x)&=\int_Sk(x_{y_2},\xi)u(\xi_{y_1})\mathrm{d}\omega^{z_0}(\xi)
				\\
				&=\int_Su_{y_1}(\xi)\mathrm{d}\omega^{x_{y_2}}(\xi)
				\\
				&=u_{y_1}(x_{y_2})=u_{y_1+y_2}(x).
			\end{align*}
			\item 
			Since $k_{y_2}(\zeta,\xi)$ is positve harmonic for every fixed $\xi\in S$, we get similarily to the proof of property 1:
			\begin{align*}
				(k_{y_1}\circ k_{y_2})(x,\xi)&=\int_S k_{y_1}(x,\zeta)k_{y_2}(\zeta,\xi)\mathrm{d}\omega^{z_0}(\zeta) 
				\\
				&=\int_S k_{y_2}(\zeta,\xi)\mathrm{d}\omega^{x_{y_1}}(\zeta)= k_{y_1+y_2}(x,\xi).
			\end{align*}
			\item Follows directly from Lemma \eqref{lem:harnack_chain} since $k(\cdot,\xi)$ is positive and harmonic for every $\xi\in S$.
			\item By definition of the Martin kernel and Lemma \eqref{lem:Martinkernel_Harmmeas} we obtain
			\begin{align*}
				(K_y1)(x)=\int_Sk(x_y,\xi)\mathrm{d}\omega^{z_0}(\xi)=\int_S\mathrm{d}\omega^{x_y}(\xi)=1,
			\end{align*}
			since the harmonic measure is a probability measure.
			\item We begin by observing that for $x,\xi\in S$ and $t,\tau>0$ we have due to the Harnack's inequality
			\begin{align*}
				\left|\frac{\mathrm{d}}{\mathrm{d}t}k_{y+t}(x,\xi)\right|
				=\left|\frac{\partial}{\partial_x} k(x_{y+t},\xi)\cdot e_d\right|\leq\left|\nabla_x k(x_{y+t},\xi)\right|\leq \frac{k_{y+t}(x,\xi)}{y+t}.
			\end{align*}
			Using this and property 3, we obtain
			\begin{align*}
				\left|k_{y+\tau}-k_y\right|
				&\leq\int_{0}^{\tau}\left|\frac{\mathrm{d}}{\mathrm{d}y}k_{y+t}\right|\mathrm{d}t\\
				&\leq\int_{0}^{\tau}\frac{k_{y+t}}{y+t}\mathrm{d}t\\
				&\leq c_S \frac{k_y}{y^\alpha} \int_{0}^{\tau}(y+t)^{\alpha-1}\mathrm{d}t\\
				&=c_S\frac{k_y}{y^\alpha}\left((y+\tau)^\alpha-y^\alpha\right)\\
				&\leq c_S\left((y+\tau)^\alpha-y^\alpha\right)\frac{k_z}{z^\alpha}.
			\end{align*}
		Now we proceed with
		\begin{align*}
			\int_{a}^{b}\left|k_{y+\tau}-k_y\right|\mathrm{d}y
			&\leq c_S\frac{k_z}{z^\alpha}\int_{a}^{b}((y+\tau)^\alpha-y^\alpha)\mathrm{d}y\\
			&=c_S\frac{k_z}{z^\alpha}\int_{a}^{b}\int_{y}^{y+\tau}\alpha r^{\alpha-1}\mathrm{d}r\mathrm{d}y\\
			&\leq c_S\frac{k_z}{z^\alpha}(b-a)\tau(b+\tau)^{\alpha-1}.
		\end{align*}
		\end{enumerate}
	\end{proof}
	We will continue to introduce another two kernels with implicit dependence on the positive harmonic function $u$.
	\begin{definition}[Definition of $c_y$]
		For $y>0$ we define the kernel $c_y:S\times S\longrightarrow\mathbb{R}$ as
		\begin{align*}
			c_y(x,\xi):=\frac{\partial^1k}{\partial\sigma(x_{2y})}(x_y,\xi)=\left\langle\nabla^1k(x_y,\xi),\sigma(x_{2y})\right\rangle,
		\end{align*}
		where 
		\[
		\sigma(x):=
			\begin{cases}
				\frac{\nabla u(x)}{\|\nabla u(x)\|} & \nabla u(x)\neq0\\
				0 & \nabla u(x)=0
			\end{cases}
		\]
		and $\partial^1$ denotes the differentiation with respect to the first variable $x_y\in \mathcal{O}$ for $x\in S$. Note, that this kernel depends on the the positive harmonic function $u$, however for notational ease and since we work with fixed $u$, we will not indicate this dependence in a written manner.
	\end{definition}
	\begin{lemma}[Properties of $c_y$ \cite{MuellerRiegler2020}]
		\label{lem:cy_properties}
		The kernel $c_y$ has the following properties:
		\begin{enumerate}
			\item For any $\varphi\in C_b(S)$ we have $(C_y\varphi)(x)=\left\langle\nabla\varphi(x_y),\sigma(x_{2y})\right\rangle$ for all $x\in S$, where $\nabla\varphi$ denotes the derivative of the harmonic extension of the boundary values $\varphi$, so in particular
			\[
				\|\nabla u(x_{2y})\|=(C_yu_y)(x).
			\]
			\item$\begin{aligned}
				|c_y(x,\xi)|\leq c_S\frac{k_y(x,\xi)}{y}.
			\end{aligned}$
			\item$\begin{aligned}
				C_y(1)=0.
			\end{aligned}$
		\end{enumerate}
	\end{lemma}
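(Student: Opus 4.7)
The plan is to establish the three properties in a logical order that leverages property 2 as the pointwise bound making the differentiation under the integral sign in property 1 legal.

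I would begin with property 2. For each fixed $\xi \in S$, the map $z \mapsto k(z,\xi)$ is a positive harmonic function on $\mathcal{O}$, so Corollary \ref{cor:Harnack_gradient} applied at $z = x_y$ gives $\|\nabla^1 k(x_y,\xi)\| \leq d^{3/2} k(x_y,\xi)/d(x_y, S)$. By Lemma \ref{lem:dist_to_boundary} we have $d(x_y, S) \geq c_S y$, which upgrades the estimate to $\|\nabla^1 k(x_y,\xi)\| \leq c_S k_y(x,\xi)/y$. Since $\|\sigma(x_{2y})\| \leq 1$, Cauchy--Schwarz yields property 2.

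Next, for property 1, I would use Lemma \ref{lem:Martinkernel_Harmmeas} which represents the harmonic extension of a bounded continuous $\varphi$ as $\varphi(z) = \int_S k(z,\xi)\varphi(\xi)\,\mathrm{d}\omega^{z_0}(\xi)$. The idea is to differentiate under the integral in $z$ at $z = x_y$, so that
\begin{align*}
    \nabla \varphi(x_y) = \int_S \nabla^1 k(x_y,\xi)\varphi(\xi)\,\mathrm{d}\omega^{z_0}(\xi),
\end{align*}
and then take the inner product with $\sigma(x_{2y})$ to obtain $(C_y\varphi)(x)$. The justification for exchanging differentiation and integration is exactly the pointwise bound from property 2: on a small ball around $x_y$ inside $\mathcal{O}$ the quantity $\|\nabla^1 k(z,\xi)\,\varphi(\xi)\|$ is dominated by $(c_S/y)\|\varphi\|_\infty k(z,\xi)$, whose integral against $\mathrm{d}\omega^{z_0}$ equals $(c_S/y)\|\varphi\|_\infty$ by property 4 of Lemma \ref{lem:MartinKernel}; this gives a uniform integrable majorant and the standard dominated convergence argument applies. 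For the particular choice $\varphi = u_y\vert_S$, property 1 of Lemma \ref{lem:MartinKernel} together with uniqueness of the bounded harmonic extension (guaranteed by assumption 1 on $u$) identifies the extension with $u_y$ itself, so $\nabla \varphi(x_y) = \nabla u_y(x_y) = \nabla u(x_{2y})$. Pairing with $\sigma(x_{2y})$ gives $\|\nabla u(x_{2y})\|$ by the definition of $\sigma$ (the case $\nabla u(x_{2y}) = 0$ being trivial since both sides vanish).

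Finally, property 3 is immediate: applying property 1 to $\varphi \equiv 1$, whose harmonic extension is also identically $1$, forces $(C_y 1)(x) = \langle 0, \sigma(x_{2y}) \rangle = 0$. Equivalently one can invoke $K_y(1) = 1$ directly and observe that its $z$-gradient vanishes, so contracting with $\sigma(x_{2y})$ yields $0$.

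The only nontrivial obstacle is justifying the differentiation under the integral in property 1, and this is handled by the bound obtained in property 2; for this reason I establish property 2 first. Everything else is a bookkeeping exercise combining previously stated facts.
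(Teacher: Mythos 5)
Your proof is correct and follows essentially the same route as the paper: both rely on the Harnack gradient bound (Corollary~\ref{cor:Harnack_gradient}) combined with the distance estimate of Lemma~\ref{lem:dist_to_boundary} to bound $\|\nabla^1 k(x_y,\xi)\|$, use that as the integrable majorant to justify differentiating under the integral, and obtain property~3 by specializing to $\varphi \equiv 1$. The only difference is cosmetic: you prove property~2 first and then cite it explicitly as the majorant for property~1, whereas the paper proves property~1 first and invokes the same bound inline at the step marked $(*)$; the underlying argument is identical.
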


	\begin{proof}
		We proof the properties $(1)-(3)$:
		\begin{enumerate}
			\item Using the definition of $c_y$ and property $(1)$ of $k_y$, we obtain
			\begin{align*}
				(C_y\varphi)(x)&= \int_S\left\langle\nabla^1k_y(x,\xi)\varphi(\xi)\mathrm{d}\omega^{z_0}(\xi),\sigma(x_{2y})\right\rangle\\
				&=\left\langle\int_S\nabla^1 k_y(x,\xi)\varphi(\xi)\mathrm{d}\omega^{z_0}(\xi) ,\sigma(x_{2y})\right\rangle\\
				&\stackrel{(*)}{=}\left\langle\nabla (K_y\varphi)(x) ,\sigma(x_{2y})\right\rangle= \left\langle\nabla \varphi(x_{y}),\sigma(x_{2y})\right\rangle.
			\end{align*}
			In the step marked with $(*)$ we are allowed to exchange integration and differentiation since $k_y$ is harmonic and Corollary \eqref{cor:Harnack_gradient} yields an integrable majorant:
			\begin{align*}
				\|\nabla^1k_y(x,\xi)\|\leq \frac{k_y(x,\xi)}{d(x_y,S)}.
			\end{align*}
			Setting $\varphi=u_y|_S$, we further get $(C_yu_y)(x)=\left\langle\nabla u(x_{2y}),\sigma(x_{2y})\right\rangle=\|\nabla u(x_{2y})\|$.
			\item Applying Lemma \eqref{lem:dist_to_boundary}, in particular the inequality $d(x_y,S)\geq c_Sy$ and Corollary \eqref{cor:Harnack_gradient} yields
			\begin{align*}
				|c_y(x,\xi)|\leq\|\nabla^1k(x_y,\xi)\|\leq\frac{k(x_y,\xi)}{d(x_y,S)}\leq\frac{1}{c_S}\frac{k(x_y,\xi)}{y}.
			\end{align*}
			\item This is a special case of property $(1)$. Inserting $u=1$ immediately yields the desired equality.
		\end{enumerate}
	\end{proof}	
	\begin{definition}[Definition of $b_y$]
		For $y>0$ we define the kernel $b_y:S\times S\longrightarrow\mathbb{R}$ as
		\begin{align*}
			b_y(x,\xi)=(k_y\circ c_y)(x,\xi)=\int_Sk_y(x,\zeta)c_y(\zeta,\xi)\mathrm{d}\omega^{z_0}(\zeta).
		\end{align*}
	\end{definition}
	\begin{lemma}[Properties of $b_y$ \cite{MuellerRiegler2020}]
		\label{lem:properties_b_y}
		The kernel $b_y$ has the following properties
		\begin{enumerate}
			\item $(y,x,\xi)\longmapsto b_y(x,\xi)$ is continuous on $(0,\infty)\times S\times S$.
			\item $\begin{aligned}
				|b_y(x,\xi)|\leq c_S\frac{k_y(x,\xi)}{y}.
			\end{aligned}$
			\item Let $\varphi\in C_b(S)$, then $\begin{aligned}
				(B_y\varphi)(x)=(K_y(C_y\varphi))(x).
			\end{aligned}$
			\item$\begin{aligned}
				B_y(1)=0.
			\end{aligned}$
		\end{enumerate}
	\end{lemma}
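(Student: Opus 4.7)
The plan is to take the four properties in the natural logical order $(3)\to(4)\to(2)\to(1)$, since $(1)$ needs the pointwise bound from $(2)$, while $(3)$ and $(4)$ are essentially algebraic consequences of the definition and of the previously established properties of $c_y$ and $k_y$.

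For property $(3)$, I would apply Fubini to
\[
	(B_y\varphi)(x)=\int_S\int_S k_y(x,\zeta)\,c_y(\zeta,\xi)\,\varphi(\xi)\,\mathrm{d}\omega^{z_0}(\zeta)\,\mathrm{d}\omega^{z_0}(\xi),
\]
which is justified because $|k_y(x,\zeta)c_y(\zeta,\xi)\varphi(\xi)|\le c_S\|\varphi\|_\infty\,k_y(x,\zeta)\,k_y(\zeta,\xi)/y$ by Lemma~\ref{lem:cy_properties}(2), and integrating in $\xi$ first using $K_y(1)=1$ (Lemma~\ref{lem:MartinKernel}(4)) produces a finite dominator. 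Swapping the order of integration rewrites the expression as $\int_S k_y(x,\zeta)(C_y\varphi)(\zeta)\,\mathrm{d}\omega^{z_0}(\zeta)=K_y(C_y\varphi)(x)$. Property $(4)$ is then immediate: $B_y(1)=K_y(C_y(1))=K_y(0)=0$ by Lemma~\ref{lem:cy_properties}(3).

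For property $(2)$, I would estimate pointwise using Lemma~\ref{lem:cy_properties}(2):
\[
	|b_y(x,\xi)|\le\int_S k_y(x,\zeta)\,|c_y(\zeta,\xi)|\,\mathrm{d}\omega^{z_0}(\zeta)\le\frac{c_S}{y}(k_y\circ k_y)(x,\xi)=\frac{c_S}{y}k_{2y}(x,\xi),
\]
where the last step uses the semi-group property Lemma~\ref{lem:MartinKernel}(2). A final application of the quotient bound Lemma~\ref{lem:MartinKernel}(3) with ratio $y_2/y_1=2$ absorbs the factor $k_{2y}/k_y$ into $c_S$, giving the claimed inequality.

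For property $(1)$, the approach is dominated convergence. Fix $(y_0,x_0,\xi_0)\in(0,\infty)\times S\times S$ and take a sequence $(y_n,x_n,\xi_n)\to(y_0,x_0,\xi_0)$; I may assume $y_n\in[a,b]$ for some compact $[a,b]\subset(0,\infty)$. Joint continuity of the Martin kernel $k$ on $\mathcal{O}\times\partial\mathcal{O}$ and of $\nabla^1 k(\cdot,\xi)$ (harmonicity of $k$ in the first variable) implies that the integrand $k_{y_n}(x_n,\zeta)\,c_{y_n}(\zeta,\xi_n)$ converges to $k_{y_0}(x_0,\zeta)\,c_{y_0}(\zeta,\xi_0)$ for every $\zeta\in S$. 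To exhibit an integrable majorant, I combine Lemma~\ref{lem:cy_properties}(2) with the quotient bound Lemma~\ref{lem:MartinKernel}(3) applied to the compact range of $y_n$ to obtain a uniform estimate of the form $|k_{y_n}(x_n,\zeta)\,c_{y_n}(\zeta,\xi_n)|\le C\,k_{y_0}(x_0,\zeta)\,k_{y_0}(\zeta,\xi_0)$, whose $\omega^{z_0}$-integral equals $C\,k_{2y_0}(x_0,\xi_0)<\infty$ by the semi-group property. The main technical nuisance is precisely this uniform bound: one needs to transfer continuity in $(y,x)$ of $k_y(x,\cdot)$ to a \emph{uniform} upper envelope independent of $n$, which is done via Harnack's inequality around the points $(x_0)_{y_0}$ and $\xi_0$ together with Lemma~\ref{lem:harnack_chain}. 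Once the majorant is in place, dominated convergence yields continuity of $b_y(x,\xi)$ in all three variables.
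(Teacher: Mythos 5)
Your treatment of properties (2), (3), (4) is correct and matches the paper's proof in all essentials: Fubini to get $(B_y\varphi)(x)=(K_y(C_y\varphi))(x)$, then $(4)$ by specialising $\varphi=1$ and using $C_y(1)=0$, and for $(2)$ the chain $|b_y|\le c_S\, k_{2y}/y\le 2^\alpha c_S\, k_y/y$ via the semi-group property and the quotient bound.

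Your argument for property $(1)$, however, has a genuine gap. You assert that the integrand $k_{y_n}(x_n,\zeta)\,c_{y_n}(\zeta,\xi_n)$ converges to $k_{y_0}(x_0,\zeta)\,c_{y_0}(\zeta,\xi_0)$ \emph{for every} $\zeta\in S$, attributing this to joint continuity of the Martin kernel and of $\nabla^1 k$. That is not the issue: what fails is the $\sigma$ factor inside $c_y$. Recall $c_y(\zeta,\xi)=\langle\nabla^1 k(\zeta_y,\xi),\sigma(\zeta_{2y})\rangle$ with $\sigma(p)=\nabla u(p)/\|\nabla u(p)\|$ when $\nabla u(p)\ne0$ and $\sigma(p)=0$ otherwise. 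The map $p\mapsto\sigma(p)$ is discontinuous precisely at the zeros of $\nabla u$: if $\nabla u(\zeta_{2y_0})=0$ then $\sigma(\zeta_{2y_0})=0$, yet $\sigma(\zeta_{2y_n})$ is a unit vector whenever $\nabla u(\zeta_{2y_n})\ne0$, so $c_{y_n}(\zeta,\xi_n)$ need not converge to $c_{y_0}(\zeta,\xi_0)$ for such $\zeta$. Pointwise convergence therefore only holds for those $\zeta$ with $\nabla u(\zeta_{2y_0})\ne0$, and to salvage the dominated-convergence argument you must know that the exceptional set $\{\zeta\in S:\nabla u(\zeta_{2y_0})=0\}$ is $\omega^{z_0}$-null. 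This is exactly what the paper supplies through Lemma~\ref{lem:ZeroOfGradient} (the gradient of a non-constant harmonic function can vanish only on a $\mathcal{H}^{d-1}$-null subset of a shifted boundary) together with Dahlberg's theorem to pass from Hausdorff measure to harmonic measure. Without that input your proof of $(1)$ does not close; the Harnack-based majorant you construct is fine (the paper even uses the cruder constant bound $\|k_{y_0}\|^2_{C(\mathcal{K}\times S)}$ via Corollary~\ref{thm:mart-kern-bdd}), but the a.e.\ pointwise convergence is the missing step, not the domination.
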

	\begin{proof}
		\begin{enumerate}
			\item This proof is not as straight forward as the other statements of this Lemma and in particular requires a characterization of the roots of the gradient of $u$. We will thus dedicate the next subsection to it.
			\item By property $(2)$ of $c_y$ in Lemma \eqref{lem:cy_properties}, the semi-group property and the quotient bound of $k_y$ in Lemma \eqref{lem:MartinKernel}, we get
			\begin{align*}
				|b_y(x,\xi)|&=|(k_y\circ c_y)(x,\xi)|\leq (k\circ|c_y|)(x,\xi)|\leq c_S \frac{k_{2y}(x,\xi)}{y}\leq2^\alpha c_S\frac{k_y(x,\xi)}{y}.
			\end{align*}
			\item With Fubini we obtain
			\begin{align*}
				(B_y\varphi)(x)&=\int_S\int_Sk_y(x,\zeta)c_y(\zeta,\xi)\mathrm{d}\omega^{z_0}(\zeta)\varphi(\xi)\mathrm{d}\omega^{z_0}(\xi)\\
				&=\int_Sk_y(x,\zeta)\left(\int_Sc_y(\zeta,\xi)\varphi(\xi)\mathrm{d}\omega^{z_0}(\xi)\right)\mathrm{d}\omega^{z_0}(\zeta)\\
				&=(K_y(C_y\varphi))(x).
			\end{align*}
			\item This is a special case of $(2)$. Plugging in $\varphi=1$ establishes the desired equality since $C_y1=0$ by Lemma \ref{lem:cy_properties}.
			
		\end{enumerate}
	\end{proof}
	\subsubsection{Continuity of $b_y$}
	We begin with the promised characterization of the Nullset of the gradient of harmonic functions $v$ defined on the epigraph of a Lipschitz domain. 
	\begin{lemma}[\cite{HavinMozol2016}(10.4)]
		\label{lem:ZeroOfGradient}
		Let $v$ be harmonic on the domain $D$, denoting an epigraph of a Lipschitz function and $S:=\partial D$. Furthermore let $\mathcal{H}^{d-1}$ denote the $(d-1)$-dimensional Hausdorff measure on $\mathcal{B}(\mathbb{R}^d)$. If for some $y>0$ and $E\subseteq S_y$ with $\mathcal{H}^{d-1}(E)>0$, the gradient of $v$ vanishes on $E$, i.e. $\nabla v|_E=0$, then $v$ is constant on $D$.
	\end{lemma}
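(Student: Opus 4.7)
The plan is to argue by contradiction: assume $v$ is non-constant on $D$ and derive that the critical set $\{\nabla v = 0\}$ cannot meet $S_y$ in a set of positive $\mathcal{H}^{d-1}$ measure. The approach rests on the real-analyticity of harmonic functions combined with a blow-up/Taylor expansion argument at a suitably generic point of $E$, following the strategy in \cite{HavinMozol2016}.

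First I would fix a point $p \in E$ enjoying two good properties simultaneously: (i) $p$ is a point of $\mathcal{H}^{d-1}$-density one of $E$ inside $S_y$, and (ii) the Lipschitz surface $S_y$ admits a classical tangent hyperplane $T_p$ at $p$. Property (i) holds $\mathcal{H}^{d-1}$-a.e.\ on $E$ by the Lebesgue differentiation theorem applied to the Radon measure induced by $\mathcal{H}^{d-1}$ on $S_y$; property (ii) holds $\mathcal{H}^{d-1}$-a.e.\ on $S_y$ by Rademacher's theorem applied to the defining Lipschitz graph $\Phi$. Both then hold simultaneously $\mathcal{H}^{d-1}$-a.e.\ on $E$.

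Since $v$ is real-analytic in a neighbourhood of $p$, I would expand
\begin{align*}
v(p+h) - v(p) = \sum_{k \geq n} P_k(h),
\end{align*}
where each $P_k$ is a harmonic homogeneous polynomial of degree $k$ and $P_n \not\equiv 0$ is the lowest-degree non-vanishing summand (existing because $v$ is non-constant). Since $\nabla v(p)=0$ the linear term vanishes, hence $n \geq 2$. For any sequence $p_j \in E$, $p_j \to p$, set $r_j := |p_j-p|$ and $y_j := (p_j-p)/r_j$. Inserting $\nabla v(p_j)=0$ into the Taylor expansion and dividing by $r_j^{n-1}$ gives
\begin{align*}
0 = \nabla P_n(y_j) + r_j \nabla P_{n+1}(y_j) + r_j^2 \nabla P_{n+2}(y_j) + \cdots,
\end{align*}
so passing to a convergent subsequence $y_j \to \hat{y}$ yields $\nabla P_n(\hat{y})=0$. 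The density-one property combined with the tangent plane ensures that the set of accumulation directions $\hat{y}$ produced by such sequences covers all of $T_p \cap S^{d-1}$; by homogeneity of $\nabla P_n$ this upgrades to $\nabla P_n \equiv 0$ on the whole tangent subspace $T_p$.

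The main obstacle is to convert $\nabla P_n|_{T_p}=0$, together with harmonicity and non-triviality of $P_n$, into an algebraic contradiction. Let $\ell$ be a linear form with kernel $T_p$. Since $P_n$ is then constant on $T_p$ and vanishes at the origin, $P_n|_{T_p}=0$, so $P_n = \ell\, Q$. The identity $\nabla P_n = (\nabla \ell)\, Q + \ell\, \nabla Q$ combined with $\nabla P_n|_{T_p}=0$ forces $Q|_{T_p}=0$, whence $P_n = \ell^2 R$. From this point harmonicity takes over: expanding $\Delta(\ell^k R_k)$ using $\Delta \ell = 0$ and restricting to $T_p$ produces the identity $k(k-1)|\nabla \ell|^2 R_k|_{T_p}=0$, forcing $R_k|_{T_p}=0$ at each step, so iteratively one obtains $P_n = c\,\ell^n$ for a constant $c$. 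But $\Delta(\ell^n) = n(n-1)\,\ell^{n-2}|\nabla \ell|^2 \not\equiv 0$ when $n\geq 2$, contradicting harmonicity of $P_n$ unless $c=0$; this in turn contradicts $P_n \not\equiv 0$. The contradiction forces $v$ to be constant on $D$.
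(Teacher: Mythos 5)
The paper itself does not prove this lemma: it cites \cite{HavinMozol2016}~(10.4), where the result is established for domains admitting a tangent hyperplane at every boundary point, and adds only the remark that the argument extends ``with little effort'' to boundaries that are differentiable almost everywhere. So there is no internal proof to compare against, and your proposal amounts to a reconstruction of the Havin--Mozolyako argument together with the promised a.e.\ extension.

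Your reconstruction is essentially correct and takes the expected route. Selecting $p\in E$ that is simultaneously a density-one point (Besicovitch/Lebesgue differentiation applied to the Ahlfors-regular measure $\mathcal{H}^{d-1}\lfloor S_y$) and a point of differentiability of the defining Lipschitz graph (Rademacher) is precisely the ``a.e.'' modification the paper alludes to. The blow-up at $p$, using that the accumulation directions from $E$ fill $T_p\cap S^{d-1}$, correctly yields $\nabla P_n\big|_{T_p}\equiv 0$ by homogeneity. From $\nabla P_n|_{T_p}=0$ (tangential part) and $P_n(0)=0$ you get $P_n|_{T_p}=0$, and together with the normal part you have vanishing Cauchy data for the Laplacian along the non-characteristic hyperplane $T_p$, so one could already conclude $P_n\equiv 0$ by Holmgren / Cauchy--Kovalevskaya uniqueness and then analytic continuation.

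Your explicit algebraic iteration works too, but as written it has a small gap: for $k\geq 3$ every term of $\Delta(\ell^k R_k)=k(k-1)\ell^{k-2}|\nabla\ell|^2R_k+2k\ell^{k-1}(\nabla\ell\cdot\nabla R_k)+\ell^k\Delta R_k$ already carries a factor of $\ell^{k-2}$, so restricting the identity to $T_p$ gives $0=0$, not the claimed constraint $k(k-1)|\nabla\ell|^2R_k|_{T_p}=0$. You must first divide the \emph{polynomial} identity $\Delta(\ell^kR_k)=0$ through by $\ell^{k-2}$ (legitimate precisely because every term has that common factor) and only then restrict to $T_p$. With that fix the induction closes, giving $P_n=c\,\ell^n$, and $\Delta(\ell^n)=n(n-1)\,\ell^{n-2}|\nabla\ell|^2\not\equiv 0$ for $n\geq 2$ forces $c=0$, contradicting $P_n\not\equiv 0$. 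So the conclusion stands once this step is made precise.
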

	In essence, the above Lemma tells us that any harmonic function whose derivatives vanish on some subset of the shifted boundary with positive Hausdorff measure, is already constant. The proof can be found in \cite{HavinMozol2016} where it is stated for domains that admit tangent hyperplanes at every boundary point. With little effort, the argument can however be extended to domains whose boundary is almost everywhere differentiable.
	
	Let $\Phi:\mathbb{R}^{d-1}\longrightarrow\mathbb{R}$ denote the Lipschitz function defining the domain $D$ in Lemma \ref{lem:ZeroOfGradient} and $s$ be the surface measure on $S$. Since the $(d-1)$-dimensional Hausdorff measure, restricted to the graph of $\Phi$ is just a multiple of $s$ \cite[353]{Folland1999} and by Dahlberg's theorem, the measures $s$ and $\omega^z$ for $z\in \mathcal{O}$ are equivalent \cite[213]{Bass}, we have the same statement of Lemma \ref{lem:ZeroOfGradient} for $\omega^{z_0}$ replacing $\mathcal{H}^{d-1}$. With these considerations we can proceed to proof continuity.
	\begin{lemma} [Continuity of $b_y$ \cite{HavinMozol2016}]
		The function $(y,x,\xi)\longmapsto b_y(x,\xi)$ is continuous on $(0,\infty)\times S\times S$.
	\end{lemma}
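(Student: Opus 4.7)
The plan is to prove sequential continuity by a Vitali convergence argument. Fix $(y_0,x_0,\xi_0)\in(0,\infty)\times S\times S$ and an arbitrary convergent sequence $(y_n,x_n,\xi_n)\to(y_0,x_0,\xi_0)$; since $y_0>0$, I may assume $y_n\geq y_0/2$ throughout. Writing
\[
b_{y_n}(x_n,\xi_n)=\int_S k_{y_n}(x_n,\zeta)\,c_{y_n}(\zeta,\xi_n)\,\mathrm{d}\omega^{z_0}(\zeta),
\]
the goal is to pass the limit inside the integral. If $u$ is constant, then $\sigma\equiv0$, hence $c_y\equiv b_y\equiv0$ and there is nothing to show; so I assume $u$ is non-constant.

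First I establish pointwise convergence of the integrand for $\omega^{z_0}$-almost every $\zeta$. Continuity of the Martin kernel $k$ on $\mathcal{O}\times S$ gives $k_{y_n}(x_n,\zeta)\to k_{y_0}(x_0,\zeta)$, and interior regularity of harmonic functions (applied to $k(\cdot,\xi)$) together with continuity of $k$ in its second argument yields joint continuity of $\nabla^1 k$ on $\mathcal{O}\times S$, so $\nabla^1 k(\zeta_{y_n},\xi_n)\to\nabla^1 k(\zeta_{y_0},\xi_0)$. The only delicate factor is $\sigma(\zeta_{2y_n})$, which is discontinuous precisely where $\nabla u$ vanishes. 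Here Lemma \ref{lem:ZeroOfGradient} enters: if $E:=\{\zeta\in S:\nabla u(\zeta_{2y_0})=0\}$ had positive $\mathcal{H}^{d-1}$-measure, $u$ would be constant; hence $\mathcal{H}^{d-1}(E)=0$, and by Dahlberg's Theorem \ref{thm:Dahlberg} together with the discussion preceding this lemma, $E$ is also $\omega^{z_0}$-null. For any $\zeta\notin E$, continuity of $\nabla u$ at $\zeta_{2y_0}\neq 0$ gives $\sigma(\zeta_{2y_n})\to\sigma(\zeta_{2y_0})$, completing the pointwise a.e. convergence $c_{y_n}(\zeta,\xi_n)\to c_{y_0}(\zeta,\xi_0)$.

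For the integrability bound I use property (2) of $c_y$:
\[
|k_{y_n}(x_n,\zeta)\,c_{y_n}(\zeta,\xi_n)|\leq g_n(\zeta):=\frac{c_S}{y_n}k_{y_n}(x_n,\zeta)\,k_{y_n}(\zeta,\xi_n).
\]
The semigroup property (Lemma \ref{lem:MartinKernel}(2)) yields the identity
\[
\int_S g_n\,\mathrm{d}\omega^{z_0}=\frac{c_S}{y_n}k_{2y_n}(x_n,\xi_n)\xrightarrow{n\to\infty}\frac{c_S}{y_0}k_{2y_0}(x_0,\xi_0)=\int_S g_0\,\mathrm{d}\omega^{z_0}
\]
by continuity of $k$. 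Since $g_n\geq0$, $g_n\to g_0$ pointwise $\omega^{z_0}$-a.e., and $\int g_n\to\int g_0$, Scheff\'e's lemma gives $g_n\to g_0$ in $L^1(\omega^{z_0})$, so $\{g_n\}$ is uniformly integrable; the original sequence inherits uniform integrability since it is dominated by $\{g_n\}$. Vitali's convergence theorem then yields $b_{y_n}(x_n,\xi_n)\to b_{y_0}(x_0,\xi_0)$. The main obstacle is the measure-theoretic treatment of the singular set of $\sigma$: without Lemma \ref{lem:ZeroOfGradient} (transferred to $\omega^{z_0}$ via Dahlberg), a.e. convergence could not be secured and the entire Vitali argument would collapse; the remaining ingredients reduce to direct applications of the continuity, quotient, and semigroup properties already established for $k_y$.
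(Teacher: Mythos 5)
Your argument is correct, and it mirrors the paper's in the essential step: both invoke Lemma~\ref{lem:ZeroOfGradient} (transferred to $\omega^{z_0}$ via Dahlberg and the equivalence of $\mathcal H^{d-1}$ with surface measure on $S$) to show that the discontinuity set of $\sigma(\cdot_{2y_0})$ is $\omega^{z_0}$-null, and thereby obtain $\omega^{z_0}$-a.e.\ pointwise convergence of the integrand. Where you diverge is the final passage to the limit under the integral. The paper picks a compact neighborhood $\mathcal K$ of the limit point, applies the quotient bound of Lemma~\ref{lem:MartinKernel}(3) and Corollary~\ref{thm:mart-kern-bdd} to produce a fixed constant majorant $c\,\|k_{y_0}\|^2_{C(\mathcal K\times S)}$ (integrable because $\omega^{z_0}$ is a probability measure), and then applies dominated convergence. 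You instead take the natural $n$-dependent dominating functions $g_n=\frac{c_S}{y_n}k_{y_n}(x_n,\cdot)k_{y_n}(\cdot,\xi_n)$, use the semigroup property to compute $\int g_n=\frac{c_S}{y_n}k_{2y_n}(x_n,\xi_n)\to\int g_0$, and conclude via Scheff\'e plus Vitali. Both are valid; the paper's route is shorter and leans on the already-proved boundedness of $k_{y}$ on compacta, while yours sidesteps the need for any fixed majorant (and thus for Corollary~\ref{thm:mart-kern-bdd}) at the cost of a slightly heavier measure-theoretic framework. You also take more care than the paper to dispose of the (vacuous under the standing assumptions, since $u$ vanishes at infinity on $S_y$) constant-$u$ case, and to justify the joint continuity of $\nabla^1 k$ via interior gradient estimates for locally uniformly convergent harmonic functions — a point the paper leaves implicit.
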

	\begin{proof}
		Recall that, by definition
		\begin{align*}
			b_y(x,\xi)=\int_Sk(x_y,\zeta)c(\zeta_y,\xi)\mathrm{d}\omega^{z_0}(\zeta)
		\end{align*}
		and $c(\zeta_y,\xi)$ is discontinuous only when $\zeta_{2y}$ is such that $\nabla u(\zeta_{2y})=0$, by Lemma \ref{lem:ZeroOfGradient}. Now observe that the limit
		\begin{align*}
			\lim_{\substack{x\rightarrow a\\ \xi\rightarrow b\\ y\rightarrow y_0}} k(x_y,\zeta)c(\zeta_y,\xi) = k(a_{y_0},\zeta)c(\zeta_{y_0},b) \quad\omega^{z_0}\text{-a.e.},
		\end{align*}
		since $u$ is non-constant on $\mathcal{O}$ i.e. the gradient of $u$ is non-zero on $S_{2y_0}$, $\omega^{z_0}$-a.e. Applying Lemma \ref{lem:cy_properties} and \ref{lem:MartinKernel}, we obtain
		\begin{align*}
			\left|k(x_y,\zeta)c(\zeta_y,\xi)\right|\leq\frac{k(x_y,\zeta)k(\zeta_y,\xi)}{y}\leq\frac{y^{2\alpha-1}}{y_0^{2\alpha}}k(x_{y_0},\zeta)k(\zeta_{y_0},\xi)\leq\frac{y^{2\alpha-1}}{y_0^{2\alpha}}\left\|k_{y_0}\right\|^2_{C(\mathcal{K}\times S)},
		\end{align*}
		where $\mathcal{K}$ is a compact neighborhood of $(a,b)$. Continuity now follows from dominated convergence.
	\end{proof}
	\begin{remark}
		\label{rem:operator_height_continuity}
		For $\alpha\in L^\infty(S)$ and $x\in S$, the mappings 
		\begin{align*}
			y\longmapsto (K_y\alpha)(x)\\
			y\longmapsto (B_y\alpha)(x)\\
		\end{align*}
		where $y>0$, are continuous. This simply follows from the continuity of the kernels $k_y$ and $b_y$ and the fact that $|b_y|\leq c_S k_y/y$ is an integrable majorant if $y>y_0$ for some positive $y_0$, combined with Lebesgue's continuity criterion.
	\end{remark}

	\subsection{Variation and the set $\mathcal{V}(S)$}
	The following vertical variation will be the center of our examination and bounds will be derived under certain circumstances.
	\begin{definition}
		For $x\in S$ and $u:S\longrightarrow\mathbb{R}$ positive and harmonic, we define the mean vertical variation
		\begin{align*}
			V(x):=\int_{0}^{1}(B_yu_y)(x)\mathrm{d}y.
		\end{align*}
	\end{definition}
	We will show that this variation is indeed non-negative. By part (2) in Lemma \eqref{lem:properties_b_y} and part (1) of Lemma \eqref{lem:cy_properties}, we have
	\begin{align*}
		V(x)=\int_{0}^{1}(K_y(C_yu_y))(x)\mathrm{d}y=\int_{0}^{1}(K_y(\|\nabla u_{2y}\|))(x)\mathrm{d}y\geq \int_0^1\|\nabla u_{3y}(x)\|\mathrm{d}y.
	\end{align*}
	The last inequality follows from the fact that $\nabla u$ is also harmonic and property (1) in Lemma \eqref{lem:MartinKernel}:
	\begin{align*}
		(K_y(\|\nabla u_{2y}\|))(x)\geq\left\|\int_Sk_y(x,\xi)\nabla u_{2y}(\xi)\mathrm{d}\omega^{z_0}(\xi)\right\|=\|\nabla u_{3y}(x)\|.
	\end{align*}
	This means, that a bound on $V(x)$ also is a bound on the vertical variation 
	\[
		\int_0^1\|\nabla u_y(x)\|\mathrm{d}y,
	\]
	that is usually studied. We will denote by 
	\begin{align*}
		\mathcal{V}=\left\lbrace x\in S: V(x)<\infty\right\rbrace.
	\end{align*}
	the set of all points on $S$, such that their vertical variation is bounded. Comparing to \eqref{eq:BourgainPoints}, we may write $\mathcal{V}\subseteq\mathcal{V}^u(S)$.
	\subsection{Main Theorem For Near Half Spaces}
	We have so far established all the ingredients that are needed to formulate the main theorem of Riegler and Müller \cite{MuellerRiegler2020}.
	\begin{theorem}[\cite{MuellerRiegler2020}]
		\label{thm:main}
		Let $\mathcal{O}$ be a near-half space, $S=\partial \mathcal{O}$, and $y>1$. Then for any ball $B=B(x_0,r)$, with $x_0\in S$ and $r>0$ there exists $x\in B\cap S$ such that the variation $V(x)$ is bounded in the following way:\par 
		There exists a constant $c=c_{S,r}$, depending on the Lipschitz function parameterizing $S$, and the radius of $B$, such that
		\[
			V(x)\leq cu(x_y).
		\]
	\end{theorem}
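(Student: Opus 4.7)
The approach is an averaging argument. I select a probability measure $\mu$ supported on $B \cap S$, prove $\int_{B \cap S} V(x)\,d\mu(x) \leq c\cdot u((x_0)_y)$, and conclude by pigeonhole combined with the Harnack chain of Lemma~\ref{lem:harnack_chain} (which provides $u((x_0)_y) \leq c_{S,r,y}\,u(x_y)$ for all $x \in B \cap S$). A natural candidate is $\mu = \omega^{p}(B\cap S)^{-1}\,\omega^{p}|_{B \cap S}$ with pole $p := (x_0)_r \in \mathcal{O}$; the normalization $\omega^p(B\cap S)$ is bounded below by a positive constant depending only on $S$ and $r$, since $p$ lies at distance comparable to $r$ from $B \cap S$.

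The first step is Fubini (all integrands are non-negative) combined with the Martin-kernel representation of $\omega^p$:
\[
\int_{B\cap S} V(x)\,d\omega^{p}(x) = \int_0^1 \int_S (B_t u_t)(x)\,d\omega^{p}(x)\,dt.
\]
The inner integral equals the harmonic extension of $B_t u_t$ at $p$, which by the semigroup identity $k_r \circ k_t = k_{r+t}$ of Lemma~\ref{lem:MartinKernel} can be represented through $k_{r+t}(x_0, \cdot)$. The direct pointwise bound $|b_t| \leq c_S k_t/t$ from Lemma~\ref{lem:properties_b_y} produces an integrand of order $u((x_0)_{r+3t})/t$, which diverges logarithmically at $t=0$. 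To salvage integrability I exploit $B_t(\mathbf{1})=0$: for any $t$-dependent constant $c(t)$,
\[
(B_t u_t)(x) = (B_t(u_t - c(t)))(x).
\]
Taking $c(t)$ equal to the harmonic extension value of $u_t$ at $(x_0)_{r+t}$, that is $c(t) = u((x_0)_{r+2t})$, centers the oscillation and yields
\[
\int_S (B_t u_t)(x)\,d\omega^p(x) \leq \frac{c_S}{t}\,\mathrm{Osc}(t), \qquad \mathrm{Osc}(t) := \int_S |u(\xi_t) - c(t)|\,d\omega^{(x_0)_{r+t}}(\xi).
\]

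The decisive step is to prove a decay estimate $\mathrm{Osc}(t) \lesssim t^\beta\,u((x_0)_1)$ for some $\beta > 0$ depending on $S$ and $r$. The plan is to decompose $S$ into the dyadic shells $R_j$ around $x_0$ (Lemma~\ref{lem:MartinKernelDecay}), invoke the geometric Martin-kernel decay $\|k_{r+t}(x_0,\cdot)\|_{L^\infty(R_j)} \lesssim 2^{-\alpha j}/\omega^{z_0}(\Delta_j)$, and bound $|u(\xi_t) - c(t)|$ on each shell by a Harnack chain of length comparable to $j$. Summing the resulting geometric series produces the required $t^\beta$ decay, so that $\int_0^1 \mathrm{Osc}(t)/t\,dt < \infty$ and the total is controlled by $c_{S,r}\,u((x_0)_1)$. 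Applying Harnack once more yields $u((x_0)_1) \leq c_{S,r,y}\,u((x_0)_y)$, and the pigeonhole argument closes the proof.

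The main obstacle is the decay estimate for $\mathrm{Osc}(t)$. The cancellation $B_t(\mathbf{1})=0$ is indispensable, but by itself is not sufficient: the boundary oscillation of $u_t$ need not vanish as $t \to 0$. The required decay must be extracted from the concentration of the averaging measure $\omega^{(x_0)_{r+t}}$ near $x_0$ as $t$ decreases, combined with the regularity of $u_t$ captured by Lemma~\ref{lem:harnack_chain} and the geometric Martin-kernel decay of Lemma~\ref{lem:MartinKernelDecay}. Pinning down the exponent $\beta$ and showing the estimate is uniform in the relevant geometric parameters is where most of the technical work will lie.
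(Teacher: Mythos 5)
Your overall scaffold — average $V$ against a probability measure on $B\cap S$, then pigeonhole — matches the paper's strategy, but the fixed averaging measure $\omega^p$ with pole $p=(x_0)_r$ leads to a dead end, and the gap you flag at the end is fatal rather than technical. The measure $\omega^{(x_0)_{r+t}}$ does \emph{not} concentrate near $x_0$ as $t\downarrow0$: since $r$ is fixed, it simply converges (weakly, with uniformly comparable densities by Harnack) to the non-degenerate measure $\omega^{(x_0)_r}$, so the mechanism you propose to extract decay from is absent. Consequently $\mathrm{Osc}(t)$ need not decay. Take $u=\omega_{\mathcal{O}}^{\cdot}(A)$ for a bounded Borel set $A\subset S$ with $0<\omega^{z_0}(A)<1$; this $u$ satisfies the running hypotheses. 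As $t\downarrow0$, $u_t\to\chi_A$ a.e.\ on $S$ and hence in $L^1(\omega^{(x_0)_r})$ by dominated convergence, while $c(t)\to c_0:=\omega^{(x_0)_r}(A)\in(0,1)$. Therefore
\begin{align*}
\liminf_{t\downarrow0}\mathrm{Osc}(t)\;\geq\; c_S^{-1}\int_S|\chi_A-c_0|\,d\omega^{(x_0)_r}\;=\;2\,c_S^{-1}c_0(1-c_0)\;>\;0,
\end{align*}
so $\int_0^1\mathrm{Osc}(t)/t\,dt$ diverges and no exponent $\beta>0$ is attainable. The dyadic-shell/Martin-kernel route cannot rescue this: Lemma~\ref{lem:MartinKernelDecay} gives decay in the shell index $j$ at fixed $t$, not decay in $t$, and the Harnack chain comparing $u(\xi_t)$ with $u((x_0)_{r+2t})$ has length of order $\log(r/t)$, so its constant \emph{deteriorates} as $t\downarrow0$.

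The paper circumvents precisely this obstruction by abandoning the unperturbed kernel $k$ in favour of the height-dependent family $\omega_\Delta$, constructed as a limit of products of the perturbation $\tilde\omega_\Delta=k_{|\Delta|}-\varepsilon b_\Delta$ over refining partitions. The payoff is the differential equation of Theorem~\ref{thm:diff-eq}: $\frac{d}{dy}(\Omega_y u_y)(x)=\varepsilon(\Omega_y(B_y u_y))(x)$. Integrating in $y$ converts $\int_\delta^{1/2}\Omega_y(B_y u_y)\,dy$ into the telescoping quantity $\varepsilon^{-1}(\Omega_{1/2}u_{1/2}-\Omega_\delta u_\delta)$, bounded by $c_S\varepsilon^{-1}u_1$ via the $\Phi$-property (Lemma~\ref{lem:phi-prop}), with no pointwise decay of $B_yu_y$ required. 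Corollary~\ref{cor:very_important_ineq} then dominates $\Omega_\eta(B_y u_y)$ by $\Omega_y(B_y u_y)$ for $\eta<y$, so letting $\eta\downarrow0$ and invoking the weak* limit $\nu_\varepsilon=\lim\Omega_\eta^*\kappa$ with $\kappa=\omega^{z_y-\vec e_d}$ yields $\int_S V\,d\nu_\varepsilon\leq\frac{c_S}{\varepsilon}u(z_y)$ (Lemma~\ref{lem:nu_prop}); the lower bound $\nu_\varepsilon(B)>c>0$ then closes the pigeonhole. In short, the $-\varepsilon b_\Delta$ perturbation is the engine of the proof — it trades the $1/t$ singularity for a factor $1/\varepsilon$ — and its absence from your plan is exactly the gap that cannot be filled; the cancellation $B_t(\mathbf{1})=0$ alone is indeed insufficient, and the paper's machinery is what replaces it.
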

	\begin{figure}[h!]
		\includegraphics[width=\textwidth]{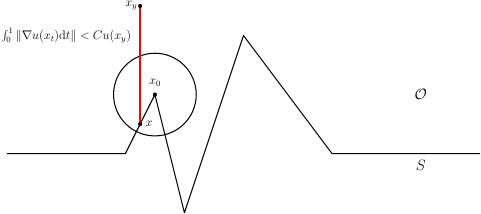}
		\caption{Overview of statement in Main Theorem \ref{thm:main}.}
	\end{figure}
	We begin preparations for the proof by introducing some notation.
	\subsection{Notation for Partitions}
	We define the set of all non-degenerate compact intervals in $\mathbb{R}^+$ as $\mathrm{segm}^+$, i.e.
	\[
		\mathrm{segm}^+=\left\lbrace[a,b]:a,b\in\mathbb{R}^+\quad\mathrm{and}\quad a<b\right\rbrace.
	\]
	Moreover, for $\Delta\in\mathrm{segm}^+$ denote 
	\begin{align*}
		m(\Delta):=\min(\Delta),&&M(\Delta)=\max(\Delta),&&|\Delta| =M(\Delta)-m(\Delta).
	\end{align*} 
	In order to ease notation later, we introduce the quantity $\varrho(\Delta):=\frac{M(\Delta)}{m(\Delta)}$.\\
	For any collection $\mu\subset\mathrm{segm}^+$, define $\mathcal{U}(\mu)$ as the union of $\mu$, i.e.
	\[
	\mathcal{U}(\mu) = \bigcup_{\Delta\in\mu}\Delta.
	\]
	A finite collection $\mu\subset\mathrm{segm}^+$ is called a partition of $\Delta\in\mathrm{segm}^+$ if $\mathcal{U}(\mu)=\Delta$ and for all $j_1,j_2\in\mu$ with $j_1\neq j_2$ we have $j_1\cap j_2\subseteq\lbrace x\rbrace$ for some $x\in\mathbb{R}^+$ or $j_1\cap j_2=\emptyset$ , i.e. intervals in $\mu$ can have at most one common point at their boundary. A partition $\tau$ is called a refinement of $\mu$, denoted with $\tau\succ\mu$, if for every $j_1\in\tau$ there exists $j_2\in\mu$ such that $j_1\subseteq j_2$.
	\subsection{Construction of the kernel $\omega_{\Delta}$}
	\begin{definition}[Definition of $b_{\Delta}$]
		For $\Delta\in\mathrm{segm}^+$, define the kernel $b_{\Delta}:S\times S\rightarrow\mathbb{R}$ as
		\[
			b_{\Delta}(x,\xi):=\int_{\Delta}b_y(x,\xi)\mathrm{d}y.
		\] 
	\end{definition}
	\begin{lemma}[Properties of $b_\Delta$ \cite{MuellerRiegler2020}]\label{lem:b_Delta}
		For $\Delta\in\mathrm{segm}^+$, the kernel $b_{\Delta}$ has the following properties:
		\begin{enumerate}
			\item $b_{\Delta}$ is continuous.
			\item $|b_\Delta(x,\xi)|\leq c_S\frac{\varrho(\Delta)^{\alpha-1}}{m(\Delta)}|\Delta|k_{m(\Delta)}(x,\xi)$.
		\end{enumerate}
	\begin{proof}
		\begin{enumerate}
			\item To begin with, we have the inequality
			\begin{align*}
				|b_y(x,\xi)|\leq c_S\frac{k_y(x,\xi)}{y}\leq c_S\frac{M(\Delta)^{\alpha-1}}{m(\Delta)^\alpha}k_{m(\Delta)}(x,\xi),
			\end{align*}
			where the last term in the inequality is constant in $y$ and integrable over $\Delta$. Since $b_y$ is continuous for every $y>0$ we can exchange limit and integration, i.e.
			\begin{align*}
				\lim_{(x,\xi)\rightarrow(x_0,\xi_0)}b_\Delta(x,\xi)=\lim_{(x,\xi)\rightarrow(x_0,\xi_0)}\int_\Delta b_y(x,\xi)\mathrm{d}y=\int_\Delta \lim_{(x,\xi)\rightarrow(x_0,\xi_0)}b_y(x,\xi)\mathrm{d}y=b_\Delta(x_0,\xi_0).
			\end{align*}
			\item With property $(2)$ of $b_y$ from Lemma \eqref{lem:properties_b_y} and the quotient bound for $k_y$ in Lemma \eqref{lem:MartinKernel}, we can estimate
			\begin{align*}
				|b_\Delta(x,\xi)|\leq c_S\int_\Delta\frac{k_y(x,\xi)}{y}\mathrm{d}y\leq c_S\int_\Delta k_{m(\Delta)}(x,\xi)\frac{y^{\alpha-1}}{m(\Delta)^\alpha}\mathrm{d}y\leq c_S\frac{M(\Delta)^{\alpha-1}}{m(\Delta)^\alpha}|\Delta|k_{m(\Delta)}(x,\xi).
			\end{align*}
		\end{enumerate}
	\end{proof}

	\end{lemma}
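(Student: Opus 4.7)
Both properties should reduce quickly to the pointwise estimate $|b_y(x,\xi)|\le c_S k_y(x,\xi)/y$ from Lemma \ref{lem:properties_b_y}(2), combined with the quotient bound $k_{y}\le c_S (y/m(\Delta))^\alpha k_{m(\Delta)}$ from Lemma \ref{lem:MartinKernel}(3). My plan is to prove the size estimate first, and then use the same majorant to deduce continuity via dominated convergence, since the harder content of continuity of $b_y$ itself is already handled by the preceding lemma.

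\textbf{Step 1 (the integral bound).} The natural computation is
\[
|b_\Delta(x,\xi)|\le \int_\Delta |b_y(x,\xi)|\,\mathrm{d}y
 \le c_S \int_\Delta \frac{k_y(x,\xi)}{y}\,\mathrm{d}y
 \le c_S \int_\Delta \frac{y^{\alpha-1}}{m(\Delta)^\alpha}\,k_{m(\Delta)}(x,\xi)\,\mathrm{d}y,
\]
using the quotient bound in the last step. Since the integrand in $y$ is now purely a power function, it is dominated by $M(\Delta)^{\alpha-1}/m(\Delta)^\alpha$ times $|\Delta|$, which rearranges to $|\Delta|\,\varrho(\Delta)^{\alpha-1}/m(\Delta)$. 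Strictly speaking one should distinguish $\alpha\ge 1$ from $\alpha<1$ when bounding $y^{\alpha-1}$ by its value at $M(\Delta)$, but this only affects constants absorbed into $c_S$.

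\textbf{Step 2 (continuity).} For any fixed $(x_0,\xi_0)\in S\times S$ and a compact neighborhood $\mathcal{K}\times\mathcal{L}$ of it, the same chain of estimates yields
\[
|b_y(x,\xi)|\le c_S\,\frac{M(\Delta)^{\alpha-1}}{m(\Delta)^\alpha}\,\sup_{(x,\xi)\in\mathcal{K}\times\mathcal{L}} k_{m(\Delta)}(x,\xi),
\]
which is a constant in $y$ (independent of $(x,\xi)$ on the neighborhood), hence integrable over $\Delta$; boundedness of $k_{m(\Delta)}$ on compact sets is exactly Corollary \ref{thm:mart-kern-bdd}. Since the previous lemma tells us $b_y$ is jointly continuous in $(y,x,\xi)$, the integrand $b_y(x,\xi)$ converges pointwise in $y$ as $(x,\xi)\to(x_0,\xi_0)$, and dominated convergence lets us pass the limit inside the integral, giving $\lim b_\Delta(x,\xi)=b_\Delta(x_0,\xi_0)$.

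\textbf{Anticipated obstacle.} There is no real obstacle: once one accepts the continuity of $b_y$ (the delicate input involving the zero-set of $\nabla u$) and the quotient bound for $k_y$, everything reduces to elementary integration in $y$ and a routine dominated convergence argument. The only small bookkeeping issue is ensuring the constant in the size bound is written in the form $\varrho(\Delta)^{\alpha-1}/m(\Delta)$ rather than $M(\Delta)^{\alpha-1}/m(\Delta)^\alpha$, but these are identical up to rearrangement.
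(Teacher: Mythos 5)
Your proposal is correct and follows essentially the same approach as the paper: both use the pointwise bound $|b_y|\le c_S k_y/y$ together with the quotient bound from Lemma \ref{lem:MartinKernel}(3) to produce an integrable, $y$-independent majorant, giving the size estimate by direct integration and continuity by dominated convergence from the continuity of $b_y$. The only cosmetic difference is the order of the two parts (you do the bound first, the paper does continuity first), and your remark about $\alpha\ge1$ versus $\alpha<1$ is a reasonable precision that the paper absorbs silently into $c_S$.
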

	\begin{definition}[Definition of $\tilde{\omega}_{\Delta}$]
		\label{def:omega-tilde}
		For positive $\varepsilon$ and $\Delta\in\mathrm{segm}^+$ we define the continuous kernel $\tilde{\omega}_{\Delta}:S\times S\longrightarrow\mathbb{R}$ as
		\begin{align*}
			\tilde{\omega}_{\Delta}:=k_{|\Delta|}-\varepsilon b_{\Delta}.
		\end{align*}
		Furthermore, notice that $\tilde{\omega}_\Delta\in L^1(\kappa\otimes\omega^{z_0})$ for any bounded measure $\kappa$ on $S$. 
	\end{definition}
	In the course of this thesis, we will impose several restrictions on how large $\varepsilon$ can be at most, for certain properties of the kernels $\tilde{\omega}_\Delta$, $\omega_\Delta$ and other objects that are derived from these kernels to hold. (e.g. Positivity of $\omega_\Delta$ Lemma \ref{lem:positivity_omega}, non-vanishing property of the measure $\nu_\varepsilon$ Lemma \ref{lem:nu_prop}, comparableness of $k_{1-y}$ and $\omega_y$ in Lemma \ref{lem:omega_ky_comparison}, etc.). The range of values for $\varepsilon$ will be determined by a constant $\varepsilon(S)$, which only depends on the Lipschitz boundary of the domain.
	We will now define the kernel $\Pi^\mu$ which depends on an partition $\mu$ of the interval $\Delta\in\mathrm{segm}^+$ and will proceed by algebraic manipulating the kernel to find estimates.
	\begin{definition}[Definition of $\Pi^{\mu}$]\label{def:Pi_mu}	
		Let $\Delta\in\mathrm{segm}^+$ and $\mu=\lbrace j_1,\dots,j_K\rbrace$ be a finite partition of $\Delta$ with $M(j_k)=m(j_{k+1})$, $1\leq k\leq K-1$. The kernel $\Pi^\mu$ is defined as follows:
		\begin{align*}
			\Pi^\mu:=\tilde{\omega}_{j_K}\circ\tilde{\omega}_{j_{K-1}}\circ\dots\circ\tilde{\omega}_{j_1}.
		\end{align*}
	\end{definition}
	The next step is to break up the kernel $\Pi^\mu$ into a sum of three different terms for which we will each derive estimates separately. Inserting definition \eqref{def:omega-tilde} we obtain
	\begin{align*}
		\Pi^\mu=\left(k_{|j_K|}-\varepsilon b_{j_K}\right)\circ\cdots\circ\left(k_{|j_1|}-\varepsilon b_{j_1}\right).
	\end{align*}
	For $l\in\left\lbrace\binom{K}{q}\right\rbrace$, we introduce the notation
	\begin{align*}
		r_s^l:=
		\begin{cases}
			-\varepsilon b_{j_s} & s\in l\\
			k_{|j_s|} & s\notin l
		\end{cases}
	\end{align*}
	and additionally define 
	\[
		\pi_l:=r_K^l\circ r_{K-1}^l\circ\cdots\circ r_1^l.
	\] 
	By Vieta's theorem we can now write the kernel $\Pi^\mu$ as follows:
	\begin{align}
		\label{eq:Pi-mu-repr}
		\Pi^\mu=k_{|\Delta|}+\sum_{q=1}^{K}\sum_{l\in\left\lbrace\binom{K}{q}\right\rbrace}\pi_l.
	\end{align}
	Our next goal is to isolated the summand where $q=1$ in equation \eqref{eq:Pi-mu-repr} and treat it separately. In the case $q=1$, we have that for every $l\in\left\lbrace\binom{K}{q}\right\rbrace$ there is only one kernel of the form $-\varepsilon b_j$, $j\in\mu$, in $\pi_l$. Therefore the semi-group property of the kernel $k_y$ enables us to write
	\begin{align*}
		\sum_{l\in\left\lbrace\binom{K}{1}\right\rbrace}\pi_l=-\varepsilon\sum_{s=1}^{K} k_{M(\Delta)-M(j_s)}\circ b_{j_s}\circ k_{m(j_s)-m(\Delta)},
	\end{align*}
	where $k_0$ is understood as the identity. Considering the identity
	\[
		b_\Delta=\sum_{s=1}^{K}b_{j_s},
	\]
	we write
	\begin{align*}
		\Pi^\mu=k_{|\Delta|}-\varepsilon b_\Delta + \varepsilon\sum_{s=1}^{K}\left(b_{j_s}-k_{M(\Delta)-M(j_s)}\circ b_{j_s}\circ k_{m(j_s)-m(\Delta)}\right)+\sum_{q=2}^{K}\sum_{l\in\left\lbrace\binom{K}{q}\right\rbrace}\pi_l.
	\end{align*}
	Denoting  
	\begin{align}
		\label{def:v_and_rho}
		v_s:=k_{M(\Delta)-M(j_s)}\circ b_{j_s} \quad \text{and} \quad
		\rho_{\mu}:=\sum_{q=2}^{K}\sum_{l\in\left\lbrace\binom{K}{q}\right\rbrace}\pi_l,
	\end{align}
	we finally arrive at the desired representation
	\begin{align*}
		\Pi^\mu = \tilde{\omega}_{\Delta}+\varepsilon\sum_{s=1}^{K}(b_{j_s}-v_s) +\rho_{\mu}.
	\end{align*}
	We proceed by proving bounds on all of three summands of $\Pi^\mu$ that guarantee convergence of $\Pi^{\mu_n}$ under certain conditions on the sequence of partitions $(\mu_n)_n$.
	\begin{lemma}[\cite{MuellerRiegler2020}]
		\label{lem:sum}
		Let $\Delta\in\mathrm{segm}^+$, then the following estimate holds:
		\begin{align*}
			\sum_{s=1}^{K}|b_{j_s}-v_s|\leq c_S\frac{\varrho(\Delta)^{\alpha-1}+\varrho(\Delta)^{2\alpha-1}}{m(\Delta)^2}|\Delta|^2 k_{m(\Delta)}.
		\end{align*}
	\end{lemma}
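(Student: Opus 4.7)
The plan is to first recast $b_{j_s} - v_s$ as a single $y$-integral whose integrand is a Martin-kernel \emph{difference} composed with $c_y$. Since $b_y = k_y \circ c_y$ by definition and $k_{\tau_s} \circ k_y = k_{\tau_s + y}$ by the semi-group property (Lemma \ref{lem:MartinKernel}(2)), Fubini gives
\begin{align*}
v_s = k_{\tau_s} \circ \int_{j_s}(k_y \circ c_y)\,\mathrm{d}y = \int_{j_s}(k_{y+\tau_s}\circ c_y)\,\mathrm{d}y,
\end{align*}
where $\tau_s := M(\Delta) - M(j_s)$, so that
\begin{align*}
b_{j_s} - v_s = \int_{j_s}\bigl((k_y - k_{y+\tau_s})\circ c_y\bigr)\,\mathrm{d}y.
\end{align*}

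Taking absolute values pointwise produces a double integral in $(y,\zeta)\in j_s\times S$. To disentangle the joint $y$-dependence I would replace $|c_y(\zeta,\xi)|$ by a $y$-independent majorant: by Lemma \ref{lem:cy_properties}(2) and the quotient bound (Lemma \ref{lem:MartinKernel}(3)), for $y\in\Delta$ one has
\begin{align*}
|c_y(\zeta,\xi)| \leq c_S\frac{k_y(\zeta,\xi)}{y} \leq \frac{c_S\,\varrho(\Delta)^\alpha}{m(\Delta)}\,k_{m(\Delta)}(\zeta,\xi).
\end{align*}
Fubini then isolates the inner $y$-integral $\int_{j_s}|k_y-k_{y+\tau_s}|(x,\zeta)\,\mathrm{d}y$, which is exactly the object controlled by the integrated form of Lemma \ref{lem:MartinKernel}(5); applied with $a=m(j_s)$, $b=M(j_s)$, $\tau=\tau_s$ and $z=m(\Delta)$ it gives
\begin{align*}
\int_{j_s}|k_y-k_{y+\tau_s}|(x,\zeta)\,\mathrm{d}y \leq c_S\,\tau_s|j_s|\,M(\Delta)^{\alpha-1}\frac{k_{m(\Delta)}(x,\zeta)}{m(\Delta)^\alpha}.
\end{align*}

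Combining the two estimates, the remaining $\zeta$-integral collapses via the semi-group identity $(k_{m(\Delta)}\circ k_{m(\Delta)})(x,\xi) = k_{2m(\Delta)}(x,\xi)$ and one more application of the quotient bound $k_{2m(\Delta)} \leq c_S\,k_{m(\Delta)}$. Using $M(\Delta)^{\alpha-1}/m(\Delta)^{\alpha-1} = \varrho(\Delta)^{\alpha-1}$ to simplify the numerical factor, this yields
\begin{align*}
|b_{j_s}-v_s|(x,\xi) \leq c_S\,\frac{\varrho(\Delta)^{2\alpha-1}\,\tau_s|j_s|}{m(\Delta)^2}\,k_{m(\Delta)}(x,\xi).
\end{align*}
Summation over $s$ together with $\tau_s\leq|\Delta|$ and $\sum_s|j_s|=|\Delta|$ then produces $c_S\,\varrho(\Delta)^{2\alpha-1}|\Delta|^2\,k_{m(\Delta)}/m(\Delta)^2$; since $\varrho(\Delta)\geq 1$ and $\alpha>0$ the nonnegative term $\varrho(\Delta)^{\alpha-1}$ may be freely added to arrive at the stated inequality.

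The main obstacle is the timing of the decoupling in the second paragraph: bounding $|c_y|$ too crudely forfeits the cancellation hidden in $k_y-k_{y+\tau_s}$, whereas keeping the joint $y$-dependence leaves an intractable two-parameter integral of the product $|k_y-k_{y+\tau_s}|\cdot k_y$. Replacing $|c_y|$ by the constant-in-$y$ majorant coming from the quotient bound is precisely strong enough to apply the integrated continuity estimate of Lemma \ref{lem:MartinKernel}(5) and to collapse the resulting $\zeta$-integral via the semi-group structure without losing information in $\varrho(\Delta)$.
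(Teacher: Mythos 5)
Your manipulation is clean and would be correct \emph{if} $v_s$ really were $k_{M(\Delta)-M(j_s)}\circ b_{j_s}$. But the displayed formula \eqref{def:v_and_rho} has a typo: $v_s$ must be the three-fold composition
\begin{align*}
v_s=k_{M(\Delta)-M(j_s)}\circ b_{j_s}\circ k_{m(j_s)-m(\Delta)},
\end{align*}
because this is exactly the first-order Vieta contribution when $\Pi^\mu=\prod_s\bigl(k_{|j_s|}-\varepsilon b_{j_s}\bigr)$ is expanded, and without the right factor the identity $\Pi^\mu=\tilde\omega_\Delta+\varepsilon\sum_s(b_{j_s}-v_s)+\rho_\mu$ would fail. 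The paper's own proof of this lemma, and the proof of Lemma \ref{lem:positivity_omega}, both use the three-fold version.

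With the correct $v_s$, your recasting $v_s=\int_{j_s}(k_{y+\tau_s}\circ c_y)\,\mathrm{d}y$ is no longer an identity --- one gets $v_s=\int_{j_s}\bigl(k_{y+\tau_s}\circ c_y\circ k_{m(j_s)-m(\Delta)}\bigr)\,\mathrm{d}y$ --- so $b_{j_s}-v_s$ cannot be written as a single kernel-difference integral. What you actually estimate is the paper's term
\begin{align*}
\mathrm{I}:=\bigl|b_{j_s}-k_{M(\Delta)-M(j_s)}\circ b_{j_s}\bigr|,
\end{align*}
the ``left-side deficit''; your route (decoupling $|c_y|$ by the $y$-uniform majorant $c_S\varrho(\Delta)^\alpha k_{m(\Delta)}/m(\Delta)$ and then applying the integrated form of Lemma \ref{lem:MartinKernel}(5)) is valid for that, though slightly lossier in $\varrho(\Delta)$ than the paper's treatment of $\mathrm{I}$, which is harmless for the stated bound. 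What is missing is the ``right-side deficit''
\begin{align*}
\mathrm{II}:=\bigl|k_{M(\Delta)-M(j_s)}\circ b_{j_s}-k_{M(\Delta)-M(j_s)}\circ b_{j_s}\circ k_{m(j_s)-m(\Delta)}\bigr|,
\end{align*}
which is controlled by inserting the difference $\delta-k_{m(j_s)-m(\Delta)}$ under the $y$-integral, promoting the outer kernel to height at most $M(\Delta)$ via the quotient bound, and invoking Lemma \ref{lem:MartinKernel}(5) once more on $|k_{y+m(\Delta)}-k_{m(j_s)+y}|$; this yields the $\varrho(\Delta)^{2\alpha-1}$ contribution. Until $\mathrm{II}$ is handled, the split $|b_{j_s}-v_s|\leq\mathrm{I}+\mathrm{II}$ is incomplete and the lemma is not established.
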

	\begin{proof}
		We start by bounding for every $1\leq s\leq K$:
		\begin{align*}
			|b_{j_s}-v_s|\leq \underbrace{|b_{j_s}-k_{M(\Delta)-M(j_s)}\circ b_{j_s}|}_{=:\Romannum{1}}+ \underbrace{|k_{M(\Delta)-M(j_s)}\circ b_{j_s}-v_s|}_{=:\Romannum{2}}.
		\end{align*}
		Let us estimate $\Romannum{1}$. Using the definition and available bounds for the involved kernels, the semi-group property of $k_y$ and Property 5 with $z=m(\Delta)$ we obtain:
		\begin{align*}
			\Romannum{1}
			&=\left|b_{j_s}-k_{M(\Delta)-M(j_s)}\circ b_{j_s}\right|\\
			&\leq \left|\int_{j_s}b_y\mathrm{d}y-k_{M(\Delta)-M(j_s)}\circ\int_{j_s} b_y\mathrm{d}y\right|\\
			&\stackrel{\mathrm{Fub}}{\leq} \int_{j_s} \left|k_y\circ c_y-k_{M(\Delta)-M(j_s)+y}\circ c_y\right|\mathrm{d}y\\
			&\stackrel{\eqref{lem:MartinKernel}}{\leq} \frac{c_S}{m(\Delta)}\int_{j_s}\left|k_{2y}-k_{M(\Delta)-M(j_s)+2y}\right|\mathrm{d}y\\
			&\leq \frac{c_S}{m(\Delta)}|j_s|(M(\Delta)-M(j_s))M(\Delta)^{\alpha-1}\frac{k_{m(\Delta)}}{m(\Delta)^\alpha}\\
			&\leq c_S\frac{M(\Delta)^{\alpha-1}}{m(\Delta)^{\alpha+1}}|j_s||\Delta|k_{m(\Delta)}.
		\end{align*}
			In order to factorize in the third line of the following calculation, we will need the identity with respect to convolution, the Dirac measure $\delta$ on $S$, for which the following holds: 
		\begin{align*}
			(p\circ\delta)(x,\xi) = \int_S p(x,\zeta)\mathrm{d}\delta_{\xi}(\zeta)=p(x,\xi),
		\end{align*}
		for any continuous kernel $p$.
		\begin{align*}
			\Romannum{2}
			&=\left|k_{M(\Delta)-M(j_s)}\circ b_{j_s}-k_{M(\Delta)-M(j_s)}\circ b_{j_s}\circ k_{m(j_s)-m(\Delta)}\right|\\
			&\leq\int_{j_s}\left|k_{M(\Delta)-M(j_s)}\circ k_y\circ c_y-k_{M(\Delta)-M(j_s)}\circ k_y\circ c_y\circ k_{m(j_s)-m(\Delta)}\right|\mathrm{d}y\\
			&\leq\int_{j_s}k_{M(\Delta)-M(j_s)+y}\circ |c_y| \circ \left|\delta-k_{m(j_s)-m(\Delta)}\right|\mathrm{d}y\\
			&\leq \frac{c_S}{m(\Delta)} \int_{j_s}\frac{(M(\Delta)-M(j_s)+y)^\alpha}{m(\Delta)^\alpha}k_{m(\Delta)}\circ\left|k_y-k_{m(j_s)-m(\Delta)+y}\right|\mathrm{d}y\\
			&\leq \frac{c_SM(\Delta)^\alpha}{m(\Delta)^{\alpha+1}}\int_{j_s} \left|k_{y+m(\Delta)}-k_{m(j_s)+y}\right|\mathrm{d}y
			\\
			&\leq \frac{c_SM(\Delta)^\alpha}{m(\Delta)^{\alpha+1}}\frac{(m(j_s)-m(\Delta))|j_s|(M(j_s)+m(j_s)-m(\Delta))^{\alpha-1}}{m(\Delta)^{\alpha}}k_{m(\Delta)}\\ 
			&\leq c_S\frac{M(\Delta)^{2\alpha-1}}{m(\Delta)^{2\alpha+1}}|\Delta||j_s|k_{m(\Delta)}.
		\end{align*}
	Summing up, we obtain 
	\begin{align*}
		\sum_{s=1}^{K}\left|b_{j_s}-v_s\right|
		&\leq c_{S}\left(\frac{M(\Delta)^{\alpha-1}}{m(\Delta)^{\alpha+1}}+\frac{M(\Delta)^{2\alpha-1}}{m(\Delta)^{2\alpha+1}}\right)\sum_{s=1}^{K}|j_s||\Delta|k_{m(\Delta)}\\
		&=c_{S}\frac{\varrho(\Delta)^{\alpha-1}+\varrho(\Delta)^{2\alpha-1}}{m(\Delta)^2}|\Delta|^2k_{m(\Delta)}.
	\end{align*}
	\end{proof}
	Before proceeding, we want to describe partitions $\mu$ of an interval $\Delta$ for which the elements of largest and smallest measure are comparable. To convey this notion, we define regular partitions.
	\begin{definition}[$\lambda$-regular partitions]
		Let $\mu$ be a finite partition of $\Delta\in\mathrm{segm}^+$ and $\lambda\geq1$. We call $\mu$ a $\lambda$-regular partition of $\Delta$ if
		\begin{align}
			\label{eq:lambda_reg1}
			\sup_{j\in\mu} |j|\leq \lambda\inf_{j\in\mu}|j|
		\end{align}
	\end{definition}
	\begin{remark}
		For $\lambda$-regular partitions $\mu$, the mean measure of the elements in $\mu$ is comparable to the largest measure of all elemtents in $\mu$. To see this, observe that
		\begin{align}
			\label{eq:lambda_reg2}
			|\mu|\inf_{j\in\mu}|j|\leq\sum_{j\in\mu}|j|=|\Delta|
		\end{align}
		and thus
		\[
		\sup_{j\in\mu}|j|\leq\lambda\frac{|\Delta|}{|\mu|}=\lambda\sum_{j\in\mu}\frac{|j|}{|\mu|}\leq\lambda\sup_{j\in\mu}|j|.
		\]
		Moreover, a partition $\mu$ is $1$-regular if and only if all elements have the same measure i.e. $\forall j_1, j_2 \in\mu$ we have $|j_1|=|j_2|$. We now apply this regularity of partitions in the following lemma.
	\end{remark}
	\begin{lemma}[\cite{MuellerRiegler2020}]
		\label{lem:rho}
		For $\lambda$-regular partitions $\mu=\left\lbrace j_1,\cdots,j_K\right\rbrace$ of $\Delta$ satisfying $|\Delta|\leq \beta m(\Delta)$ for some $\beta>0$ $, |\rho_{\mu}|$ is bounded as follows:
		\begin{align*}
			|\rho_{\mu}|\leq c_{\lbrace S,\lambda,\beta\rbrace}\frac{\varepsilon^2|\Delta|^2}{m(\Delta)^2}k_{m(\Delta)}, 
		\end{align*}
		where $c_{\lbrace S,\lambda,\beta\rbrace}$ does not depend on $m(\Delta)$.
	\end{lemma}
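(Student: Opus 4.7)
The plan is to bound each summand $\pi_l$ of $\rho_\mu$ pointwise as a multiple of $k_{m(\Delta)}$, and then carry out the double sum over $q$ and $l$, using $\lambda$-regularity to tame the binomial coefficient $\binom{K}{q}$. Fix $q\geq2$ and $l=\{s_1<\dots<s_q\}\in\left\{\binom{K}{q}\right\}$. For $s\in l$, property (2) of Lemma \ref{lem:b_Delta} gives $|b_{j_s}|\leq c_S\varrho(j_s)^{\alpha-1}|j_s|k_{m(j_s)}/m(j_s)$; since $m(j_s)\geq m(\Delta)$ and $\varrho(j_s)\leq M(\Delta)/m(\Delta)\leq 1+\beta$, this reduces to
\[
|b_{j_s}|\leq c_S(1+\beta)^{\alpha-1}\frac{|j_s|}{m(\Delta)}k_{m(j_s)}.
\]
For $s\notin l$ we simply use $r_s^l=k_{|j_s|}$. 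Taking the absolute value through the composition and iterating the semi-group property (Lemma \ref{lem:MartinKernel}(2)) on the resulting pure product of Martin kernels yields
\[
|\pi_l|\leq \varepsilon^q\bigl(c_S(1+\beta)^{\alpha-1}\bigr)^q\frac{\prod_{s\in l}|j_s|}{m(\Delta)^q}\, k_T,\qquad T:=\sum_{s\notin l}|j_s|+\sum_{s\in l}m(j_s).
\]

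Next I would reduce $k_T$ to $k_{m(\Delta)}$ and apply regularity. Since $q\geq 1$, $T\geq m(\Delta)$, and the crude estimate $T\leq|\Delta|+qM(\Delta)\leq(q+1)(1+\beta)m(\Delta)$ combined with the quotient bound (Lemma \ref{lem:MartinKernel}(3)) gives $k_T\leq c_d\bigl((q+1)(1+\beta)\bigr)^\alpha k_{m(\Delta)}$. The $\lambda$-regularity hypothesis implies $\sup_{s}|j_s|\leq \lambda|\Delta|/K$, hence $\prod_{s\in l}|j_s|\leq(\lambda|\Delta|/K)^q$. Using $\binom{K}{q}\leq K^q/q!$ and summing over all $l\in\left\{\binom{K}{q}\right\}$ the factors of $K$ cancel and we obtain
\[
\sum_{l\in\left\{\binom{K}{q}\right\}}|\pi_l|\leq \frac{A^q}{q!}\left(\frac{|\Delta|}{m(\Delta)}\right)^q c_d\bigl((q+1)(1+\beta)\bigr)^\alpha k_{m(\Delta)},
\]
where $A:=\varepsilon c_S\lambda(1+\beta)^{\alpha-1}$ depends only on $S,\lambda,\beta$ and $\varepsilon$.

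Finally I would sum in $q$. Writing $r:=|\Delta|/m(\Delta)\leq\beta$ and factoring out $(Ar)^2=\varepsilon^2 c_S^2\lambda^2(1+\beta)^{2(\alpha-1)}r^2$,
\[
|\rho_\mu|\leq (Ar)^2 c_d(1+\beta)^\alpha k_{m(\Delta)}\sum_{q=2}^{K}\frac{(Ar)^{q-2}(q+1)^\alpha}{q!}.
\]
Since $Ar\leq A\beta$ is bounded by a constant depending only on $S,\lambda,\beta$ (the prescribed range of $\varepsilon$ is itself determined by $S$), the factorial dominates $(q+1)^\alpha$ and the tail series is bounded uniformly in $K$. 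Absorbing all $S,\lambda,\beta$-dependent constants into a single $c_{\{S,\lambda,\beta\}}$ yields the desired
\[
|\rho_\mu|\leq c_{\{S,\lambda,\beta\}}\,\frac{\varepsilon^2|\Delta|^2}{m(\Delta)^2}k_{m(\Delta)}.
\]
The only real obstacle is bookkeeping: one must verify that the combinatorial factor $\binom{K}{q}$ is exactly offset by the $K^{-q}$ coming from $\lambda$-regularity, and that after extracting $(Ar)^2$ from the $q=2$ term the residual series $\sum_{q\geq 2}(Ar)^{q-2}(q+1)^\alpha/q!$ is summable uniformly in the partition; no new analytic input beyond the kernel estimates of Lemmas \ref{lem:MartinKernel} and \ref{lem:b_Delta} is needed.
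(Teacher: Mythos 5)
Your proposal is correct and follows essentially the same route as the paper: bound each $\pi_l$ termwise using the pointwise estimate on $|b_{j_s}|$ together with the semi-group and quotient properties of $k_y$ (your $T$ is exactly the paper's $\gamma(A)$, and your $T\leq(q+1)(1+\beta)m(\Delta)$ plays the role of the paper's $\gamma(A)\leq 2M(\Delta)q$), use $\lambda$-regularity so that $\prod_{s\in l}|j_s|\leq(\lambda|\Delta|/K)^q$, cancel $\binom{K}{q}\leq K^q/q!$ against the $K^{-q}$, extract $\varepsilon^2|\Delta|^2/m(\Delta)^2$, and absorb the remaining convergent tail into a constant depending only on $S,\lambda,\beta$. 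No substantive differences.
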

	\begin{proof}
		For $A\in\left\lbrace\binom{K}{q}\right\rbrace$ we define
		\[
		\gamma(A):=\sum_{s\in A}m(j_s)+\sum_{s\notin A}|j_s|.
		\]
		Obviously, $\gamma$ is bounded by $\gamma(A)\leq q M(\Delta) +|\Delta|\leq 2M(\Delta)q$. Using $\lambda$-regularity of $\mu$, in particular \eqref{eq:lambda_reg2}, the assumption $|\Delta|\leq\beta m(\Delta)$, which in turn implies $\varrho(\Delta)\leq1+\beta$, and the bounds for the kernels $b_{j_s}$ we obtain for $A\in\left\lbrace\binom{K}{q}\right\rbrace$:
		\begin{align}
			\label{eq:pi_bound}
			\begin{aligned}
				|\pi_A|
				&\leq k_{\gamma(A)}\prod_{s\in A}\left(c_S\varepsilon\varrho(\Delta)^{\alpha-1}\frac{|j_s|}{m(\Delta)}\right)\\	
				&\leq k_{\gamma(A)}\frac{1}{K^q}\left(c_{S}\lambda(1+\beta)^{\alpha-1}\frac{\varepsilon |\Delta|}{m(\Delta)}\right)^q\\
				&\leq k_{m(\Delta)}c_S\frac{\left(2M(\Delta)\right)^{\alpha}}{m(\Delta)^{\alpha}}\frac{q^{\alpha}}{K^q}\left(c_{S,\beta}\lambda\frac{\varepsilon|\Delta|}{m(\Delta)}\right)^q\\
				&\leq k_{m(\Delta)}\underbrace{c_S (2(1+\beta))^\alpha (c_{S,\beta})^2}_{:=c_{S,\beta}}\frac{\varepsilon^2|\Delta|^2}{m(\Delta)^2}\frac{q^{\alpha}}{K^q}\lambda^2\left(c_{S,\beta}\lambda\frac{\varepsilon|\Delta|}{m(\Delta)}\right)^{q-2}
			\end{aligned}
		\end{align}
		Since $\binom{K}{q}\leq\frac{K^q}{q!}$, we obtain the following bound:
		\begin{align*}
			|\rho_{\mu}|
			&\leq c_{S,\beta}k_{m(\Delta)}\frac{\varepsilon^2|\Delta|^2}{m(\Delta)^2}\sum_{q=2}^{K}\frac{q^{\alpha}}{q!}\lambda^2\left(c_{S,\beta}\lambda\varepsilon \frac{|\Delta|}{m(\Delta)}\right)^{q-2}.
 		\end{align*}
 		With the inequality
 		\begin{align*}
			\sum_{q=2}^{K}\frac{q^{\alpha}}{q!}\lambda^2\left(c_{S,\beta}\lambda\varepsilon\frac{|\Delta|}{m(\Delta)}\right)^{q-2} \leq
			\sum_{q=2}^{\infty}\frac{q^{\alpha}}{q!}\lambda^2(\varepsilon c_{S,\lambda,\beta})^{q-2} \stackrel{\varepsilon<1}{\leq}
			c_{\lbrace S,\lambda,\beta\rbrace},
 		\end{align*}
 		we arrive at
 		\begin{align*}
			|\rho_{\mu}|\leq c_{\lbrace S,\lambda,\beta\rbrace}\frac{\varepsilon^2|\Delta|^2}{m(\Delta)^2}k_{m(\Delta)}. 
 		\end{align*}
	\end{proof}
	\begin{lemma}[\cite{MuellerRiegler2020}]
		\label{lem:Pi-omega}
		For $\lambda$-regular partitions $\mu=\left\lbrace j_1,\cdots,j_K\right\rbrace$ of $\Delta\in\mathrm{segm}^+$ satisfying $|\Delta|\leq \beta m(\Delta)$ for some $\beta>0$, we have the following bound
		\begin{align*}
			|\Pi^{\mu}-\tilde{\omega}_{\Delta}|&\leq c_{\lbrace S,\lambda,\beta\rbrace}\left(\varrho(\Delta)^{\alpha-1}+\varrho(\Delta)^{2\alpha-1}+1\right)\frac{\varepsilon|\Delta|^2}{m(\Delta)^{2}} k_{m(\Delta)}\\
			&\leq c_{\lbrace S,\lambda,\beta\rbrace}\frac{\varepsilon|\Delta|^2}{m(\Delta)^{2}} k_{m(\Delta)}.
		\end{align*}
	\end{lemma}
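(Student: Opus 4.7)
The lemma is a direct synthesis of the two preceding estimates in the decomposition
\[
    \Pi^\mu = \tilde{\omega}_\Delta + \varepsilon\sum_{s=1}^{K}(b_{j_s}-v_s) + \rho_\mu
\]
derived in the text just before Lemma \ref{lem:sum}. The plan is therefore to apply the triangle inequality to isolate $\Pi^\mu - \tilde{\omega}_\Delta$, then feed in Lemma \ref{lem:sum} and Lemma \ref{lem:rho} to control the two remaining terms, and finally absorb the dependence on $\varrho(\Delta)$ into the $c_{\{S,\lambda,\beta\}}$ constants using the hypothesis $|\Delta|\leq \beta\, m(\Delta)$.

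First, I would write
\[
    |\Pi^\mu - \tilde{\omega}_\Delta| \;\leq\; \varepsilon\sum_{s=1}^{K}|b_{j_s}-v_s| + |\rho_\mu|.
\]
Applying Lemma \ref{lem:sum} to the first summand gives a bound of
$c_S\varepsilon\bigl(\varrho(\Delta)^{\alpha-1}+\varrho(\Delta)^{2\alpha-1}\bigr)\,|\Delta|^2\,m(\Delta)^{-2}\,k_{m(\Delta)}$,
while Lemma \ref{lem:rho}—whose hypotheses coincide exactly with those of the present lemma ($\lambda$-regularity and $|\Delta|\leq\beta m(\Delta)$)—bounds $|\rho_\mu|$ by $c_{\{S,\lambda,\beta\}}\,\varepsilon^2|\Delta|^2\,m(\Delta)^{-2}\,k_{m(\Delta)}$. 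Pulling out the common prefactor $\varepsilon|\Delta|^2\,m(\Delta)^{-2}\,k_{m(\Delta)}$ from both contributions yields
\[
    |\Pi^\mu - \tilde{\omega}_\Delta| \;\leq\; \frac{\varepsilon|\Delta|^2}{m(\Delta)^2}\,k_{m(\Delta)}\;\Bigl(c_S\bigl(\varrho(\Delta)^{\alpha-1}+\varrho(\Delta)^{2\alpha-1}\bigr) + c_{\{S,\lambda,\beta\}}\,\varepsilon\Bigr),
\]
which, after noting that $\varepsilon$ may be assumed bounded (say $\varepsilon<1$, as in the use of this restriction in the proof of Lemma \ref{lem:rho}), is exactly the first asserted inequality with the $+1$ accounting for the $\rho_\mu$-contribution.

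For the second, cruder inequality, the hypothesis $|\Delta|\leq\beta m(\Delta)$ forces $\varrho(\Delta)=M(\Delta)/m(\Delta)\leq 1+\beta$, so both $\varrho(\Delta)^{\alpha-1}$ and $\varrho(\Delta)^{2\alpha-1}$ are dominated by a constant depending only on $\alpha$ (hence on $S$) and $\beta$. Absorbing this constant into $c_{\{S,\lambda,\beta\}}$ delivers
\[
    |\Pi^\mu - \tilde{\omega}_\Delta| \;\leq\; c_{\{S,\lambda,\beta\}}\,\frac{\varepsilon|\Delta|^2}{m(\Delta)^2}\,k_{m(\Delta)}.
\]

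There is no real obstacle here—the heavy lifting has already been done in Lemmas \ref{lem:sum} and \ref{lem:rho}, and the present lemma is essentially bookkeeping. The only point requiring a sentence of care is the observation that the $\varepsilon^2$ from $|\rho_\mu|$ contributes one power of $\varepsilon$ into the common prefactor and one power of $\varepsilon$ into the constant $c_{\{S,\lambda,\beta\}}$ (using $\varepsilon<1$), which is what makes the $+1$ appear alongside the $\varrho(\Delta)^{\alpha-1}+\varrho(\Delta)^{2\alpha-1}$ terms in the first form of the bound.
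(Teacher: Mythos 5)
Your proof is correct and follows exactly the route of the paper: apply the triangle inequality to the decomposition $\Pi^\mu = \tilde{\omega}_\Delta + \varepsilon\sum_s(b_{j_s}-v_s)+\rho_\mu$, invoke Lemma \ref{lem:sum} and Lemma \ref{lem:rho}, absorb the extra factor of $\varepsilon$ from $|\rho_\mu|$ using $\varepsilon<1$, and for the second inequality use $\varrho(\Delta)\leq 1+\beta$. Nothing to add.
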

	\begin{proof}
		Combining Lemmata \ref{lem:sum} and \ref{lem:rho}, we immediately obtain the desired inequality 
		\begin{align*}
			|\Pi^{\mu}-\tilde{\omega}_{\Delta}|
			&\leq \varepsilon\sum_{s=1}^{K}|b_{j_s}-v_s|+|\rho_{\mu}|\\
			&\leq\left(c_S\left(\varrho(\Delta)^{\alpha-1}+\varrho(\Delta)^{2\alpha-1}\right)+c_{\lbrace S,\lambda,\beta\rbrace}\right)\frac{\varepsilon|\Delta|^2}{m(\Delta)^{2}}k_{m(\Delta)},
		\end{align*}
		since $\varepsilon<1$. The second inequality stated in the Lemma follows from the simple observation that $\varrho(\Delta)\leq1+\beta$.
	\end{proof}
	\begin{lemma}[\cite{MuellerRiegler2020}]
		\label{lem:Pi}
		For $\lambda$-regular partitions $\mu=\left\lbrace j_1,\cdots,j_K\right\rbrace$ of $\Delta\in\mathrm{segm}^+$ satisfying $|\Delta|\leq \beta m(\Delta)$ for some $\beta>0$ and $0<m(\Delta)<1$, we have the following estimate for $|\Pi^{\mu}|$
		\begin{align*}
			|\Pi^{\mu}|\leq k_{|\Delta|} + c_{\lbrace S,\lambda,\beta\rbrace}\frac{\varepsilon|\Delta|}{m(\Delta)}k_{m(\Delta)}\leq k_{|\Delta|} + c_{\lbrace S,\lambda,\beta\rbrace}\varepsilon k_{m(\Delta)}.
		\end{align*}	
	\end{lemma}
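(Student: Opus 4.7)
The plan is to deduce this bound directly by combining the previous lemma (Lemma \ref{lem:Pi-omega}) with the definition of $\tilde{\omega}_\Delta$ and the kernel bound in Lemma \ref{lem:b_Delta}, via the triangle inequality.

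First I would write
\begin{align*}
    |\Pi^\mu| \leq |\Pi^\mu - \tilde{\omega}_\Delta| + |\tilde{\omega}_\Delta| \leq |\Pi^\mu - \tilde{\omega}_\Delta| + k_{|\Delta|} + \varepsilon |b_\Delta|,
\end{align*}
using $\tilde{\omega}_\Delta = k_{|\Delta|} - \varepsilon b_\Delta$ (Definition \ref{def:omega-tilde}) and the fact that $k_{|\Delta|}$ is non-negative. The first summand is already controlled by Lemma \ref{lem:Pi-omega}, which gives
\begin{align*}
    |\Pi^\mu - \tilde{\omega}_\Delta| \leq c_{\lbrace S,\lambda,\beta\rbrace} \frac{\varepsilon|\Delta|^2}{m(\Delta)^2} k_{m(\Delta)}.
\end{align*}

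Next I would control $|b_\Delta|$. By Lemma \ref{lem:b_Delta}(2),
\begin{align*}
    |b_\Delta| \leq c_S \frac{\varrho(\Delta)^{\alpha-1}}{m(\Delta)} |\Delta| \, k_{m(\Delta)}.
\end{align*}
The assumption $|\Delta| \leq \beta \, m(\Delta)$ yields $\varrho(\Delta) = M(\Delta)/m(\Delta) \leq 1 + \beta$, so $\varrho(\Delta)^{\alpha-1}$ is absorbed into a constant $c_{S,\beta}$. Similarly, $|\Delta|/m(\Delta) \leq \beta$ lets me bound $|\Delta|^2/m(\Delta)^2 \leq \beta \cdot |\Delta|/m(\Delta)$, so that the error term from Lemma \ref{lem:Pi-omega} and the $\varepsilon |b_\Delta|$ term are both of the shape $c \cdot \varepsilon \, (|\Delta|/m(\Delta)) \, k_{m(\Delta)}$. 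Adding them produces the first claimed bound
\begin{align*}
    |\Pi^\mu| \leq k_{|\Delta|} + c_{\lbrace S,\lambda,\beta\rbrace} \frac{\varepsilon |\Delta|}{m(\Delta)} k_{m(\Delta)}.
\end{align*}

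Finally, the second inequality is immediate: invoking once more $|\Delta|/m(\Delta) \leq \beta$ absorbs the quotient into the constant, yielding $|\Pi^\mu| \leq k_{|\Delta|} + c_{\lbrace S,\lambda,\beta\rbrace} \varepsilon \, k_{m(\Delta)}$. There is really no main obstacle here; the statement is essentially a book-keeping corollary of Lemma \ref{lem:Pi-omega} and Lemma \ref{lem:b_Delta}, and the only care needed is to track that $\varrho(\Delta)$ and $|\Delta|/m(\Delta)$ are both controlled by $\beta$ under the hypothesis $|\Delta| \leq \beta \, m(\Delta)$. The condition $m(\Delta) < 1$ is not used in obtaining these bounds but presumably ensures consistency with the regime where the lemma will subsequently be applied.
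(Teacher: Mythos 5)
Your argument is correct and mirrors the paper's proof exactly: both split $|\Pi^\mu|\leq|\Pi^\mu-\tilde\omega_\Delta|+|\tilde\omega_\Delta|$, bound $|\tilde\omega_\Delta|$ via the definition and Lemma~\ref{lem:b_Delta}, bound the difference via Lemma~\ref{lem:Pi-omega}, and use $|\Delta|/m(\Delta)\leq\beta$ and $\varrho(\Delta)\leq1+\beta$ to absorb the quotients into the constant. Your side remark that the hypothesis $m(\Delta)<1$ is never used is also accurate.
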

	\begin{proof}
		Considering the definition of $\tilde{\omega}_{\Delta}=k_{|\Delta|}-\varepsilon b_{\Delta}$ we obtain the bound 
		\begin{align*}
			|\tilde{\omega}_{\Delta}|&\leq k_{|\Delta|}+\varepsilon c_S\frac{\varrho(\Delta)^{\alpha-1}}{m(\Delta)}|\Delta|k_{m(\Delta)}\\
			&\leq k_{|\Delta|}+\varepsilon c_S\frac{(1+\beta)^{\alpha-1}}{m(\Delta)}|\Delta|k_{m(\Delta)}.
		\end{align*}
		Furthermore Lemma \ref{lem:Pi-omega} yields the bound
		\begin{align*}
			|\Pi^{\mu}-\tilde{\omega}_{\Delta}|&\leq c_{\lbrace S,\lambda,\beta\rbrace}\frac{\varepsilon|\Delta|^2}{m(\Delta)^{2}} k_{m(\Delta)}\leq c_{\lbrace S,\lambda,\beta\rbrace}\frac{\varepsilon\beta|\Delta|}{m(\Delta)} k_{m(\Delta)}.
		\end{align*}
		Combining these two estimate with the estimate, we immediately obtain the desired bound.
	\end{proof}
	The bounds on $\Pi^\mu$, finally give reassurance that in fact the iterated kernel is again nice in a sense that it is majorized by the Martin kernel. In particular, it is again continuous and integrable w.r.t. to the measure $\kappa\otimes\omega^{z_0}$, for an arbitrary bounded measure $\kappa$ on $S$.
	
	Now we present one more minor technical detail, which seems trivial but is critical in the next lemma, thus it will be separately stated.
	\begin{lemma}
	\label{lem:lambda_beta}
	Let $\mu$ be any partition of $\Delta\in\mathrm{segm}^+$, $\nu\subseteq\mu$ a subpartition of $\mu$  and $\beta>0, \lambda\geq1$. Then the following holds:
	\begin{enumerate}
		\item If $|\Delta|\leq\beta m(\Delta)$ then $|\mathcal{U}(\nu)|\leq\beta m(\mathcal{U}(\nu)).$
		\item If $\mu$ is a $\lambda$-regular partition of $\Delta$, then $\nu$ is a $\lambda$-regular partition of $\mathcal{U}(\nu)$.
	\end{enumerate}
\end{lemma}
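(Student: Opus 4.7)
The plan is to derive both items from the elementary set-theoretic inclusion $\mathcal{U}(\nu)\subseteq\mathcal{U}(\mu)=\Delta$ together with the monotonicity of $\sup$ and $\inf$ under subset inclusion; no analysis beyond manipulating interval endpoints is involved.

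For item (1), I would first observe that because every interval in $\nu$ already lies in $\mu$, one has $\mathcal{U}(\nu)\subseteq\Delta$. This immediately yields the two comparisons $m(\mathcal{U}(\nu))\geq m(\Delta)$ (the left endpoint of a subset of $\mathbb{R}^+$ is at least that of the whole) and $|\mathcal{U}(\nu)|\leq|\Delta|$. Chaining these with the hypothesis $|\Delta|\leq\beta m(\Delta)$ gives
\[
    |\mathcal{U}(\nu)|\leq|\Delta|\leq\beta m(\Delta)\leq\beta m(\mathcal{U}(\nu)),
\]
which is the asserted inequality.

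For item (2), I would simply compare the sup and inf taken over $\nu$ to those taken over $\mu$. Since $\nu\subseteq\mu$, one has $\sup_{j\in\nu}|j|\leq\sup_{j\in\mu}|j|$ and $\inf_{j\in\nu}|j|\geq\inf_{j\in\mu}|j|$. Inserting the $\lambda$-regularity of $\mu$ between these two bounds produces
\[
    \sup_{j\in\nu}|j|\leq\sup_{j\in\mu}|j|\leq\lambda\inf_{j\in\mu}|j|\leq\lambda\inf_{j\in\nu}|j|,
\]
which is precisely the defining condition \eqref{eq:lambda_reg1} of $\lambda$-regularity of $\nu$ as a partition of $\mathcal{U}(\nu)$.

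There is no genuine obstacle here: the lemma is a bookkeeping device isolating that the parameters $\beta$ and $\lambda$ descend from any partition to its subpartitions, and each part reduces to one line of endpoint comparisons. The only subtlety worth a remark is that both arguments implicitly require $\mathcal{U}(\nu)$ to be a well-defined element of $\mathrm{segm}^+$, which is guaranteed when the elements of $\nu$ are consecutive in $\mu$, the intended reading of the word \emph{subpartition}.
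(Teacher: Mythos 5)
Your proof is correct and matches the paper's argument almost line for line: part (1) uses $m(\mathcal{U}(\nu))\geq m(\Delta)$ together with $|\mathcal{U}(\nu)|\leq|\Delta|$ and the hypothesis, and part (2) chains the sup/inf monotonicity under $\nu\subseteq\mu$ with the $\lambda$-regularity of $\mu$. Your closing remark about needing $\mathcal{U}(\nu)\in\mathrm{segm}^+$ (i.e.\ $\nu$ consisting of consecutive intervals) is a fair observation and is indeed how the lemma is applied in Lemma \ref{lem:refinement_of_part_kern}.
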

\begin{proof}
	\begin{enumerate}
		\item Obviously we have $m(\Delta)\leq m(\mathcal{U}(\nu))$. With this we obtain,
		\begin{align*}
			|\mathcal{U}(\nu)|=\sum_{j\in\nu}|j|\leq|\Delta|\leq\beta m(\Delta)\leq\beta m(\mathcal{U}(\nu)).
		\end{align*}
		\item Since $\nu\subseteq\mu$ we immediately have the inequality
		\begin{align*}
			\sup_{\Delta\in\nu}|\Delta|\leq\sup_{\Delta\in\mu}|\Delta|\leq\lambda\inf_{\Delta\in\mu}|\Delta|\leq\lambda\inf_{\Delta\in\nu}|\Delta|.
		\end{align*}
	\end{enumerate}
\end{proof}
	The main message of the above lemma is that $\lambda$-regularity is preserved for subpartitions, a circumstance that will be exploited in the following Lemma, in which we prove the convergence of the kernels $\Pi^\mu$.
	\begin{lemma}[\cite{MuellerRiegler2020}]
		\label{lem:refinement_of_part_kern}
		Let $\lambda\geq 1$ and $\sigma,\tau$ be $\lambda$-regular partitions of $\Delta\in\mathrm{segm}^+$ such that $\sigma\succ\tau$ and $|\Delta|\leq \beta m(\Delta)$, $\beta>0$. Then we have
		\begin{align*}
			|\Pi^{\tau}-\Pi^{\sigma}|\leq \frac{c_{\lbrace S,\lambda,\beta\rbrace}}{m(\Delta)}\sup_{j\in\tau}|j| k_{m(\Delta)}.
		\end{align*}
	\end{lemma}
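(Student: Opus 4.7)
The plan is to exploit the refinement structure $\sigma \succ \tau$ via a telescoping identity. Write $\tau = \{j_1, \dots, j_K\}$ and, since each $j' \in \sigma$ lies in a unique $j_s \in \tau$, decompose $\sigma = \bigsqcup_{s=1}^{K} \sigma_s$ where $\sigma_s := \{j' \in \sigma : j' \subseteq j_s\}$ is a partition of $j_s$. Then
\[
\Pi^{\tau} = \tilde{\omega}_{j_K}\circ\cdots\circ\tilde{\omega}_{j_1}, \qquad
\Pi^{\sigma} = \Pi^{\sigma_K}\circ\cdots\circ\Pi^{\sigma_1},
\]
and the standard telescoping identity for differences of products yields
\[
\Pi^{\tau} - \Pi^{\sigma} = \sum_{s=1}^{K} A_s \circ \bigl(\tilde{\omega}_{j_s} - \Pi^{\sigma_s}\bigr) \circ B_s,
\]
where $A_s := \tilde{\omega}_{j_K}\circ\cdots\circ\tilde{\omega}_{j_{s+1}}$ (identity if $s=K$) and $B_s := \Pi^{\sigma_{s-1}}\circ\cdots\circ\Pi^{\sigma_1}$ (identity if $s=1$).

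Next I would estimate each factor separately. Lemma \ref{lem:lambda_beta} guarantees that $\sigma_s$ is $\lambda$-regular on $j_s$ with $|j_s| \leq |\Delta| \leq \beta m(\Delta) \leq \beta m(j_s)$, so Lemma \ref{lem:Pi-omega} gives
\[
|\tilde{\omega}_{j_s} - \Pi^{\sigma_s}| \leq c_{\{S,\lambda,\beta\}}\,\frac{\varepsilon |j_s|^2}{m(j_s)^2}\, k_{m(j_s)}.
\]
For $A_s$ (associated to the $\lambda$-regular partition $\{j_{s+1},\dots,j_K\}$ of $[M(j_s),M(\Delta)]$) and $B_s$ (associated to the sub-partitions of $[m(\Delta),m(j_s)]$), Lemma \ref{lem:Pi} provides bounds of the form $k_{(\cdot)} + c_{\{S,\lambda,\beta\}}\varepsilon\, k_{(\cdot)}$ at appropriate heights bounded by $M(\Delta)$.

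The third step is to turn these three factor bounds into a single bound on $k_{m(\Delta)}$. Using the semigroup property $k_a \circ k_b = k_{a+b}$ from Lemma \ref{lem:MartinKernel}(2), the composition of the three factors is a sum of kernels of the form $k_y$ with $m(\Delta)\leq y \leq 2M(\Delta)$, multiplied by $|j_s|^2/m(j_s)^2$. Since $|\Delta|\leq \beta m(\Delta)$ forces $y/m(\Delta) \leq 2(1+\beta)$, the quotient bound (Lemma \ref{lem:MartinKernel}(3)) collapses each $k_y$ into a constant multiple of $k_{m(\Delta)}$, so
\[
|A_s \circ (\tilde{\omega}_{j_s} - \Pi^{\sigma_s}) \circ B_s| \leq c_{\{S,\lambda,\beta\}}\, \frac{|j_s|^2}{m(\Delta)^2}\, k_{m(\Delta)}.
\]

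Finally I would sum over $s$ via
\[
\sum_{s=1}^{K} |j_s|^2 \leq \Bigl(\sup_{j\in\tau}|j|\Bigr)\sum_{s=1}^{K}|j_s| = \Bigl(\sup_{j\in\tau}|j|\Bigr)\,|\Delta| \leq \beta\,\Bigl(\sup_{j\in\tau}|j|\Bigr)\, m(\Delta),
\]
which gives exactly the claimed estimate
\[
|\Pi^{\tau}-\Pi^{\sigma}| \leq \frac{c_{\{S,\lambda,\beta\}}}{m(\Delta)}\sup_{j\in\tau}|j|\,k_{m(\Delta)}.
\]
The main technical obstacle is the bookkeeping in the composition step: distributing the sums in the bounds on $A_s$ and $B_s$ produces several cross terms, and one must check that every one of them lies at a height between $m(\Delta)$ and $2(1+\beta)m(\Delta)$ so that the quotient bound reduces it to $k_{m(\Delta)}$ with a constant depending only on $S$, $\lambda$ and $\beta$.
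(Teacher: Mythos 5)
Your proposal is correct and follows essentially the same route as the paper: the telescoping identity you write down (with $A_s \circ (\tilde{\omega}_{j_s} - \Pi^{\sigma_s}) \circ B_s$ as the generic summand) is exactly the paper's $\Pi_i - \Pi_{i+1} = \Pi^{\tau_i^+} \circ (\tilde{\omega}_{\Delta_i} - \Pi^{\sigma_i}) \circ \Pi^{\sigma_i^-}$, and the subsequent use of Lemma~\ref{lem:lambda_beta}, Lemma~\ref{lem:Pi-omega}, Lemma~\ref{lem:Pi}, the semigroup property, the quotient bound, and the final sum $\sum_s |j_s|^2 \leq (\sup_{j\in\tau}|j|)|\Delta|$ all match the paper's argument. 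The only slip is arithmetic: the cross-term heights can reach $3M(\Delta) = 3(1+\beta)m(\Delta)$ (not $2(1+\beta)m(\Delta)$ as you state), but since the quotient bound still collapses everything to a constant multiple of $k_{m(\Delta)}$ depending only on $S$, $\lambda$, $\beta$, this does not affect the conclusion.
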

	\begin{proof}
		Suppose $|\tau|=K$ and write $\tau=\lbrace\Delta_1,\dots,\Delta_K\rbrace$ with $m(\Delta_1)<\dots<m(\Delta_K)$. Define $\sigma_k$ as the partition of $\Delta_k$ by $\sigma$ , thus $\sigma_k:=\lbrace j\in\sigma|j\subseteq\Delta_k\rbrace$. Obviously $\sigma=\bigcup_{1\leq k\leq K}\sigma_k.$
		Now fix $i\in\lbrace2,\dots,K\rbrace$ and define $\sigma_i^-:=\bigcup_{0\leq k<i}\sigma_k$ as the the partition of the interval $\left(m(\Delta_1),m(\Delta_i)\right)$ by $\sigma$. Analogously define $\tau_i^+=\bigcup_{i<k\leq K}\lbrace\Delta_k\rbrace$ as the partition of the interval $\left(m(\Delta_{i+1}),M(\Delta_K)\right)$ by $\tau$. Additionally, we set $\sigma_1^-=\emptyset=\tau_{K+1}^+$, Finally we denote $\tau(i)=\sigma_i^-\cup\lbrace\Delta_i\rbrace\cup\tau_i^+$ and $\tau(K+1)=\sigma$. Writing $\Pi^{\tau(i)}:=\Pi_i$, this yields by telescoping
		\begin{align}
			\label{eq:Pi_telescoping}
			\Pi^{\tau}-\Pi^{\sigma}=\sum_{i=1}^{K}\left(\Pi_i-\Pi_{i+1}\right).
		\end{align}
		For $1\leq i\leq K$ we now have
		\begin{align}
			\label{eq:subparts}
			\Pi_i-\Pi_{i+1}=\Pi^{\tau_i^+}\circ\left(\tilde{\omega}_{\Delta_i}-\Pi^{\sigma_i}\right)\circ\Pi^{\sigma_i^-}.
		\end{align}
		In equation \eqref{eq:subparts} we have rewritten the difference of two subsequent refinements from left to right as concatenation of kernels depending on the partitions:
		\begin{align*}
			\tau_i^+ \quad&\mathrm{of} \quad \Delta_i^+:=\mathcal{U}(\tau_i^+)\\
			\sigma_i\quad&\mathrm{of}\quad \Delta_i\\
			\sigma_i^-\quad&\mathrm{of}\quad \Delta_i^-:=\mathcal{U}(\sigma_i^-)
		\end{align*}
		Since $|\Delta|\leq \beta m(\Delta)$, Lemma \ref{lem:lambda_beta} guarantees, that for each partition above, the measure of its union is bounded by the smallest number in the union with constant $\beta$. Furthermore, again by Lemma \ref{lem:lambda_beta}, since $\tau$ and $\sigma$ are $\lambda$-regular, the partitions above all are again $\lambda$-regular. Hence Lemmata \ref{lem:Pi-omega} and \ref{lem:Pi} apply and we have bounds for every summand in \eqref{eq:Pi_telescoping}. By the triangle inequality and $\varepsilon<1$, we get after lengthy computations:
		\begin{align*}
			|\Pi_i-\Pi_{i+1}|&\leq \left(k_{|\Delta_i^+|} + c_{\lbrace S,\lambda,\beta\rbrace} k_{m(\Delta_i^+)}\right)
			\circ
			\left( c_{\lbrace S,\lambda,\beta\rbrace} \frac{|\Delta_i|^2}{m(\Delta_i)^2} k_{m(\Delta_i)}\right)\circ
			\left(k_{|\Delta_i^-|} + c_{\lbrace S,\lambda,\beta\rbrace} k_{m(\Delta_i^-)}\right)
			\\
			&=c_{\lbrace S,\lambda,\beta\rbrace} \frac{|\Delta_i|^2}{m(\Delta_i)^2}\left(k_{|\Delta_i^+|+m(\Delta_i)}+c_{\lbrace S,\lambda,\beta\rbrace} k_{m(\Delta_i^+)+m(\Delta_i)}\right)\circ\left(k_{|\Delta_i^-|} + c_{\lbrace S,\lambda,\beta\rbrace}k_{m(\Delta_i^-)}\right)
			\\
			&\leq c_{\lbrace S,\lambda,\beta\rbrace} \frac{|\Delta_i|^2}{m(\Delta_i)^2}\left(k_{|\Delta_i^+|+m(\Delta_i)+|\Delta_i^-|}+c_{\lbrace S,\lambda,\beta\rbrace} k_{m(\Delta_i^+)+m(\Delta_i)+|\Delta_i^-|}\right.\\ &\mkern175mu \left.+c_{\lbrace S,\lambda,\beta\rbrace}k_{|\Delta_i^+|+m(\Delta_i)+m(\Delta_i^-)}+c_{\lbrace S,\lambda,\beta\rbrace} k_{m(\Delta_i^+)+m(\Delta_i)+m(\Delta_i^-)}\right).
		\end{align*}
		A rough estimation yields that the heights
		\begin{align*}
			&|\Delta_i^+|+m(\Delta_i)+|\Delta_i^-|\\
			&m(\Delta_i^+)+m(\Delta_i)+|\Delta_i^-|\\
			&|\Delta_i^+|+m(\Delta_i)+m(\Delta_i^-)\\
			&m(\Delta_i^+)+m(\Delta_i)+m(\Delta_i^-)
		\end{align*} 
		are all in the interval $[m(\Delta),3M(\Delta)]$. Thus we proceed using the quotient bound for the Martin kernel in Lemma \ref{lem:MartinKernel} and obtain
		\begin{align*}
			|\Pi_i-\Pi_{i+1}|&\leq c_{\lbrace S,\lambda,\beta\rbrace}\frac{|\Delta_i|^2}{m(\Delta_i)^2}\cdot c_S \left(\frac{3M(\Delta)}{m(\Delta)}\right)^\alpha k_{m(\Delta)}\\
			&\leq c_{\lbrace S,\lambda,\beta\rbrace}\frac{|\Delta_i|^2}{m(\Delta_i)^2} k_{m(\Delta)},		
		\end{align*}
		since $\varrho(\Delta)\leq1+\beta$..
		Calculating the sum in \eqref{eq:Pi_telescoping}, we finally obtain the estimate
		\begin{align*}
			|\Pi^{\tau}-\Pi^{\sigma}|&\leq \sum_{i=1}^{K}c_{\lbrace S,\lambda,\beta\rbrace}\frac{|\Delta_i|^2}{m(\Delta_i)^2} k_{m(\Delta)}\\
			&\leq \frac{c_{\lbrace S,\lambda,\beta\rbrace}}{m(\Delta)^2}\sup_{j\in\tau}|j|\sum_{i=1}^{K}|\Delta_i|k_{m(\Delta)}\\
			&\leq \frac{\beta c_{\lbrace S,\lambda,\beta\rbrace}}{m(\Delta)}\sup_{j\in\tau}|j| k_{m(\Delta)},
		\end{align*}
		as $|\Delta|\leq\beta m(\Delta)$.
 	\end{proof}
 	\subsubsection{Weak $\lambda$-regularity}
 		We compare the condition of $\lambda$-regularity given in \eqref{eq:lambda_reg1} with the following condition:\par 
 		Let $\tau$ be a partition of $\Delta\in\mathrm{segm}^+$ and $\lambda\geq1$. We call $\tau$ a weakly $\lambda$-regular partition of $\Delta$ if
		\begin{align}
 		\label{eq:weak_lambda_reg}
 			\sup_{j\in\tau}|j|\leq\lambda\frac{\Delta}{|\tau|}.
 		\end{align}
 		The name of this condition is justified, since \eqref{eq:lambda_reg1} implies \eqref{eq:lambda_reg2} and thus \eqref{eq:weak_lambda_reg}.
 		We make this distinction because in the paper by Havin and Mozolyako \cite{HavinMozol2016}, merely condition \eqref{eq:weak_lambda_reg} was imposed on the refining partitions in the proof of Lemma \ref{lem:refinement_of_part_kern}. This is sufficient for $1$-regular partitions, since $1$-regularity implies that all elements in the partition have equal measure and consequently every subpartition of a $1$-regular partition is again $1$-regular. For $\lambda>1$, we will however, demonstrate that this condition leaves a logical gap in the proof of Lemma \ref{lem:refinement_of_part_kern}, by showing that in general weak $\lambda$-regularity is not preserved for subpartitions if $\lambda>1$. 
 		
 		As announced, we will shortly digress from the topics above to attend to the following general Problem: Suppose we are given an arbitrary partition $\tau$ of $\Delta$, that is weakly $\lambda$-regular. The natural question that arises is whether any subpartition $\tau_1\subset\tau$ is again a weakly $\lambda$-regular partition of $\mathcal{U}(\tau_1)$. This statement can be promptly refuted with an easy counterexample.
 		Consider the intervals 
 		\begin{align*}
 		\Delta_1=\left[0,\frac{1}{16}\right) && \Delta_2=\left[\frac{1}{16},\frac{7}{16}\right) && \Delta_3=\left[\frac{7}{16},\frac{1}{2}\right) && \Delta_4=\left[\frac{1}{2},1\right].
 		\end{align*}
 		Now $\tau:=\left\lbrace\Delta_i|i\in\lbrace1,\dots,4\rbrace\right\rbrace$ is a weakly $2$-regular partition of $\left[0,1\right]$, since
 		\[
 		\frac{1}{2}=|\Delta_4|=\max_{\Delta\in\tau}|\Delta|\leq 2\frac{|\left[0,1\right]|}{|\tau|}=\frac{1}{2}.
 		\]
 		However the partition $\lbrace\Delta_1,\Delta_2,\Delta_3\rbrace$ of $\left[0,\frac{1}{2}\right]$ is no longer weakly $2$-regular as
 		\[
 		\frac{3}{8}=|\Delta_2|=\max\lbrace|\Delta_1|,|\Delta_2|,|\Delta_3|\rbrace >2\frac{|\left(0,\frac{1}{2}\right)|}{3}=\frac{1}{3}.
 		\]
 		The next question that arises, is whether it is possible to derive a bound on the constant of regularity of an arbitrary subpartition. The following attempt delivers the sought after estimates but only under some restricting assumptions on the subpartition.
		\begin{theorem}
			\label{thm:reg_bound}
			Let $\Delta$ be an interval with $|\Delta|<\infty$ and $\tau$ be a weakly $\lambda$-regular partition of $\Delta$. Furthermore let $\mu\subset\tau$ such that $\lambda|\mu|<|\tau|$. Then the partition $\tau_1:=\tau\setminus\mu$ of the set $\mathcal{U}(\tau_1)$ is weakly regular with constant $
			\frac{\lambda|\tau_1|}{|\tau|-\lambda|\mu|}
			$, so in particular we have 
			\[
				\sup_{j\in\tau_1}|j|\leq\frac{\lambda|\tau_1|}{|\tau|-\lambda|\mu|}\frac{|\mathcal{U}(\tau_1)|}{|\tau_1|}=\frac{\lambda}{|\tau|-\lambda|\mu|}|\mathcal{U}(\tau_1)|.
			\]
		\end{theorem}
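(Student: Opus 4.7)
\bigskip

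\noindent\textbf{Proof plan for Theorem \ref{thm:reg_bound}.} The strategy is a short, direct computation: use weak $\lambda$-regularity of $\tau$ to bound the total length $|\mathcal{U}(\mu)|$ from above, rearrange to get a lower bound on $|\mathcal{U}(\tau_1)|$, and plug it back into the weak regularity inequality for $\tau$.

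First, I would exploit that $\mu\subseteq\tau$ and $\tau$ is weakly $\lambda$-regular, to write
\begin{align*}
|\mathcal{U}(\mu)| \;=\; \sum_{j\in\mu}|j| \;\leq\; |\mu|\sup_{j\in\tau}|j| \;\leq\; |\mu|\cdot\lambda\frac{|\Delta|}{|\tau|} \;=\; \frac{\lambda|\mu|}{|\tau|}\,|\Delta|.
\end{align*}
Since $\tau_1 = \tau\setminus\mu$ is a partition of $\Delta$ together with $\mu$, we have $|\mathcal{U}(\tau_1)| = |\Delta| - |\mathcal{U}(\mu)|$, so the hypothesis $\lambda|\mu|<|\tau|$ guarantees positivity and yields
\begin{align*}
|\mathcal{U}(\tau_1)| \;\geq\; |\Delta|\left(1-\frac{\lambda|\mu|}{|\tau|}\right) \;=\; \frac{|\tau|-\lambda|\mu|}{|\tau|}\,|\Delta|,
\end{align*}
or equivalently $|\Delta|\leq\frac{|\tau|}{|\tau|-\lambda|\mu|}|\mathcal{U}(\tau_1)|$.

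Next, since $\tau_1\subseteq\tau$, the supremum of interval lengths in $\tau_1$ is bounded by the supremum over $\tau$, and combining this with the weak $\lambda$-regularity of $\tau$ and the lower bound on $|\mathcal{U}(\tau_1)|$ gives
\begin{align*}
\sup_{j\in\tau_1}|j| \;\leq\; \sup_{j\in\tau}|j| \;\leq\; \lambda\frac{|\Delta|}{|\tau|} \;\leq\; \frac{\lambda}{|\tau|-\lambda|\mu|}\,|\mathcal{U}(\tau_1)| \;=\; \frac{\lambda|\tau_1|}{|\tau|-\lambda|\mu|}\cdot\frac{|\mathcal{U}(\tau_1)|}{|\tau_1|},
\end{align*}
which is exactly the desired weak regularity of $\tau_1$ as a partition of $\mathcal{U}(\tau_1)$ with constant $\lambda|\tau_1|/(|\tau|-\lambda|\mu|)$.

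There is no real obstacle here: the argument is essentially two inequalities and an arithmetic rearrangement. The only point where the assumption $\lambda|\mu|<|\tau|$ actively enters is in ensuring that the denominator $|\tau|-\lambda|\mu|$ is strictly positive so that the resulting regularity constant is finite; this also explains why the counterexample in the paragraph preceding the theorem does not contradict the result, as there one takes $|\mu|=1$, $|\tau|=4$ and $\lambda=2$, which still yields a finite but correspondingly larger constant.
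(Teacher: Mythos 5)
Your proposal is correct and follows essentially the same route as the paper's own proof: both arguments bound $|\mathcal{U}(\mu)|=\sum_{j\in\mu}|j|$ by $|\mu|\sup_{j\in\tau}|j|$, invoke weak $\lambda$-regularity of $\tau$, use $\sup_{j\in\tau_1}|j|\leq\sup_{j\in\tau}|j|$, and rearrange. The only cosmetic difference is that you isolate an intermediate bound $|\Delta|\leq\frac{|\tau|}{|\tau|-\lambda|\mu|}|\mathcal{U}(\tau_1)|$, while the paper works directly with the chain $\lambda\frac{|\mathcal{U}(\tau_1)|}{|\tau_1|}\geq\frac{|\tau|-\lambda|\mu|}{|\tau_1|}\sup_{j\in\tau}|j|$; these are the same computation in a different order.
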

		\begin{proof}
			Since $\tau$ is weakly $\lambda$-regular and $\sup_{j\in\mu}|j|\leq\sup_{j\in\tau}|j|$, we have the estimates
			\begin{align*}
				\lambda\frac{|\mathcal{U}(\tau_1)|}{|\tau_1|}&=\frac{\lambda}{|\tau_1|}\left(|\mathcal{U}(\tau)|-|\mathcal{U}(\mu)|\right)\\
				&\geq\frac{1}{|\tau_1|}\left(|\tau|\sup_{j\in\tau}|j|-\lambda\sum_{j\in\mu}|j|\right)\\
				&\geq\frac{|\tau|-\lambda|\mu|}{|\tau_1|}\sup_{j\in\tau}|j|.
			\end{align*}
		Rearranging yields the desired inequality.
		\end{proof}
		\begin{remark}
			In the setting of Theorem \ref{thm:reg_bound}, the estimate above gives us a useful bound on the constant of regularity of the partition $\tau_1$ that results after taking away a sufficiently small subset $\mu$ of the original partition $\tau$. The condition $\lambda|\mu| <|\tau|$ can be reformulated as 
			\[
				\lambda|\mu| <|\tau|\Longleftrightarrow\lambda\left(|\tau|-|\tau_1|\right) <|\tau|\Longleftrightarrow |\tau_1|>\frac{\lambda-1}{\lambda}|\tau|.
			\]
			Thus, depending on the constant of weak regularity of $\tau$ we obtain information about the constant of weak regularity of subsets $\tau_1$ with cardinality strictly larger than $\frac{\lambda-1}{\lambda}|\tau|$.\par
			For $\lambda=1$ we can obtain a bound since $|\tau_1|\geq1>0$, however in this case we anyways have that $\tau_1$ is weakly $1$ regular since weak $1$-regularity of $\tau$ is equivalent to all elements of $\tau$ having the same length.\par
			If $\lambda$ is large, we obtain no useful information from Theorem \ref{thm:reg_bound}. Now consider the case that $\lambda=1+\varepsilon$ with small $\varepsilon>0$. Then we can bound subpartitions $\tau_1$ that have at least $\frac{\varepsilon}{1+\varepsilon}|\tau|$ elements however, no useful bound can be derived for subpartitions with smaller cardinality. This begs the question, whether it is even possible to a bound the constant of weak regularity for arbitrarily small subpartitions with respect to $\tau$. This problem is negatively answered in the following Theorem.
		\end{remark}
		\begin{theorem}
			\label{thm:counterex_reg}
			Let $\Delta$ with $|\Delta|<\infty$, $A\in\mathbb{N}$ and $\lambda>1$. Then there exists a weakly $\lambda$-regular partition $\tau$ of $\Delta$ and disjoint subpartitions $\tau_1,\tau_2\subset\tau$, i.e. $\tau_1\cap\tau_2=\emptyset$, such that $\tau=\tau_1\cup\tau_2$ and
			\begin{align}
				\label{eq:arb_irr}
				\sup_{j\in\tau_1}|j|\geq A\cdot\frac{|\mathcal{U}(\tau_1)|}{|\tau_1|}. 		
			\end{align}
		\end{theorem}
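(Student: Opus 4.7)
The plan is to construct $\tau$ as a disjoint union of one long interval, $A$ very short intervals, and a large number of intervals of a common moderate length, all together tiling $\Delta$. The subpartition $\tau_1$ will consist of the long interval together with the $A$ short ones, while $\tau_2$ will consist of all the moderate intervals. The guiding idea is that packing many tiny pieces into $\tau_1$ collapses the average $|\mathcal{U}(\tau_1)|/|\tau_1|$ without affecting $\sup_{j \in \tau_1} |j|$, forcing the ratio in \eqref{eq:arb_irr} above $A$; at the same time, placing sufficiently many moderate-length intervals in $\tau_2$ keeps the global average $|\Delta|/|\tau|$ small enough for $\tau$ itself to remain weakly $\lambda$-regular.

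Concretely, since the conditions depend only on interval lengths, I may assume $|\Delta|=1$. First fix the smallest integer $K$ with $K+1 \geq A/(\lambda-1)$, so that $K+1+A \leq \lambda(K+1)$. For a parameter $\varepsilon>0$ to be chosen later, set
\[
L := \frac{1-A\varepsilon}{K+1}
\]
and partition $\Delta$ consecutively into one interval of length $L$, then $A$ intervals of length $\varepsilon$ each, and finally $K$ further intervals each of length $L$. Declare $\tau_1$ to be the collection of the first $A+1$ intervals and $\tau_2$ the remaining $K$. By construction the lengths sum to $L(K+1)+A\varepsilon=1$, so $\tau$ is indeed a partition of $\Delta$.

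What remains is to check the two required properties, both for all sufficiently small $\varepsilon>0$. For the weak $\lambda$-regularity of $\tau$: as soon as $\varepsilon<L$, one has $\sup_{j\in\tau}|j|=L$ and $|\tau|=K+1+A$, so the inequality $L\leq \lambda/(K+1+A)$ rearranges to $(1-A\varepsilon)(K+1+A)\leq \lambda(K+1)$, which holds strictly for every small $\varepsilon>0$ by our choice of $K$. For the irregularity of $\tau_1$: one has $|\tau_1|=A+1$, $|\mathcal{U}(\tau_1)|=L+A\varepsilon$ and $\sup_{j\in\tau_1}|j|=L$, so \eqref{eq:arb_irr} reduces to $L(A+1)\geq A(L+A\varepsilon)$, i.e.\ $L\geq A^{2}\varepsilon$, which is again satisfied whenever $\varepsilon$ is small. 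Picking any $\varepsilon>0$ lying below all the thresholds collected above produces the desired partitions.

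The argument is not deep; the only point that deserves attention is the forced growth of $K$ like $1/(\lambda-1)$ as $\lambda\downarrow 1$, which is unavoidable: at $\lambda=1$ weak regularity demands that all elements of $\tau$ have equal measure, and no such counterexample can exist. Everything else is a matter of shrinking $\varepsilon$ to separate the long and short length scales inside $\tau_1$ while preserving both the total length of $\Delta$ and the global weak regularity of $\tau$.
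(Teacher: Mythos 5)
Your proof is correct and follows essentially the same construction as the paper's: one long interval plus $A$ very short ones in $\tau_1$, padded by a block of comparable intervals in $\tau_2$ to preserve weak $\lambda$-regularity of the whole. The only differences are parametric — you shrink the short intervals' common length $\varepsilon$ toward zero and use padding intervals of the same length $L$ as the long one, choosing the number $K$ of padding intervals first; the paper instead fixes the short length at $\omega/A^2$ (giving exact equality in \eqref{eq:arb_irr}), uses padding intervals of length $\tfrac{1+\varepsilon}{\lambda}\omega$, and then picks the number $N$ of them large enough afterward.
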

		\begin{proof}
			We first construct a partition without taking into account the interval we obtain when taking the union over the partition and then specify the construction to any given bounded interval.\par 
			In the first step we construct $\tau_1$. Let $\omega>0$ and define intervals $\Delta_0,\dots,\Delta_A$ such that
			\[
				M(\Delta_i)=m(\Delta_{i+1})\quad\text{for all } i\in\lbrace0,\dots,A-1\rbrace,
			\]
			$|\Delta_0|=\omega$ and $|\Delta_i|=\frac{\omega}{A^2}$ for $i\in\lbrace1,\dots, A\rbrace$. Obviously $|\Delta_0|\geq|\Delta_i|$ holds. We set $\tau_1=\lbrace\Delta_0,\dots\Delta_A\rbrace$ and verify equation \eqref{eq:arb_irr}:
			\begin{align*}
				A\cdot\frac{|\mathcal{U}(\tau_1)|}{|\tau_1|}=\frac{A}{A+1}\left(\omega+A\cdot\frac{\omega}{A^2}\right)=\omega=\sup_{j\in\tau_1}|j|.
			\end{align*}
			Choose $\varepsilon>0$ with $\frac{1+\varepsilon}{\lambda}<1$ and define $N\in\mathbb{N}$, $N>A+1$ large enough such that the inequality 
			\begin{align}
				\label{eq:lim_cond}
				(N-A-1)(1+\varepsilon)\geq \lambda\left(\frac{N}{\lambda}-1-\frac{1}{A}\right)
			\end{align}
			is satisfied. This choice is possible since the quotient
			\[
				q_n:=\frac{n-A-1}{\frac{n}{\lambda}-1-\frac{1}{A}}
			\]
			converges to $\lambda$, so in particular $q_n\geq\frac{\lambda}{1+\varepsilon}$ for large enough $n$. We proceed the construction by defining intervals $\Delta_{A+1},\dots,\Delta_N$ satisfying
			\[
				M(\Delta_i)=m(\Delta_{i+1})\quad\text{for all } i\in\lbrace A,\dots,N-1\rbrace,
			\]
			and $|\Delta_i|=\frac{1+\varepsilon}{\lambda}\omega$ for all $i\in\lbrace A+1,\dots, N\rbrace$. By our choice of $\varepsilon$ we have $|\Delta_i|<\omega$, so denoting $\tau:=\lbrace\Delta_0,\dots,\Delta_N\rbrace$ we obtain $\sup_{j\in\tau}|j|=\omega$. This construction yields a weakly $\lambda$-regular partition $\tau$ since
			\begin{align*}
				\lambda\frac{|\mathcal{U}(\tau)|}{|\tau|}&=\frac{\lambda}{N}\left(\sum_{j\in\tau_1}|j|+\sum_{j\in\tau\setminus\tau_1}|j|\right)
				\\
				&=\frac{\lambda}{N}\left(\omega+\frac{\omega}{A}+(N-A-1)\frac{1+\varepsilon}{\lambda}\omega\right)
				\\
				&\stackrel{\eqref{eq:lim_cond}}{\geq}\frac{\lambda\omega}{N}\left(1+\frac{1}{A}+\left(\frac{N}{\lambda}-1-\frac{1}{A}\right)\right)=\omega.
			\end{align*}
			Now let $\Delta$ be a bounded interval. The computations above show that $|\mathcal{U}(\tau)|$ is a non-zero multiple of $\omega$ so we can determine $\omega$ such that $|\mathcal{U}(\tau)|=|\Delta|$. If we choose $m(\Delta_0)=m(\Delta)$, the construction above yields a desired partition $\tau$ of $\Delta$.
		\end{proof}
		This result essentially tells us, that for any weakly $\lambda$-regular partition, the weak regularity of an arbitrary subpartition cannot be bounded. Therefore the process of piecewise refinement of a coarse partition to a fine partition, that the proof of Lemma \ref{lem:refinement_of_part_kern} heavily relies on, is not applicable to arbitrary weakly $\lambda$-regular partitions. This can be seen as follows. Let $\lambda>1$ be arbitrary and assume that the constant of regularity of any subpartition of a weakly $\lambda$-regular partition can be bounded by some $C$. Now we take the weakly $\lambda$-regular partition $\tau$ constructed in Theorem \ref{thm:counterex_reg} such that the subpartition $\tau_1$ is not $A$-regular, where $A>C$. Construct the partition $\mu$ by halving all intervals in $\tau$ i.e.
		 \begin{align*}
			\mu:=\left\lbrace\left[m(\Delta),m(\Delta)+\frac{|\Delta|}{2}\right],\left[m(\Delta)+\frac{|\Delta|}{2},M(\Delta)\right]:\Delta\in\tau\right\rbrace.
		\end{align*}
		Analogously, construct $\mu_1$ as the collection of all halved intervals in $\tau_1$. Now observe that $\mu$ is again weakly $\lambda$-regular and similarily $\mu_1$ is again not weakly $A$-regular, since
		\begin{align*}
			\sup_{j\in\tau}|j|=2\sup_{j\in\mu}|j| \quad \text{and}\quad |\tau|=2|\mu|
		\end{align*}
		as well as
			\begin{align*}
			\sup_{j\in\tau_1}|j|=2\sup_{j\in\mu_1}|j| \quad \text{and}\quad |\tau_1|=2|\mu_1|.
		\end{align*}
		Obviously $\mu$ is a refinement of $\tau$ and $\mu_1$ a refinement of $\tau_1$.
		\par 
	\subsection{The kernel $\omega_\Delta$ and its properties}
		Finally we have arrived at a point where we can define the kernel $\omega_{\Delta}$ and guarantee its existence. 
		\begin{theorem}[\cite{MuellerRiegler2020}]
			\label{thm:convergence_Pi}
			Fix $\lambda\geq1$, $\Delta\in\mathrm{segm}^+$ and let $(\tau_n(\Delta))_n=(\tau_n)_n$ be a sequence of $\lambda$-regular partitions such that for all $n\in\mathbb{N}$ we have $\tau_{n+1}\succ\tau_{n}$ and
			\begin{align}
			\label{eq:vanishing_fineness}
			\lim_{n\rightarrow\infty}\sup_{j\in\tau_n}|j|=0.
			\end{align}
			Then, for any compact $\mathcal{K}\subset S$, the sequence of continuous kernels $\left(\Pi^{\tau_n}\right)_n\subset C(S\times S)$ is uniformly convergent and independent of the approximating sequence of partitions and their degree of regularity. Furthermore there exists a function $\omega_\Delta\in C(S\times S)$ such that
			\begin{align}
				\omega_\Delta:=\lim_{n\rightarrow\infty}\Pi^{\tau_n} \quad\text{on } \mathcal{K}\times S
			\end{align}
			 for any compact subset $\mathcal{K}$ of $S$.
		\end{theorem}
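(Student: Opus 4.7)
The plan is to prove the theorem in two stages: first, extract a limit from each given sequence $(\tau_n)$ via a Cauchy argument; second, show that any two such limits agree. Fix $\beta:=|\Delta|/m(\Delta)$ so that the hypothesis $|\Delta|\leq\beta m(\Delta)$ of Lemma \ref{lem:refinement_of_part_kern} becomes automatic. For any $n\leq m$, the partition $\tau_m$ is a $\lambda$-regular refinement of $\tau_n$, so Lemma \ref{lem:refinement_of_part_kern} delivers the pointwise estimate
\begin{align*}
    |\Pi^{\tau_n}(x,\xi)-\Pi^{\tau_m}(x,\xi)| \leq \frac{c_{\lbrace S,\lambda,\beta\rbrace}}{m(\Delta)}\,\sup_{j\in\tau_n}|j|\cdot k_{m(\Delta)}(x,\xi).
\end{align*}
Restricted to $\mathcal{K}\times S$ for a compact $\mathcal{K}\subset S$, Corollary \ref{thm:mart-kern-bdd} bounds $k_{m(\Delta)}$ uniformly, so the estimate becomes $\|\Pi^{\tau_n}-\Pi^{\tau_m}\|_{C(\mathcal{K}\times S)}\leq C_\mathcal{K}\sup_{j\in\tau_n}|j|\to 0$. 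Hence $(\Pi^{\tau_n})_n$ is Cauchy in the Banach space $C(\mathcal{K}\times S)$ and, by completeness, converges uniformly to a continuous kernel $\omega_\Delta^\mathcal{K}$. Compatibility across a compact exhaustion of $S$ glues these restrictions to a single $\omega_\Delta\in C(S\times S)$.

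For independence from the approximating sequence and from $\lambda$, I would compare two admissible sequences $(\tau_n^{(1)}),(\tau_n^{(2)})$ with regularity constants $\lambda_1,\lambda_2$ via a bridging sequence $(\rho_n)$ constructed inductively as follows: $\rho_n$ contains the endpoints of $\rho_{n-1},\tau_n^{(1)},\tau_n^{(2)}$ and is completed by equally spaced fill-in points at a scale $h_n\to 0$ chosen smaller than the minimum gap among the previously prescribed endpoints, so that every resulting interval has length in $(h_n/2,h_n]$. By construction, $\rho_n$ is $2$-regular, $\rho_{n+1}\succ\rho_n$, $\rho_n\succ\tau_n^{(i)}$ for $i=1,2$, and $\sup_{j\in\rho_n}|j|\leq h_n\to 0$. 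Setting $\lambda':=\max(\lambda_1,\lambda_2,2)$, two applications of Lemma \ref{lem:refinement_of_part_kern}, one to the pair $(\rho_n,\tau_n^{(1)})$ and one to $(\rho_n,\tau_n^{(2)})$, yield $\lim\Pi^{\tau_n^{(i)}}=\lim\Pi^{\rho_n}$ for $i=1,2$, so both input sequences produce the same limit $\omega_\Delta$.

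The Cauchy step is essentially a transcription of the prior lemmata; the genuine difficulty is the independence claim. As already anticipated in the discussion of weak $\lambda$-regularity above, the naive common refinement $\tau_n^{(1)}\vee\tau_n^{(2)}$ need not be $\lambda'$-regular for any fixed $\lambda'$, so Lemma \ref{lem:refinement_of_part_kern} cannot be invoked on it directly. Producing the bridging sequence $(\rho_n)$ by hand -- simultaneously refining both input sequences, uniformly regular, monotone under refinement, and of vanishing fineness -- is the step that requires the most care, and carrying out the inductive choice of $h_n$ against the inherited endpoint constraints is where the argument earns its keep.
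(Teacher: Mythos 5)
Your proposal is correct and follows essentially the same route as the paper: establish Cauchyness in $C(\mathcal{K}\times S)$ via Lemma \ref{lem:refinement_of_part_kern} and Corollary \ref{thm:mart-kern-bdd}, glue over a compact exhaustion, and handle uniqueness of the limit by manufacturing a regularity-controlled common refinement rather than trusting the naive joint refinement (which, as the weak $\lambda$-regularity discussion shows, can lose regularity). The only cosmetic difference is that you build a single monotone bridging sequence $(\rho_n)$ (via equal-spacing fill-in, giving $2$-regularity) whose limit you compare against, whereas the paper, for each $n$, subdivides the joint refinement $\tilde\nu_n$ into $\eta$-sized pieces to obtain a per-step intermediary $\nu_n$ and then closes with the triangle inequality; both turn on the same key point.
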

		\begin{proof}
			Fix any compact $\mathcal{K}\subset S$. By Lemma \ref{lem:refinement_of_part_kern} we have 
			\begin{align}
				\label{eq:Cauchy_ineq}
				\forall m\geq n:\quad\left|\Pi^{\tau_m}-\Pi^{\tau_{n}}\right|\leq C \sup_{j\in\tau_n}|j| k_{m(\Delta)}
			\end{align}
			everywhere on $S\times S$. Lemma \ref{thm:mart-kern-bdd} implies $\left\|k_{m(\Delta)}\right\|_{C(\mathcal{K}\times S)}<\infty$, hence $(\Pi^{\tau_n})_n:\mathcal{K}\times S\longrightarrow\mathbb{R}$ is a Cauchy sequence and converges uniformly to some $\omega_{\Delta,\mathcal{K}}\in C(\mathcal{K}\times S)$. Now consider another sequence of refining $\lambda$-regular partition $(\mu_n)_n$ such that again \eqref{eq:vanishing_fineness} holds, i.e. the uniform limit of $\left(\Pi^{\mu_n}\right)_n$ exists and is denoted by $\omega_{\Delta,\mathcal{K}}^\mu$. We start by examining the difference $\left|\Pi^{\tau_n}-\Pi^{\mu_n}\right|$. For that, consider the joint refinement of $\tau_n$ and $\mu_n$ denoted by $\tilde{\nu}_n$. Now we construct a $\lambda$-regular  refinement $\nu_n$ of $\tilde{\nu}_n$. Define $\eta:=\inf\lbrace|j|:j\in\tilde{\nu}_n\rbrace$ and divide every $j\in\tilde{\nu}_n$ into intervals $j_1,\dots,j_k,j_{k+1}$ such that for each $i\in\lbrace1,\dots,k\rbrace$ we have $|j_i|=\eta$ and $|j_{k+1}|\leq \lambda \eta$. Note that $k=0$ is possible if already $|j|\leq \lambda \eta$ holds. Now $\nu_n$ obviously is a refinement of both $\tau_n$ and $\mu_n$ and $\inf\lbrace|j|:j\in\nu_n\rbrace=\eta$. Therefore $\nu_n$ is $\lambda$-regular since by definition
			\begin{align*}
				\sup_{j\in\nu_n}|j|\leq \lambda \eta = \lambda\inf_{j\in\nu_n}|j|.
			\end{align*}
			Applying Lemma \eqref{lem:refinement_of_part_kern} onto the differences $\Pi^{\tau_n}-\Pi^{\nu_n}$ and $\Pi^{\mu_n}-\Pi^{\nu_n}$, we immediately obtain
			\begin{align*}
				|\Pi^{\tau_n}-\Pi^{\mu_n}| \leq C \max\left\lbrace \sup_{j\in\tau_n}|j|,\sup_{j\in\mu_n}|j|\right\rbrace \left\|k_{m(\Delta)}\right\|_{C(\mathcal{K}\times S)}
			\end{align*}
			everywhere on $\mathcal{K}\times S$. The independence of the approximating sequence of partitions now follows from the inequality:
			\begin{align*}
				\left\|\omega_{\Delta,\mathcal{K}}-\omega_{\Delta,\mathcal{K}}^\mu\right\|_{C(\mathcal{K}\times S)}\leq \left\|\omega_{\Delta,\mathcal{K}}-\Pi^{\tau_n}\right\|_{C(\mathcal{K}\times S)}+\left\|\Pi^{\tau_n}-\Pi^{\mu_n}\right\|_{C(\mathcal{K}\times S)}+\left\|\Pi^{\mu_n}-\omega_{\Delta,\mathcal{K}}^\mu\right\|_{C(\mathcal{K}\times S)}.
			\end{align*}
			Choose as compact sets the exhausting sequence $\lbrace\overline{B}_n\cap S : n\in\mathbb{N}\rbrace$, where $\bar{B}_n$ is the closed ball centered at the origin with radius $n$ and define
			\begin{align*}
				\omega_\Delta(x,\xi):=\omega_{\Delta,B_n}\quad \text{for } (x,\xi)\in B_n\times S.
			\end{align*}
			The kernel $\omega_\Delta$ is well-defined, since for any $n\in\mathbb{N}$ we have $\omega_{\Delta,B_{n+1}}|_{B_n\times S}=\omega_{\Delta,B_{n}}$
		\end{proof}		
		\begin{definition} [Definition of $\omega_{\Delta}$]
			Let $\Delta\in\mathrm{segm}^+$ and for $n\in\mathbb{N}$ define
			\[
				d_n(\Delta):=\left\lbrace\left[m(\Delta)+\frac{(i-1)|\Delta|}{2^n},m(\Delta)+\frac{i|\Delta|}{2^n}\right]:i\in\lbrace1,\dots,2^n\rbrace\right\rbrace
			\].
			The above partition obviously is $1$-regular and although the kernel $\omega_\Delta$ is independent of the approximating sequence of refining partitions we will mostly use $(d_n)_n$. 	
		\end{definition}
	
		\begin{lemma}[Properties of $\omega_\Delta$ \cite{MuellerRiegler2020}]
			\label{lem:omega-prop}
			The kernel $\omega_\Delta$ has the following properties.
			\begin{enumerate}
				\item For all $x\in S$, the identity
				\begin{align*}
					\int_S \omega_\Delta(x,\xi)\mathrm{d}\omega^{z_0}(\xi) =1
				\end{align*}
				holds.
				\item For any segment $\Delta\in\mathrm{segm}^+$ with $|\Delta|\leq \beta m(\Delta)$, for some $\beta>0$, we have the inequality
				\[
				|\omega_\Delta-\tilde{\omega}_\Delta|\leq c_{S,\beta}\frac{\varepsilon|\Delta|^2}{m(\Delta)^{2}} k_{m(\Delta)}.
				\]
				\item For $0<a<c$ and $b\in(a,c)$ we have $\omega_{[a,c]}=\omega_{[b,c]}\circ\omega_{[a,b]}.$
			\end{enumerate}
		\end{lemma}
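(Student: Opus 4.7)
The plan is to prove each of the three properties by first establishing the corresponding identity or estimate at the pre-limit level for $\Pi^{\tau_n}$ and then letting $n\to\infty$ via Theorem \ref{thm:convergence_Pi}.

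For (1), the starting point is the identity
\[
\int_S \tilde{\omega}_y(x,\xi)\,\mathrm{d}\omega^{z_0}(\xi) = (K_y 1)(x) - \varepsilon (B_y 1)(x) = 1 - 0 = 1,
\]
which combines property (4) of Lemma \ref{lem:MartinKernel} with property (4) of Lemma \ref{lem:properties_b_y}. Integrating in $y$ over any sub-interval $\Delta'$ and applying Fubini yields $\int_S \tilde{\omega}_{\Delta'}(x,\xi)\,\mathrm{d}\omega^{z_0}(\xi)=1$. A straightforward induction on the composition $\Pi^{\tau_n}=\tilde{\omega}_{j_K}\circ\dots\circ\tilde{\omega}_{j_1}$, using Fubini at each step, then gives $\int_S \Pi^{\tau_n}(x,\xi)\,\mathrm{d}\omega^{z_0}(\xi)=1$. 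Since the convergence $\Pi^{\tau_n}\to\omega_\Delta$ from Theorem \ref{thm:convergence_Pi} is uniform on $\mathcal{K}\times S$ for any compact $\mathcal{K}\subset S$ and $\omega^{z_0}$ is a probability measure on $S$, fixing $x$ inside such a $\mathcal{K}$ permits interchanging limit and integration, establishing (1).

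For (2), Lemma \ref{lem:Pi-omega} already supplies the pre-limit bound
\[
|\Pi^{\tau_n}-\tilde{\omega}_\Delta|\leq c_{\{S,\lambda,\beta\}}\frac{\varepsilon|\Delta|^2}{m(\Delta)^2}\,k_{m(\Delta)},
\]
whose right-hand side is independent of $n$. Uniform convergence on compacta immediately transfers this inequality to $|\omega_\Delta-\tilde{\omega}_\Delta|$.

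For (3), I construct a sequence of $\lambda$-regular refining partitions $\tau_n$ of $[a,c]$ in which $b$ is always a common endpoint of two consecutive sub-intervals; a concrete realization is to subdivide $[a,b]$ and $[b,c]$ separately into equal-length pieces of a common length $\eta_n\to 0$ (allowing one shorter trailing piece in each, which keeps the regularity constant uniformly bounded). Writing $\tau_n=\tau_n^1\cup\tau_n^2$ with $\tau_n^1$ partitioning $[a,b]$ and $\tau_n^2$ partitioning $[b,c]$, the very definition of $\Pi^\mu$ as a composition of $\tilde{\omega}_{j_k}$'s yields $\Pi^{\tau_n}=\Pi^{\tau_n^2}\circ\Pi^{\tau_n^1}$, while Theorem \ref{thm:convergence_Pi} gives $\Pi^{\tau_n}\to\omega_{[a,c]}$, $\Pi^{\tau_n^1}\to\omega_{[a,b]}$, and $\Pi^{\tau_n^2}\to\omega_{[b,c]}$. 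The claim follows once the limit is brought inside the convolution defining $\Pi^{\tau_n^2}\circ\Pi^{\tau_n^1}$, which is justified by dominated convergence: the majorants are supplied by Lemma \ref{lem:Pi}, which bounds each $|\Pi^{\tau_n^i}|$ by a sum of two Martin kernels, each $\omega^{z_0}$-integrable with $n$-independent integral.

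The main obstacle will be the dominated-convergence step in part (3): two kernels linked by a convolution over the possibly unbounded boundary $S$ must be passed to the limit simultaneously, and the $n$-uniform bounds from Lemma \ref{lem:Pi} are precisely what makes this feasible. A secondary concern is ensuring that the split partitions $\tau_n^1,\tau_n^2$ inherit both $\lambda$-regularity with a uniformly bounded constant and the hypothesis $|\Delta'|\leq\beta' m(\Delta')$ required by the supporting lemmata; this is the reason behind the explicit equal-length construction outlined above.
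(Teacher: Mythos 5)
Your overall strategy is the same as the paper's: prove each property at the pre-limit level for $\Pi^{\tau_n}$ and pass to the limit via Theorem \ref{thm:convergence_Pi}, using the mean-one property of $\tilde{\omega}_\Delta$ for (1), Lemma \ref{lem:Pi-omega} for (2), and a partition with $b$ as a breakpoint together with dominated convergence (majorants from Lemma \ref{lem:Pi}) for (3).

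One technical flaw in your construction for (3): subdividing $[a,b]$ and $[b,c]$ into pieces of a \emph{common} length $\eta_n$ and allowing a shorter trailing piece does \emph{not} give a uniformly bounded $\lambda$-regularity constant, because $(b-a)\bmod\eta_n$ and $(c-b)\bmod\eta_n$ can be arbitrarily small relative to $\eta_n$, so $\inf_j|j|/\sup_j|j|$ is not bounded below. The paper avoids this by taking the dyadic partitions $d_n(\Delta^-)$ and $d_n(\Delta^+)$ of $[a,b]$ and $[b,c]$ separately (each internally $1$-regular), so the union $\tau_n$ is $\lambda'$-regular with $\lambda'=\max(b-a,c-b)/\min(b-a,c-b)$, a fixed constant. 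Either switch to that construction or absorb any trailing piece shorter than $\eta_n/2$ into its neighbor. With that repair the rest of your argument goes through.

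A second, smaller point in (1): you justify the final interchange of limit and integration by uniform convergence of $\Pi^{\tau_n}(x,\cdot)\to\omega_\Delta(x,\cdot)$ together with $\omega^{z_0}(S)=1$, which is valid; the paper instead invokes dominated convergence via the explicit rate $|\Pi^{d_n}-\omega_\Delta|\leq c_\Delta 2^{-n}k_{m(\Delta)}$. Both work.
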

		\begin{proof}
			\begin{enumerate}
				\item We first observe that for all $x\in S$ and $n\in\mathbb{N}$, we have 
				\begin{align}
					\label{eq:Pi_prob_meas}
					\int_S \Pi^{d_n}(x,\xi)\mathrm{d}\omega^{z_0}(\xi)=1.
				\end{align}
				Choose an arbitrary $\Delta\in\mathrm{segm}^+$, then
				\[
					\int_S\tilde{\omega}_\Delta(x,\xi)\mathrm{d}\omega^{z_0}(\xi)=\int_S k_\Delta(x,\xi)\mathrm{d}\omega^{z_0}(\xi)-\varepsilon\int_Sb_\Delta(x,\xi)\mathrm{d}\omega^{z_0}(\xi).
				\]
				The last summand can be further written as
				\[
					\int_Sb_\Delta(x,\xi)\mathrm{d}\omega^{z_0}(\xi)=\int_S\int_\Delta b_y(x,\xi)\mathrm{d}y\mathrm{d}\omega^{z_0}(\xi)=\int_\Delta(B_y1)(x)\mathrm{d}y=0.
				\]
				Fubini can be applied due to the bound of $|b_y|$ in Lemma \ref{lem:properties_b_y}.
				Reinserting above gives
				\[
					\int_S\tilde{\omega}_\Delta(x,\xi)\mathrm{d}\omega^{z_0}(\xi)=(K_{|\Delta|}1)(x)=1.
				\]
				Now write $d_n=\lbrace j_1,\dots,j_K\rbrace $. Then observe the following:
				\begin{align*}
					\int_S\Pi^{d_n}(x,\xi)\mathrm{d}\omega^{z_0}(\xi)
					&=\int_S\int_S(\tilde{\omega}_{j_K}\circ\cdots\circ\tilde{\omega}_{j_2})(x,\zeta)\tilde{\omega}_{j_1}(\zeta,\xi)\mathrm{d}\omega^{z_0}(\zeta)\mathrm{d}\omega^{z_0}(\xi)\\
					&=\int_S(\tilde{\omega}_{j_K}\circ\cdots\circ\tilde{\omega}_{j_2})(x,\zeta)\underbrace{\int_S\tilde{\omega}_{j_1}(\zeta,\xi)\mathrm{d}\omega^{z_0}(\xi)}_{=1}\mathrm{d}\omega^{z_0}(\zeta)\\
					&=\int_S\Pi^{d_n\setminus\lbrace j_1\rbrace}(x,\zeta)\mathrm{d}\omega^{z_0}(\zeta).
				\end{align*}
				Fubini is applicable since $\Pi^\mu:=\tilde{\omega}_{j_K}\circ\cdots\circ\tilde{\omega}_{j_2}$, where $\mu=\lbrace j_2,\dots,j_K\rbrace$, has an integrable majorant given in Lemma \ref{lem:Pi} and $\Pi^\mu(x,\zeta)\tilde{\omega}_{j_1}(\zeta,\xi)$ is integrable.
				Applying this identity $K$ times finally yields \eqref{eq:Pi_prob_meas}. Since
				\[
					|\Pi^{d_n}-\omega_\Delta|\leq c_\Delta 2^{-n}k_{m(\Delta)},
				\]
				we can apply dominated convergence to obtain
				\begin{align*}
					\int_S\omega_\Delta(x,\xi)\mathrm{d}\omega^{z_0}(\xi)=\lim_{n\rightarrow\infty}\int_S\Pi^{d_n}(x,\xi)\mathrm{d}\omega^{z_0}(\xi)=1
				\end{align*}
				for all $x\in\mathcal{K}$, where $\mathcal{K}$ is an arbitrary compact subset of $S$ and thus for all $x\in S$.
				
				\item We apply Lemma \ref{lem:Pi-omega} with $\mu=d_n(\Delta)$, i.e $\lambda=1$ to obtain a constant $c_{\lbrace S,\lambda,\beta\rbrace} = c_S$ and the inequality
				\begin{align*}
					|\Pi^{d_n}-\tilde{\omega}_\Delta|\leq c_{S,\beta}\frac{\varepsilon|\Delta|^2}{m(\Delta)^{2}} k_{m(\Delta)}.
				\end{align*}
				Passing to the limit $n\rightarrow\infty$ we obtain the desired inequality.			
				
				\item Write $\Delta:=[a,c]$, $\Delta^-:=[a,b]$, $\Delta^+=[b,c]$, then by Theorem \ref{thm:convergence_Pi} we have
				\begin{align*}
					\left\|\Pi^{d_n(\Delta^+)}-\omega_{\Delta^+}\right\|_{C(\mathcal{K}\times S)}\rightarrow0 \quad\text{and}\quad
					\left\|\Pi^{d_n(\Delta^-)}-\omega_{\Delta^-}\right\|_{C(\mathcal{K}\times S)}\rightarrow0
				\end{align*}
				for all compact $\mathcal{K}\subset S$.
				Now define $\tau_n:=\tau_n^+\cup\tau_n^-$ and observe that $\tau_n$ is a $\lambda'$ regular sequence of refining partitions of $\Delta$, where
				\begin{align*}
					\lambda'=\frac{\max(b-a,c-b)}{\min(b-a,c-b)}.
				\end{align*}
				Since $d_n(\Delta)$ is $1$-regular, it is in particular $\lambda'$-regular and again Theorem \ref{thm:convergence_Pi} guarantees uniform convergence of $\Pi^{\tau_n}$ to $\omega_\Delta$ in $C(\mathcal{K}\times S)$, for all compact $\mathcal{K}\subset S$. Obviously we have
				\begin{align*}
					\Pi^{\tau_n}=\Pi^{\tau_n^+}\circ\Pi^{\tau_n^-}
				\end{align*}
				everywhere on $S\times S$ and since Lemma \ref{lem:Pi} provides integrable bounds on $\Pi^{\tau_n^+}(x,\cdot)\cdot\Pi^{\tau_n^-}(\cdot,\xi)$ , we can exchange limits and integration in
				\begin{align*}
					\forall\mathcal{K}\subset S \text{ compact }\forall (x,\xi)\in\mathcal{K}\times S:\\
					\omega_\Delta(x,\xi)=\lim_{n\rightarrow\infty}\Pi^{\tau_n}(x,\xi)&=\int_S\lim_{n\rightarrow\infty}\Pi^{\tau_n^+}(x,\zeta)\cdot\Pi^{\tau_n^-}(\zeta,\xi)\mathrm{d}\omega^{z_0}(\zeta)\\
					&=\lim_{k\rightarrow\infty}\int_{S\cap B_k}\lim_{n\rightarrow\infty}\Pi^{\tau_n^+}(x,\zeta)\cdot\Pi^{\tau_n^-}(\zeta,\xi)\mathrm{d}\omega^{z_0}(\zeta)\\
					&=\lim_{k\rightarrow\infty}\int_{S\cap B_k}\omega_{\Delta^+}(x,\zeta)\cdot\omega_{\Delta^-}(\zeta,\xi)\mathrm{d}\omega^{z_0}(\zeta)\\
					&=\left(\omega_{\Delta^+}\circ\omega_{\Delta^-}\right)(x,\xi),
				\end{align*}
				since part $(2)$ gives an integrable bound on $\omega_{\Delta^+}(x,\cdot)\cdot\omega_{\Delta^-}(\cdot,\xi)$. The above now yields the desired identity.
			\end{enumerate}
		\end{proof}
		\subsubsection{Positivity of $\omega_\Delta$ and consequences}
		\begin{lemma}[\cite{MuellerRiegler2020}]
			\label{lem:positivity_omega}
			There exists $\varepsilon(S)>0$ such that for any $\varepsilon\in(0,\varepsilon(S))$, the kernel $\omega_\Delta$ is positive if $|\Delta|\geq m(\Delta)$.
		\end{lemma}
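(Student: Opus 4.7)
The plan is to factor $\omega_\Delta$ as an iterated composition over a carefully chosen decomposition of $\Delta$, using the semigroup identity $\omega_{[a,c]}=\omega_{[b,c]}\circ\omega_{[a,b]}$ from part~$(3)$ of Lemma~\ref{lem:omega-prop}. Since a composition of pointwise-positive continuous kernels against the probability measure $\omega^{z_0}$ on $S$ is again pointwise positive, it suffices to exhibit a finite decomposition on whose pieces positivity can be verified directly.

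Writing $\Delta=[a,c]$ with $c\ge 2a$, I would set $N:=\lceil\log_2(c/a)\rceil\ge 1$ and $\rho:=(c/a)^{1/N}$, then define $a_i:=\rho^i a$ so that $a_0=a$ and $a_N=c$. Because $2^{N-1}<c/a\le 2^N$, one has $\rho\in[2^{(N-1)/N},2]\subseteq[\sqrt{2},2]$, so $\rho-1\ge\sqrt{2}-1>0$ uniformly in $\Delta$. With $\Delta_i:=[a_{i-1},a_i]$ one has $m(\Delta_i)=a_{i-1}$, $\varrho(\Delta_i)=\rho$, and $|\Delta_i|=(\rho-1)a_{i-1}\le m(\Delta_i)$, so the hypothesis of Lemma~\ref{lem:omega-prop}(2) holds with $\beta=1$ on every piece. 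Iterating part~$(3)$ yields $\omega_\Delta=\omega_{\Delta_N}\circ\omega_{\Delta_{N-1}}\circ\cdots\circ\omega_{\Delta_1}$.

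For each piece I would combine three ingredients. Lemma~\ref{lem:b_Delta}(2) gives $|b_{\Delta_i}|\le c_S\rho^{\alpha-1}(\rho-1)k_{m(\Delta_i)}\le c_S k_{m(\Delta_i)}$. The quotient bound of Lemma~\ref{lem:MartinKernel}(3), applied in reverse, gives $k_{|\Delta_i|}\ge c_S(\rho-1)^\alpha k_{m(\Delta_i)}\ge c_S(\sqrt{2}-1)^\alpha k_{m(\Delta_i)}$. Finally Lemma~\ref{lem:omega-prop}(2) with $\beta=1$ gives $|\omega_{\Delta_i}-\tilde\omega_{\Delta_i}|\le c_S\varepsilon k_{m(\Delta_i)}$. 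Combining these (with $c_S$ used flexibly per the paper's convention),
\[
\omega_{\Delta_i}\ge\bigl[c_S(\sqrt{2}-1)^\alpha-c_S\varepsilon\bigr]k_{m(\Delta_i)}.
\]
Choosing $\varepsilon(S)>0$ small enough that the bracket is strictly positive for every $\varepsilon\in(0,\varepsilon(S))$ gives $\omega_{\Delta_i}(x,\xi)>0$ for all $(x,\xi)\in S\times S$, since the Martin kernel $k_{m(\Delta_i)}$ is strictly positive and continuous on $S\times S$.

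The main obstacle, and the reason for the non-trivial choice of $N$, is that the lower bound $(\rho-1)^\alpha$ on the Martin-kernel ratio degenerates as $\rho\to 1$: a naive doubling decomposition of $\Delta$ would leave a remainder piece of arbitrarily small relative length, and no single $\varepsilon(S)$ could handle it. The geometric refinement $a_i=\rho^i a$ with $\rho\ge\sqrt{2}$ keeps $(\rho-1)^\alpha$ bounded below by $(\sqrt{2}-1)^\alpha$ uniformly in $\Delta$, which closes the argument.
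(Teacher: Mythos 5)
Your proof is correct, but it takes a genuinely different route from the paper. The paper first proves positivity for \emph{medium}-length intervals ($m(\Delta)\le|\Delta|\le3m(\Delta)$) by working directly at the $\Pi^\mu$ level: it isolates the decomposition $\Pi^\mu=k_{|\Delta|}-\varepsilon\sum_s w_s+\rho_\mu$ with $w_s=k_{M(\Delta)-M(j_s)}\circ b_{j_s}\circ k_{m(j_s)-m(\Delta)}$, bounds $\sum_s|w_s|$ and $|\rho_\mu|$ by $c_S k_{m(\Delta)}$ and $c_{S,\lambda}\varepsilon^2 k_{m(\Delta)}$ respectively, and bounds $k_{|\Delta|}$ from below via the Harnack-chain Lemma; positivity of $\omega_\Delta$ then follows by passing to the limit, and the general case $|\Delta|\ge m(\Delta)$ is handled by a doubling decomposition $\left[m(\Delta),2m(\Delta)\right]\cup\cdots\cup\left[2^nm(\Delta),M(\Delta)\right]$ whose last piece has ratio between $2$ and $4$. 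You instead treat $\omega_\Delta$ as a black box: you invoke Lemma~\ref{lem:omega-prop}(2) with $\beta=1$ (which packages the $\Pi^\mu$ machinery) together with the $|b_\Delta|$ bound and the Martin-kernel quotient bound, and you decompose geometrically with a \emph{uniform} ratio $\rho=(c/a)^{1/N}$ so that every piece satisfies $(\sqrt{2}-1)m(\Delta_i)\le|\Delta_i|\le m(\Delta_i)$. This is more economical --- it avoids redoing the $w_s$/$\rho_\mu$ estimates --- and is cleaner in that all pieces share the same ratio, whereas the paper has to allow the final doubling piece to be up to three times its minimum. One small imprecision: your chain $[2^{(N-1)/N},2]\subseteq[\sqrt{2},2]$ is formally false for $N=1$ (where $2^{(N-1)/N}=1$); but for $N=1$ the constraints $c/a\le 2^N$ and $c/a\ge 2$ force $c/a=2$ and hence $\rho=2$, so the operative conclusion $\rho\ge\sqrt{2}$ holds uniformly and the argument closes as you intended. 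Both proofs then finish identically, via the semigroup identity $\omega_{[a,c]}=\omega_{[b,c]}\circ\omega_{[a,b]}$ and the fact that composition of pointwise-positive continuous kernels against the probability measure $\omega^{z_0}$ is pointwise positive.
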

		\begin{proof}
			We begin by showing that for any $\Delta\in\mathrm{segm}^+$ satisfying $m(\Delta)\leq|\Delta|\leq 3 m(\Delta)$ and any $\lambda$-regular partition $\mu=\lbrace j_1,\dots,j_K\rbrace$ of $\Delta$, the kernel $\Pi^\mu$ is positive and pointwise bounded away from $0$. As can be seen in the discussion following Definition \ref{def:Pi_mu} we are able to write:
			\begin{align*}
				\Pi^\mu=k_{|\Delta|}-\varepsilon \sum_{s=1}^{K}w_s+\rho_\mu,
			\end{align*}
			where $\rho_\mu$ is as in \eqref{def:v_and_rho} and $w_s=k_{M(\Delta)-M(j_s)}\circ b_{j_s}\circ k_{m(j_s)-m(\Delta)}$. Due to the bound on $|b_y|$ in Lemma \ref{lem:b_Delta}, and $\varrho(\Delta)\leq4$, we have
			\begin{align*}
				|w_s|&\leq k_{M(\Delta)-M(j_s)}\circ |b_{j_s}|\circ k_{m(j_s)-m(\Delta)}\\
				&\leq c_S \frac{\varrho(j_s)^{\alpha-1}}{m(j_s)}|j_s| k_{M(\Delta)-M(j_s)+m(j_s)+m(j_s)-m(\Delta)}\\
				&\leq c_S\frac{4^{\alpha-1}}{m(\Delta)}|j_s| k_{|\Delta|-|j_s|+m(j_s)}=\frac{c_S}{m(\Delta)}|j_s| k_{|\Delta|-|j_s|+m(j_s)}.
			\end{align*}
			Since $|\Delta|-|j_s|+m(j_s)\geq m(j_s)\geq m(\Delta)$ 
			and $\frac{|\Delta|-|j_s|+m(j_s)}{m(\Delta)}\leq\frac{|\Delta|+M(\Delta)}{m(\Delta)}\leq7$, we can use the quotient estimate in Lemma \ref{lem:MartinKernel} to obtain
			\begin{align*}
				|w_s|\leq\frac{c_S}{m(\Delta)}|j_s|\left(\frac{|\Delta|-|j_s|+m(j_s)}{m(\Delta)}\right)^\alpha k_{m(\Delta)}\leq c_S\frac{|j_s|}{m(\Delta)} k_{m(\Delta)}.
			\end{align*}
			Summing up yields the bound
			\begin{align*}
				\sum_{s=1}^{K} |w_s|\leq c_S \frac{|\Delta|}{m(\Delta)}k_{m(\Delta)}\leq c_S 3^\alpha k_{m(\Delta)}=c_Sk_{m(\Delta)}.
			\end{align*}
			In Lemma \ref{lem:rho} we derived a bound on $|\rho_\mu|$, in the current setting $\beta=3$, thus
			\begin{align*}
				|\rho_{\mu}|\leq c_{ S,\lambda}\frac{\varepsilon^2|\Delta|^2}{m(\Delta)^2}k_{m(\Delta)}\leq c_{S,\lambda}3^\alpha\varepsilon^2k_{m(\Delta)}.
			\end{align*} 
			The last step comprises finding a lower bound for $k_{|\Delta|}$ of the form: $k_{|\Delta|}\geq Ck_{m(\Delta)}$, for some constant $C>0$. Since $m(\Delta)\leq|\Delta|$ we use Lemma \ref{lem:harnack_chain} and obtain
			\begin{align*}
				k_{|\Delta|}\geq c \left(\frac{m(\Delta)}{|\Delta|}\right)^\alpha k_{m(\Delta)}\geq\frac{c}{3^\alpha}k_{m(\Delta)}=c_Sk_{m(\Delta)}.
			\end{align*}
			In total we estimate
			\begin{align*}
				\Pi^{\mu}\geq c_Sk_{m(\Delta)}-\varepsilon c_Sk_{m(\Delta)}-\varepsilon^2c_{S,\lambda}k_{m(\Delta)}>0
			\end{align*}
			for fixed $\lambda$ and $\varepsilon>0$ small enough, in particular there exists $\varepsilon(S)>0$, such that for all $\varepsilon<\varepsilon(S)$, the kernel $\omega_\Delta$ is positive. Taking the limit on the left side, the inequality is preserved and thus $\omega_\Delta>0$ if $m(\Delta)\leq|\Delta|\leq3m(\Delta)$.
			Now we apply this statement to prove that $\omega_\Delta>0$ for $|\Delta|\geq m(\Delta)$. Choose $n\in\mathbb{N}_0$ such that $2^{n+1}m(\Delta)\leq M(\Delta)\leq 2^{n+2}m(\Delta)$ and partition $\Delta$ in the following way:
			\begin{align*}
				\Delta=\bigcup_{k=0}^{n-1}\left[2^km(\Delta),2^{k+1}m(\Delta)\right]\cup\left[2^{n}m(\Delta),M(\Delta)\right].
			\end{align*}
			Since $2^{k+1}m(\Delta)-2^km(\Delta)=2^km(\Delta)$ and $2^nm(\Delta)\leq M(\Delta)-2^{n}m(\Delta)\leq 3\cdot2^nm(\Delta)$, all intervals $j$ of the above partition, satisfy $m(j)\leq|j|\leq3m(j)$, hence by the first part of the proof we have positivity of the kernels $\omega_j$. The semi-group property in Lemma \ref{lem:omega-prop} now yields
			\begin{align*}
				\omega_\Delta=\omega_{\left[M(\Delta),2^nm(\Delta)\right]}\circ \omega_{\left[2^{n}m(\Delta),2^{n-1}m(\Delta)\right]}\circ\cdots\circ\omega_{\left[2m(\Delta),m(\Delta)\right]}>0,
			\end{align*}
			since all involved kernels are positive.
		\end{proof}
			From Lemma \eqref{lem:omega-prop} we can infer two immediate consequences
		\begin{corollary}
			\label{cor:omega-bounded}
			The kernel $\omega_\Delta$ is bounded on $S\times S$ for any $\Delta\in\mathrm{segm}^+$.
		\end{corollary}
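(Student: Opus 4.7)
The plan is to first prove the bound for intervals with controlled ratio $\varrho(\Delta)$, and then extend to arbitrary $\Delta$ by the semi-group property of $\omega$.

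\textbf{Small intervals.} Suppose first that $|\Delta|\leq m(\Delta)$, so $\varrho(\Delta)\leq 2$. Writing $\omega_\Delta=\tilde\omega_\Delta+(\omega_\Delta-\tilde\omega_\Delta)$ and inserting the definition $\tilde\omega_\Delta=k_{|\Delta|}-\varepsilon b_\Delta$, the triangle inequality gives
\[
|\omega_\Delta|\;\leq\;k_{|\Delta|}+\varepsilon |b_\Delta|+|\omega_\Delta-\tilde\omega_\Delta|.
\]
The first term is controlled by $c_d k_{m(\Delta)}$ via the quotient bound of Lemma \ref{lem:MartinKernel}(3), since $\varrho(\Delta)\leq 2$. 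The second term is controlled by $c_S k_{m(\Delta)}$ via Lemma \ref{lem:b_Delta}(2). The third term is controlled by $c_S\varepsilon k_{m(\Delta)}$ by Lemma \ref{lem:omega-prop}(2) applied with $\beta=1$. Summing,
\[
|\omega_\Delta(x,\xi)|\;\leq\;c_S\,k_{m(\Delta)}(x,\xi).
\]
Boundedness of $\omega_\Delta$ now follows from boundedness of $k_{m(\Delta)}$ (Corollary \ref{thm:mart-kern-bdd} together with the decay at infinity supplied by Lemma \ref{lem:MartinKernelDecay}).

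\textbf{General intervals.} For arbitrary $\Delta\in\mathrm{segm}^+$, I would mimic the dyadic decomposition in the final paragraph of the proof of Lemma \ref{lem:positivity_omega}. Choose $n\in\mathbb{N}_0$ with $2^{n+1}m(\Delta)\leq M(\Delta)\leq 2^{n+2}m(\Delta)$ and split
\[
\Delta\;=\;\bigcup_{k=0}^{n-1}\bigl[2^k m(\Delta),2^{k+1}m(\Delta)\bigr]\cup\bigl[2^n m(\Delta),M(\Delta)\bigr],
\]
so that every piece $\Delta_k$ satisfies $m(\Delta_k)\leq|\Delta_k|\leq 3m(\Delta_k)$. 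The semi-group identity of Lemma \ref{lem:omega-prop}(3) yields
\[
\omega_\Delta\;=\;\omega_{\Delta_n}\circ\omega_{\Delta_{n-1}}\circ\cdots\circ\omega_{\Delta_0}.
\]
Applying the small-interval argument (with $\beta=3$ in Lemma \ref{lem:omega-prop}(2)) to each factor gives $|\omega_{\Delta_k}|\leq c_S k_{m(\Delta_k)}$ pointwise. Composing these inequalities and using the semi-group property $k_{y_1}\circ k_{y_2}=k_{y_1+y_2}$ from Lemma \ref{lem:MartinKernel}(2) collapses the composition to a single Martin kernel at height $\sum_k m(\Delta_k)\leq 2M(\Delta)$, yielding
\[
|\omega_\Delta|\;\leq\;C_{S,\Delta}\,k_{\sum_k m(\Delta_k)},
\]
and boundedness again follows.

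\textbf{Main obstacle.} The delicate point is that iterated compositions of the form $c_S k_{m(\Delta_i)}\circ c_S k_{m(\Delta_{i-1})}$ have to be handled via the semi-group property of $k_y$ so as not to accumulate integrals that grow with the number of factors; this is what forces the dyadic (finitely many) decomposition rather than an arbitrary partition, and why the uniform $L^1$-normalization $\int\omega_\Delta\,d\omega^{z_0}=1$ from Lemma \ref{lem:omega-prop}(1) is helpful for controlling the convolutions at each step.
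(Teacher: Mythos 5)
Your small-interval case ($|\Delta|\leq m(\Delta)$) matches the paper's proof: both bound $|\omega_\Delta|$ by the triangle inequality against $k_{|\Delta|}$, $\varepsilon|b_\Delta|$ and $|\omega_\Delta-\tilde\omega_\Delta|$, then invoke boundedness of the shifted Martin kernel. You diverge on long intervals $|\Delta|>m(\Delta)$: the paper disposes of this case in one line, asserting that $\omega_\Delta$ is bounded as a positive, continuous probability density; you instead carry out the same dyadic split $\Delta=\bigcup_k\Delta_k$ with $m(\Delta_k)\leq|\Delta_k|\leq 3m(\Delta_k)$ that appears in the proof of Lemma \ref{lem:positivity_omega}, bound each factor $|\omega_{\Delta_k}|\leq c_S k_{m(\Delta_k)}$ by the triangle-inequality estimate with $\beta=3$, and collapse the composition via the semi-group identity $k_{y_1}\circ k_{y_2}=k_{y_1+y_2}$. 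Your route is more explicit and arguably on safer ground, since positivity, continuity, and unit $L^1$-mass alone do not yield an $L^\infty$ bound; you are in effect making precise the comparison with $k_y$ that the paper invokes implicitly. Two small caveats on your write-up: the intervals $\Delta_k$ are not ``small'' in the $\beta=1$ sense (they satisfy $m(\Delta_k)\leq|\Delta_k|$), so ``small-interval argument with $\beta=3$'' is a slight misnomer, although the estimates go through unchanged with $\varrho(\Delta_k)\leq4$; and in your ``main obstacle'' paragraph the mechanism that actually controls the composition is the semi-group property of $k_y$ (plus the finiteness of the dyadic decomposition, so a $\Delta$-dependent constant $c_S^{n+1}$ is harmless), not the $L^1$-normalization from Lemma \ref{lem:omega-prop}(1).
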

		\begin{proof}
			If $|\Delta|>m(\Delta)$, $\omega_\Delta$ is bounded as positive and continuous density of a probability distribution. Now consider the case where $|\Delta|\leq m(\Delta)$, i.e $\beta=1$. Then, recalling the definition of $\tilde{\omega}_\Delta$ together with part (3) of Lemma \ref{lem:omega-prop} implies
			\begin{align*}
				|\omega_\Delta|&\leq c_S\varepsilon\frac{|\Delta|^2}{m(\Delta)^2}+|\tilde{\omega}_\Delta|\\
				&\leq c_S\varepsilon\frac{|\Delta|^2}{m(\Delta)^2}+ k_{|\Delta|}+\varepsilon c_S\frac{\varrho(\Delta)^{\alpha-1}}{m(\Delta)}|\Delta|k_{m(\Delta)}\\
				&\stackrel{|\Delta|\leq m(\Delta)}{\leq} c_S\varepsilon+c_S\varepsilon2^{\alpha-1}k_{m(\Delta)} +k_{|\Delta|},
			\end{align*}
			which is bounded since by Theorem \eqref{thm:mart-kern-bdd}, the shifted martin kernels $k_{|\Delta|}, k_{m(\Delta)}$ are bounded on $S\times S$.
		\end{proof}
		\begin{remark}
			\label{rem:omega-L1-norm}
			This pointwise bound becomes useless if $|\Delta|\rightarrow0$, however we obtain a tractable bound on $\sup_{x\in S}\|\omega_\Delta(x,\cdot)\|_{L^1(S)}$: Using the inequality in Corollary \ref{cor:omega-bounded} and property (4) in Lemma \ref{lem:MartinKernel}, we obtain
			\begin{align*}
				\int_S|\omega_\Delta(x,\xi)|\mathrm{d}\omega^{z_0}(\xi)\leq c_S\varepsilon+c_S\varepsilon2^{\alpha-1}+1.
			\end{align*}
		\end{remark}
		\begin{corollary}
			\label{cor:omega-operator-bounded}
			For any $\Delta\in\mathrm{segm}^+$, the operator $\Omega_\Delta: \mathcal{C}(S)\longrightarrow \mathcal{C}(S)$ is continuous, and if $|\Delta|>m(\Delta)$, we have $\|\Omega_\Delta\|\leq 1$.
		\end{corollary}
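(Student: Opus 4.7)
Since $\Omega_\Delta$ is linear, continuity from $\mathcal{C}(S)$ into itself amounts to two things: (i) $\Omega_\Delta f\in\mathcal{C}(S)$ whenever $f$ is bounded and continuous, and (ii) the operator is bounded. I would dispatch (ii) immediately by a uniform-in-$x$ $L^1$-bound on $\omega_\Delta(x,\cdot)$ and tackle (i) via dominated convergence, splitting into the two regimes $|\Delta|>m(\Delta)$ and $|\Delta|\leq m(\Delta)$ that are forced by the differing descriptions of $\omega_\Delta$ we have at our disposal.

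For boundedness, Remark~\ref{rem:omega-L1-norm} already supplies, for the range $|\Delta|\leq m(\Delta)$, the estimate $\sup_{x\in S}\|\omega_\Delta(x,\cdot)\|_{L^1(\omega^{z_0})}\leq c_S(1+\varepsilon)$, whence $\|\Omega_\Delta f\|_\infty\leq c_S(1+\varepsilon)\|f\|_\infty$. When $|\Delta|>m(\Delta)$, positivity from Lemma~\ref{lem:positivity_omega} and the mass-one identity in part (1) of Lemma~\ref{lem:omega-prop} give $\|\omega_\Delta(x,\cdot)\|_{L^1(\omega^{z_0})}=1$ uniformly in $x$, so that the operator norm is at most $1$.

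For (i) I would fix $x_0\in S$ and a sequence $x_n\to x_0$. Theorem~\ref{thm:convergence_Pi} supplies pointwise convergence $\omega_\Delta(x_n,\xi)\to\omega_\Delta(x_0,\xi)$ for every $\xi\in S$, so only passage under the integral needs justification. In the regime $|\Delta|>m(\Delta)$ the kernels $\omega_\Delta(x_n,\cdot)$ are nonnegative probability densities converging pointwise to a probability density, and Scheff\'e's lemma upgrades this to convergence in $L^1(\omega^{z_0})$, which immediately gives $(\Omega_\Delta f)(x_n)\to(\Omega_\Delta f)(x_0)$ for every bounded $f$. In the regime $|\Delta|\leq m(\Delta)$ the kernel can change sign, but the proof of Corollary~\ref{cor:omega-bounded} supplies the pointwise majorant
\[
|\omega_\Delta(x,\xi)|\leq c_S\varepsilon+c_S\varepsilon\,2^{\alpha-1}k_{m(\Delta)}(x,\xi)+k_{|\Delta|}(x,\xi).
\]
Since $k_y(\cdot,\xi)$ is positive harmonic in the first variable and $y=|\Delta|,m(\Delta)$ are fixed positive heights, Harnack's inequality applied on a compact neighborhood $\mathcal{K}\subset S$ of $x_0$ containing the tail of $(x_n)$ provides a constant $H_{\mathcal{K}}$ and a reference point $x_*\in\mathcal{K}$ with $k_y(x,\xi)\leq H_{\mathcal{K}}\,k_y(x_*,\xi)$ uniformly for $x\in\mathcal{K}$ and $\xi\in S$. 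Combined with Lemma~\ref{lem:MartinKernel}(4), this upgrades the pointwise bound to a $\xi$-integrable dominating function independent of $n$, and the classical dominated convergence theorem concludes.

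The main obstacle I anticipate is that the two regimes for $|\Delta|$ demand genuinely different techniques: in the small-$\Delta$ range our only handle on $\omega_\Delta$ is a signed pointwise comparison to Martin kernels, so Harnack is needed to migrate the $x$-dependence onto a fixed reference point and produce an integrable majorant; in the large-$\Delta$ range positivity is available but a clean pointwise Martin-kernel comparison is not, so one instead exploits the exact mass-one identity through Scheff\'e's theorem. No estimates beyond those already collected in Lemmas~\ref{lem:MartinKernel}, \ref{lem:omega-prop} and \ref{lem:positivity_omega}, Corollary~\ref{cor:omega-bounded} and Remark~\ref{rem:omega-L1-norm} should be required.
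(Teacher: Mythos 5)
Your operator-norm estimate follows the paper's proof exactly: cite Remark~\ref{rem:omega-L1-norm} for the uniform $L^1(\omega^{z_0})$-bound on $\omega_\Delta(x,\cdot)$ (which handles $|\Delta|\leq m(\Delta)$), and positivity together with the mass-one identity from Lemma~\ref{lem:omega-prop} for the sharper $\|\Omega_\Delta\|\leq 1$ when $|\Delta|>m(\Delta)$. Where you go beyond the paper is in explicitly verifying that $\Omega_\Delta f$ is actually continuous, i.e.\ that the operator really lands in $\mathcal{C}(S)$; the paper's proof only gives the norm estimate $|(\Omega_\Delta f)(x)|\leq\|f\|_\infty\int_S|\omega_\Delta(x,\xi)|\,\mathrm{d}\omega^{z_0}(\xi)$ and leaves well-definedness implicit (presumably as a routine consequence of the continuity of $\omega_\Delta$ established in Theorem~\ref{thm:convergence_Pi}). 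Your two-case argument for that missing step is sound: in the positive regime, Scheff\'e's lemma upgrades pointwise convergence of probability densities to $L^1$-convergence, using only that the integrals are identically $1$; in the signed regime, the pointwise bound from the proof of Corollary~\ref{cor:omega-bounded} together with a finite Harnack chain on the compact, fixed-height slab $\mathcal{K}_y\subset\mathcal{O}$ produces a $\xi$-integrable majorant independent of $x\in\mathcal{K}$ so dominated convergence applies. So the net effect is that you supply a more complete proof, of which the paper's argument is the boundedness half; the Scheff\'e observation in particular is a clean way to settle the positive case without any domination estimate at all.
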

		\begin{proof}
			Let $f\in C(S)$, then
			\begin{align*}
				|(\Omega_\Delta f)(x)|&\leq\|f\|_{\infty}\int_S|\omega_\Delta(x,\xi)|\mathrm{d}\omega^{z_0}(\xi).
			\end{align*}
			By Remark \ref{rem:omega-L1-norm}, the integral on the right is finite and notably independent of $|\Delta|$. Moreover, if $|\Delta|>m(\Delta)$, $\omega_\Delta$ is positive and thus by its mean $1$ property in Lemma \ref{lem:omega-prop}, the integral evaluates to $1$.
		\end{proof}
		
		\begin{remark}
			\label{rem:semigroup-operators}
			Property (2) in Lemma \ref{lem:omega-prop} and the integrability of $\omega_\Delta$ discussed in remark \eqref{rem:omega-L1-norm}, now imply the analogous semi-group property for the induced operators, i.e. with the same variables as in the above Lemma \ref{lem:omega-prop} and some continuous function $f$ on $S$ vanishing at infinity (thus bounded), we obtain with Fubini
			\begin{align*}
				\left(\Omega_{[b,c]}\left(\Omega_{[a,b]}f\right)\right)(x)&=\int_S \omega_{[b,c]}(x,\xi)\int_S \omega_{[a,b]}(\xi,\zeta)f(\zeta)\mathrm{d}\omega^{z_0}(\zeta)\mathrm{d}\omega^{z_0}(\xi)\\
				&=\int_S\int_S\omega_{[b,c]}(x,\xi)\omega_{[a,b]}(\xi,\zeta)\mathrm{d}\omega^{z_0}(\xi)f(\zeta)\mathrm{d}\omega^{z_0}(\zeta)\\
				&=(\Omega_{[a,c]}f)(x).
			\end{align*}
		\end{remark}
		
		\begin{remark}
			In the case where $\mathcal{O}$ has a $C^2$ boundary, treated in Havin Mozolyako \cite{HavinMozol2016}, we have $\alpha=1$ in Lemma \ref{lem:harnack_chain}. Moreover, instead of the Martin kernel of the domain, they use the Poisson kernel and obtain the inequality
			\begin{align}
				\label{eq:quot_bd_HM}
				\frac{p_{y_2}}{p_{y_1}}\leq c_d\frac{y_2}{y_1} \quad \mathrm{for}\quad y_2\geq y_1.
			\end{align}
			The kernels $\tilde{\omega}_\Delta$  and $b_y$ are analogously defined as $\tilde{\omega}_\Delta=p_{|\Delta|}-\varepsilon b_\Delta$ and $b_y=p_y\circ c_y$ with $c_y=\langle\nabla^1 p_y,\sigma_{2y}\rangle$. We can derive similar estimates for these kernels with inequality \eqref{eq:quot_bd_HM} and Harnack. This yields
			\begin{align*}
				\left|b_\Delta\right|&\leq\int_\Delta|b_y|\mathrm{d}y\\
				&\leq c_S\int_\Delta\frac{p_y}{y}\mathrm{d}y\leq c_S\frac{|\Delta|}{m(\Delta)}p_{m(\Delta)}.
			\end{align*}
			For the case $|\Delta|\leq m(\Delta)$, we can now further estimate $|b_\Delta|\leq c_S p_{|\Delta|}$ using \eqref{eq:quot_bd_HM}. This implies
			\begin{align*}
				\tilde{\omega}_\Delta\geq p_{|\Delta|}-\varepsilon p_{|\Delta|},
			\end{align*}
			which means that $\omega_\Delta$ is positive for small enough $\varepsilon>0$ if $\Delta$ is short in the sense that $|\Delta|\leq m(\Delta)$. This line of arguments however fails for $\alpha>1$, so in the more general case of Lipschitz boundaries, we only know $\omega_\Delta>0$ for long intervals, i.e. $m(\Delta)\leq|\Delta|$.´
		\end{remark}
		Next, the all important "$\Phi$-property" of the integral operator $\Omega_\Delta$
		\subsubsection{The $\Phi$-property}
		\begin{lemma}[$\Phi$-property, \cite{MuellerRiegler2020}]
			\label{lem:phi-prop}
			Let $\varepsilon\in(0,1)$, $y\in(0,1]$ and $\psi:S\longrightarrow\mathbb{R}$ be a function that coincides on $S$ with a positive harmonic function $v$ defined on $\mathcal{O}_{-y}$, Then $K(\psi)=v|_\mathcal{O}$, i.e. the harmonic extension of $\psi$ retrieves $v$ on $\mathcal{O}$. For any $\Delta\in\mathrm{segm}^+$ with $\Delta\subset(0,y]$ we have:
			\begin{align}
				\label{eq:phi_prop_ineq}
				|\Omega_\Delta\psi-\psi|\leq c_S\frac{|\Delta|}{y}\psi.
			\end{align}
		\end{lemma}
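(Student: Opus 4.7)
The first move is to identify the harmonic extension: since $v$ is harmonic on $\mathcal{O}_{-y}\supset\mathcal{O}$ and $\psi=v|_S$, uniqueness of the Dirichlet problem (Theorem~\ref{thm:ex_harm_ext}) forces $K(\psi)=v|_{\mathcal{O}}$. In particular $(K_t\psi)(x)=v(x_t)=\psi_t(x)$ for every $t\geq0$, and each shifted boundary value $\psi_\tau$ is itself the trace of a positive harmonic function $v_\tau$ on the wider domain $\mathcal{O}_{-y-\tau}$. I would then decompose
\[
\Omega_\Delta\psi-\psi=(K_{|\Delta|}\psi-\psi)-\varepsilon B_\Delta\psi+(\Omega_\Delta\psi-\tilde{\Omega}_\Delta\psi)
\]
and estimate each summand separately, aiming in each case for a bound of the form $c_S(|\Delta|/y)\psi$.

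For the first two (deterministic) pieces, the crucial input is that the distance from $x_t$, $x\in S$, to $\partial\mathcal{O}_{-y}$ is at least $c_Sy$ by (the shifted form of) Lemma~\ref{lem:dist_to_boundary}. Corollary~\ref{cor:Harnack_gradient} then gives $\|\nabla v(x_t)\|\leq (c_S/y)v(x_t)$, while Lemma~\ref{lem:harnack_chain} applied to $v$ on $\mathcal{O}_{-y}$ shows $v(x_t)\leq c_S\psi(x)$ for every $t\in[0,|\Delta|]\subset[0,y]$. Writing $(K_{|\Delta|}\psi-\psi)(x)=\int_0^{|\Delta|}\partial_{e_d}v(x_t)\,\mathrm{d}t$ and integrating yields $|K_{|\Delta|}\psi-\psi|\leq c_S(|\Delta|/y)\psi$. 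For the second term, Lemma~\ref{lem:cy_properties}(1) identifies $(C_t\psi)(\xi)=\langle\nabla v(\xi_t),\sigma(\xi_{2t})\rangle$, whose absolute value is $\leq\|\nabla v(\xi_t)\|\leq (c_S/y)v(\xi_t)$. Hence $|B_t\psi|=|K_t(C_t\psi)|\leq(c_S/y)v_{2t}|_S\leq(c_S/y)\psi$ by Harnack, and integration in $t$ produces $\varepsilon|B_\Delta\psi|\leq c_S\varepsilon(|\Delta|/y)\psi$.

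The real work is the third piece, $|(\Omega_\Delta-\tilde{\Omega}_\Delta)\psi|$: the pointwise estimate from Lemma~\ref{lem:omega-prop}(2) only gives $c_{S,\beta}\varepsilon(|\Delta|^2/m(\Delta)^2)k_{m(\Delta)}$, which on its own does not reproduce the target factor $|\Delta|/y$ unless $y|\Delta|\lesssim m(\Delta)^2$. My plan is to exploit the semigroup property $\omega_{[a,c]}=\omega_{[b,c]}\circ\omega_{[a,b]}$ from Lemma~\ref{lem:omega-prop}(3), writing $\Omega_\Delta=\Omega_{\Delta_N}\circ\cdots\circ\Omega_{\Delta_1}$ along an adjacent partition $\{\Delta_i\}$ of $\Delta$, and then apply the following representation from the derivation preceding Lemma~\ref{lem:sum}:
\[
\Pi^{d_n}-\tilde{\omega}_\Delta=\varepsilon\sum_{s}(b_{j_s}-v_s)+\rho_{d_n},\qquad\omega_\Delta=\lim_{n\to\infty}\Pi^{d_n}.
\]
The sum $\sum_s|(B_{j_s}-V_s)\psi|$ can be controlled once one notices that the inner $K_{m(j_s)-m(\Delta)}$ in $V_s=K_{M(\Delta)-M(j_s)}\circ B_{j_s}\circ K_{m(j_s)-m(\Delta)}$ merely turns $\psi$ into $\psi_{m(j_s)-m(\Delta)}$, which is itself the trace of a positive harmonic function on an even deeper domain $\mathcal{O}_{-y-(m(j_s)-m(\Delta))}$. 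Applying the bound of Step~4 to this shifted trace gives $|B_{j_s}\psi_{m(j_s)-m(\Delta)}|\leq c_S(|j_s|/y)\psi_{m(j_s)-m(\Delta)}$, and the outer $K_{M(\Delta)-M(j_s)}$ followed by Harnack converts this to $c_S(|j_s|/y)\psi$. Since $\sum_s|j_s|=|\Delta|$, we obtain $\varepsilon\sum_s|(B_{j_s}-V_s)\psi|\leq c_S\varepsilon(|\Delta|/y)\psi$.

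The hard part is the remainder $\rho_{d_n}\psi$. Here I expect the main obstacle: the kernel estimate $|\pi_l|\leq k_{\gamma(l)}\prod_{s\in l}(c_S\varepsilon\varrho(\Delta)^{\alpha-1}|j_s|/m(\Delta))$ of equation~\eqref{eq:pi_bound} contains $1/m(\Delta)$ rather than $1/y$, so iterating it as in Lemma~\ref{lem:rho} loses the correct scale. I plan to overcome this by working at the function level instead of the kernel level: for each ordered composition making up $\pi_l$, every $-\varepsilon b_{j_s}$ factor acts on a function bounded by a trace of positive harmonic $\psi_{s}$ on $\mathcal{O}_{-y-s}$, and therefore contributes a factor $c_S\varepsilon|j_s|/y$ by Step~4, while intervening $k_{|j_s|}$ factors merely push the trace higher (changing $\psi$ by a Harnack-bounded factor). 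This yields $|\pi_l\psi|\leq c_S\prod_{s\in l}(c_S\varepsilon|j_s|/y)\,\psi$ up to a uniformly bounded Harnack constant. Summing $\binom{K}{q}$ terms via the elementary-symmetric-polynomial bound $\leq(c_S\varepsilon|\Delta|/y)^q/q!$ and then over $q\geq2$ produces $|R_{d_n}\psi|\leq c_S\varepsilon^2(|\Delta|/y)^2\psi\leq c_S\varepsilon(|\Delta|/y)\psi$ for $\varepsilon<\varepsilon(S)$. Combining all three contributions gives the desired estimate. The main technical difficulty, and where care is needed, is precisely in justifying the propagation of the ``trace-of-positive-harmonic'' bound $|j_s|/y$ through the iterated composition of $b$'s and $k$'s; this is handled by iteratively tracking how each $K_{|j_s|}$ increases the effective depth and using Harnack on the shifted function.
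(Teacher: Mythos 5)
Your opening moves are sound: the decomposition of $\Omega_\Delta\psi-\psi$ into $(K_{|\Delta|}\psi-\psi)-\varepsilon B_\Delta\psi+(\Omega_\Delta-\tilde\Omega_\Delta)\psi$ matches the paper's first splitting, and your Harnack-based estimates for the first two pieces and for the contribution $\varepsilon\sum_s(B_{j_s}-V_s)\psi$ (using that $K_{m(j_s)-m(\Delta)}\psi$ is the trace of $v$ on an even deeper domain) are correct. The gap is in the remainder $\rho_{d_n}\psi=\sum_{q\geq 2}\sum_{l}\pi_l\psi$. You claim that in the iterated composition $\pi_l\psi$, every $-\varepsilon B_{j_s}$ factor ``contributes a factor $c_S\varepsilon|j_s|/y$ by Step~4''. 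But the $1/y$ improvement in Step~4 is not a consequence of the bound $|g|\le C\psi$ alone; it relies essentially on $g$ being \emph{equal} to the trace of a positive harmonic function on the wider domain $\mathcal{O}_{-y}$, so that $C_t g$ is a directional derivative of $v$ and one may estimate $\|\nabla v(\cdot_t)\|\le c_S v(\cdot_t)/d(\cdot_t,\partial\mathcal{O}_{-y})\le c_S v(\cdot_t)/y$. Once a single $B_{j_s}$ has been applied, the output is only \emph{bounded} by a multiple of a shifted trace and is not itself such a trace; its harmonic extension lives only on $\mathcal{O}$, and the interior gradient estimate at height $t$ can only use the ball $B(x_t,c_S t)\subset\mathcal{O}$, giving $|B_t g|\le(c_S/t)\psi$, not $(c_S/y)\psi$. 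Consequently the second and subsequent $b$-factors contribute $|j_s|/m(j_s)\approx|j_s|/m(\Delta)$ rather than $|j_s|/y$, and the resulting bound for $\rho_{d_n}\psi$ is of the order $\varepsilon^2(|\Delta|/y)(|\Delta|/m(\Delta))\psi$, which is not $\lesssim(|\Delta|/y)\psi$ when $m(\Delta)\ll|\Delta|$ — exactly the regime you correctly identified as problematic.

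The paper avoids this by not estimating $(\Omega_\Delta-\tilde\Omega_\Delta)\psi$ on the full interval at all. It first proves the displayed inequality \eqref{eq:almost_phi_prop} for a \emph{short} piece $J\subset\Delta$ with $|J|\le m(J)$ and (after choosing $K$ large) $|J|y/m(\Delta)^2\le1$; under these constraints the crude kernel bound $|R_J\psi|\le c_S(|J|^2/m(\Delta)^2)\psi$ from Lemma~\ref{lem:omega-prop}(2) is automatically $\le c_S(|J|/y)\psi$ and needs no improvement. Then $\Delta$ is partitioned into $K$ equal such pieces, the operator semigroup property $\Omega_\Delta=\Omega_{J_K}\cdots\Omega_{J_1}$ is applied, and the accumulated factor $(1\pm 2c_S|\Delta|/(Ky))^{\pm K}$ is controlled by $\exp(\pm c_S|\Delta|/y)$ and then linearised to $1\pm c_S|\Delta|/y$. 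If you wish to stay at the function level, you would need some genuine replacement for the ``depth-$y$'' propagation step through iterated $b$-compositions; as it stands, the argument you describe does not supply one, and the semigroup iteration appears to be essential.
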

		The proof of Lemma \ref{lem:phi-prop} will roughly follow the following scheme: We begin by splitting the operator $\Omega_J$ for some $J\subset\Delta$ with $|J|\leq m(J)$ and derive estimates from above and below for $\Omega_J\psi$ with constants, that still depend on $J$. Then, splitting $\Delta$ into sufficiently small intervals for which we have estimates available and iterating these estimates using the semi-group property of $\Omega_\Delta$, we eradicate the dependence on $\Delta$ in the constants and obtain the desired bounds on $\Omega_\Delta\psi$.
		\begin{proof}
			We begin with arbitrary $J\in\textrm{segm}^+$ such that $J\subset\Delta$. Writing $\omega_J=\tilde{\omega}_J+r_J$ we obtain 
			\begin{align}
				\label{eq:splitting_Phi_prop}
				|(\Omega_J\psi)(x)-\psi(x)|\leq |(K_{|J|}\psi)(x)-\psi(x)|+\varepsilon|(B_J\psi)(x)|+|(R_J\psi)(x)|
			\end{align}
			for all $x\in S$. Since $v$ is positive harmonic on $\mathcal{O}_{-y}$ we apply Harnack and the mean value theorem to estimate the first summand as follows:
			\begin{align*}
				|\left(K_{|J|}\psi\right)(x)-\psi(x)|=|v\left(x_{|J|}\right)-v(x)|&\leq\|\nabla v(x_\eta)\||J|\leq c\frac{v(x_\eta)}{d(x_\eta,S_{-y})}|J|,
			\end{align*}
			where $\eta=\eta(x)\in(0,|J|)$. Now since $x_\eta=x_{(\eta+y)-y}$, we have $d(x_\eta,S_{-y})\geq c_S(\eta+y)\geq c_Sy$ and by Lemma \ref{lem:harnack_chain}, $v(x_\eta)\leq c\left(\frac{y+\eta}{y}\right)^\alpha v(x)\leq c2^\alpha v(x)=c_S\psi(x)$, where $c=c(d)$ only depends on the dimension. Therefore
			\begin{align}
				\label{eq:phi_prop1}
				\left|\left(K_{|J|}\psi\right)(x)-\psi(x)\right|\leq c_{S}\frac{\psi(x)}{y}|J|.
			\end{align}
			Recalling the definition of $c_y$ we obtain for $z\in S$
			\begin{align*}
				|(C_t\psi)(z)|&=\left|\int_S\langle\nabla^1k_t(z,\xi)\psi(\xi),\sigma(z_{2t})\rangle\mathrm{d}\omega^{z_0}(\xi)\right|\\
				&\stackrel{(*)}{=}\left|\left\langle\nabla\int_Sk_t(z,\xi)\psi(\xi)\mathrm{d}\omega^{z_0}(\xi),\sigma(z_{2t})\right\rangle\right|\\
				&\leq\|\nabla v(z_t)\|\stackrel{(**)}{\leq} c_S\frac{v_t(z)}{y} = c_S\frac{(K_t\psi)(z)}{y}
			\end{align*}
			since again $d(z_t,S_{-y})\geq c_Sy$. We exchanged differentiation and integration in $(*)$ due to the integrable bound on $\nabla^1k_t$ provided by Corollary \ref{cor:Harnack_gradient} of the Harnack inequality and $(**)$ was another application of \ref{cor:Harnack_gradient}. Now by definition of the kernel $b_\Delta$ and applying Fubini we estimate $|B_J\psi|$:
			\begin{align}
				\label{eq:phi_prop2}
				\begin{aligned}
					|(B_J\psi)(x)|\leq\int_J K_t\left(|C_t\psi|\right)(x)\mathrm{d}t&\leq \frac{c_S}{y}\int_J\left(K_{2t}\psi\right)(x)\mathrm{d}t\\
					&=\frac{c_S}{y}\int_Jv_{2t}(x)\mathrm{d}t\leq c_{S}\frac{\psi(x)}{y}|J|
				\end{aligned}
			\end{align}
			since $v_{2t}(x)\leq c\left(\frac{2t+y}{y}\right)^\alpha v(x)\leq c3^\alpha v(x)=c_S\psi(x)$ by Lemma \eqref{lem:harnack_chain}. Now we turn to the last summand. To appropriately estimate $R_J\psi$, we assume the extra condition $|J|\leq m(J)$, enabling us to apply the bound (3) from Lemma \ref{lem:omega-prop} with $\beta=1$. With that we obtain
			\begin{align}
				\label{eq:phi_prop3}
				\begin{aligned}
					|(R_J\psi)(x)|&\leq\int_S|\omega_J(x,\xi)-\tilde{\omega}_J(x,\xi)|\psi(\xi)\mathrm{d}\omega^{z_0}(\xi)\\
					&\leq \varepsilon c_S\frac{|J|^2}{m(J)^2}\int_Sk_{m(J)}(x,\xi)\psi(\xi)\mathrm{d}\omega^{z_0}(\xi)\\
					&\stackrel{\varepsilon<1}{\leq} c_S\frac{|J|^2}{m(J)^2}v\left(x_{m(J)}\right)\\
					&\leq \frac{c_S2^\alpha}{m(\Delta)^2}|J|^2\psi(x) 
				\end{aligned}
			\end{align}
			since by Lemma \ref{lem:harnack_chain} $v_{m(J)}(x)\leq c \left(\frac{m(J)+y}{y}\right)^\alpha v(x)\leq c2^\alpha \psi(x)$. Combining inequalities \eqref{eq:phi_prop1}, \eqref{eq:phi_prop2} and \eqref{eq:phi_prop3}, we obtain
			\begin{align*}
				|(\Omega_J\psi)(x)-\psi(x)|&\leq \left(c_S+\frac{c_S|J|y}{m(\Delta)^2}\right)\frac{|J|}{y}\psi(x)\\
				&=: c_S\left(1+\frac{|J|y}{m(\Delta)^2}\right)\frac{|J|}{y}\psi(x),
			\end{align*}
			so after reformulating we end up with
			\begin{align}
				\label{eq:almost_phi_prop}
				\left(1-c_S\left(1+\frac{|J|y}{m(\Delta)^2}\right)\frac{|J|}{y}\right)\psi\leq\Omega_J\psi\leq\left(1+c_S\left(1+\frac{|J|y}{m(\Delta)^2}\right)\frac{|J|}{y}\right)\psi
			\end{align}
			everywhere on $S$. Having established this inequality we partition $\Delta$ into small intervals and apply it to each element of the partition. Choose $K:= K(\Delta,y)\in\mathbb{N}$ such that the three conditions
			\begin{align*}
				\frac{|\Delta|}{K}\leq m(\Delta) && \frac{|\Delta|y}{Km(\Delta)^2}\leq1 && 2c_S\frac{|\Delta|}{Ky}\leq\frac{1}{2}
			\end{align*}
			are satisfied. Now divide $\Delta$ into $K$ non-overlapping intervals of equal length $\frac{|\Delta|}{K}$ and denote them by $J_1,\dots,J_K$, i.e. $\Delta=\bigcup_{1\leq k\leq K} J_k$, $m(J_k)<m(J_{k+1})$ and $|J_k|=\frac{|\Delta|}{K}$. Moreover, we have $c_S\left(1+\frac{|J_k|y}{m(\Delta)^2}\right)\leq 2c_S$ and $|J_k|\leq m(J_k)$, by the choice of $K$. Remark \eqref{rem:semigroup-operators} and the above estimates for $J\in\mathrm{segm}^+$ with $|J|\leq m(J)$ now imply
			\begin{align*}
				\Omega_\Delta\psi&=(\Omega_{J_K}\Omega_{J_{K-1}}\cdots\Omega_{J_1})\psi\\
				&\leq \left(1+\frac{2c_S|\Delta|}{Ky}\right)^K\psi\stackrel{(*)}{\leq} \exp\left(\frac{2c_S|\Delta|}{y}\right)\psi\stackrel{(**)}{\leq}\left(1+e^{2c_s}\frac{|\Delta|}{y}\right)\psi\stepcounter{equation}\tag{\theequation}\label{eq:phi_prop_upper_ineq},
			\end{align*}
			where we used $\ln(1+\zeta)\leq \zeta$ for $\zeta>-1$ in $(*)$ and the fact that for $z=2c_S|\Delta|/y$, we have 
			\begin{align*}
				\left(1+\frac{z}{K}\right)^K\leq e^z \quad\Longleftrightarrow\quad \ln\left(1+\frac{z}{K}\right)\leq\frac{z}{K}.
			\end{align*}
			For $(**)$ observe that $\xi=|\Delta|/y\leq1$ and combine with the following consideration for $c=2c_S\geq 0$:
			\begin{align*}´						e^{c\xi}-1=\sum_{k=1}^{\infty}\frac{(c\xi)^k}{k!}=\xi\sum_{k=1}^{\infty}\frac{c^k\xi^{k-1}}{k!}\leq e^c\xi.
			\end{align*}
			Completely analogously we show the other inequality
			\begin{align*}
				\Omega_\Delta\psi&=(\Omega_{J_K}\Omega_{J_{K-1}}\cdots\Omega_{J_1})\psi\\
				&\geq \left(1-\frac{2c_S|\Delta|}{Ky}\right)^K\psi\stackrel{(\star)}{\geq}\exp\left(-\frac{4c_S|\Delta|}{y}\right)\psi\stackrel{(\star\star)}{\geq}\left(1-\sinh(4c_s)\frac{|\Delta|}{y}\right)\psi\stepcounter{equation}\tag{\theequation}\label{eq:phi_prop_lower_ineq}.
			\end{align*}
			Inequality $(\star)$ stems from the fact that $\ln(1-\zeta)\geq-\frac{\zeta}{1-\zeta}$ for $\zeta<1$ i.e. with $z$ as above we have 
			\begin{align*}
				K\ln\left(1-\frac{z}{K}\right)\geq-\frac{z}{1-z/K}\Longleftrightarrow\left(1-\frac{z}{K}\right)^K\geq\exp\left(-\frac{z}{1-z/K}\right).
			\end{align*}
			Since by assumption $z/K<1/2$, we obtain $\left(1-\frac{z}{K}\right)^K\geq e^{-2z}$.
			In $(\star\star)$ we again have $\xi=|\Delta|/y\leq1$, moreover, for $c=4c_S\geq0$,
			\begin{align*}
				\frac{e^{-c\xi}-1}{z}=\sum_{k=1}^{\infty}\frac{(-1)^kc^k\xi^{k-1}}{k!}\geq-\sum_{n=0}^{\infty}\frac{c^{2n+1}\xi^{2n}}{(2n+1)!}\geq-\sinh(c)
			\end{align*}
			holds.
		\end{proof}
		
		The statement, that the action $\Omega_\Delta$ on the trace of any positive harmonic function on the shifted domain $S_y$ is under good control, in the sense of \eqref{eq:phi_prop_ineq}, is heavily relied upon in the remaining arguments. Moreover, if $|\Delta|\rightarrow0$, $\Omega_\Delta\psi\rightarrow\psi$ uniformly on $S$, for $\psi$ as in the setting of Lemma \ref{lem:phi-prop}.\par 
		Sometimes we will use the improved estimates \eqref{eq:phi_prop_upper_ineq} and \eqref{eq:phi_prop_lower_ineq} to estimate $\Omega_\Delta$, i.e.
		\begin{align}
			\exp\left(-\frac{4c_S|\Delta|}{y}\right)\psi\leq\Omega_\Delta\psi\leq\exp\left(\frac{2c_S|\Delta|}{y}\right)\psi,
		\end{align}
		where $\Delta,\psi$ and $y$ are as in Lemma \ref{lem:phi-prop}. The setting in Lemma \ref{lem:phi-prop} is such that $\psi$ is the trace on $S$ of a positive harmonic function  $v$ defined on $\mathcal{O}_{-y}$. The $\Phi$-property will for example be applied to situations where $\phi=\phi\in C_0(S)$ and positive and $\psi(x)=(K_y\phi)(x)$ for $x\in S$. Then $\psi$ is the trace on $S$ of the positive harmonic function $K_\tau\psi$ defined on $\mathcal{O}_{-y}$ and for $\tau<0$, $K\tau\psi=K_{y-\tau}\phi$. \\
		We can derive a slightly weaker form of the $\Phi$-property:
		\begin{remark}
			\label{rem:Phi2}
			Let $\varepsilon\in(0,1)$ and $\psi:S\longrightarrow\mathbb{R}$ be bounded and continuous. Then for $\Delta\in\mathrm{segm}^+$ with $|\Delta|\leq m(\Delta)$ we have
			\begin{align}
				\label{eq:Phi2}
				|\psi-\Omega_\Delta\psi|\leq|\psi-K_{|\Delta|}\psi|+c_S\frac{|\Delta|}{m(\Delta)}\|\psi\|_{C(S)}
			\end{align}
		everywhere on $S$. In particular, Theorem \ref{thm:ex_harm_ext} implies that $\Omega_\Delta\psi$ converges to $\psi$ pointwise, as $|\Delta|\rightarrow 0$. 
		
		The proof of inequality \eqref{eq:Phi2} starts with splitting as in \eqref{eq:splitting_Phi_prop}, estimating $|R_\Delta\psi|$ as in \eqref{eq:phi_prop3} and combining with the inequality
		\begin{align*}
			|B_\Delta\psi|\leq\int_\Delta \left|B_\theta|\psi|\right|\mathrm{d}\theta\leq\int_\Delta\frac{K_\theta|\psi|}{\theta}\mathrm{d}\theta\leq\frac{|\Delta|}{m(\Delta)}\|\psi\|_{C(S)}.
		\end{align*}
		\end{remark}
		
		We continue to derive some immediate consequences of the $\Phi$-property. \par

		\begin{remark}[\textbf{Notation}]
			For $y\in(0,1)$, we now introduce the notational convention
			\begin{align*}
				\omega_y:=\omega_{[y,1]} \quad\text{and respectively}\quad\Omega_y:=\Omega_{[y,1]},
			\end{align*}
			which we shall henceforth abide by.
		\end{remark}
		
		An immediate and crucially important consequence of the $\Phi$-property is stated in the following corollary.
		\begin{corollary}[\cite{MuellerRiegler2020}]
			\label{cor:very_important_ineq}
			Let $\varphi:\mathcal{O}\longrightarrow\mathbb{R}$ be positive harmonic on $\mathcal{O}$. Then for $0<\eta<y\leq1/2$ the following estimate holds:
			\begin{align}
				\Omega_\eta\varphi_y\leq c_S\Omega_y\varphi_y
			\end{align}
			where $\varphi_y:=\varphi|_{S_y}$
		\end{corollary}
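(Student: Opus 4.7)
The plan is to combine the semigroup property of $\omega_\Delta$ from Lemma \ref{lem:omega-prop}(3) with the $\Phi$-property from Lemma \ref{lem:phi-prop}, and then exploit positivity of the resulting kernel to push the inequality through.

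First I would decompose the interval $[\eta,1]$ as $[\eta,y]\cup[y,1]$ and use the semigroup property (interpreted at the level of operators via Remark \ref{rem:semigroup-operators}) to write
\begin{align*}
    \Omega_\eta \varphi_y = \Omega_{[\eta,1]}\varphi_y = \Omega_{[y,1]}\bigl(\Omega_{[\eta,y]}\varphi_y\bigr) = \Omega_y\bigl(\Omega_{[\eta,y]}\varphi_y\bigr).
\end{align*}
So it suffices to show the pointwise bound $\Omega_{[\eta,y]}\varphi_y \leq c_S\,\varphi_y$ on $S$, and then apply $\Omega_y$ to both sides while preserving the inequality.

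Next I would verify that $\varphi_y$ falls into the hypothesis of the $\Phi$-property. Since $\varphi$ is positive and harmonic on $\mathcal{O}$, the shifted function $v(z):=\varphi(z_y)$ is positive and harmonic on $\mathcal{O}_{-y}$, and $v|_S = \varphi_y$. Applying Lemma \ref{lem:phi-prop} to the interval $\Delta=[\eta,y]\subset(0,y]$ gives
\begin{align*}
    \left|\Omega_{[\eta,y]}\varphi_y - \varphi_y\right| \leq c_S\,\frac{|[\eta,y]|}{y}\,\varphi_y = c_S\,\frac{y-\eta}{y}\,\varphi_y \leq c_S\,\varphi_y,
\end{align*}
which yields the desired $\Omega_{[\eta,y]}\varphi_y \leq (1+c_S)\varphi_y = c_S\,\varphi_y$.

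Finally I would use positivity of the kernel $\omega_y = \omega_{[y,1]}$ to conclude. This is precisely where the assumption $y\leq 1/2$ enters: it guarantees $|[y,1]| = 1-y \geq y = m([y,1])$, so by Lemma \ref{lem:positivity_omega} (for $\varepsilon<\varepsilon(S)$) the kernel $\omega_y$ is positive, hence $\Omega_y$ is a monotone operator. Applying $\Omega_y$ to $\Omega_{[\eta,y]}\varphi_y \leq c_S \varphi_y$ gives
\begin{align*}
    \Omega_\eta\varphi_y = \Omega_y\bigl(\Omega_{[\eta,y]}\varphi_y\bigr) \leq c_S\,\Omega_y\varphi_y,
\end{align*}
which is the claim. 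The only subtle point is to make sure that all of the hypotheses line up — that the $\Phi$-property applies with reference height $y$ and interval $[\eta,y]$ (it does, since $[\eta,y]\subset(0,y]$), and that $\omega_y$ is positive (it is, because $y\leq 1/2$ makes $[y,1]$ a long interval in the sense of Lemma \ref{lem:positivity_omega}). Everything else is purely algebraic.
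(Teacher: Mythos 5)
Your proposal is correct and follows exactly the same route as the paper's proof: apply the $\Phi$-property with $\Delta=[\eta,y]\subset(0,y]$ to get $\Omega_\Delta\varphi_y\leq(1+c_S)\varphi_y$, use the semigroup factorization $\Omega_\eta=\Omega_y\Omega_\Delta$, and invoke positivity of $\omega_y$ to apply $\Omega_y$ while preserving the inequality. You are a bit more explicit than the paper about why $\varphi_y$ satisfies the hypothesis of Lemma~\ref{lem:phi-prop} and why $y\leq 1/2$ makes $\omega_y$ positive, but the argument is the same.
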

		\begin{proof}
			Let $c_S$ be the constant from the previous Lemma \ref{lem:phi-prop},  and define $\Delta:=[\eta,y]$. By the $\Phi$-property in Lemma \ref{lem:phi-prop}, we now estimate
			\begin{align*}
				|\Omega_\Delta\varphi_y-\varphi_y|\leq c_S\frac{y-\eta}{y}\varphi_y
			\end{align*} 
			and thus
			\begin{align*}
				\Omega_\Delta\varphi_y\leq \left(1+c_S\right)\varphi_y.
			\end{align*}
			Applying this inequality and noting that $\omega_y>0$, we immediately obtain for all $x\in S$
			\begin{align*}
				(\Omega_\eta\varphi_y)(x)=(\Omega_y\Omega_\Delta\varphi_y)(x)\leq(1+c_S)(\Omega_y\varphi_y)(x).
			\end{align*}
		\end{proof}
		Another property of $\omega_y$ that holds for sufficiently small $y$ is listed below and will be used later when characterizing the dual operator of $\Omega_y$. The structure of the proof is very similar to that of Lemma \ref{lem:positivity_omega}.
		\begin{lemma}[\cite{MuellerRiegler2020}]
			\label{lem:omega_ky_comparison}
			There exist constants $\varepsilon(S),c_S^-,c_S^+>0$ such that for all $y\in(0,1/4)$, and $\varepsilon\in(0,\varepsilon(S))$ the double estimate
			\begin{align}
				\label{eq:omega_y_k_y_comparison}
				y^{c_S^-\varepsilon }k_{1-y}\leq\omega_y\leq\frac{1}{y^{c_S^+\varepsilon }}k_{1-y}
			\end{align}
			holds.
		\end{lemma}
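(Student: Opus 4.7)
The plan is to perform a dyadic decomposition of $[y,1]$, establish a one-step two-sided multiplicative comparison $\omega_{\Delta}\asymp k_{|\Delta|}$ on each piece, and then iterate using the semi-group properties of $\omega$ and $k$. Set $n:=\lfloor\log_2(1/y)\rfloor$, which is at least $2$ because $y<1/4$. Define the pieces $\Delta_k:=[2^ky,2^{k+1}y]$ for $0\leq k\leq n-2$ and $\Delta_{n-1}:=[2^{n-1}y,1]$. A short computation shows that each $\Delta_k$ satisfies $m(\Delta_k)\leq|\Delta_k|\leq 3m(\Delta_k)$, i.e.\ $\varrho(\Delta_k)\in[2,4]$. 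By the semi-group property of $\omega_\Delta$ (Lemma~\ref{lem:omega-prop}(3)) and of $k_y$ (Lemma~\ref{lem:MartinKernel}(2)) one has
\begin{align*}
\omega_y \;=\; \omega_{\Delta_{n-1}}\circ\cdots\circ\omega_{\Delta_0}, \qquad k_{1-y}\;=\;k_{|\Delta_{n-1}|}\circ\cdots\circ k_{|\Delta_0|}.
\end{align*}

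\textbf{One-step comparison.} For a fixed piece $\Delta=\Delta_k$ I would combine the identity $\tilde{\omega}_\Delta=k_{|\Delta|}-\varepsilon b_\Delta$ with the bound $|b_\Delta|\leq c_S k_{m(\Delta)}$ from Lemma~\ref{lem:b_Delta} (noting $\varrho(\Delta)\leq 4$ and $|\Delta|/m(\Delta)\leq 3$) and the estimate $|\omega_\Delta-\tilde{\omega}_\Delta|\leq c_S\varepsilon k_{m(\Delta)}$ coming from Lemma~\ref{lem:omega-prop}(2) applied with $\beta=3$. Together these give
\begin{align*}
\bigl|\omega_\Delta-k_{|\Delta|}\bigr|\;\leq\; c_S\varepsilon\,k_{m(\Delta)}.
\end{align*}
The Martin-kernel quotient bound in Lemma~\ref{lem:MartinKernel}(3), applied to the ratio $|\Delta|/m(\Delta)\in[1,3]$, yields $k_{m(\Delta)}\leq c_S k_{|\Delta|}$, so absorbing this factor produces the two-sided bound
\begin{align*}
(1-c_S\varepsilon)\,k_{|\Delta|}\;\leq\;\omega_\Delta\;\leq\;(1+c_S\varepsilon)\,k_{|\Delta|}
\end{align*}
with a constant $c_S$ independent of $k$.

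\textbf{Iteration and conversion.} Choose $\varepsilon(S)$ so small that $c_S\varepsilon<1/2$ and Lemma~\ref{lem:positivity_omega} applies; then each $\omega_{\Delta_k}$ is strictly positive and so is the lower bound $(1-c_S\varepsilon)k_{|\Delta_k|}$. Because all kernels involved are non-negative, the pointwise inequality composes multiplicatively under $\circ$ (Tonelli), hence
\begin{align*}
(1-c_S\varepsilon)^n\,k_{1-y}\;\leq\;\omega_y\;\leq\;(1+c_S\varepsilon)^n\,k_{1-y}.
\end{align*}
Finally $\ln(1+c_S\varepsilon)\leq c_S\varepsilon$ and $\ln(1-c_S\varepsilon)\geq -2c_S\varepsilon$ (valid under the assumption $c_S\varepsilon\leq 1/2$), combined with $n\leq\log_2(1/y)$, convert the factors $(1\pm c_S\varepsilon)^n$ into $y^{\mp c_S^{\pm}\varepsilon}$ with $c_S^+=c_S/\log 2$ and $c_S^-=2c_S/\log 2$, giving exactly \eqref{eq:omega_y_k_y_comparison}.

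\textbf{Anticipated obstacle.} The only real subtlety is that the error estimates provided by Lemmata~\ref{lem:omega-prop}(2) and~\ref{lem:b_Delta} are naturally phrased in terms of $k_{m(\Delta)}$, while the desired bound is expressed in terms of $k_{|\Delta|}$; one must verify that the Harnack-type quotient bound fully absorbs this mismatch without introducing an extra factor that grows with $n$. This works precisely because the dyadic pieces satisfy $|\Delta_k|\asymp m(\Delta_k)$ with a universal ratio, so the conversion constant depends only on $S$ and not on $k$, which is what allows $n\sim\log_2(1/y)$ compositions to accumulate only into the $y^{c\varepsilon}$ factor.
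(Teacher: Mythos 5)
Your proposal follows the paper's proof nearly verbatim: a dyadic decomposition of $[y,1]$ into $\sim\log_2(1/y)$ pieces, a one-step comparison $(1-c_S\varepsilon)k_{|\Delta|}\le\omega_\Delta\le(1+c_S\varepsilon)k_{|\Delta|}$ from the bounds on $b_\Delta$ and $\omega_\Delta-\tilde\omega_\Delta$, and iteration via the semi-group property to accumulate the factor $y^{\mp c_S^\pm\varepsilon}$. One minor citation slip: the auxiliary inequality $k_{m(\Delta)}\le c_S k_{|\Delta|}$ for $|\Delta|\le 3m(\Delta)$ is the lower-bound direction supplied by Lemma~\ref{lem:harnack_chain} (as the paper cites), not by Lemma~\ref{lem:MartinKernel}(3), which only bounds $k_{|\Delta|}/k_{m(\Delta)}$ from above, i.e.\ the opposite direction.
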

		\begin{proof}
			Let $\rho>0$, as a first step we derive bounds for $\omega_{[\rho,2\rho]}$. Write
			\begin{align*}
				\omega_{[\rho,2\rho]}=k_{\rho}-\varepsilon b_{[\rho,2\rho]} + r_{[\rho,2\rho]}
			\end{align*}
			and apply available bounds for $|b_{[\rho,2\rho]}|$ and $\left|r_{[\rho,2\rho]}\right|$ provided in Lemmata \ref{lem:b_Delta} and \ref{lem:omega-prop} (3) to obtain
			\begin{align}
				\label{eq:omega_ky_comparison1}
				\omega_{[\rho,2\rho]}&\leq k_{\rho}+c_S\varepsilon2^{\alpha-1}k_{\rho}+c_S\varepsilon k_{\rho}\leq (1+c_S\varepsilon)k_\rho
			\end{align}
			and similarily
			\begin{align}
				\label{eq:omega_ky_comparison2}
				\omega_{[\rho,2\rho]}\geq (1-c_S\varepsilon)k_\rho.
			\end{align}
			Let $K=K(y)\in\mathbb{N}, K>0$ be such that $2^{K+1}y\leq1\leq2^{K+2}y$ is satisfied and partition ${[y,1]}$ in the following manner:
			\begin{align*}
				[y,1]=\bigcup_{k=0}^{K-1}\left[2^ky,2^{k+1}y\right] \cup \left[2^Ky,1\right].
			\end{align*}
			Moreover, by the  semi-group property we have
			\begin{align}
				\label{eq:omega_ky_comparison3}
				\omega_y= \omega_{\left[2^Ky,1\right]}\circ\omega_{\left[2^{K-1}y,2^Ky\right]}\circ\cdots\circ\omega_{\left[y,2y\right]}.
			\end{align}
			The considerations above deliver a bound on each $\omega_{\left[2^ky,2^{k+1}y\right]}$. We now estimate $\omega_{\Delta}$, with $\Delta=\left[2^Ky,1\right]$ in a similar way. By the choice of $K$, we have $m(\Delta)\leq|\Delta|\leq3m(\Delta)$. Lemma \ref{lem:harnack_chain} yields the inequality
			\begin{align*}
				\frac{k_{|\Delta|}}{k_{m(\Delta)}}\geq c_d\left(\frac{m(\Delta)}{|\Delta|}\right)^\alpha\quad\text{thus}\quad k_{m(\Delta)}\leq\frac{1}{c_d}\left(\frac{|\Delta|}{m(\Delta)}\right)^\alpha k_{|\Delta|}\leq c_S k_{|\Delta|}
			\end{align*}
			where $c_d>0$ is a constant that only depends on the dimension $d$. Combining the bound above with estimates given in  Lemmata \ref{lem:b_Delta} and \ref{lem:omega-prop} (3) we obtain
			\begin{align*}
				\omega_\Delta&\leq k_{|\Delta|}+\varepsilon\left|b_\Delta\right|+\left|r_\Delta\right|\\
				&\leq k_{|\Delta|}+c_S\varepsilon\frac{\varrho(\Delta)^{\alpha-1}}{m(\Delta)}|\Delta|k_{m(\Delta)}+c_S\varepsilon\frac{|\Delta|^2}{m(\Delta)^2}k_{m(\Delta)}\\
				&\leq k_{|\Delta|}+c_S\varepsilon2^\alpha k_{m(\Delta)}+c_S\varepsilon k_{m(\Delta)}\leq (1+c_S\varepsilon)k_{|\Delta|}
			\end{align*}
			and completely analogously
			\begin{align*}
				\omega_\Delta&\geq k_{|\Delta|}-\varepsilon\left|b_\Delta\right|-\left|r_\Delta\right|\\
				&\geq k_{|\Delta|}-c_S\varepsilon\frac{\varrho(\Delta)^{\alpha-1}}{m(\Delta)}|\Delta|k_{m(\Delta)}-c_S\varepsilon\frac{|\Delta|^2}{m(\Delta)^2}k_{m(\Delta)}\\
				&\geq k_{|\Delta|}-c_S\varepsilon2^\alpha k_{m(\Delta)}-c_S\varepsilon k_{m(\Delta)}\geq(1-c_S\varepsilon)k_{|\Delta|}.
			\end{align*}
			Combining this with equations \eqref{eq:omega_ky_comparison1}, \eqref{eq:omega_ky_comparison2}, the kernel representation in \eqref{eq:omega_ky_comparison3} and the fact that
			\begin{align*}
				1-2^Ky + \sum_{k=0}^{K-1}2^ky=1-y
			\end{align*}
			yields
			\begin{align*}
				(1-c_S\varepsilon)^{K+1}k_{1-y}\leq\omega_y\leq(1+c_S\varepsilon)^{K+1}k_{1-y}.
			\end{align*}
			Now define $\varepsilon(S):=\frac{1}{2c_S}$ and let $\varepsilon<\varepsilon(S)$. Since $(K+1)\ln(2)\leq \ln(1/y)$ we obtain
			\begin{align*}
				(K+1)\ln(1+\varepsilon c_S)\leq (K+1)c_S\varepsilon\leq\ln\left(\frac{1}{y}\right)\frac{c_S}{\ln(2)}\varepsilon
			\end{align*}	
			and thus $\omega_y\leq\frac{1}{y^{c_S^+\varepsilon}}k_{1-y}$ with $c_S^+:=c_S/\ln(2)$. Similarly we have
			\begin{align*}
				(K+1)\ln(1-c_S\varepsilon)\geq-(K+1)\frac{c_S\varepsilon}{1-c_S\varepsilon}\geq-\ln\left(\frac{1}{y}\right)\frac{2c_S}{\ln(2)}\varepsilon
			\end{align*}
			and hence $\omega_y\geq y^{c_S^-\varepsilon}k_{1-y}$, with $c_S^-:=2c_S/\ln(2)$.
		\end{proof}
		\subsubsection{Differential equation}
		The following theorem will arguably be one of the most important ingredient in the arguments leading up to proving the main theorem for near half spaces \ref{thm:main}.
		\begin{theorem}[Differential equation \cite{MuellerRiegler2020}]
			\label{thm:diff-eq}
			Let $\varphi$ be a positive harmonic function on $\mathcal{O}$ that is bounded on $\mathcal{O}_y$ for any $y>0$ and vanishes at infinity on all positive shifts of $S$, i.e.
			\begin{align*}
				\forall y>0:\quad \lim_{\substack{\|z\|\rightarrow\infty\\z\in S}}\varphi(z_y)=0.
			\end{align*}
			For every $x\in S$, define the real-valued function
			\begin{align*}
				f^x:(0,1]&\longrightarrow \mathbb{R}\\
				&y\longmapsto(\Omega_y(\varphi_y))(x).
			\end{align*}
			Then for any $y_0\in(0,1)$, $f^x$ is Lipschitz on $[y_0,1]$ and we have 
			\begin{align}
				\label{eq:HS-fx}
				f^x(y_2)-f^x(y_1)=\int_{y_1}^{y_2}\varepsilon(\Omega_{y}(B_{y}\varphi_{y}))(x)\mathrm{d}y
			\end{align}
			for all $y_1,y_2\in(y_0,1]$.
		\end{theorem}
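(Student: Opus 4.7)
The plan is to derive a local identity for $f^x(y_2) - f^x(y_1)$ using the semi-group property of $\Omega$ from Remark \ref{rem:semigroup-operators}, together with the decomposition $\omega_{[y_1,y_2]} = k_{y_2-y_1} - \varepsilon b_{[y_1,y_2]} + r_{[y_1,y_2]}$, where the remainder $r_{[y_1,y_2]} := \omega_{[y_1,y_2]} - \tilde{\omega}_{[y_1,y_2]}$ is controlled quadratically by Lemma \ref{lem:omega-prop}(2). From this both the Lipschitz bound and the integral formula will follow.

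Applying the semi-group identity $\Omega_{y_1} = \Omega_{y_2}\circ\Omega_{[y_1,y_2]}$, together with $K_{y_2-y_1}\varphi_{y_1} = \varphi_{y_2}$ from property~(1) of Lemma \ref{lem:MartinKernel}, and justifying the exchange of integrals via the bound $|b_t|\leq c_S k_t/t$ of Lemma \ref{lem:properties_b_y}, I obtain the local identity
\begin{align*}
f^x(y_2) - f^x(y_1) = \varepsilon \int_{y_1}^{y_2} \Omega_{y_2}(B_t\varphi_{y_1})(x)\,\mathrm{d}t \;-\; \Omega_{y_2}(R_{[y_1,y_2]}\varphi_{y_1})(x).
\end{align*}
Lemma \ref{lem:omega-prop}(2) applied with $\beta=(1-y_0)/y_0$ and the bound $M_{y_0}:=\sup_{\mathcal{O}_{y_0}}\varphi<\infty$ gives a remainder of size $\mathcal{O}((y_2-y_1)^2)$, while Corollary \ref{cor:Harnack_gradient} together with Lemma \ref{lem:dist_to_boundary} produces $|B_t\varphi_{y_1}|(x)\leq c_{S,y_0}M_{y_0}$ uniformly on $[y_0,1]$. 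Combined with the uniform $L^\infty$-bound on $\Omega_{y_2}$ from Corollary \ref{cor:omega-operator-bounded} and Remark \ref{rem:omega-L1-norm}, this is exactly the Lipschitz estimate on $[y_0,1]$.

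To promote the local identity to the integral formula, I would partition $[y_1,y_2]$ by a sequence $\pi_n = \{t_0=y_1<t_1<\cdots<t_N=y_2\}$ with vanishing mesh, apply the identity on each $[t_{i-1},t_i]$, and sum. The accumulated remainders are bounded by $c_{S,y_0}\varepsilon M_{y_0}|\pi_n|(y_2-y_1)\to 0$, while the principal part is a Riemann sum for $\int_{y_1}^{y_2}\varepsilon\Omega_y(B_y\varphi_y)(x)\,\mathrm{d}y$, provided the integrand $g(y):=\Omega_y(B_y\varphi_y)(x)$ is continuous on $[y_0,1]$. Continuity of $g$ follows from the joint continuity of $b_y$ established earlier, continuity of $y\mapsto\varphi_y$ (a shift of the fixed positive harmonic $\varphi$), and continuity of $y\mapsto\Omega_y(\cdot)$, which I would obtain from $\Omega_y = \Omega_{y+h}\circ\Omega_{[y,y+h]}$ together with the $\Phi$-property (Lemma \ref{lem:phi-prop} and Remark \ref{rem:Phi2}), both of which force $\Omega_{[y,y+h]}\to\mathrm{id}$ as $h\to 0^+$.

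The main technical obstacle will be the Riemann-sum passage itself, where one must replace the mismatched arguments $\Omega_{y_2}$ and $\varphi_{y_1}$ by $\Omega_t$ and $\varphi_t$ uniformly over the subintervals of each $\pi_n$. This amounts to an equicontinuity argument on the compact $[y_0,1]$: the uniform bounds already derived --- depending only on $S$, $y_0$ and $M_{y_0}$ --- guarantee that the replacement errors vanish uniformly as $|\pi_n|\to 0$, and it is precisely at this stage that the assumption $y_0>0$ is essential, since $c_{S,y_0}$, $M_{y_0}$ and the lower bound on the distance to $S$ all blow up as $y_0\downarrow 0$.
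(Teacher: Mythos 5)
Your argument is correct, but it follows a structurally different route from the paper's. Both proofs rest on the same decomposition $\omega_\Delta = k_{|\Delta|} - \varepsilon b_\Delta + r_\Delta$, the bound $|r_\Delta| \lesssim \varepsilon(|\Delta|/m(\Delta))^2 k_{m(\Delta)}$ from Lemma~\ref{lem:omega-prop}(2), the semigroup identity, and the $\Phi$-property; the difference lies in how \eqref{eq:HS-fx} is extracted from these. The paper first proves the Lipschitz bound, then invokes Rademacher's theorem to obtain a.e.\ differentiability and the fundamental theorem of calculus, and finally \emph{computes the left derivative pointwise}: $\bigl(f^x\bigr)'_-(y)=\varepsilon(\Omega_y B_y\varphi_y)(x)$, via a somewhat delicate difference-quotient analysis that requires bounded second derivatives of $\varphi$ on $\mathcal{O}_{y_0/2}$ (derived from Remark~\ref{rem:grad_harm_fctn}) together with dominated convergence. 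You instead note that $f^x(y_2)-f^x(y_1)=\Omega_{y_2}(\varphi_{y_2}-\Omega_{[y_1,y_2]}\varphi_{y_1})$ and that $\varphi_{y_2}-\Omega_{[y_1,y_2]}\varphi_{y_1}=\varepsilon B_{[y_1,y_2]}\varphi_{y_1}-R_{[y_1,y_2]}\varphi_{y_1}$ is an \emph{exact} algebraic identity, and you then telescope over a partition with vanishing mesh. This avoids Rademacher entirely, gives the Lipschitz bound for free, and converts a pointwise-derivative argument into an $L^1$ passage to the limit. What you pay for this is the ``matching'' step you flag at the end, where $\Omega_{t_i}$ and $\varphi_{t_{i-1}}$ must be replaced by $\Omega_t$ and $\varphi_t$: to make this rigorous one still needs (i) the uniform second-derivative bound on $\mathcal{O}_{cy_0}$ that the paper derives explicitly (it is what controls $\|\nabla\varphi(\zeta_{t+t_{i-1}})-\nabla\varphi(\zeta_{2t})\|$), and (ii) the estimate $\|K_{t_i}\psi-K_t\psi\|_\infty\lesssim_{S,y_0}|\pi_n|\,\|\psi\|_\infty$ from Lemma~\ref{lem:MartinKernel}(5), applied to $\psi=C_t\varphi_t$ after writing $\Omega_{[t,t_i]}(B_t\varphi_t)-B_t\varphi_t$ via Remark~\ref{rem:Phi2} and the $K$-semigroup identity $K_h(B_t\varphi_t)=K_{t+h}(C_t\varphi_t)$. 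You attribute the convergence to ``equicontinuity,'' but since the replacement errors are uniformly bounded on $[y_0,1]$ it actually suffices to show \emph{pointwise} convergence $\tilde h_n(t)\to g(t)$ and conclude by dominated convergence; this is strictly easier than what you assert and closes the argument cleanly. With that observation made precise, your proof is a valid and arguably more direct alternative to the paper's.
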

		\begin{proof}
			We begin by demonstrating the Lipschitz property of $f^x$. By Rademacher's Theorem $f^x$ is then a.e. differentiable and the principle theorem of integration holds, i.e.
			\begin{align*}
				f^x(y_2)-f^x(y_1)=\int_{y_1}^{y_2}(f^x)'(y)\mathrm{d}y
			\end{align*}
			where $(f^x)'$ denotes the a.e. derivative of $f^x$. Then we will compute the left derivative of $(f^x)'_-(y)=(\Omega_{y}(B_{y}\varphi_{y}))(x)$ which exists everywhere on $(0,1]$ and is continuous. Since the a.e. derivative $(f^x)'$ and $(f^x)'_-$ coincide except on a set of measure $0$, we have established the desired equality \eqref{eq:HS-fx}.\par 
			Fix $x\in S$, $y_0\in(0,1]$ and for $y\in[y_0,1]$, $h>0$ satisfying $y>y-h\geq y_0$, define $\Delta:=[y-h,y]$. We obtain by definition of $f^x$ and remark \eqref{rem:semigroup-operators}:
			\begin{align}
				\label{eq:splitting_diff}
				\begin{aligned}
					f^x(y)-&f^x(y-h)=\\
					&(\Omega_{y}(\varphi_{y}-\Omega_{\Delta}\varphi_{y}+\varphi_{y}-\varphi_{y-h}))(x)+((\Omega_{y-h}-\Omega_{y})(\varphi_{y}-\varphi_{y-h}))(x).
				\end{aligned}
			\end{align}
			Setting $K:=\sup_{y_0\leq y\leq1}\|\varphi_y\|_{C(S)}$, we observe the following. Since $\Delta=[y-h,y]\subset(0,y]$, the $\Phi$-property in Lemma \ref{lem:phi-prop} implies
			\begin{align*}
				\forall z\in S:\quad|\varphi_{y}(z)-(\Omega_{\Delta}\varphi_{y})(z)|\leq c_S\frac{h}{y}\varphi_{y}(z)\leq \frac{c_SK}{y_0}h.
			\end{align*}
			Furthermore, by the mean value theorem we obtain for every $z\in S$ a $\theta=\theta(z)\in\Delta^\circ$, such that for any $z\in S$, we have
			\begin{align*}
				|\varphi_{y}(z)-\varphi_{y-h}(z)|=\left|\langle\nabla \varphi(z_{\theta}),\vec{e_d}\rangle\right|h\stackrel{(*)}{\leq}c_d\frac{\varphi(z_\theta)}{\theta}h\leq c_d\frac{K}{y_0}h,
			\end{align*}
			where we used Harnack's inequality in $(*)$, and $c_d$ only depends on the dimension $d$. By Corollary \ref{cor:omega-operator-bounded} we also obtain the bound
			\begin{align*}
				\left\|\Omega_{y}(\varphi_{y}-\Omega_{\Delta}\varphi_{y}\right.&\left.+\varphi_{y}-\varphi_{y-h})\right\|_{C(S)}
				\\ 
				&\leq C\left(\|\varphi_{y}-\Omega_{\Delta}\varphi_{y}\|_{C(S)}+\|\varphi_{y}-\varphi_{y-h}\|_{C(S)}\right)\\
				&\leq C\frac{K(c_d+c_S)}{y_0}h,
			\end{align*}
			where $C$ is given in Remark \ref{rem:omega-L1-norm} and notably  only depends on $S$ and on $\varepsilon$. For the second term in equation \eqref{eq:splitting_diff}, we apply the operator inequality again to obtain
			\begin{align*}
				\left\|(\Omega_{y-h}-\Omega_y)(\varphi_y-\varphi_{y-h})\right\|_{C(S)}\leq 2C \frac{c_dK}{y_0}h.
			\end{align*}
			Combining estimates, we now obtain
			\begin{align*}
				|f^x(y)-f^x(y-h)|\leq CK\frac{3c_d+c_S}{y_0}h.
			\end{align*}
			Hence $f^x$ is Lipschitz on $[y_0,1]$.
			
			We now turn to computing the left derivative of $f^x$. Using the same idea  as in \eqref{eq:splitting_diff}, we have for any $x\in S$, $y\in(0,1)$ and $h<y$,
			\begin{align*}
				\frac{f^x(y-h)-f^x(y)}{-h}=\Omega_y\left(\frac{\varphi_y-\Omega_\Delta\varphi_y+\varphi_y-\varphi_{y-h}}{h}\right)(x)+(\Omega_{y-h}-\Omega_y)\left(\frac{\varphi_y-\varphi_{y-h}}{h}\right)(x)
			\end{align*}
			and thus
			\begin{align*}
					\left|\frac{f^x(y-h)-f^x(y)}{-h}-\varepsilon\Omega_y(B_y\varphi_y)(x)\right|&\leq 
					\left|\Omega_y\left(\frac{\varphi_y-\Omega_\Delta\varphi_y+\varphi_y-\varphi_{y-h}}{h}-\varepsilon B_y\varphi_y\right)(x)\right|\\
					&+\left|(\Omega_{y-h}-\Omega_y)\left(\frac{\varphi_y-\varphi_{y-h}}{h}\right)(x)\right|.
			\end{align*}
			In a first step we treat the last term and show
			\begin{align}
				\label{eq:diff_quot_sec_part}
				\lim_{h\downarrow0}(\Omega_{y-h}-\Omega_y)\left(\frac{\varphi_y-\varphi_{y-h}}{h}\right)=0.
			\end{align}
			We rewrite (and drop the notational dependence on $x\in S$)
			\begin{align*}
				(\Omega_{y-h}-\Omega_y)\left(\frac{\varphi_y-\varphi_{y-h}}{h}\right)=(\Omega_y\Omega_\Delta-\Omega_y)\left(\frac{\partial\varphi_y}{\partial\vec{e_d}}+\frac{\varphi_y-\varphi_{y-h}}{h}-\frac{\partial\varphi_y}{\partial\vec{e_d}}\right).
			\end{align*}
			By Harnack we have for all $z\in S$:
			\begin{align*}
				\left|\frac{\partial\varphi}{\partial\vec{e_d}}(z_y)\right|\leq c \frac{\varphi_y(z)}{y}
			\end{align*}
			and thus $\lim_{|z|\rightarrow\infty, z\in S} \frac{\partial\varphi}{\partial\vec{e_d}}(z_y)=0$, i.e. $\frac{\partial\varphi_y}{\partial\vec{e_d}}\in C_0(S)$. For small enough $h>0$ (in particular such that $|\Delta|\leq m(\Delta)$), we can apply Remark \ref{rem:Phi2} to obtain 
			\begin{align*}
				(\Omega_y\Omega_\Delta-\Omega_y)\left(\frac{\partial\varphi_y}{\partial\vec{e_d}}\right)(x)=\int_S\omega_y(x,z)\left(\Omega_\Delta\frac{\partial\varphi_y}{\partial\vec{e_d}}(z)-\frac{\partial\varphi_y}{\partial\vec{e_d}}(z)\right)\mathrm{d}\omega^{z_0}(z)\stackrel{h\rightarrow0}{\longrightarrow} 0,
			\end{align*}
			since by Remark \ref{rem:maximum_principle}, $
			c_S\omega_y(x,z)\|\psi\|_{C(S)}$ is an integrable majorant.
			
			We now estimate the norm of
			\begin{align*}
				\frac{\varphi_y-\varphi_{y-h}}{h}-\frac{\partial\varphi_y}{\partial\vec{e_d}}.
			\end{align*}
			Boundedness of $\varphi$ on $\mathcal{O}_{y_0}$ allows us to uniformly bound all second derivatives on $\overline{\mathcal{O}_{3y/4}}=:\Omega$. Consider $z\in\Omega$ and let $r>0$ be such that $B(z,r)\subseteq \mathcal{O}_{y/2}$. Applying Remark \ref{rem:grad_harm_fctn} to $\frac{\partial}{\partial_{x_j}}\varphi$ on $B(z,r/2)$, we obtain 
			\begin{align*}
				\left\|\nabla\frac{\partial}{\partial_{x_j}}\varphi(z)\right\|\leq \frac{2n}{r}\sup_{\zeta\in\partial B(z,r/2)} \left|\frac{\partial}{\partial_{x_j}}\varphi(\zeta)\right| = \frac{2n}{r}\left|\frac{\partial}{\partial_{x_j}}\varphi(\xi)\right|
			\end{align*}
			for some $\xi\in\partial B(z,r/2)$. We apply Remark \ref{rem:grad_harm_fctn} again, this time to $\varphi$ on $B(\xi,r/2)\subseteq B(x,r)$ and obtain
			\begin{align*}
				\|\nabla \varphi(\xi)\|\leq \frac{2n}{r}\sup_{\zeta\in \partial B(\xi,r/2)} \varphi(\zeta)\leq \frac{2n}{r} \sup_{\zeta\in \mathcal{O}_{y/2}} \varphi(\zeta),
			\end{align*}
			so in total we have
			\begin{align*}
				\left|\frac{\partial^2}{\partial x_i\partial x_j}\varphi(z)\right|\leq\left(\frac{2n}{r}\right)^2\sup_{\zeta\in \mathcal{O}_{y/2}} \varphi(\zeta)\leq \left(\frac{8n}{c_S y}\right)^2\sup_{\zeta\in \mathcal{O}_{y/2}} \varphi(\zeta),
			\end{align*}
			since $r>0$ was arbitrary, $B(z,\mathrm{dist}(z,\partial \mathcal{O}_{y/2}))\subseteq \mathcal{O}_{y/2}$ and $\mathrm{dist}(z,\partial \mathcal{O}_{y/2})\geq c_Sy/4$. Finally, this leads to the desired estimate
			\begin{align*}
				\sup_{\Omega} \left|\frac{\partial^2}{\partial x_i\partial x_j}u\right|\leq\left(\frac{8n}{c_S y}\right)^2\sup_{\mathcal{O}_{y/2}} u.
			\end{align*}
			For any $z\in S$, Taylor expansion yields
			\begin{align*}
				\varphi_y(z)-\varphi_{y-h}(z)=h\frac{\partial\varphi}{\partial\vec{e_d}}(z_y)-\frac{h^2}{2}\frac{\partial^2\varphi}{\partial\vec{e_d}^2}(z_\theta),
			\end{align*}
			where $\theta=\theta(z)\in(y-h,y)$, i.e. $z_\theta\in\Omega$ if $h<y/4$. This implies
			\begin{align*}
				\left|\frac{\varphi(z_y)-\varphi(z_{y-h})}{h}-\frac{\partial\varphi}{\partial\vec{e_d}}(z_y)\right|\leq\frac{h}{2}\left(\frac{8n}{c_S y}\right)^2\sup_{\mathcal{O}_{y/2}} u.
			\end{align*}
			Therefore
			\begin{align*}
				\left\|(\Omega_y\Omega_\Delta-\Omega_y)\left(\frac{\varphi_y-\varphi_{y-h}}{h}-\frac{\partial\varphi_y}{\partial\vec{e_d}}\right)\right\|_{C(S)}\leq h \left(\frac{8n}{c_S y}\right)^2\sup_{\mathcal{O}_{y/2}} u
			\end{align*}
			and thus establishing \eqref{eq:diff_quot_sec_part}.
			
			In the last step of this proof we  write $\omega_\Delta=\tilde{\omega}_\Delta+r_\Delta = k_h-\varepsilon b_\Delta+r_\Delta$ and since $K_h\varphi_y=\varphi_{y+h}$, we obtain for every $z\in S$
			\begin{align}
				\label{eq:diffeq1}
				\begin{aligned}
				\left|\frac{\varphi_y(z)-(\Omega_\Delta\varphi_y)(z)+\varphi_y(z)-\varphi_{y-h}(z)}{h}-\varepsilon (B_y\varphi_y)(z)\right|\leq\\ 
				\left\|\frac{\varphi_y-\varphi_{y+h}+\varphi_y-\varphi_{y-h}}{h}\right\|_{C(S)}+\frac{1}{h}\left\|R_\Delta(\varphi_y)\right\|_{C(S)}
				+\varepsilon\left|\frac{(B_\Delta\varphi_y)(z)}{h}-(B_y\varphi_y)(z)\right|,
				\end{aligned}
			\end{align}
			where in the first two summands we immediately pass to the norm. We now treat each of the summands on the right separately.
			Again, for any $z\in S$, Taylor expansion yields 
			\begin{align*}
				\frac{\varphi(z_{y+h})-\varphi(z_y)-(\varphi(z_y)-\varphi(z_{y-h}))}{h} = \frac{h}{2}\left(\frac{\partial^2}{\partial\vec{e_d}^2}\varphi(z_{\theta_1})+\frac{\partial^2}{\partial\vec{e_d}^2}\varphi(z_{\theta_2})\right),
			\end{align*}
			where $\theta_1\in(y,y+h)$ and $\theta_2\in(y-h,y)$, i.e.  $z_{\theta_i}\in\Omega$, if $h<y/4$. Since all second partial derivatives are bounded on $\Omega$, the first summand in \eqref{eq:diffeq1} converges to $0$ as $h\rightarrow0$. For the next summand, recall the inequality for $|r_\Delta|$ in Lemma \ref{lem:omega-prop}, establishing the following estimate:
			\begin{align*}
				|(R_\Delta\varphi_y)(z)| &\leq \int_S|\omega_\Delta(z,\xi)-\tilde{\omega}_\Delta(z,\xi)|\varphi(\xi)\mathrm{d}\omega^{z_0}(\xi)\\
				&\leq c_S\varepsilon\frac{h^2}{(y-h)^2}\int_Sk_{y-h}(z,\xi)\varphi(\xi)\mathrm{d}\omega^{z_0}(\xi),
			\end{align*}
			since $|\Delta|=h\leq y-h=m(\Delta)$.
			Another application of the quotient bound in Lemma \ref{lem:MartinKernel}, yields that also this term, converges to $0$. For the last term observe that with Fubini we obtain:
			\begin{align*}
				\left|\frac{1}{h}(B_\Delta\varphi_y)(z)-(B_y\varphi_y)(z)\right|\leq\frac{1}{h}\int_{y-h}^{h}|(B_\theta\varphi_y)(z)-(B_y\varphi_y)(z)|\mathrm{d}\theta,
			\end{align*}
			which converges to $0$ for every $z\in S$ since $\theta\mapsto B_\theta\varphi_y(z)$ is continuous by Remark \ref{rem:operator_height_continuity}. Furthermore, writing  $\|\varphi_y\|_{S,\infty}=: D$, we observe
			\begin{align*}
				\frac{1}{h}\int_{y-h}^{y}|(B_\theta\varphi_y)(z)-(B_y\varphi_y)(z)|\mathrm{d}\theta&\leq \frac{D}{h}\int_{y-h}^{y}\int_S|b_\theta(z,\xi)-b_y(z,\xi)|\mathrm{d}\omega^{z_0}(\xi)\mathrm{d}\theta\\
				&\leq\frac{c_SD}{h}\int_{y-h}^{y}\int_S\left(\frac{k_\theta(z,\xi)}{\theta}+\frac{k_y(z,\xi)}{y}\right)\mathrm{d}\omega^{z_0}(\xi)\mathrm{d}\theta\\
				&\leq \frac{3c_SD}{y}
			\end{align*}
			for $h<y/2$, using Lemma \ref{lem:properties_b_y}. Now by dominated convergence we have
			\begin{align*}
				\left|\Omega_y\left(\frac{1}{h}B_\Delta\varphi_y-B_y\varphi_y\right)(x)\right| &\leq \int_S\omega_y(x,z)\left|\frac{1}{h}(B_\Delta\varphi_y)(z)-(B_y\varphi_y)(z)\right|\mathrm{d}\omega^{z_0}(z)\stackrel{h\rightarrow0}{\longrightarrow}0,
			\end{align*}
			since $\Omega_y(1)=1$ and the integrand satisfies the estimate above. Applying the operator inequality to the other terms in equation \eqref{eq:diffeq1}, we finally obtain
			\begin{align*}
				\left|\Omega_y\left(\frac{\varphi_y-\Omega_\Delta\varphi_y+\varphi_y-\varphi_{y-h}}{h}-\varepsilon B_y\varphi_y\right)(x)\right|\stackrel{h\rightarrow0}{\longrightarrow}0,
			\end{align*}
			establishing the end of the proof.
		\end{proof}
		\begin{remark}[\cite{Gilbarg2001}(p.22-23)]
			\label{rem:grad_harm_fctn}
			Let $f$ be harmonic on a domain $\Omega\subseteq\mathbb{R}^n$ and $x\in\Omega$, $r>0$ such that $B=B(x,r)\subseteq\Omega$. By the mean value theorem and divergence theorem, we have
			\begin{align*}
				\nabla f(x) \stackrel{(*)}{=} \frac{1}{r^n\omega_n}\int_{B}\nabla f(z)\mathrm{d}z \stackrel{(**)}{=} \frac{1}{r^n\omega_n}\int_{\partial B} f\vec{n}\mathrm{d}s,
			\end{align*}
			where $\vec{n}$ denotes the unit outward normal to $\partial B$, $\omega_n$ denotes the $n$-dimensional volume of the unit sphere and $\mathrm{d}s$ indicates the $(n-1)$-dimensional surface measure.
			Equality $(*)$ holds since all partial derivatives of $f$ are again harmonic and equality $(**)$ holds since for every $v\in\mathbb{R}^n$ we have 
			\begin{align*}
				\left\langle v,\int_{B}\nabla f(z)\mathrm{d}z\right\rangle &= \int_{B}\langle v,\nabla f(z)\rangle\mathrm{d}z\\
				&=\int_B \mathrm{div}(vf(z))\mathrm{d}z\\
				&=\int_{\partial B}	\langle vf(z),\vec{n}\rangle \mathrm{d}s=\left\langle v,\int_{\partial B}f(z)\vec{n}\mathrm{d}s\right\rangle.			
			\end{align*}
			Hence in total, we obtain 
			\begin{align*}
				\|\nabla f(x)\|\leq \frac{1}{r^n\omega_n}\int_{\partial B}\mathrm{d}s\sup_{z\in\partial B} |f(z)|=\frac{n}{r}\sup_{z\in\partial B} |f(z)|.
			\end{align*}
		\end{remark}
		\subsection{Measures and Duality}
		We begin this chapter by observing that the operator $\Omega_y$ maps functions from $C_0(S)$ to $C_0(S)$ for any $y>0$ small enough. Beforehand, we check, that the harmonic extension of a continuous and vanishing boundary function again vanishes at infinity on any positive shift of the boundary $S_y$.
		\begin{lemma}
			\label{lem:ImageK_y}
			For $y>0$, $K_y\left(C_0(S)\right)\subseteq C_0(S)$.
		\end{lemma}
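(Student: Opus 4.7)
\textbf{Proof plan for Lemma \ref{lem:ImageK_y}.} The claim has two parts: for $\phi\in C_0(S)$, the function $K_y\phi$ is (a) continuous on $S$ and (b) vanishes at infinity on $S$. My plan is to handle them separately, using the boundedness of $k_y$ from Corollary \ref{thm:mart-kern-bdd}, the probability identity $K_y(1)=1$ from Lemma \ref{lem:MartinKernel}, and the Martin kernel decay estimate of Lemma \ref{lem:MartinKernelDecay}.

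For continuity (a), I would fix $x_0\in S$ and take a sequence $x_n\to x_0$ with $\|x_n\|\leq R$ for some $R$. Since $\|\phi\|_{C_0(S)}<\infty$, Corollary \ref{thm:mart-kern-bdd} applied to the compact neighborhood $\mathcal{K}:=\overline{B(0,R)}\cap S$ produces an integrable majorant $\|\phi\|_{C_0(S)}\cdot\|k_y\|_{C(\mathcal{K}\times S)}$ with respect to $\omega^{z_0}$. Continuity of $k$ in its first argument then allows dominated convergence, giving $(K_y\phi)(x_n)\to(K_y\phi)(x_0)$.

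For vanishing at infinity (b), the idea is to decompose. Fix $\epsilon>0$ and choose a compact set $\mathcal{K}_\epsilon\subset S$ such that $|\phi(\xi)|<\epsilon$ for $\xi\in S\setminus \mathcal{K}_\epsilon$. Using $K_y(1)=1$, split
\begin{align*}
    |(K_y\phi)(x)|\leq \|\phi\|_{C_0(S)}\int_{\mathcal{K}_\epsilon}k(x_y,\xi)\,\mathrm{d}\omega^{z_0}(\xi)+\epsilon.
\end{align*}
Hence it suffices to show that $\omega^{x_y}(\mathcal{K}_\epsilon)\to 0$ as $\|x\|\to\infty$ in $S$. I would prove this by fixing $q_0\in\mathcal{K}_\epsilon$, and applying the decay estimate of Lemma \ref{lem:MartinKernelDecay} with $p=x_y$ and $q_0$ as the base point. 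For $\|x\|$ much larger than $\mathrm{diam}(\mathcal{K}_\epsilon\cup\{q_0\})$, the set $\mathcal{K}_\epsilon$ lies in a shell $R_j$ whose index $j$ is negative relative to $\|x_y-q_0\|$—so that $\mathcal{K}_\epsilon$ lies in the ``far'' regime $j\geq i+1$, yielding $k(x_y,\xi)\leq c\,2^{-\alpha j}$. Since $j$ grows like $\log_2(\|x_y-q_0\|/\mathrm{diam}(\mathcal{K}_\epsilon))\to\infty$ as $\|x\|\to\infty$, integrating over $\mathcal{K}_\epsilon$ gives $\omega^{x_y}(\mathcal{K}_\epsilon)\leq c\,2^{-\alpha j}\to 0$. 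Thus $|(K_y\phi)(x)|\leq 2\epsilon$ for $\|x\|$ large, and since $\epsilon$ was arbitrary, $K_y\phi\in C_0(S)$.

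The main obstacle is the clean application of Lemma \ref{lem:MartinKernelDecay} when $x_y$ is far from the pole $z_0$. The lemma is stated with a case distinction involving the index $i$ determined by $\|z_0-q_0\|$, and when $\|x_y-q_0\|$ is very large we are in the regime $i=0$, where only the ``far'' bound $c\,2^{-\alpha j}$ (without the $\omega^{z_0}(\Delta_j)$ denominator) is useful. Care must be taken to check that for $\|x\|$ sufficiently large, every $\xi\in\mathcal{K}_\epsilon$ indeed falls into a shell $R_j$ with $j$ in this regime—this will follow from choosing $\|x\|$ large enough that the geodesic scale determined by $\|x_y-q_0\|$ dwarfs $\mathrm{diam}(\mathcal{K}_\epsilon)$. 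Once this is verified, the rest is routine.
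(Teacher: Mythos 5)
Your overall plan (split into continuity and vanishing-at-infinity, truncate the integral using $\mathcal{K}_\epsilon$, then appeal to Martin kernel decay) matches the spirit of the paper, and the continuity part is sound. But the vanishing-at-infinity argument has a genuine geometric error: you have the role of the base point $q_0$ in Lemma~\ref{lem:MartinKernelDecay} reversed.

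In Lemma~\ref{lem:MartinKernelDecay} the dyadic shells $R_j$ are centered at $q_0$ and have radius $2^j\|p-q_0\|$, i.e.\ the \emph{scale} of the decomposition is $\|p-q_0\|$. You propose to fix $q_0\in\mathcal{K}_\epsilon$ and take $p=x_y$, so the scale is $y'=\|x_y-q_0\|\to\infty$ as $\|x\|\to\infty$. Then every $\xi\in\mathcal{K}_\epsilon$ lies at a distance $\leq\mathrm{diam}(\mathcal{K}_\epsilon)\ll y'$ from $q_0$, hence inside the \emph{innermost} ball $\Delta_0=B(q_0,y')\cap S$. The index $j$ of the shell containing $\xi$ therefore collapses to $0$ as $\|x\|\to\infty$ — it does not grow. (Your heuristic $j\sim\log_2(\|x_y-q_0\|/\mathrm{diam}\mathcal{K}_\epsilon)$ has the sign wrong: the correct index is roughly $\log_2(\|\xi-q_0\|/y')$, which tends to $-\infty$ and is clipped at $0$.) Consequently the estimate $c\,2^{-\alpha j}$ gives no decay, only the uninformative bound $k(x_y,\xi)\leq c$.

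The fix, which is what the paper does, is to center the shells at $q_0=x$ (the boundary foot of $x_y$) so that $\|p-q_0\|=y$ is the \emph{fixed} vertical height. Then $\mathcal{K}_\epsilon$ sits at distance $d(x,\mathcal{K}_\epsilon)\to\infty$ from $q_0=x$, so it falls into shells $R_j$ with $j\gtrsim\log_2(d(x,\mathcal{K}_\epsilon)/y)\to\infty$, and the decay factor $2^{-\alpha j}$ genuinely tends to $0$. Intuitively: the mass of $k(x_y,\cdot)\,\mathrm{d}\omega^{z_0}$ concentrates near the ``shadow'' $x$ of $x_y$, and you want to exploit that $\mathcal{K}_\epsilon$ is far from $x$; centering the dyadic decomposition at a point of $\mathcal{K}_\epsilon$ fails to capture this.

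Your flagged concern about the lemma's case distinction (the index $i$ governed by $\|z_0-q_0\|$) is the right thing to worry about, but the resolution is different from what you expected: after switching to $q_0=x$, one just uses the unified bound $c\,2^{-\alpha j}/\omega^{z_0}(\Delta_j)$ stated after Lemma~\ref{lem:MartinKernelDecay}, together with the monotonicity $\omega^{z_0}(\Delta_j)\geq\omega^{z_0}(\Delta_0)=\omega^{z_0}(B(x,y)\cap S)$. (This still leaves a dependence of the bound on $x$ through $\omega^{z_0}(B(x,y)\cap S)$, which the paper does not dwell on; but the gap in your argument is upstream of that subtlety.)

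Separately, note that the paper folds the continuity part into the statement ``$k(\cdot,\cdot)$ is continuous'' and only writes out the vanishing-at-infinity part — your proposed dominated-convergence argument for continuity via Corollary~\ref{thm:mart-kern-bdd} is a perfectly acceptable way to justify what the paper takes for granted.
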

		\begin{proof}
			Let $\varphi\in C_0(S)$ and fix $\epsilon>0$. Then there exists a compact set $\mathcal{K}_\epsilon\subseteq S$, such that $\sup_{x\in S\setminus \mathcal{K}_\epsilon}|\varphi(x)|<\epsilon/2$, furthermore write $M:=\max_{x\in \mathcal{K}_\epsilon}|\varphi(x)|$. Now let $x\in S$, satisfying $d(x,\mathcal{K}_\epsilon)>2^ny$, i.e. $\mathcal{K}_\varepsilon \subseteq S\setminus B(x,2^ny)$. Furthermore, let $c,\alpha>0$ be the constants from Lemma \ref{lem:MartinKernelDecay} and choose $n\in\mathbb{N}$ such that 
			\begin{align*}
				M\frac{c2^{-n\alpha}}{\omega^{z_0}\left(B(x,y)\cap S\right)}<\frac{\epsilon}{2}.
			\end{align*}
			Lemma \ref{lem:MartinKernelDecay} implies $k(x_y,\xi)\leq\frac{c}{\omega^{z_0}(B(x,y)\cap S)}2^{-n\alpha}$ for $\xi\in S\setminus B(x,2^ny)$, thus
			\begin{align*}
				|(K_y\varphi)(x)|&\leq\int_{\mathcal{K}_\epsilon}k(x_y,\xi)|\varphi(\xi)|\mathrm{d}\omega^{z_0}(\xi) + \int_{S\setminus\mathcal{K}_\epsilon}k(x_y,\xi)|\varphi(\xi)|\mathrm{d}\omega^{z_0}(\xi)\\
				&\leq M\int_{\mathcal{K}_\epsilon} k(x_y,\xi)\mathrm{d}\omega^{z_0}(\xi)+ \frac{\epsilon}{2}\int_{S\setminus\mathcal{K}_\epsilon}k(x_y,\xi)\mathrm{d}\omega^{z_0}(\xi)\\
				&\leq \frac{c2^{-n\alpha}}{\omega^{z_0}\left(B(x,y)\cap S\right)}+\frac{\epsilon}{2}<\epsilon.
			\end{align*}	
		\end{proof}
		\begin{figure}[h!]
			\includegraphics[width=\textwidth]{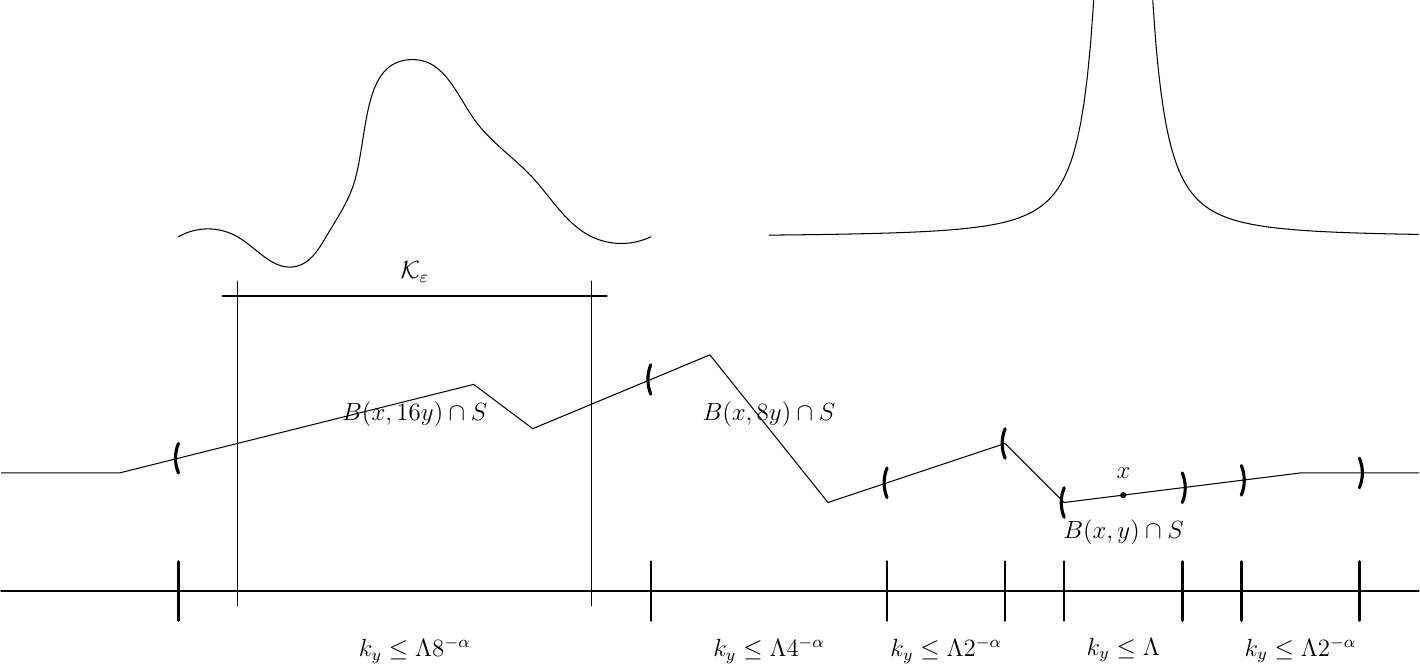}
			\caption{$\Lambda:= \frac{c}{\omega^{z_0}\left(B(x,y)\cap S\right)}$}
		\end{figure}
		An immediate consequence now is the announced property:
		\begin{corollary}
			\label{cor:Im_Omega_y}
			For $y\in(0,1/4)$, $\Omega_y\left(C_0(S)\right)\subseteq C_0(S)$.
		\end{corollary}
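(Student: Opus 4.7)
The plan is to verify continuity and vanishing at infinity separately for $\Omega_y\varphi$ when $\varphi\in C_0(S)$ and $y\in(0,1/4)$, leveraging the pointwise kernel bounds already established.

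For the vanishing at infinity, the key ingredient is the upper bound from Lemma \ref{lem:omega_ky_comparison}, which gives
\[
\omega_y(x,\xi)\leq \frac{1}{y^{c_S^+\varepsilon}}k_{1-y}(x,\xi)
\]
for $\varepsilon\in(0,\varepsilon(S))$. Applied to the integral operator, this yields
\[
|(\Omega_y\varphi)(x)|\leq \frac{1}{y^{c_S^+\varepsilon}}(K_{1-y}|\varphi|)(x)\qquad\text{for all }x\in S.
\]
Since $|\varphi|\in C_0(S)$, Lemma \ref{lem:ImageK_y} applied with height $1-y>0$ yields $K_{1-y}|\varphi|\in C_0(S)$, so in particular $(\Omega_y\varphi)(x)\to 0$ as $\|x\|\to\infty$ on $S$.

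For the continuity of $\Omega_y\varphi$, I would use Theorem \ref{thm:convergence_Pi}, which gives continuity of $\omega_y=\omega_{[y,1]}$ on $S\times S$, together with Corollary \ref{cor:omega-bounded}, which ensures $\|\omega_y\|_{L^\infty(S\times S)}<\infty$ (noting that $|[y,1]|=1-y>y=m([y,1])$ for $y<1/2$, so the positive case of the corollary applies). Fix $x\in S$ and let $x_n\to x$ in $S$. The integrands $\xi\mapsto\omega_y(x_n,\xi)\varphi(\xi)$ converge pointwise to $\omega_y(x,\xi)\varphi(\xi)$ by continuity of $\omega_y$, and are dominated uniformly in $n$ by the constant $\|\omega_y\|_{L^\infty(S\times S)}\|\varphi\|_{C_0(S)}$, which is integrable against $\omega^{z_0}$ because $\omega^{z_0}$ is a probability measure on $S$. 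Dominated convergence then gives $(\Omega_y\varphi)(x_n)\to(\Omega_y\varphi)(x)$, establishing continuity.

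Honestly, there is no real obstacle: both ingredients (the $k_{1-y}$-majorant from Lemma \ref{lem:omega_ky_comparison} and the continuity/boundedness of $\omega_y$) have already been set up, and the only minor point to check is that the constraint $y\in(0,1/4)$ is exactly the range where Lemma \ref{lem:omega_ky_comparison} is available, which also comfortably places us in the regime where $\omega_y$ is positive (Lemma \ref{lem:positivity_omega}) and where the $C_0$ majorant for $K_{1-y}|\varphi|$ applies through Lemma \ref{lem:ImageK_y}.
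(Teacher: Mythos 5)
Your argument for vanishing at infinity — using Lemma~\ref{lem:omega_ky_comparison} to majorize $|\Omega_y\varphi|$ by $y^{-c_S^+\varepsilon}K_{1-y}|\varphi|$ and then invoking Lemma~\ref{lem:ImageK_y} — is exactly the paper's proof. You also spelled out the continuity step (via Theorem~\ref{thm:convergence_Pi}, Corollary~\ref{cor:omega-bounded}, and dominated convergence against the probability measure $\omega^{z_0}$), which the paper leaves implicit; this is a correct and welcome completion, not a different approach.
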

		\begin{proof}
			Let $\varphi\in C_0(S)$ and observe that $\omega_y>0$. By Lemma \ref{lem:omega_ky_comparison} we have 
			\begin{align*}
				|\Omega_y\varphi|\leq \frac{1}{y^{c_S^+\varepsilon}} K_{1-y}|\varphi|.
			\end{align*}
			The right side is in $C_0(S)$ by Lemma \ref{lem:ImageK_y}.
		\end{proof}	
		\subsubsection{The measure $\nu_\varepsilon$ and its properties}
			A classical Riesz representation theorem characterizes the dual of $C_0(S)$ as the space of regular Borel measures $\mathcal{M}(S)$ on $S$ \cite{Rudin1999}. Since we have ascertained ourselves, that $\Omega_y$ maps a continuous and vanishing functions again to a continuous and vanishing function, i.e. the restriction of $\Omega_y:C_0(S)\longrightarrow C_0(S)$ is well-defined, it is natural to ask for a characterization of the dual operator $\Omega_y^*:\mathcal{M}(S)\longrightarrow\mathcal{M}(S)$.  With Fubini we obtain for all $\kappa\in\mathcal{M}(S)$ and any $\varphi\in C_0(S)$ 
		\begin{align}
			\label{eq:dual}
			\begin{aligned}
				\left\langle\Omega_y\varphi,\kappa\right\rangle&=\left\langle\int_S\omega_y(\cdot,\xi)\varphi(\xi)\mathrm{d}\omega^{z_0}(\xi), \kappa(\cdot)\right\rangle\\
				&=\int_S\varphi(\xi)\int_S\omega_y(x,\xi)\mathrm{d}\kappa(x)\mathrm{d}\omega^{z_0}(\xi)\\
				&=\left\langle\varphi,\left(\int_S\omega_y(x,\cdot)\mathrm{d}\kappa(x)\right)\mathrm{d}\omega^{z_0}(\cdot)\right\rangle,
			\end{aligned}
		\end{align}
		i.e. $\Omega_y^*(\kappa)$ has density
		\begin{align}
			\label{eq:gamma}
			\gamma_y(\xi):=\int_S\omega_y(x,\xi)\mathrm{d}\kappa(x)
		\end{align}
		with respect to the harmonic measure $\omega^{z_0}$. For our purpose, it suffices to determine $\Omega_y^*$ evaluated at probability measures $\kappa\in\mathcal{M}(S)$. In that case we can repeat the same calculation as in \eqref{eq:dual} with continuous and bounded $\varphi$. Notice, that $\gamma_y>0$ for $y<1/2$ and by setting $\varphi=1$ we obtain
		\begin{align*}
			\int_S\gamma_y\mathrm{d}\omega^{z_0}=\int_S\Omega_y(1)\mathrm{d}\kappa,
		\end{align*}
		thus $\gamma_y\cdot\omega^{z_0}=\Omega^*_y(\kappa)$ is in fact again a probability measure. We will now proceed to show that the $\gamma_y\cdot\omega^{z_0}$ converges in the weak* topology to some measure, that features certain crucial properties.
		\subsubsection{Construction of $\nu_\varepsilon$}
		Fix a probability measure $\kappa\in\mathcal{M}(S)$. We may later choose $\kappa$ to our liking. For $y\in(0,1)$, each measure $\gamma_y\cdot\omega^{z_0}$ generates a functional on $C_0(S)$ in the canonical way, i.e.
		\begin{align*}
			F_y:C_0(S)\longrightarrow\mathbb{R};\quad\alpha\longmapsto\int_S\alpha\gamma_y\mathrm{d}\omega^{z_0}=\int_S\Omega_y\alpha\mathrm{d}\kappa.
		\end{align*}
		Furthermore,  for $y<1/2$, we immediately observe that
		\begin{align*}
			\left\|F_y\alpha\right\|_{C_0(S)}\leq\left\|\alpha\right\|_{C_0(S)}\int_S \Omega_y1\mathrm{d}\kappa,
		\end{align*}
		since $\omega_y>0$ and due to $\Omega(1)=1$, we have $\left\|F_y\right\|_{C_0(S)^*}\leq1$. By Banach-Alaoglu, any sequence of scalars $(y_k)_k$ tending to $0$, contains a subsequence, again denoted by $(y_k)_k$ such that $F_{y_k}$ has a weak* limit $F\in C_0(S)^*$ satisfying $\left\|F\right\|_{C_0(S)^*}\leq1$. Applying Riesz's representation theorem for $C_0(X)^*$, where $X$ is Hausdorff and locally compact \cite{Rudin1999}, the limit functional $F$ can be represented by a measure, which we will denote by $\nu_\varepsilon$, such that $\nu_\varepsilon(S)=\left\|F\right\|_{C_0(S)^*}$. In total we have 
		\begin{align*}
			\forall \alpha\in C_0(S):\quad\lim_{k\rightarrow\infty}\int_S\alpha\gamma_{y_k}\mathrm{d}\omega^{z_0} = \int_S\alpha\mathrm{d}\nu_\varepsilon.
		\end{align*}
		Since the measures $\gamma_y\cdot\omega^{z_0}$ are in fact probability measures, the functionals $F_y$ could of course also be defined on $C_b(S)$ and then restricted to $C_0(S)$. A priori, their weak* limit however only exists in $C_0(S)^*$.\par 
		We will now check, that $\nu_\varepsilon$ is not the trivial measure and, that it is independent of the choice of the sequence $(y_k)$
	
		\begin{lemma}[\cite{MuellerRiegler2020}]
			\label{lem:nu_epsilon_prob}
			The measure $\nu_\varepsilon$ constructed above is a probability measure and independent of the choice of $(y_k)_k$.
		\end{lemma}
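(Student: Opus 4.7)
The plan splits into two parts: showing that $\nu_\varepsilon$ is indeed a probability measure, and then showing that the limit is independent of the chosen subsequence.

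First I would verify $\nu_\varepsilon \geq 0$ and $\nu_\varepsilon(S) \leq 1$. For $y \in (0, 1/2)$ we have $|[y,1]| = 1-y \geq y = m([y,1])$, so by Lemma \ref{lem:positivity_omega} the kernel $\omega_y$ is positive; hence each $\gamma_y > 0$ and each $F_y$ is a positive functional, a property that passes to weak* limits. The bound $\nu_\varepsilon(S) \leq 1$ follows from weak* lower semicontinuity of the dual norm, since $\|F_{y_k}\|_{C_0(S)^*} = \int_S \gamma_{y_k}\,d\omega^{z_0} = 1$ by Lemma \ref{lem:omega-prop}(1).

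The decisive step is to prove $\nu_\varepsilon(S) \geq 1$, equivalently that no mass escapes to infinity along the sequence. I would establish this via uniform tightness of the family $\{\gamma_{y_k}\cdot\omega^{z_0}\}_k$ of probability measures. Fixing an auxiliary scale $y_0 \in (0, 1/2)$ and applying the semigroup property of Lemma \ref{lem:omega-prop}(3), one factors $\Omega_{y_k} = \Omega_{y_0}\,\Omega_{[y_k, y_0]}$, so that $\gamma_{y_k}\cdot\omega^{z_0} = \Omega_{[y_k, y_0]}^* \mu_0$, where $\mu_0 := \Omega_{y_0}^*\kappa$ is a fixed probability measure on $S$ and thus automatically tight. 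The hardest step will be to control how mass of $\Omega_{[y_k, y_0]}^*\mu_0$ can spread as the interval $[y_k, y_0]$ grows with $y_k \downarrow 0$: the naive pointwise bound analogous to Lemma \ref{lem:omega_ky_comparison} carries a prefactor of order $y_k^{-c_S^+ \varepsilon}$ that diverges. The strategy for avoiding this is to decompose the mass of $\omega_{[y_k, y_0]}(\zeta,\cdot)\,d\omega^{z_0}$ according to the shells $R_j$ of Lemma \ref{lem:MartinKernelDecay}, combine the geometric decay there with the mass-conservation identity $\Omega_{[y_k, y_0]}(1) = 1$, and sum the shell contributions, arriving at a tightness bound that is uniform in $k$ and which, combined with $\nu_\varepsilon(S) \leq 1$, yields $\nu_\varepsilon(S) = 1$.

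For the second part, once $\nu_\varepsilon(S) = 1$ is known, weak* convergence in $C_0(S)^*$ along the subsequence coincides with narrow convergence. It therefore suffices to show that $\lim_{y \downarrow 0} F_y(\alpha)$ exists for every $\alpha \in C_0(S)$, forcing all subsequential limits to agree. Writing $F_y(\alpha) = \int_S \Omega_{y_0}(\Omega_{[y, y_0]}\alpha)\,d\kappa$ for $y < y_0$, the uniform bound $\|\Omega_{[y, y_0]}\alpha\|_\infty \leq \|\alpha\|_\infty$ together with two applications of dominated convergence (with integrable majorants $\omega_{y_0}(x,\cdot)\|\alpha\|_\infty$ and $\|\alpha\|_\infty$ against $d\kappa$) reduce the problem to pointwise convergence of $\Omega_{[y, y_0]}\alpha$ as $y \downarrow 0$. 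This I would establish by a Cauchy criterion: for $0 < \eta < y < y_0$, regarding a common $\lambda$-regular refinement of $[\eta, y_0]$ as obtained from that of $[y, y_0]$ by subdividing only the leftmost pieces, Lemma \ref{lem:refinement_of_part_kern} provides a difference estimate that tends to zero as $\eta, y \downarrow 0$. This yields the pointwise limit and hence independence of $\nu_\varepsilon$ from the approximating sequence, completing the proof.
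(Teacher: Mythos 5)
Your high-level plan (positivity from Lemma \ref{lem:positivity_omega}, total mass $\le 1$ from weak* lower semicontinuity, then push for $\nu_\varepsilon(S)\ge 1$ and a uniqueness-of-limit argument) is a reasonable skeleton, but both of the substantive steps as you describe them have genuine gaps, and the paper's proof handles both quite differently.

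For $\nu_\varepsilon(S)\ge 1$, you correctly identify that the naive pointwise bound on $\omega_{[y_k,y_0]}$ from Lemma \ref{lem:omega_ky_comparison} carries a diverging factor $y_k^{-c_S^+\varepsilon}$, but the proposed fix (decomposing the mass of $\omega_{[y_k,y_0]}(\zeta,\cdot)\,d\omega^{z_0}$ into the shells $R_j$ of Lemma \ref{lem:MartinKernelDecay} and combining decay with the normalization $\Omega_{[y_k,y_0]}(1)=1$) does not resolve it: the shell bounds you have available all pass through the same comparison to $k_{y_0-y_k}$, so each shell carries the same $y_k^{-c_S^+\varepsilon}$ prefactor and summing them only reproduces the divergence. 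Knowing the total mass is $1$ tells you nothing about where it sits. The paper avoids this entirely by testing $\nu_\varepsilon$ against the concrete functions $z\mapsto\omega^{z+\rho}(S\cap B_n)\in C_0(S)$, which are restrictions to $S$ of positive harmonic functions on $\mathcal{O}_{-\rho}$. For those, the lower bound \eqref{eq:phi_prop_lower_ineq} in the $\Phi$-property applies with a constant $\exp(-c_S(1-y_k)/\rho)\ge\exp(-c_S/\rho)$ that is \emph{uniform in $k$}, and one lets $n\to\infty$ then $\rho\to\infty$.

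For independence of the subsequence, the Cauchy criterion you propose cannot be run through Lemma \ref{lem:refinement_of_part_kern}. That lemma compares $\Pi^\tau$ and $\Pi^\sigma$ for two partitions of the \emph{same} interval $\Delta$, and its constant $c_{\{S,\lambda,\beta\}}/m(\Delta)$ blows up as $m(\Delta)\to 0$; moreover the hypothesis $|\Delta|\le\beta m(\Delta)$ fails for $\Delta=[\eta,y_0]$ once $\eta$ is small. The phrase ``a common $\lambda$-regular refinement of $[\eta,y_0]$ as obtained from that of $[y,y_0]$'' is not meaningful, since these are partitions of different intervals. What you can write via the semigroup is $\Omega_{[\eta,y_0]}\alpha-\Omega_{[y,y_0]}\alpha=\Omega_{[y,y_0]}(\Omega_{[\eta,y]}\alpha-\alpha)$, and Remark \ref{rem:Phi2} controls the bracket by $|\alpha-K_{y-\eta}\alpha|+c_S\frac{y-\eta}{\eta}\|\alpha\|_{C(S)}$; but the second term does not go to zero as $\eta,y\downarrow 0$, so Cauchy-ness fails. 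In effect you are asserting that the full net $\Omega_{[y,y_0]}\alpha$ converges pointwise --- a claim the paper never proves (and which would render the Banach--Alaoglu subsequence extraction superfluous). The paper's route is to first show agreement of the two subsequential limits on the dense class $\{K_\sigma\alpha:\alpha\in C_0(S),\sigma>0\}$, where the $\Phi$-property gives $\|\Omega_{[\eta_k,y_k]}K_\sigma\alpha-K_\sigma\alpha\|\le c_S\,y_k\,\|\alpha\|/\sigma\to 0$ because the fixed lift by $\sigma$ keeps the denominator away from zero, and then to pass to general $\alpha$ using $K_\sigma\alpha\to\alpha$ as $\sigma\to 0$ and dominated convergence with respect to both limit measures.
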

		\begin{proof}
			We begin by proving $\nu_\varepsilon(S)=1$, by the construction $\nu_\varepsilon(S)\leq1$.¸ For any $\rho\geq 1$ and $n\in\mathbb{N}$, observe that the function 
			\begin{align*}
				z\in S\longmapsto\omega^{z+\rho}(S\cap B_n),
			\end{align*}
			where $B_n:=B(0,n)$, is continuous and vanishes at infinity, i.e. $\omega^{\cdot+\rho}(S\cap B_n)\in C_0(S)$. Moreover, it is the restriction of the positive harmonic function $\omega^{\cdot}(S\cap B_n)$ on $S_\rho$. Since $[y_k,1]\subset(0,\rho]$ for all $k\in\mathbb{N}$, we use the improved bound \eqref{eq:phi_prop_lower_ineq} in the proof of the $\Phi$-property to estimate
			\begin{align*}
				\int_S\omega^{z+\rho}(S\cap B_n)\mathrm{d}\nu_\varepsilon&=\lim_{k\rightarrow\infty}\int_S\omega^{z+\rho}(S\cap B_n)\gamma_{y_k}\mathrm{d}\omega^{z_0}\\
				&=\lim_{k\rightarrow\infty}\int_S\Omega_{y_k}(\omega^{\cdot+\rho}(S\cap B_n))\mathrm{d}\kappa\\
				&\geq\lim_{k\rightarrow\infty}\exp\left(-\frac{c_S(1-y_k)}{\rho}\right)\int_S\omega^{z+\rho}(S\cap B_n)\mathrm{d}\kappa\\
				&=\exp\left(-\frac{c_S}{\rho}\right)\int_S\omega^{z+\rho}(S\cap B_n)\mathrm{d}\kappa.
			\end{align*}
			For fixed $\rho\geq1$, $\omega^{\cdot+\rho}(S\cap B_n)\in[0,1]$ and converges pointwise to $1$ as $n\longrightarrow\infty$. Since the constant function $1$ is thus an integrable majorant of $\omega^{\cdot+\rho}(S\cap B_n)$ w.r.t to both $\kappa$ and $\nu_\varepsilon$, we obtain the inequality
			\begin{align}
				\label{eq:pos_nu}
				\nu_\varepsilon(S)\geq\exp\left(-\frac{c_S}{\rho}\right)
			\end{align}
			after passing to the limit $n\rightarrow\infty$. \eqref{eq:pos_nu} now holds for all $\rho\geq1$ and therefore $\nu_\varepsilon(S)\geq1$.\par 
			We now turn to showing, that the obtained probability measure $\nu_\varepsilon$ is independent of the choice of $(y_k)_k$. For that, let $(\eta_k)_k$ be another sequence of scalars tending to $0$. Since $(y_k)_k$ was arbitrary the construction, we can guarantee the existence of a weak* limit of the functionals $F_{\eta_k}$. Let $\mu$ denote the probability measure that generates the weak* limit functional of the sequence $(F_{\eta_k})_k\subset C_0(S)^*$. We want to show that then again
			\begin{align}
				\label{eq:limit_functional}
				\int_S\alpha\mathrm{d}\mu=\lim_{k\rightarrow\infty}\int_S\alpha\gamma_{\eta_k}\mathrm{d}\omega^{z_0}=\int_S\alpha\mathrm{d}\nu_\varepsilon
			\end{align}
			holds for any $\alpha\in C_0(S)$. We begin by extracting a subsequence of $(\eta_k)_k$ such that $\eta_k<y_k$ for all $k$ sufficiently large and show \eqref{eq:limit_functional} for any $K_\sigma\alpha$ with $\alpha\in C_0(S)$ and $\sigma>0$ (Note that $K_\sigma\alpha\in C_0(S)$ by Lemma \ref{lem:ImageK_y}). Let $k_0$ be large enough such that $\eta_k<y_k<\max(\sigma,1/2)$ for any $k>k_0$. Then we obtain
			\begin{align*}
				\left|\int_SK_\sigma(\alpha)\gamma_{\eta_k}\mathrm{d}\omega^{z_0}-\int_SK_\sigma(\alpha)\gamma_{y_k}\mathrm{d}\omega^{z_0}\right|&=\left|\int_S(\Omega_{\eta_k}-\Omega_{y_k})(K_\sigma\alpha)\mathrm{d}\kappa\right|\\
				&=\left|\int_S(\Omega_{y_k}(\Omega_{[\eta_k,y_k]}K_\sigma\alpha-K_\sigma\alpha)\mathrm{d}\kappa\right|\\
				&\leq\left\|\Omega_{[\eta_k,y_k]}K_\sigma\alpha-K_\sigma\alpha\right\|_{C_0(S)},
			\end{align*}
			since $\Omega_{\eta_k}=\Omega_{y_k}\Omega_{[\eta_k,y_k]}$ and $\Omega_{y_k}1=1$. Using the $\Phi$-property yields
			\begin{align*}
				\left\|\Omega_{[\eta_k,y_k]}K_\sigma\alpha-K_\sigma\alpha\right\|_{C_0(S)}\leq c_S\frac{y_k}{\sigma}\left\|K_\sigma\alpha\right\|_{C_0(S)}\leq c_S\frac{y_k}{\sigma}\left\|\alpha\right\|_{C_0(S)},
			\end{align*}
			by the maximum principle for unbounded domains and harmonic functions thereon, that vanish at infinity (see remark \ref{rem:maximum_principle}). In total we have 
			\begin{align*}
				\left|\int_SK_\sigma(\alpha)\gamma_{\eta_k}\mathrm{d}\omega^{z_0}-\int_SK_\sigma(\alpha)\mathrm{d}\nu_\varepsilon\right|\leq c_S\frac{y_k}{\sigma}\left\|\alpha\right\|_{C_0(S)}+\left|\int_SK_\sigma(\alpha)\gamma_{y_k}\mathrm{d}\omega^{z_0}-\int_SK_\sigma(\alpha)\mathrm{d}\nu_\varepsilon\right|
			\end{align*}
			for any $\alpha\in C_0(S)$, implying \eqref{eq:limit_functional} for all $K_\sigma\alpha$ with $\alpha\in C_0(S)$ and $\sigma>0$.  Now by the triangle inequality we obtain
		\begin{align*}
			\left|\int_S\alpha\gamma_{\eta_k}\mathrm{d}\omega^{z_0}-\int_S\alpha\mathrm{d}\nu_\varepsilon\right|&\leq\int_S\left|\alpha-K_\sigma\alpha\right|\gamma_{\eta_k}\mathrm{d}\omega^{z_0}+\left|\int_S K_\sigma\alpha\gamma_{\eta_k}\mathrm{d}\omega^{z_0}-\int_SK_\sigma\alpha\mathrm{d}\nu_\varepsilon\right|\\
			&+\int_S\left|K_\sigma\alpha-\alpha\right|\mathrm{d}\nu_\varepsilon
		\end{align*}.
		Taking the limit $k\rightarrow\infty$, the middle term on the right vanishes and we obtain 
		\begin{align}\label{eq:nu_is_mu}
			\left|\int_S\alpha\mathrm{d}\mu-\int_S\alpha\mathrm{d}\nu_\varepsilon\right|\leq\int_S\left|\alpha-K_\sigma\alpha\right|\mathrm{d}\mu+ \int_S\left|K_\sigma\alpha-\alpha\right|\mathrm{d}\nu_\varepsilon
		\end{align}
		for all $\sigma\in(0,1/2)$. Observe that again the maximum principle in Remark \ref{rem:maximum_principle}  implies $\left\|K_\sigma\alpha-\alpha\right\|_{C_0(S)}\leq 2\left\|\alpha\right\|_{C_0(S)}$ and by Theorem \ref{thm:ex_harm_ext} we have $K_\sigma\alpha\rightarrow\alpha$ pointwise on $S$ if $\sigma\rightarrow0$. So by dominated convergence and the fact that $2\|\alpha\|_{C_0(S)}$ is integrable w.r.t both $\nu_\varepsilon$ and $\mu$, the right side in \eqref{eq:nu_is_mu} vanishes and thus $\nu_\varepsilon=\mu$.
		\end{proof}
		Lemma tells us that the $\nu_\varepsilon$ is a probability measure, however this is a weaker property than is required for our final argument in Theorem \ref{thm:main}. We will thus show in the next Lemma, that in fact any surface ball is contained in the support of $\nu_\varepsilon$.
		\begin{lemma}[Properties of $\nu_\varepsilon$ \cite{MuellerRiegler2020}]
			\label{lem:nu_prop}
			The measure $\nu_\varepsilon$ defined above satisfies the following:
			\begin{enumerate}
				\item For any $\varepsilon\in(0,\varepsilon(S))$ we have the estimate
				\begin{align*}
					\int_S V\mathrm{d}\nu_\varepsilon\leq\frac{c_S}{\varepsilon}\int_Su_1\mathrm{d}\kappa.
				\end{align*}
				\item For any $x\in S$ and $r>0$ there exists $\varepsilon(x,r)>0$ such that we have
				\begin{align*}
					\forall \varepsilon\in(0,\varepsilon(x,r)): \quad \nu_\varepsilon( B(x,r)\cap S)>c,
				\end{align*}
				where $c=c(\kappa,S,r)>0$.
			\end{enumerate}
		\end{lemma}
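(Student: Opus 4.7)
The plan for both parts hinges on combining Theorem \ref{thm:diff-eq} and the $\Phi$-property (Lemma \ref{lem:phi-prop}) with the weak$^*$ characterization of $\nu_\varepsilon$ as the limit of $\alpha\mapsto\int_S\Omega_{y_k}\alpha\,\mathrm{d}\kappa$.

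For (1), I apply Theorem \ref{thm:diff-eq} to $\varphi=u$ with $y_2=1$ and $y_1=y_k$, obtaining
\[
\varepsilon\int_{y_k}^1 (\Omega_y(B_y u_y))(x)\,\mathrm{d}y \;=\; u_1(x) - (\Omega_{y_k}u_{y_k})(x) \;\leq\; u_1(x),
\]
since $\Omega_{y_k}u_{y_k}\geq 0$ by positivity of $\omega_{y_k}$ (Lemma \ref{lem:positivity_omega}, for $y_k\leq 1/2$). Integrating against $\kappa$, applying Fubini to the positive integrand, and passing $y_k\to 0$ via monotone convergence yields $\varepsilon\int_0^1\!\int_S \Omega_y(B_y u_y)\,\mathrm{d}\kappa\,\mathrm{d}y\leq\int_S u_1\,\mathrm{d}\kappa$. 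To change the outer measure from $\kappa$ to $\nu_\varepsilon$, I use that for each $y\in(0,1/2]$ the function $B_y u_y=K_y(\|\nabla u_{2y}\|)$ lies in $C_0(S)$ (Lemma \ref{lem:ImageK_y} together with Corollary \ref{cor:Harnack_gradient}) and is the trace on $S$ of a positive harmonic function on $\mathcal{O}_{-y}$. For $y_k<y$ the semigroup $\Omega_{y_k}=\Omega_y\circ\Omega_{[y_k,y]}$ and the improved $\Phi$-property on $\Delta=[y_k,y]$, combined with positivity of $\omega_y$, give
\[
\Omega_{y_k}(B_y u_y)\;\leq\; e^{2c_S}\,\Omega_y(B_y u_y);
\]
passing to the limit $k\to\infty$ in the integral against $\kappa$ yields $\int_S B_y u_y\,\mathrm{d}\nu_\varepsilon\leq e^{2c_S}\int_S\Omega_y(B_y u_y)\,\mathrm{d}\kappa$. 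Integrating in $y\in(0,1/2]$ and combining with the preceding bound handles that range; the contribution from $y\in(1/2,1]$ is bounded separately using the Harnack estimate $B_y u_y\leq c_S u_1$ together with the $\Phi$-property on $u_1$ (which is the trace on $S$ of a positive harmonic function on $\mathcal{O}_{-1}$), giving $\int_S u_1\,\mathrm{d}\nu_\varepsilon\leq e^{2c_S}\int_S u_1\,\mathrm{d}\kappa$.

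For (2), take the test function $f(z):=\omega^{z_1}(B(x,r)\cap S)$, which lies in $C_0(S)$ and is the trace on $S$ of a positive harmonic function on $\mathcal{O}_{-1}$. The improved $\Phi$-property with $y=1$ and $\Delta=[y_k,1]$ gives $\Omega_{y_k} f\geq e^{-4c_S(1-y_k)}f$, so integrating against $\kappa$ and letting $k\to\infty$ yields
\[
\int_S f\,\mathrm{d}\nu_\varepsilon \;\geq\; e^{-4c_S}\int_S \omega^{z_1}(B(x,r)\cap S)\,\mathrm{d}\kappa(z) \;>\;0,
\]
independently of $\varepsilon$. The main obstacle is to translate this integral bound into one on $\nu_\varepsilon(B(x,r)\cap S)$ itself, and this is where the smallness hypothesis $\varepsilon<\varepsilon(x,r)$ enters. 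The plan is to show that $\nu_\varepsilon\to\nu_0:=\int\omega^{z_1}\,\mathrm{d}\kappa(z)$ in the weak$^*$ sense as $\varepsilon\to 0$, exploiting the sandwich $y^{c_S^-\varepsilon}k_{1-y}\leq\omega_y\leq y^{-c_S^+\varepsilon}k_{1-y}$ from Lemma \ref{lem:omega_ky_comparison}: for $\phi\in C_0(S)$ one compares $\int\phi\,\mathrm{d}\nu_\varepsilon$ with $\int\phi\,\mathrm{d}\nu_0$ via a diagonal choice $y(\varepsilon)\to 0$ satisfying $\varepsilon|\ln y(\varepsilon)|\to 0$, which kills the perturbative factor $y^{\pm c_S\varepsilon}$. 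By Dahlberg's theorem, $\nu_0$ is absolutely continuous with respect to surface measure on $S$, whence $\nu_0(\partial B(x,r)\cap S)=0$; the Portmanteau theorem then gives $\nu_\varepsilon(B(x,r)\cap S)\to\nu_0(B(x,r)\cap S)>0$, yielding the desired positive lower bound for $\varepsilon$ below a threshold $\varepsilon(x,r)$. The delicate point of this second part is precisely the continuity argument, since the factor $y^{\pm c_S\varepsilon}$ diverges in $y$ for each fixed $\varepsilon>0$, and any quantitative rate on $\nu_\varepsilon\to\nu_0$ has to trade off between the two parameters.
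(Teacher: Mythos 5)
Your proof of part (1) follows the same essential route as the paper --- the differential equation of Theorem \ref{thm:diff-eq}, the improved $\Phi$-property, and positivity of $\omega_y$ for $y\leq 1/2$ --- but the particular reduction you choose creates a sign problem. You apply the differential equation on $[y_k,1]$ and obtain $\varepsilon\int_{y_k}^1\Omega_y(B_yu_y)\,\mathrm{d}y\leq u_1$, and later want to extract a bound on the $(0,1/2]$ portion of the $\kappa$-integral from the resulting inequality $\varepsilon\int_0^1\int_S\Omega_y(B_yu_y)\,\mathrm{d}\kappa\,\mathrm{d}y\leq\int_Su_1\,\mathrm{d}\kappa$. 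But the piece $\int_{1/2}^1\int_S\Omega_y(B_yu_y)\,\mathrm{d}\kappa\,\mathrm{d}y$ has no sign: Lemma \ref{lem:positivity_omega} only guarantees $\omega_{[y,1]}>0$ when $1-y\geq y$, so $\Omega_y(B_yu_y)$ need not be non-negative for $y>1/2$, and neither can the pointwise bound $B_yu_y\leq c_Su_1$ be pushed through $\Omega_y$. The paper avoids this entirely by running the differential equation only on $[\delta,1/2]$, using $\int_\delta^{1/2}\Omega_yg_y\,\mathrm{d}y=\frac{1}{\varepsilon}(\Omega_{1/2}u_{1/2}-\Omega_\delta u_\delta)\leq\frac{1}{\varepsilon}\Omega_{1/2}u_{1/2}\leq\frac{c_S}{\varepsilon}u_1$, and handles $y\in(1/2,1]$ by keeping the positive operator $\Omega_\eta$ (small $\eta$) around rather than $\Omega_y$. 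Replacing your $y_2=1$ with $y_2=1/2$ closes the gap, and then the rest of your argument is essentially the unrolled form of the paper's Corollary \ref{cor:very_important_ineq}.

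Part (2) has the more serious gap. Your test function $f(z)=\omega^{z_1}(B(x,r)\cap S)$ is strictly positive on all of $S$, so the estimate $\int_Sf\,\mathrm{d}\nu_\varepsilon\geq e^{-4c_S}\int_Sf\,\mathrm{d}\kappa>0$ (which is correct, and is more or less what the paper proves in Lemma \ref{lem:nu_epsilon_prob} to conclude $\nu_\varepsilon(S)=1$) does not bound $\nu_\varepsilon(B(x,r)\cap S)$ from below: one would need $f\leq\chi_{B(x,r)\cap S}$, which fails. The paper instead takes a continuous bump $\psi$ with $0\leq\psi\leq1$, $\psi\equiv1$ on $\frac12B$ and $\psi\equiv0$ off $B$, so that $\int_S\psi\,\mathrm{d}\nu_\varepsilon\leq\nu_\varepsilon(B)$ is automatic; since $\psi$ is not the trace of a positive harmonic function, the $\Phi$-property alone does not apply to it, and instead the paper combines the differential equation on $[y,r]$ with Lemma \ref{lem:omega_ky_comparison} (producing the main term $c_{\kappa,B}r^{c_S\varepsilon}$) and a Lipschitz bound $\sup\|\nabla\varphi\|\leq 2/r$ on the $O(\varepsilon)$ error term; this is precisely where the smallness $\varepsilon<\varepsilon(x,r)$ is used. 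Your substitute --- show $\nu_\varepsilon\to\nu_0:=\int\omega^{z_1}\,\mathrm{d}\kappa(z)$ weak$^*$ and invoke Portmanteau --- is only sketched, and beyond the diagonal-limit subtlety you flag there is a second obstruction: Portmanteau needs $\nu_0(\partial B(x,r)\cap S)=0$, but on a Lipschitz graph the intersection $\partial B(x,r)\cap S$ can carry positive $\mathcal{H}^{d-1}$-measure (the boundary may locally coincide with a sphere about $x$), so this can fail for some radii and would at minimum require perturbing $r$. In short, the key device of the paper's proof --- a compactly supported bump together with the differential equation and Lemma \ref{lem:omega_ky_comparison} --- is missing from your plan, and the weak$^*$-continuity detour you propose in its place is both unproven and not clearly true as stated.
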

		\begin{proof}
			\begin{enumerate}
				\item For $y\in(0,1]$, we denote $g_y:=B_yu_y$. Notice that with Fubini and Lemma \ref{lem:cy_properties}, we obtain for all $x\in S$
				\begin{align}
					\label{eq:B_yK_y}
					\begin{aligned}
							(B_yu_y)(x)&=\int_S\int_S k_y(x,\zeta) c_y(\zeta,\xi)u_y(\xi)\mathrm{d}\omega^{z_0}(\zeta)\mathrm{d}\omega^{z_0}(\xi)\\
						&=\int_S k_y(x,\zeta) (C_yu_y)(\zeta)\mathrm{d}\omega^{z_0}(\zeta) = \left(K_y\|\nabla u_{2y}\|\right)(x).
					\end{aligned}
				\end{align}
				Since for fixed $\tau>0$, the function $z=x_\vartheta\in \mathcal{O}\longmapsto \left(K_\vartheta\|\nabla u_{2\tau}\|\right)(x)$ is positive harmonic, choosing $\tau=y$ yields that $g_y$ is the trace on $S_y$ of a positive harmonic function defined on $\mathcal{O}$. Corollary \ref{cor:very_important_ineq} now implies that $\Omega_\eta g_y\leq\Omega_y g_y$ for all $0<\eta<y<\frac{1}{2}$.
				Now define 
				\begin{align*}
					J(\delta):=\int_S\int_\delta^1 g_y\mathrm{d}y\mathrm{d}\nu_\varepsilon.
				\end{align*}
				Next observe that  $f_\beta(x):=\int_\delta^{\beta} g_y(x)\mathrm{d}y\in C_0(S)$ for any $\beta\in\mathbb{R}$. To begin, we show that $f_\beta$ vanishes at infinity on $S$. Using the pointwise inequality in \eqref{eq:B_yK_y} and Corollary \ref{cor:Harnack_gradient}, we get
				\begin{align*}
					f_\beta(x)&=\int_\delta^{\beta} \int_Sk_y(x,\xi)\|\nabla u_{2y}(\xi)\|\mathrm{d}\omega^{z_0}(\xi)\mathrm{d}y\\
					&\leq c\int_\delta^{\beta} \int_Sk_y(x,\xi)\frac{u_{2y}(\xi)}{d(\xi_{2y},S)}\mathrm{d}\omega^{z_0}(\xi)\\
					&\leq\frac{c_S}{2\delta}\int_\delta^{\beta} \int_S\left(\frac{y}{\delta}\right)^\alpha k_\delta(x,\xi)\left(\frac{2y}{\delta}\right)^\alpha u_{\delta}(\xi)\mathrm{d}\omega^{z_0}(\xi)\\
					&\leq c_S\frac{\beta^{2\alpha}}{\delta^{2\alpha+1}}\left(\beta-\delta\right)\left(K_\delta u_\delta\right)(x).
				\end{align*} 
				Now by Lemma \ref{lem:ImageK_y}, the claim follows since $u_\delta|_S\in C_0(S)$. For continuity recall again \eqref{eq:B_yK_y}, i.e. $(x,y)\longmapsto g_y(x)$ is continuous and so is $f$. We now prove that $J(\delta)$ is uniformly bounded. Since $f_1\in C_0(S)$, we exploit that $\nu_\varepsilon$ is the weak*-limit of $\gamma_\eta\mathrm{d}\omega^{z_0}$ and their duality to the operator $\Omega_\eta$ to obtain
				\begin{align*}
					J(\delta)&=\lim_{\eta\rightarrow0}\int_S\int_\delta^{1} g_y\mathrm{d}y\gamma_\eta\mathrm{d}\omega^{z_0}\\
					&=\lim_{\eta\rightarrow0}\int_S\Omega_\eta\left(\int_\delta^{1} g_y\mathrm{d}y\right)\mathrm{d}\kappa\\
					&=\lim_{\eta\rightarrow0}\int_S\int_\delta^{1} \Omega_\eta g_y\mathrm{d}y\mathrm{d}\kappa\\
					&= \lim_{\eta\rightarrow0}\left(\int_S\int_\delta^{\frac{1}{2}}\Omega_\eta g_y\mathrm{d}y\mathrm{d}\kappa+\int_S\int_{\frac{1}{2}}^1\Omega_\eta g_y\mathrm{d}y\mathrm{d}\kappa\right)\stepcounter{equation}\tag{\theequation}\label{eq:splitting_integral}.
				\end{align*}
				Now by Corollary \ref{cor:very_important_ineq} and the remarks below equation \eqref{eq:B_yK_y}, we can further estimate the integrand in the first term in \eqref{eq:splitting_integral} to obtain
				\begin{align*}
					\int_S\int_\delta^{\frac{1}{2}}\Omega_\eta g_y\mathrm{d}y\mathrm{d}\kappa&\leq c_S\int_S\int_\delta^{\frac{1}{2}}\Omega_yg_y\mathrm{d}y\mathrm{d}\kappa\\
					&\stackrel{(*)}{=}\frac{c_S}{\varepsilon}\int_S\left(\Omega_{\frac{1}{2}} u_{\frac{1}{2}}-\Omega_\delta u_\delta\right)\mathrm{d}\kappa\\
					&\leq\frac{c_S}{\varepsilon}\int_S\Omega_{\frac{1}{2}} u_{\frac{1}{2}}\mathrm{d}\kappa,
				\end{align*}
				where $\delta>0$ is fixed and small enough such that $\omega_\delta>0$ and in $(*)$, the differential equation in Theorem \ref{thm:diff-eq} was used. Now the quotient estimates in Lemma \ref{lem:harnack_chain} yield
				\begin{align*}
					\frac{u_1}{u_{\frac{1}{2}}}\geq\left(\frac{1}{2}\right)^\alpha \Longleftrightarrow u_{\frac{1}{2}}\leq2^\alpha u_1
				\end{align*}
				thus, by the positivity of $\omega_{\frac{1}{2}}$, we have $\Omega_{\frac{1}{2}}u_\frac{1}{2}\leq c_S \Omega_\frac{1}{2}u_1$. Moreover, the $\Phi$-property implies $\Omega_{\frac{1}{2}}u_1\leq(1+c_S)u_1$, since $[1/2,1]\subset(0,1]$.
				Combining these two bounds, we arrive at
				\begin{align*}
					\int_S\int_\delta^{\frac{1}{2}}\Omega_\eta g_y\mathrm{d}y\mathrm{d}\kappa\leq\frac{c_S}{\varepsilon}\int_S u_1\mathrm{d}\kappa.
				\end{align*}
				For the second term in \eqref{eq:splitting_integral} observe that due to \eqref{eq:B_yK_y} we have
				\begin{align*}
					g_y = K_y\|\nabla u_{2y}\|&\leq \frac{c_S}{2y}K_yu_{2y}=\frac{c_S}{2y}u_{3y}\leq c_S\frac{(3y)^\alpha}{2y}u_1\leq c_Su_1
				\end{align*} 
				for $y\in(1/2,1)$, so in particular $3y>1$. For $\eta<1/2$, $\omega_\eta$ is positive, thus we have the inequality $\Omega_\eta g_y\leq c_S \Omega_\eta u_1$. Furthermore, the $\Phi$-property implies that $\Omega_\eta u_1\leq \left(1+c_S(1-\eta)\right)u_1$, since $[\eta,1]\subset(0,1]$. Plugging into the integral yields
				\begin{align*}
					\int_S\int_{\frac{1}{2}}^1\Omega_\eta g_y\mathrm{d}y\mathrm{d}\kappa\leq c_S(1+c_S(1-\eta))\int_Su_1\mathrm{d}\kappa.
				\end{align*}
				In total this the above considerations lead to 
				\begin{align*}
					J(\delta)\leq\frac{c_S}{\varepsilon}\int_Su_1\mathrm{d}\kappa.
				\end{align*}
				Taking limits on both sides we now obtain
				\begin{align*}
					\limsup_{\delta\downarrow0} J(\delta)\leq\frac{c_S}{\varepsilon}\int_S  u_1\mathrm{d}\kappa.
				\end{align*} 
				Since $f\geq0$, $J(\delta)$ is monotonically increasing for falling $\delta>0$, we obtain by monotone convergence 
				\begin{align*}
					\limsup_{\delta\downarrow0} J(\delta)=\int_SV\mathrm{d}\nu_\varepsilon
				\end{align*} 
				and thus the desired inequality
				\begin{align*}
					\int_S V\mathrm{d}\nu_\varepsilon\leq\frac{c_S}{\varepsilon}\int_Su_1\mathrm{d}\kappa.
				\end{align*}
				\item We prove the second statement. Let $\zeta\in S$, $r<1/4$ and define $B:= B(\zeta,r)$.  There exists a differentiable function $\psi$ defined on $\mathcal{O}_{-1}$, such that $\psi(S)\subseteq[0,1]$,  $\psi|_{B/2}\equiv1$,  $\psi|_{S\setminus B}\equiv0$ and $\|\nabla\psi\|\leq2/r$. Now let $\varphi:=\psi|_S\in C_0(S)$, for convenience we will also denote by $\varphi$ the harmonic extension of $\psi|_S$ to $\mathcal{O}$. Using the differential equation in Theorem \ref{thm:diff-eq} we write for $y\in(0,r)$
				\begin{align*}
					\Omega_y\varphi_y = \Omega_r\varphi_r-\varepsilon\int_{y}^{r}\Omega_t(B_t\varphi_t)\mathrm{d}t.
				\end{align*}
				Let us estimate the first term from below and the second term from below. To begin, notice that with Lemma \ref{lem:omega_ky_comparison} and positivity of $\omega_y$, we obtain
				\begin{align*}
					\Omega_r\varphi_r\geq r^{c_S\varepsilon}K_{1-r}\varphi_r=r^{c_S\varepsilon}\varphi_1
				\end{align*}
				for sufficiently small $\varepsilon<\varepsilon(S)$ and thus integrating over $S$ w.r.t $\kappa$ gives
				\begin{align*}
					\int_S\Omega_r\varphi_r\mathrm{d}\kappa\geq  c_{\lbrace\kappa, B\rbrace}r^{c_S\varepsilon} 
				\end{align*}
				since $\varphi_1$ has a special form i.e.,
				\begin{align*}
					\varphi_1(x) &= \int_{B} k(x_1,\xi)\psi(\xi)\mathrm{d}\omega^{z_0}(\xi)\\
					&=\int_{\frac{1}{2}B\cap S}k(x_1,\xi)\mathrm{d}\omega^{z_0}(\xi)+\int_{B\setminus\frac{1}{2}B\cap S}k(x_1,\xi)\psi(\xi)\mathrm{d}\omega^{z_0}(\xi)\\
					&\geq \omega^{x_1}\left(\frac{1}{2}B\cap S\right).
				\end{align*}
				In particular we choose $c_{\kappa,B}=\int_S\omega^{x_1}\left(\frac{1}{2} B\right)\mathrm{d}\kappa(x)$. Moreover, using positivity of $\omega_t$ for $t\in(y,r)$ and the fact that $\Omega_t(1)=1$, we get
				\begin{align*}
					\left|\int_y^r\Omega_t(B_t\varphi_t)\mathrm{d}t\right|\leq (r-y)\sup_{\substack{x\in S\\t\in(y,r)}}|(B_t\varphi_t)(x)|.
				\end{align*}
				By Lemma \ref{lem:cy_properties} and \ref{lem:properties_b_y}, $|(B_t\varphi_t)(x)|\leq\sup_{S}\|\nabla\varphi_{2t}\|$, hence
				\begin{align*}
					\sup_{\substack{x\in S\\t\in(y,r)}}|(B_t\varphi_t)(x)|\leq\sup_{\mathcal{ O}}\|\nabla\varphi\|\leq\frac{c_{S,B}}{r}.
				\end{align*}
				In total, we combine estimates to obtain
				\begin{align}
					\label{eq:Omega_yphi_y-dkappa}
					\int_S\Omega_y\varphi_y\mathrm{d}\kappa\geq c_{\lbrace S,\kappa,B\rbrace}\left(r^{c_S\varepsilon}+\varepsilon\right),
				\end{align}
				which is greater than a constant depending on $B$ and $\kappa$ for $\varepsilon\in(0,\varepsilon(B,\kappa))$. Having established this lower bound, we exploit duality and estimate
				\begin{align*}
					\int_S \Omega_y\varphi_y\mathrm{d}\kappa &= \int_S\varphi_y\gamma_y\mathrm{d}\omega^{z_0}\\
					&\stackrel{(*)}{\leq} c_S\int_S \Omega_{[\delta,y]}\varphi_y\gamma_y\mathrm{d}\omega^{z_0}\\
					&=c_S\int_S \Omega_y(\Omega_{[\delta,y]}\varphi_y)\mathrm{d}\kappa\\
					&=c_S\int_S\varphi_y\gamma_\delta\mathrm{d}\omega^{z_0}
				\end{align*}
				 for $\delta<y$, where $\gamma_y=\gamma_y(\kappa)$ is defined in \eqref{eq:gamma} and is positive for $y>0$ small enough. In $(*)$ we used inequality \eqref{eq:phi_prop_lower_ineq} from the proof of the $\Phi$-property with the constant $\exp\left(\frac{4c_S(y-\delta)}{y}\right)$. Since $\nu_\varepsilon$ is the weak* limit of $\gamma_\delta\mathrm{d}\omega^{z_0}$ as $\delta\downarrow0$, taking the limit and combining with the estimate in equation \eqref{eq:Omega_yphi_y-dkappa} yields
				 \begin{align*}
				 	\int_S\varphi_y\mathrm{d}\nu_\varepsilon>c_{\lbrace S,\kappa,B\rbrace}
				 \end{align*}
			 	for every $y\in(0,r)$. Taking the limit $y\rightarrow0$ we obtain 
			 	\begin{align*}
			 		\lim_{y\rightarrow0}\int_S\varphi_y\mathrm{d}\nu_\varepsilon = \int_S\psi\mathrm{d}\nu_\varepsilon\leq\nu_\varepsilon(B),
			 	\end{align*}
		 		since $\varphi_y\rightarrow\psi$ pointwise on $S$. Note that we can apply dominated convergence because by the maximum principle for harmonic functions vanishing at infinity on unbounded domains, we have
		 		\begin{align*}
		 			\sup_{\mathcal{O}}\varphi=\sup_S\psi = 1,
		 		\end{align*}
	 			see Remark \ref{rem:maximum_principle}.
			\end{enumerate}
		\end{proof}
		\begin{remark}
			\label{rem:maximum_principle}
			The classical weak maximum principle states \cite{Gilbarg2001}:\par
			\textit{Let $\Omega$ be a bounded domain and $L$ an elliptic differential operator of the form
			\[
				L=\sum_{i,j=1}^da_{ij}D_{ij}+\sum_{i=1}^db_iD_i.
			\]
			Note that there is no contribution of the function $f$ in $Lf$. Functions $\varphi\in C^2(\Omega)\cap C(\bar{\Omega})$ satisfying $L\varphi=0$ on $\Omega$ follow the (weak) maximum principle}
			\begin{align*}
				\sup_\Omega |\varphi|=\sup_{\partial\Omega}|\varphi|.
			\end{align*} 
			We will now extend the classical maximum principle to unbounded domains $\Omega$ and solution $\varphi$ to $L\varphi=0$ on $\Omega$, that vanish at infinity, i.e. $\varphi\in C^2(\Omega)\cap C(\bar{\Omega})\cap C_0(\Omega)$. Denote $M=\sup_{\Omega}|\varphi|$. Suppose $x\in\partial\Omega$, and choose $R>0$ large enough, such that on $\Omega\setminus B$, we have $|\varphi|<M/2$ and $\Omega\cap B$ is connected, where $B=B(x,R)$. Now on the domain $\Omega\cap B$, the weak maximum principle holds i.e. 
			\begin{align*}
				\sup_{\Omega\cap B}|\varphi|=\sup_{\partial\left(\Omega\cap B\right)}|\varphi|=M,
			\end{align*}
			since $|\varphi|<M/2$ on $\Omega\setminus B$. Due to the same reasoning, we have $\sup_\Omega|\varphi|=\sup_{\Omega\cap B}|\varphi|$ and $\sup_{\partial\Omega}|\varphi|=\sup_{\partial\left(\Omega\setminus B\right)}|\varphi|$ and thus
			\begin{align*}
				\sup_{\Omega}|\varphi|=\sup_{\partial\Omega}|\varphi|.
			\end{align*}
		\end{remark}
		This concludes all preparations needed for proving the main statement of \cite{MuellerRiegler2020}.
		\subsection{Proof of Main Theorem}
		First, let us restate Theorem \ref{thm:main} :\par 
		\textit{
			Let $\mathcal{O}$ be a near-half space, $S=\partial\mathcal{O}$, and $y>1$. Then for any $B=B(z,r)$, with $z\in S$ and $r>0$ there exists $x\in B\cap S$ such that the variation $V(x)$ is bounded in the following way:\\
			There exists a constant $c=c_{S,r}$, depending on the Lipschitz function parameterizing $S$ and the radius of $B$, such that
			\[
			V(x)\leq cu(x_y).
			\]
		}
		\begin{proof}
			The constraint $y>1$ is arbitrary, we only require large distance to $0$. Let $B=B(z,r)\cap S$ be any surface ball. We may choose for $\kappa$ a specific probability measure on $S$, and construct $\nu_\varepsilon=\nu_\varepsilon(\kappa)$.
			With $\kappa=\omega^{z_y-\vec{e_d}}$, we find $\varepsilon(B)>0$ such that for any $\varepsilon<\varepsilon(B)$, we have $\nu_\varepsilon(B)>c(S,\kappa,r)>0$, by Lemma \ref{lem:nu_prop}. Choose an appropriate $\varepsilon>0$ and denote $c(S,\kappa,r)=:\tilde{c}$. By the same Lemma \ref{lem:nu_prop}, we obtain
			\begin{align*}
				\int_SV(\xi)\mathrm{d}\nu_\varepsilon(\xi)\leq\frac{c_S}{\varepsilon}\int_Su_1(\xi)\mathrm{d}\omega^{z_y-\vec{e_d}}(\xi) = \frac{c_S}{\varepsilon}u(z_y).
			\end{align*}
			Now assume that for any $x\in B$, 
			\begin{align*}
				V(x)>\frac{2c_S}{\varepsilon \tilde{c}}u(z_y)
			\end{align*}
			holds. We obtain an immediate contradiction by
			\begin{align*}
				\frac{2c_S}{\varepsilon}u(z_y)<\frac{2c_S}{\varepsilon}\frac{\nu_\varepsilon(B)}{\tilde{c}}u(z_y)\leq\int_BV(\xi)\mathrm{d}\nu_\varepsilon(\xi)\leq\frac{c_S}{\varepsilon}u(z_y).
			\end{align*}
			This yields the existence of $x\in B$ such that $V(x)\leq c u(z_y)$ with $c=c(S,r,\kappa,\varepsilon)$. Observe that by Lemma \ref{lem:dist_to_boundary} $B(z_y,c_sy)\subset \mathcal{O}$, thus since $y>1$, $u$ is positive harmonic on the ball $B(z_y,c_S)$. Additionally, notice that for small enough $r<c_S/2$, $d(x_y,z_y)<c_S/2$ and therefore by Harnack's inequality \ref{thm:Harnack_ineq}
			\begin{align*}
				u(z_y)\leq\frac{\left(1+\frac{d(z_y,x_y)}{c_s}\right)^{d-1}}{1-\frac{d(z_y,x_y)}{c_s}}u(x_y)\leq \frac{1}{2} \left(\frac{3}{2}\right)^{d-1}u(x_y).
			\end{align*} 
			In total we obtain $V(x)\leq cu(x_y)$.
		\end{proof}
		
		\section{Hausdorff dimension of $\mathcal{V}$}\label{sec:Hausdorff}
		In this last section we will show that the Hausdorff dimension of $\mathcal{V}$ is at least $(d-1)\frac{1+\eta}{2+\eta}$, for some $\eta>0$, that depends on $S$ and $z_0$. This is a considerably weaker statement than in Havin and Mozolyako who showed in \cite{HavinMozol2016} that for domains with $C^2$ boundary, $\dim(\mathcal{V})=d-1$. What is more, they even show a much stronger notion of "density" of $\mathcal{V}$ inside $S$: that $\mathcal{V}$ lies \textit{ultradense} in $S$, meaning 
		\begin{align*}
			\forall x\in S \quad\forall r>0:\quad \dim\left(\mathcal{V}\cap B(x,r)\right)=d-1
		\end{align*}
		Obviously ultradensity of $\mathcal{V}$ implies $\dim(\mathcal{V}(S))=d-1$. 
		
		It is not far-fetched to conjecture that also in our case of near half spaces, we have utradensity of $\mathcal{V}$, however the method that works in \cite{HavinMozol2016}, yields weaker estimates for our domains. It is plausible to extend this method to obtain stronger estimates, but not further discussed here.
		
		We begin with estimating the measure of surface balls under $\nu_\varepsilon$. The proof is similar to the corresponding lemma in \cite{HavinMozol2016} for $C^2$-domains and implements the necessary adaptations.
		\begin{lemma}[\cite{HavinMozol2016}]
			\label{lem:nu_upper_bd}
			There exist positive constants $d_S,\varepsilon_{S,d}$ and $\eta=\eta_S>0$, such that for any $x\in S$, $r\in(0,1/2)$ and $\varepsilon\in(0,\varepsilon_{S,d})$ we have
			\begin{align*}
				\nu_\varepsilon(B(x,r)\cap S)\leq cr^{(d-1)\frac{1+\eta}{2+\eta}-d_S\varepsilon},
			\end{align*}
			where $c=c_{\lbrace S,d,\kappa,z_0\rbrace}$.
		\end{lemma}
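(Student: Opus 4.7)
The exponent $(d-1)(1+\eta)/(2+\eta)$ is the signature of Dahlberg's theorem combined with H\"older's inequality, while the correction $r^{-d_S\varepsilon}$ will come from Lemma \ref{lem:omega_ky_comparison} applied at scale $r$. The plan is to test $\nu_\varepsilon$ against a harmonic test function adapted to $B(x,r)\cap S$ and control it with the $\Phi$-property.

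As test function I would take $\phi_r(\xi):=\omega^{\xi_r}(B(x,2r)\cap S)$ on $S$. This is the value at $\xi_r$ of the positive harmonic function $V(z):=\omega^z(B(x,2r)\cap S)$ on $\mathcal{O}$, and since $z\mapsto V(z+r\vec{e_d})$ is positive harmonic on $\mathcal{O}_{-r}$ and coincides with $\phi_r$ on $S$, Lemma \ref{lem:phi-prop} applies at level $y:=r$ and yields $\Omega_{[y',r]}\phi_r\leq(1+c_S)\phi_r$ for every $y'\in(0,r]$. A Harnack-chain argument inside $\mathcal{O}$ (using the uniform Lipschitz character of $S$) also produces $\phi_r\geq c_1(S)>0$ on $B(x,r)\cap S$, so $\nu_\varepsilon(B(x,r)\cap S)\leq c_1^{-1}\int_S\phi_r\,d\nu_\varepsilon$. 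Because $\phi_r\in C_0(S)$, the weak*-construction of $\nu_\varepsilon$ together with the semigroup identity $\Omega_{y'}=\Omega_r\circ\Omega_{[y',r]}$ and the preceding $\Phi$-bound yield, upon letting $y'\to 0$, $\int_S\phi_r\,d\nu_\varepsilon\leq(1+c_S)\int_S\Omega_r\phi_r\,d\kappa$.

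Now Lemma \ref{lem:omega_ky_comparison} gives $\Omega_r\phi_r\leq r^{-c_S^+\varepsilon}K_{1-r}\phi_r$, and Lemma \ref{lem:MartinKernel}(1) collapses $K_{1-r}\phi_r(x')=V_1(x')=\omega^{x'_1}(B(x,2r)\cap S)$. Integrating against $\kappa$ and applying Harnack (with the Martin-kernel decay in Lemma \ref{lem:MartinKernelDecay} to absorb the tails of $\kappa$) dominates the result by $C_{\kappa,z_0,S}\,\omega^{z_0}(B(x,2r)\cap S)$. Dahlberg's theorem (Theorem \ref{thm:Dahlberg}) supplies a density $d=d\omega^{z_0}/ds\in L^{2+\eta_S}_{\mathrm{loc}}(S)$, and H\"older's inequality combined with the Lipschitz surface bound $s(B(x,2r)\cap S)\leq C_S r^{d-1}$ yields $\omega^{z_0}(B(x,2r)\cap S)\leq C r^{(d-1)(1+\eta_S)/(2+\eta_S)}$. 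Chaining the bounds produces the claim with $d_S:=c_S^+$ and $\eta:=\eta_S$.

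The main obstacle will be securing the Dahlberg $L^{2+\eta_S}$ bound uniformly in the center $x\in S$, since the theorem as stated is only local. For near half spaces this should be achievable because $S$ coincides with the hyperplane outside the fixed ball $B^{d-1}(0,1)$, where the Radon--Nikodym density of $\omega^{z_0}$ is essentially the half-space Poisson kernel and has polynomial decay; splitting the surface ball $B(x,2r)\cap S$ into its flat and non-flat parts and combining the explicit decay with the local Dahlberg estimate on the compact perturbation should produce a constant $C=C_{S,z_0}$ independent of $x$. A minor separate technicality is the range restriction $r\in(0,1/4)$ needed by Lemma \ref{lem:omega_ky_comparison}, but the remaining range $r\in[1/4,1/2)$ is trivial since $\nu_\varepsilon(S)=1$.
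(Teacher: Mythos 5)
Your proof is correct and reaches the same final chain (semi-group and $\Phi$-property to pass from $\Omega_{y'}$ to $\Omega_r$, Lemma \ref{lem:omega_ky_comparison} for the $r^{-d_S\varepsilon}$ factor, Fubini plus boundedness of $\xi\mapsto\int_S k_1(x',\xi)\,\mathrm{d}\kappa(x')$ to reduce to $\omega^{z_0}$ of a fixed multiple of the surface ball, and finally Dahlberg together with H\"older), but with a genuinely different choice of test function. The paper instead tests $\nu_\varepsilon$ against a bump function $\psi=\psi_{\hat B}$ taken from \cite{HavinMozol2016}: continuous on $\mathbb{R}^d$, harmonic on $\mathcal{O}$, $\psi(S)\subseteq[0,1]$, supported on $S\cap2\hat B$ and $>q$ on $2B$; its existence there requires the restriction $\|\nabla\Phi\|\leq1/100$. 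Your test function $\phi_r(\xi)=\omega^{\xi_r}(B(x,2r)\cap S)$ is the harmonic measure of the doubled surface ball seen from the corkscrew point $\xi_r$; it is automatically the trace on $S$ of a positive harmonic function on $\mathcal{O}_{-r}$, and the $\Phi$-property then applies at scale $y=r$ exactly as the paper applies it at scale $r/2$. What your route buys is that it avoids invoking the Havin--Mozolyako bump construction and its smallness assumption on the Lipschitz constant; what it costs is the corkscrew-type lower bound $\omega^{\xi_r}(B(\xi,r)\cap S)\geq c_S>0$, which is a standard fact for Lipschitz domains but is stronger than a plain Harnack-chain comparison, so you should cite or prove it rather than label it a Harnack argument. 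Two further points in your favor: you correctly observe that although $\phi_r$ is not compactly supported, Fubini still localizes the final integral to $B(x,2r)\cap S$ because the indicator of the surface ball is what survives after expanding $\omega^{x'_1}$; and you explicitly address the uniformity in $x$ of the Dahlberg $L^{2+\eta}$ norm on near half spaces (flat exterior plus compact perturbation), a point the paper's proof passes over silently.
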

		\begin{proof}
			In the following, $\lambda B:=B(x,\lambda r)$, where $B=B(x,r)$.
			
			We now fix a surface ball $B:=B(x,r)$ and set $\hat{B}:=4B$. We can now find a function $\psi:=\psi_{\hat{B}}:\mathbb{R}^d\rightarrow\mathbb{R}$ which is continuous and on $\mathcal{O}$ also harmonic. Furthermore, $\psi(S)\subseteq[0,1]$, with $\psi|_{S\setminus2\hat{B}}\equiv0$ and $\psi>q$ on $\frac{1}{2}\hat{B}=2B$, where $q\in(0,1)$ may depend on $S$ but is independent of $\hat{B}$, an thus of $B$. The existence of such a family of functions is shown in \cite{HavinMozol2016} (and requires the condition $||\nabla\Phi||\leq1/100$). 
			
			Let $z\in(S\cap B)_{\frac{r}{2}}$, then $z=\xi_{\frac{r}{2}}$ for some $\xi\in S\cap B$ and thus $\|z-x\|\leq1+r/2$. Hence $(S\cap B)_{\frac{r}{2}}\subset2B$ and $\psi>q$ on $(S\cap B)_{\frac{r}{2}}$. These consideration yield
			\begin{align}
				\label{eq:upper_bound_nu_eps}
				\nu_\varepsilon(B\cap S)\leq\frac{1}{q}\int_{S\cap B}\psi_{\frac{r}{2}}\mathrm{d}\nu_\varepsilon\leq\frac{1}{q}\int_{S}\psi_{\frac{r}{2}}\mathrm{d}\nu_\varepsilon=\frac{1}{q}\lim_{\eta\rightarrow0}\int_S\Omega_{\eta}\psi_{\frac{r}{2}}\mathrm{d}\kappa.
			\end{align}
			By the semi-group property and the $\Phi$-property
			\begin{align*}
				\Omega_\eta\psi_{\frac{1}{2}} = \Omega_{\frac{r}{2}}\Omega_{\left[\eta,\frac{r}{2}\right]}\psi_{\frac{r}{2}}\leq c_S\frac{r/2-\eta}{r/2}\Omega_{\frac{r}{2}}\psi_{\frac{r}{2}},
			\end{align*}
			where the last inequality holds since $r/2<1/4$ and thus $\omega_{\frac{r}{2}}>0$. Now applying Lemma \ref{lem:omega_ky_comparison}, we obtain
			\begin{align*}
				\Omega_{\frac{r}{2}}\psi_{\frac{r}{2}}\leq\left(\frac{2}{r}\right)^{d_S\varepsilon} K_{1-\frac{r}{2}}\psi_{\frac{r}{2}},
			\end{align*}
			where $d_S>0$ is the corresponding constant in equation \eqref{eq:omega_y_k_y_comparison}. Plugging into \eqref{eq:upper_bound_nu_eps} and collecting the above estimates, yields
			\begin{align*}
				\nu_\varepsilon(B\cap S)&\leq \frac{c_S}{q}r^{-d_S\varepsilon}\int_SK_1\psi\mathrm{d}\kappa\\
				&=c_Sr^{-d_S\varepsilon}\int_S\psi(\xi)\int_Sk_1(x,\xi)\mathrm{d}\kappa(x)\mathrm{d}\omega^{z_0}(\xi)\\
				&\leq c_{S,\kappa} r^{-d_S\varepsilon}\omega^{z_0}\left(S\cap2\hat{B}\right),
			\end{align*}
			since $\kappa$ is a probability measure, $\psi$ is supported on $S\cap2\hat{B}$ and $\psi\leq1$. 
			For $\xi\in S$, the mapping
			\begin{align*}
				\xi\longmapsto\int_S k_1(x,\xi)\mathrm{d}\kappa(x)
			\end{align*}
			is bounded on $S$ since it is continuous, non-negative and integrable on $S$, which can be seen by applying Fubini:
			\begin{align*}
				1=\int_S\int_S k_1(x,\xi)\mathrm{d}\kappa(x)\mathrm{d}\omega^{z_0}(\xi).
			\end{align*}
			Now, by Dahlberg's Theorem \ref{thm:Dahlberg}, $\omega^{z_0}$ is absolutely continuous w.r.t $\mathcal{H}^{d-1}$ and the Radon-Nikodym derivative of $\omega^{z_0}$ w.r.t $\mathcal{H}^{d-1}$, denoted by $\delta$, is in $L^{2+\eta}_\text{loc}(S)$, for some $\eta>0$ depending on $S$ and $z_0$. Thus
			\begin{align*}
				\omega^{z_0}(S\cap2\hat{B})=\int_{S\cap2\hat{B}}\delta\mathrm{d}\mathcal{H}^{d-1}\leq\left\|\delta\right\|_{L^{2+\eta}(S\cap2\hat{B})}\left(\mathcal{H}^{d-1}(S\cap2\hat{B})\right)^{\frac{1+\eta}{2+\eta}}=c_{\lbrace d, S,\kappa,z_0\rbrace} r^{(d-1)\frac{1+\eta}{2+\eta}}.
			\end{align*}
		\end{proof}
		In total this yields the desired inequality
		\begin{align*}
			\nu_\varepsilon(B\cap S)\leq c_{\lbrace d, S,\kappa,z_0\rbrace} r^{(d-1)\frac{1+\eta}{2+\eta}-d_S\varepsilon}
		\end{align*}
		for $\varepsilon<(d-1)(1+\eta)/d_S(2+\eta)$.
		
		Equipped with this estimate, we can finally bound the Hausdorff dimension of $\mathcal{V}$.
		
		\begin{theorem}
			For any ball $B=B(x,r)$, with $x\in S$ and $r>0$, we have 
			\begin{align*}
				\dim(\mathcal{V}\cap B)\geq(d-1)\frac{1+\eta}{2+\eta},
			\end{align*}
			where $\eta$ is the constant from Lemma \ref{lem:nu_upper_bd}.
		\end{theorem}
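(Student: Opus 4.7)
The plan is to apply the mass distribution principle (Frostman's lemma) to the measure $\nu_\varepsilon$, combining the two estimates of Lemma \ref{lem:nu_prop} with the upper bound of Lemma \ref{lem:nu_upper_bd}. Recall the principle: if $\mu$ is a non-zero finite Borel measure on $\mathbb{R}^d$ satisfying $\mu(B(x,r)) \leq Cr^s$ for all $x \in \mathrm{supp}(\mu)$ and all sufficiently small $r > 0$, then $\dim(\mathrm{supp}(\mu)) \geq s$.

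Fix $B = B(x_0, r_0)$ with $x_0 \in S$ and $r_0 > 0$, and choose any probability measure $\kappa$ on $S$. Since $u_1$ is bounded on $S$ by the standing assumptions on $u$, we have $\int_S u_1\,\mathrm{d}\kappa < \infty$ automatically. For every $\varepsilon$ below the thresholds $\varepsilon(S)$, $\varepsilon(x_0, r_0)$, and $\varepsilon_{S,d}$ arising in the preceding lemmata, construct $\nu_\varepsilon = \nu_\varepsilon(\kappa)$. Part $(1)$ of Lemma \ref{lem:nu_prop} yields $\int_S V\,\mathrm{d}\nu_\varepsilon \leq \frac{c_S}{\varepsilon}\int_S u_1\,\mathrm{d}\kappa < \infty$, so $V$ is $\nu_\varepsilon$-almost everywhere finite on $S$, i.e.\ $\nu_\varepsilon(S\setminus\mathcal{V}) = 0$. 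Part $(2)$ yields $\nu_\varepsilon(B\cap S) > 0$. Hence the restriction $\tilde\nu_\varepsilon := \nu_\varepsilon|_{\mathcal{V}\cap B}$ is a non-zero finite Borel measure supported in $\mathcal{V}\cap B$.

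Lemma \ref{lem:nu_upper_bd} then supplies the Frostman-type density bound
\[
\tilde\nu_\varepsilon(B(x,r)) \leq \nu_\varepsilon(B(x,r)\cap S) \leq C\, r^{s_\varepsilon}, \qquad s_\varepsilon := (d-1)\tfrac{1+\eta}{2+\eta} - d_S\varepsilon,
\]
valid for all $x \in S$ and all $r \in (0,1/2)$, with a constant $C = C_{\{S,d,\kappa,z_0\}}$. Applying the mass distribution principle to $\tilde\nu_\varepsilon$ gives $\dim(\mathcal{V}\cap B) \geq \dim(\mathrm{supp}(\tilde\nu_\varepsilon)) \geq s_\varepsilon$. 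Since this holds for every sufficiently small $\varepsilon>0$ and $s_\varepsilon \to (d-1)(1+\eta)/(2+\eta)$ as $\varepsilon\downarrow 0$, the claimed bound follows.

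The main subtlety, if one wishes to call it so, is that the quantitative constants deteriorate as $\varepsilon \downarrow 0$: the $V$-integral estimate carries a $1/\varepsilon$ prefactor, and the lower bound on $\nu_\varepsilon(B\cap S)$ also weakens with $\varepsilon$. This is however harmless, because for each individual admissible $\varepsilon$ both properties of Lemma \ref{lem:nu_prop} hold with positive constants, producing the valid lower bound $s_\varepsilon$ on $\dim(\mathcal{V}\cap B)$, and we pass to the supremum over $\varepsilon$ only \emph{after} the dimension estimate has been derived.
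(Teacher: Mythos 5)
Your proof is correct and follows essentially the same route as the paper: both arguments combine the finiteness of $\int_S V\,\mathrm{d}\nu_\varepsilon$ (Lemma \ref{lem:nu_prop}(1)) to deduce $\nu_\varepsilon(\mathcal{V}\cap B)=\nu_\varepsilon(B\cap S)>0$ (Lemma \ref{lem:nu_prop}(2)) with the Frostman-type bound of Lemma \ref{lem:nu_upper_bd} and then let $\varepsilon\downarrow 0$. The only difference is cosmetic: you cite the mass distribution principle by name, while the paper unwinds it into an explicit $\delta$-covering argument showing $\mathcal{H}^{d(\varepsilon)}(\mathcal{V}\cap B)>0$.
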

		\begin{proof}
			Let $B$ be as above and denote $M:=\mathcal{V}\cap B$. Due to Lemma \ref{lem:nu_prop} 
			\begin{align*}
				\int_S V(x)\mathrm{d}\nu_\varepsilon<\infty \quad\text{ for } \varepsilon\text{ small enough}.
			\end{align*}
			Thus, the set $\lbrace x\in S:V(x)=\infty\rbrace$ is a $\nu_\varepsilon$ null-set and $\nu_\varepsilon(M)=\nu_\varepsilon(B)$. Now let $(B_j)_{j\in\mathbb{N}}$ be a $\delta$-covering of $M$ of balls centered on $S$ and denote the radius of $B_j$ to be $r_j>0$, i.e. $\sup r_j<\delta$. Then, due to Lemma \ref{lem:nu_prop} part $(2)$ and Lemma \ref{lem:nu_upper_bd}, we have
			\begin{align*}
				0<\nu_\varepsilon(B)=\nu_\varepsilon(M)\leq c\sum_{j\in\mathbb{N}}r_j^{d(\varepsilon)},
			\end{align*} 
			where $d(\varepsilon):=(d-1)\frac{1+\eta}{2+\eta}-d_S\varepsilon$. We thus see that for all $\varepsilon>0$ small enough, $\mathcal{H}^{d(\varepsilon)}(M)>0$. This in turn implies that $\dim(M)\geq(d-1)\frac{1+\eta}{2+\eta}$.
		\end{proof}
		
		\section{From Near Half Spaces to Lipschitz domains}\label{sec:nhs_lip}
		
		In this last section, we turn to lifting the results on boundedness of radial variation on near half spaces to general Lipschitz domains. This will be done in essence by locally identifying the boundary of a Lipschitz domain with a near half space. 
		
		Let $D\subset\mathbb{R}^d$ denote a Lipschitz domain (not necessarily bounded) and let $u$ be positive harmonic and bounded from above on $D$. For any $\rho\in\partial D$ we can (after rotating and translating the space) find quantities $r=r(\rho)>0 $, $h=h(\rho)>0$ such that $\rho$ coincides with the origin and the cylinder
		\begin{align*}
			\mathcal{C}=\mathcal{C}(\rho):=\left\lbrace(x,y)\in\mathbb{R}^{d-1}\times\mathbb{R}:\|x\|< r\text{ and }|y|< h\right\rbrace
		\end{align*}	
		satisfies
		\begin{enumerate}
			\item $D\cap\mathcal{C}=\left\lbrace(x,y)\in\mathbb{R}^{d-1}\times\mathbb{R}:\|x\|< r\text{ and } \phi(x)<y<h\right\rbrace$ and $|\phi|< h/2$ on $B^{d-1}(r)$,
			\item $\mathbb{R}^d\setminus D\cap\mathcal{C}=\left\lbrace(x,y)\in\mathbb{R}^{d-1}\times\mathbb{R}:\|x\|< r\text{ and } -h<y<\phi(x)\right\rbrace$,
			\item $\partial D\cap\mathcal{C}=\phi|_{B^{d-1}(r)}$,
		\end{enumerate}
		where $\phi$ denotes the local parametrization of $\partial D$ at $p$. We extend the Lipschitz function $\phi$ on $B^{d-1}(r)$ to a Lipschitz function $\Phi$ on $\mathbb{R}^{d-1}$ such that $\Phi\equiv0$ outside $B^{d-1}(2r)$ and define the near half space
		\begin{align*}
			\mathcal{O}=\mathcal{O}_\rho:= \left\lbrace(x,y):x\in\mathbb{R}^{d-1}\text{ and }y>\Phi(x)\right\rbrace,
		\end{align*}
		i.e. $\mathcal{O}$ is the epigraph of the Lipschitz function $\Phi$. Notice that $S:=\partial\mathcal{O}=\Phi(\mathbb{R}^{d-1})$ and $S\cap\mathcal{C}\subset S\cap\partial D$. By $\lambda\mathcal{C}$ we will denote the set $\left\lbrace(x,y)\in\mathbb{R}^{d-1}\times\mathbb{R}:\|x\|< \lambda r\text{ and }|y|< h\right\rbrace$.
		
		In the following we will show that for every $\rho\in\partial D$ and its associated cylinder $\mathcal{C}$, we find a positive harmonic function $w$ on $\mathcal{O}$, satisfying the assumptions of the previous sections, such that the equality 
		\begin{align}
			\label{eq:Bourgain_pts_in_cylinders}
			\mathcal{V}^u(\partial D)\cap\frac{1}{4}\mathcal{C}\supseteq\mathcal{V}^w(\partial\mathcal{O})\cap\frac{1}{4}\mathcal{C}
		\end{align}
		holds. (Recall \eqref{eq:BourgainPoints}).
		
		We fix a $\rho\in\partial D$. Recall $\rho=0$ after rotation and translation The domain $U:=D\cap\mathcal{C}$ is contained in the intersection of $D$ and $\mathcal{O}$ and clearly is again a  Lipschitz domain that is bounded. In particular, the function $u$ is positive harmonic and bounded on $U$ and therefore, by the crucial theorem of Hunt and Wheeden \cite[Section 2]{HuntWheeden68}, $u$ admits the following integral representation:
		\begin{align*}
			\forall p\in U:\quad u(p)=\int_{\partial U}k_{p_0}^U(p,q)f(q)\mathrm{d}\omega^{p_0}_U(q),
		\end{align*}
		where $f\in L^1(\partial U,\omega^{p_0}_U)$, $p_0\in U$ is some fixed point, $\omega_U^{p_0}$ denotes the harmonic measure in $U$ with pole at $p_0$ and $k_U^{p_0}(p,\cdot)$ is the Martin kernel, i.e. the Radon-Nikodym derivative of $\omega_U^p$ w.r.t. $\omega_U^{p_0}$. Additionally note, as the nontangential limit of $u$, $f$ inherits the same upper and lower bounds as the positive harmonic function $u$, i.e. $0\leq f\leq C$. Now let $\Sigma:=\phi(B^{d-1}(r/2))\subset\partial U\cap\partial D\cap\mathcal{C}$ and define the function $w$ on the near half space $\mathcal{O}$, as follows:
		\begin{align*}
			\forall p\in\mathcal{O}:\quad w(p)=\int_\Sigma k^\mathcal{O}_{p_0}(p,q)f(q)\mathrm{d}\omega^{p_0}_U(q).
		\end{align*}
		Notice, that $w$ is positive harmonic on $\mathcal{O}$. Furthermore, $w$ is bounded on $\mathcal{O}$ and vanishes at infinity on $S_y$, for $y>0$. This can be seen as follows:
		
		We first proof boundedness. Let $p\in\mathcal{O}$ be arbitrary, then $p=x_y$ for some $x\in S$ and $y>0$. By Lemma \ref{lem:MartinKernelDecay} we have the inequality
		\begin{align*}
			\int_\Sigma k^\mathcal{O}_{p_0}(x_y,q)f(q)\mathrm{d}\omega^{p_0}_U(q)\leq\sum_{j\in\mathbb{N}}\frac{c2^{-\alpha j}}{\omega^{\mathcal{O}}_{p_0}(\Delta_j)}\int_{\Sigma\cap R_j}f(q)\mathrm{d}\omega_U^{p_0}(q),
		\end{align*}
		where $\Delta_j=B(x,2^jy)\cap S$ and $R_j=\Delta_j\setminus\Delta_{j-1}$. By Remark \ref{rem:HarmMeasComp} we can estimate $\omega^{p_0}_\mathcal{O}(\Delta_j)\geq\omega^{p_0}_\mathcal{O}(\Delta_j\cap\partial U)\geq\omega^{p_0}_U(\Delta_j\cap\partial U)$. Since $0\leq f\leq C$, we obtain the uniform bound
		\begin{align*}
			\int_\Sigma k^\mathcal{O}_{p_0}(x_y,q)f(q)\mathrm{d}\omega^{p_0}_U(q)\leq cC\sum_{j\in\mathbb{N}}2^{-\alpha j}<\infty.
		\end{align*}
		
		The second property to check is $\lim_{\substack{\|x\|\rightarrow\infty\\x\in S}}w(x_y)=0$ for any $y>0$. This is again a direct consequence of Lemma \ref{lem:MartinKernelDecay}. Fix $y>0$, then for $x\in S$ satisfying $d(x,\Sigma)\in(2^{j-1}y,2^jy)$, we have $\Sigma\subset B(x,2^jy)\setminus B(x,2^{j-1}y)$ and thus by Lemma \ref{lem:MartinKernelDecay}
		\begin{align*}
			w(x_y)= \int_\Sigma k^\mathcal{O}_{p_0}(x_y,q)f(q)\mathrm{d}\omega^{p_0}_U(q)\leq C\frac{c2^{-\alpha j}}{\omega^{p_0}_U(\Sigma)}.
		\end{align*}
		
		In total, $w$ satisfies all the assumptions from the previous sections, in particular, we have information about $\mathcal{V}^w(\partial U)\cap\frac{1}{4}\mathcal{C}$. Also, since $w$ is bounded, we can again apply the aforementioned theorem in Section 2 of \cite{HuntWheeden68} by Hunt and Wheeden, yielding the representation
		\begin{align*}
			\forall p\in U:\quad w(p)=\int_{\partial U} k^U_{p_0}(p,q)g(q)\mathrm{d}\omega^{p_0}_U(q).
		\end{align*}
		where again $g\in L^1(\partial U)$ (and bounded). Moreover, by a very similar argument as in \cite[Section 2]{HuntWheeden68}, we obtain $f=g$ on $\Sigma$ and thus 
		\begin{align*}
			u(p)-w(p) = \int_{\partial U\setminus\Sigma}k^U_{p_0}(p,q)(f(q)-g(q))\mathrm{d}\omega^{p_0}_U(q)
		\end{align*}
		everywhere on $U$. We will from now on drop the sub- and superscripts $U$ and $p_0$ in our notation, i.e. $k:=k_{p_0}^U$ and $\omega:=\omega^{p_0}_U$. Differentiating and applying the bound on gradients of positive harmonic functions due to Harnack, we obtain 
		\begin{align}
			\label{eq:grad_bound}
			\|\nabla(u-w)(p)\|\leq\int_{\partial U\setminus\Sigma}\frac{k(p,q)}{d(p,\partial U)}|f(q)-g(q)|\mathrm{d}\omega(q)
		\end{align}
		for $p\in U$. Equality \eqref{eq:Bourgain_pts_in_cylinders} will be proven if we can show
		\begin{align}
			\label{eq:mutually_finite_int}
			\int_0^1\left\|\nabla u(p^*+yv)\right\|\mathrm{d}y<\infty \quad\Longleftrightarrow\quad\int_0^1\left\|\nabla w(p^*+yv)\right\|\mathrm{d}y<\infty
		\end{align}
		for any $p^*\in\Sigma/2$, where $v=h/2\vec{e_d}$ and $\Sigma/2=\phi(B^{d-1}(r/4))$. To prove \eqref{eq:mutually_finite_int} we plug into \eqref{eq:grad_bound} and estimate
		\begin{align*}
			\int_0^1\|\nabla(u-w)(p^*+yv)\|\mathrm{d}y\leq c_S\int_{\partial U\setminus\Sigma}\int_0^1\frac{k(p^*+yv,q)}{y}\mathrm{d}y|f(q)-g(q)|\mathrm{d}\omega(q).
		\end{align*}
		Denote $c=d(\Sigma/2,\partial U\setminus\Sigma)$ and rewrite
		\begin{align}
			\label{eq:DyadicSumOfKernel}
			\int_0^1\frac{k(p^*+yv,q)}{y}\mathrm{d}y=\lim_{N\rightarrow\infty}\sum_{k=0}^{N}\int_{2^{-k-1}}^{2^{-k}}\frac{k(p^*+yv,q)}{y}\mathrm{d}y
		\end{align}
		Let $p^*\in\Sigma/2$ and $q\in\partial U\setminus\Sigma$ and w.l.o.g $\|v\|=1$. Fix $k\in\mathbb{N}$ and $y\in[2^{-k-1},2^{-k}]$ . Now choose $j=j_k$ such that
		\begin{align*}
			\frac{c}{4}\leq2^{j-k-2}\leq 2^{j-1}y\leq\|p^*-q\|\leq 2^jy\leq2^{j-k}.
		\end{align*}
		The first inequality holds since otherwise $2^{j-k}<c\leq\|p^*-q\|$.	This implies that in the notation of Lemma \ref{lem:MartinKernelDecayHuntWheeden} 
		\begin{align*}
			q\in R_{j-1}^{k}\cup R_{j}^{k},
		\end{align*}	
		where $R^k_l:=\left\lbrace q\in\partial U: 2^{l-k-1}\leq\|p^*-q\|\leq 2^{l-k}\right\rbrace$. Thus, by Lemma \ref{lem:MartinKernelDecayHuntWheeden}
		\begin{align*}
			k(p^*+vy,q)\leq c'\max\left\lbrace\frac{c_{j_k}}{\omega(B(2^{j_k}y,p^*))},\frac{c_{j_k+1}}{\omega(B(2^{j_k+1}y,p^*))}\right\rbrace\leq\tilde{c}\frac{2^{-\alpha j_k}}{\omega(B(c/4,p^*))},
		\end{align*}
		where $\tilde{c}$ only depends on $U$ and $p_0$ and $\alpha>0$ only depends on $U$. Inserting into \eqref{eq:DyadicSumOfKernel} we obtain
		\begin{align*}
			\int_0^1\frac{k(p^*+yv,q)}{y}\mathrm{d}y\leq \frac{\tilde{c}}{\omega(B(c/4,p^*))}\lim_{N\rightarrow\infty}\sum_{k=0}^{N}2^{-\alpha j_k}<\infty
		\end{align*}
		and thus 
		\begin{align*}
			\int_0^1\|\nabla(u-w)(p^*+yv)\|\mathrm{d}y<\infty,
		\end{align*}
		implying \eqref{eq:mutually_finite_int}.
	
		\begin{figure}[h!]
			\includegraphics[width=\textwidth]{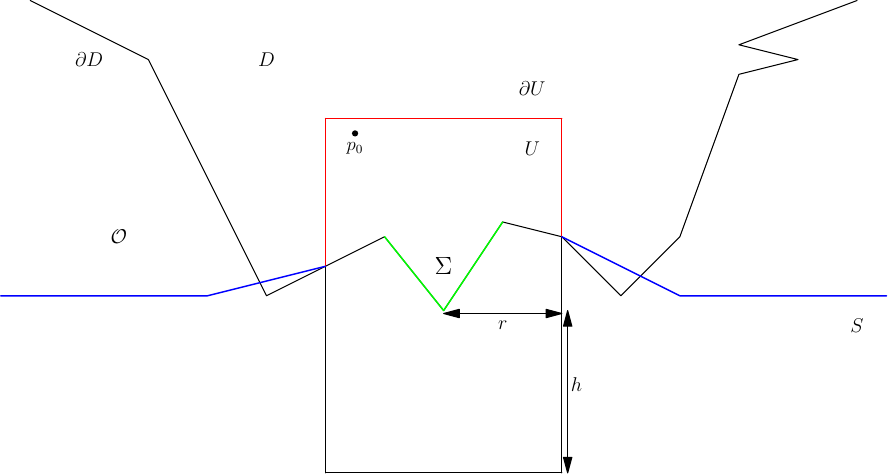}
			\caption{Identifying the Near Half Space $\mathcal{O}$ and the special Lipschitz domain $U$.}
		\end{figure}
		\begin{remark}\label{rem:HarmMeasComp}
			In the above situation we want to show
			\begin{align}\label{eq:HarmMeasComp}
				\omega^{p_0}_\mathcal{O}(\Delta_j)\geq\omega_U^{p_0}(\Delta_j\cap\partial U).
			\end{align}
			W.l.o.g., assume $\Delta_j\cap\partial U\neq\emptyset$. We will prove the desired inequality in a more general setting.
			
			Let $\mathcal{O}$ be any epigraph of a Lipschitz function, $U\subset\mathcal{O}$ be a bounded Lipschitz domain and $p\in U$. Let $M\subset\partial\mathcal{O}$ denote an open set and suppose $R:=M\cap\partial U\neq\emptyset$. We want to show $\omega^p_\mathcal{O}(R)\geq\omega_U^p(R)$ as this obviously implies \eqref{eq:HarmMeasComp}.
			
			For fixed $\epsilon>0$, find a continuous function $\varphi:\partial\mathcal{O}\longrightarrow\mathbb{R}$ satisfying $\varphi|_R\equiv1$ and $\varphi\equiv0$ on $\partial\mathcal{O}\setminus R_\epsilon$
			where $R_\epsilon:=\lbrace q\in\partial\mathcal{O}:d(q,R)<\epsilon\rbrace$. Consider the harmonic extension of $\varphi$ to $\mathcal{O}$;
			\begin{align*}
				\Phi(p):=\int_{\partial\mathcal
				{O}}\varphi(q)\mathrm{d}\omega_\mathcal{O}^p(q),\quad p\in\mathcal{O}.
			\end{align*}
			Then, obviously $\Phi(p)\leq\omega_\mathcal{O}^p(R_\epsilon)$. Now define the function $\tilde{\varphi}:\partial U\longrightarrow\mathbb{R}$ satisfying 
			\begin{align*}
				\tilde{\varphi}(q)=
				\begin{cases}
					\varphi(q),\quad q\in\partial\mathcal{O}\cap\partial U\\
					\Phi(q),\quad q\in\partial U\setminus\partial\mathcal{O}
				\end{cases}.
			\end{align*}
			$\tilde{\varphi}$ is bounded and continuous hence by uniqueness, its harmonic extension to $U$ retrieves $\Phi$ on $U$, yielding
			\begin{align*}
				\forall p\in U:\quad\Phi(p)=\int_{\partial U}\tilde{\varphi}(q)\mathrm{d}\omega_U^p(q).
			\end{align*}
			The last integral is bounded from below by $\omega^p_U(R)$ and since $\epsilon>0$ was arbitrary, we obtain $\omega_\mathcal{O}^p(R)\geq\omega^p_U(R)$.
			
		\end{remark}
		\begin{figure}[h!]
			\includegraphics[width=\textwidth]{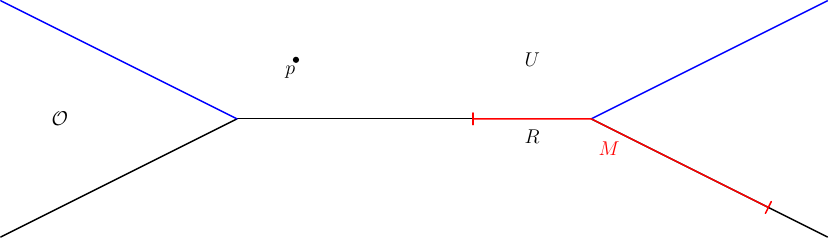}
			\caption{Schematic representation of sets in above proof.}
		\end{figure}
	
 		\newpage
 		\printbibliography[heading=bibintoc, title={References}]

\textsc{J. Fromherz, Institute of Analysis, Johannes Kepler University Linz, 				Altenberger Strasse 69, A-4040 Linz, Austria}

E-mail address: Jakob.Fromherz@jku.at

\bigskip

\textsc{P.F.X. M\"uller, Institute of Analysis, Johannes Kepler University Linz, 				Altenberger Strasse 69, A-4040 Linz, Austria}

E-mail address: Paul.Mueller@jku.at

\bigskip

\textsc{K. Riegler, Institute of Analysis, Johannes Kepler University Linz, Altenberger
Strasse 69, A-4040 Linz, Austria}

\end{document}